\def\eps{{\varepsilon}}
\def\Bbb E{\mathbb{E}}
\def\Bbb R{\mathbb{R}}
\newtheorem{corollary}{Corollary}[section]
\makeatletter \@addtoreset{equation}{section}
\newtheorem{lemma}{Lemma}[section]
\newtheorem{theorem}{Theorem}[section]
\newtheorem{proposition}{Proposition}[section]
\newtheorem{remark}{Remark}[section]
\font\tencmmib=cmmib10 \skewchar\tencmmib '60
\font\tenmsb=msbm10 
\def\Bbb#1{\hbox{\tenmsb#1}}
\def\lessim{\ \lower4pt\hbox{$
\buildrel{\displaystyle <}\over\sim$}\ }
\def\gessim{\ \lower4pt\hbox{$\buildrel{\displaystyle >}
\over\sim$}\ }
\def\eps{\varepsilon}
\def\go0{\to 0}
\def\leftitem#1{\item{\hbox to\parindent{\enspace#1\hfill}}}
\def\sg{\sigma}
\def\sg2{\sigma^2}
\def\__{_{\infty}}
\numberwithin{equation}{section} 
\newcommand{\1}{{\rm 1}\kern-0.24em{\rm I}}
\begin{document}

\begin{frontmatter}
\title{Estimation of trace functionals and spectral measures of covariance operators in Gaussian models}
\runtitle{Functional estimation}

\begin{aug}
\author{\fnms{Vladimir} \snm{Koltchinskii}\thanksref{t1}\ead[label=e1]{vlad@math.gatech.edu}} 
\thankstext{t1}{Supported in part by NSF grant DMS-2113121}
\runauthor{Koltchinskii}


\address{School of Mathematics\\
Georgia Institute of Technology\\
Atlanta, GA 30332-0160, USA\\
\printead{e1}\\
}
\end{aug}
\vspace{0.2cm}
{\small \today}
\vspace{0.2cm}

\begin{abstract}
Let $f:{\mathbb R}_+\mapsto {\mathbb R}$ be a smooth function with $f(0)=0.$ 
A problem of estimation of a functional $\tau_f(\Sigma):= {\rm tr}(f(\Sigma))$ of unknown covariance operator $\Sigma$ in a separable Hilbert space 
${\mathbb H}$ based on i.i.d. mean zero Gaussian observations $X_1,\dots, X_n$ with values in ${\mathbb H}$ and covariance operator $\Sigma$ is studied. Functionals $\tau_f(\Sigma)$ for a sufficiently large class of test functions $f$ define the spectral measure $\mu_{\Sigma}$ of $\Sigma$ by the following relationship: $\tau_f(\Sigma)=\int_{{\mathbb R}_+}fd\mu_{\Sigma}.$ 
Let $\hat \Sigma_n$ be the sample covariance operator based on observations $X_1,\dots, X_n.$
Estimators 
\begin{align*}
T_{f,m}(X_1,\dots, X_n):= 
\sum_{j=1}^m C_j \tau_f(\hat \Sigma_{n_j})
\end{align*}
based on linear aggregation of several plug-in estimators $\tau_f(\hat \Sigma_{n_j}),$
where the sample sizes $n/c\leq  n_1<\dots<n_m\leq n$ and coefficients $C_1,\dots, C_n$ are chosen to reduce the bias, are considered. Estimator $T_{f,m}(X_1,\dots, X_n)$ could be also represented as an integral $\int_{{\mathbb R}_+}fd\hat \mu_{n,m},$
where $\hat \mu_{n,m} := \sum_{j=1}^m C_j \mu_{\hat \Sigma_{n_j}}$ is a signed measure in ${\mathbb R}_+$ providing an estimator 
of the spectral measure $\mu_{\Sigma}.$ 
The complexity of the problem is characterized by the effective rank 
${\bf r}(\Sigma):= \frac{{\rm tr}(\Sigma)}{\|\Sigma\|}$ of covariance operator $\Sigma.$
It is shown that, if $f\in C^{m+1}({\mathbb R}_+)$ for some $m\geq 2,$ $\|f''\|_{L_{\infty}}\lesssim 1,$ 
$\|f^{(m+1)}\|_{L_{\infty}}\lesssim 1,$ $\|\Sigma\|\lessim 1$ and ${\bf r}(\Sigma)\lesssim n,$
then 
\begin{align*}
&
\|\hat T_{f,m}(X_1,\dots, X_n)-\tau_f(\Sigma)\|_{L_2} 
\lesssim_m
\frac{\|\Sigma f'(\Sigma)\|_2}{\sqrt{n}}
+
\frac{{\bf r}(\Sigma)}{n}+
{\bf r}(\Sigma)\Bigl(\sqrt{\frac{{\bf r}(\Sigma)}{n}}\Bigr)^{m+1}.
\end{align*}
Similar bounds have been proved for the $L_{p}$-errors and some other Orlicz norm errors of estimator $\hat T_{f,m}(X_1,\dots, X_n).$
The optimality of these error rates is discussed. A symmetrized (jackknife) version $\check T_{f,m}(X_1,\dots, X_n)=\int_{{\mathbb R}_+}fd\check \mu_{n,m}$ of estimator $\hat T_{f,m}(X_1,\dots, X_n)$ is also considered and, for this estimator, normal approximation bounds and asymptotic efficiency have been proved.
Finally, bounds on the sup-norms of stochastic processes $n^{1/2}\int_{{\mathbb R}_+}fd(\hat \mu_{n,m}-\mu_{\Sigma}), f\in {\mathcal F}$
and $n^{1/2}\int_{{\mathbb R}_+}fd(\check \mu_{n,m}-\mu_{\Sigma}), f\in {\mathcal F}$ for classes ${\mathcal F}$ of smooth functions $f$
as well as the results on Gaussian approximation of the second process are also discussed. 
\end{abstract}

\begin{keyword}[class=AMS]
\kwd[Primary ]{62H12} \kwd[; secondary ]{62G20, 62H25, 60B20}
\end{keyword}

\begin{keyword}
\kwd{Trace functionals} 
\kwd{Covariance operator}
\kwd{Spectral measure}
\kwd{Minimax optimality}
\kwd{Asymptotic efficiency}
\kwd{Bias reduction} 
\kwd{Effective rank} 
\kwd{Concentration inequalities} 
\kwd{Normal approximation} 
\end{keyword}

\end{frontmatter}

\section{Introduction}
\label{Intro}

Let $X$ be a mean zero Gaussian random variable in a separable Hilbert space ${\mathbb H}$ with covariance operator $\Sigma: {\mathbb H}\mapsto {\mathbb H},$
defined as 
$\Sigma u := {\mathbb E}\langle X,u\rangle X, u\in {\mathbb H}.$ In this paper, we study the problem of estimation of functionals of the form $\tau_f(\Sigma):= {\rm tr}(f(\Sigma))$
for smooth functions $f:{\mathbb R}_+=[0,+\infty)\mapsto {\mathbb R}$ based on i.i.d. observations $X_1,\dots, X_n$ of r.v. $X\sim N(0,\Sigma).$\footnote{Here and in what follows ${\rm tr}(A)$ denotes the trace of operator $A$ in ${\mathbb H}.$} 
In what follows,  $\tau_f(\Sigma)$ will be called the {\em trace functional} generated by a test function $f.$
Note that covariance operator $\Sigma$ is a self-adjoint positively semidefinite compact operator in ${\mathbb H}$ 
(moreover, a trace class operator). Let  $\lambda_1\geq \lambda_2\geq \dots \geq 0$ be the eigenvalues of $\Sigma$
(repeated with their multiplicities). Let us define the spectral measure of $\Sigma$ as 
\begin{align*} 
\mu(B)=\mu_{\Sigma} (B):= \sum_{j\geq 1} I_B(\lambda_j)
\end{align*}
for Borel sets $B\subset {\mathbb R}_+.$
Then 
\begin{align}
\label{tau_f}
\tau_f(\Sigma)= {\rm tr}(f(\Sigma)) =  \int_{{\mathbb R}_+} f d\mu_{\Sigma}=\sum_{j\geq 1} f(\lambda_j).
\end{align} 
Note that $\tau_f(\Sigma)$ is well defined under any assumptions 
on $f$ that guarantee the absolute convergence of the series in the right hand side of \eqref{tau_f} (for instance, for any continuous function $f$ such $|f(x)|\lesssim x$ for all $x\geq 0$ in a neighborhood of $0$). If ${\mathbb H}$ is infinite-dimensional and $f$ is continuous, then the condition that $f(0)=0$ is necessary for the convergence of the series $\sum_{j\geq 1} f(\lambda_j).$ On the other hand, if ${\rm dim}({\mathbb H})=d<\infty,$ then
\begin{align*}
\tau_f (\Sigma) = f(0)d+ \tau_{\bar f}(\Sigma),
\end{align*}
where $\bar f(x):= f(x)-f(0).$ Thus, in this case, estimation of $\tau_f(\Sigma)$ could be reduced to estimation of $\tau_{\bar f}(\Sigma)$ with $\bar f(0)=0.$
In what follows, we assume without loss of generality that $f(0)=0.$

Clearly, $\tau_f(U \Sigma U^{-1})= \tau_f(\Sigma)$
for any orthogonal transformation $U:{\mathbb H}\mapsto {\mathbb H},$ so, trace functionals are important examples of orthogonally invariant functionals 
of covariance operator $\Sigma$ that depend only on the eigenvalues of $\Sigma$ (arranged in decreasing order), but not on their eigenvectors. 
The values of trace functionals for a properly chosen collection of test functions $f$ could provide relevant information about the distribution of eigenvalues 
of $\Sigma$ (more precisely, about its spectral measure $\mu_{\Sigma}$), which makes estimation of trace functionals an important problem in statistical analysis of spectral properties of covariance operators. For instance, it is of interest to develop estimators $\hat \mu_n$ of $\mu_{\Sigma}$ for which 
\begin{align*}
\|\hat \mu_n-\mu_{\Sigma}\|_{\mathcal F} := \sup_{f\in {\mathcal F}}\Bigl|\int_{{\mathbb R}_+} fd\hat \mu_n- \int_{{\mathbb R}_+} fd\mu_\Sigma\Bigr|
\end{align*}
converges to zero sufficiently fast for proper classes ${\mathcal F}$ of smooth functions $f:{\mathbb R}_+\mapsto {\mathbb R}.$

A straightforward approach to estimation of the trace functional $\tau_f(\Sigma)$ is to use the plug-in estimator $\tau_f(\hat \Sigma_n)$
based on the sample covariance operator $\hat \Sigma_n: {\mathbb H}\mapsto {\mathbb H},$ defined as
\begin{align*}
\hat \Sigma_n u := n^{-1}\sum_{j=1}^n \langle X_j, u\rangle X_j, u\in {\mathbb H}.
\end{align*} 
The statistic 
\begin{align*}
\tau_f (\hat \Sigma_n) = \sum_{j\geq 1} f(\lambda_j(\hat \Sigma_n)),
\end{align*}
where $\lambda_1(\hat \Sigma_n)\geq \lambda_2 (\hat \Sigma_n)\geq \dots $ are the eigenvalues of $\hat \Sigma_n$ repeated with their multiplicities,
is often called {\it a linear spectral statistic.} The study of asymptotic properties of linear spectral statistics of high-dimensional covariance matrices 
(as well as some other important random matrix ensembles) has been one of the central lines of research in asymptotic theory of random matrices. 
This problem was studied in a high-dimensional framework where ${\mathbb H}={\mathbb R}^d$ and the dimension $d$ of the model 
grows with the sample size $n$ so that $\frac{d}{n}\to \gamma$ as $n\to\infty,$ where $\gamma \in (0,+\infty).$ In this setting, it is natural 
to {\it normalize} the spectral measure $\mu_{\hat \Sigma_n}$ and define $\hat \nu_n:= d^{-1} \mu_{\hat \Sigma_n},$ representing 
the empirical distribution of the eigenvalues of sample covariance. In random matrix theory, the problem of weak convergence of sequences of such normalized random spectral measures $\{\hat \nu_n\}$ to a nonrandom limit probability measure has been studied in great detail since the pathbreaking paper by Marchenko and Pastur \cite{Marchenko_Pastur}. In particular, it follows from the results of \cite{Marchenko_Pastur} that, in the case when $\Sigma=I_d,$ the  
weak limit of sequence $\{\hat \nu_n\}$ is a probability measure $\nu_{\gamma}$ called {\it the Marchenko-Pastur Law} and defined as 
\begin{align*}
\nu_{\gamma}(A):= 
\begin{cases}
\tilde \nu_{\gamma}(A) & {\rm for}\ \gamma \in (0,1]\\
(1-\gamma^{-1})I_{A}(0) + \tilde \nu_{\gamma}(A) & {\rm for}\ \gamma>1,
\end{cases}
\end{align*}
where 
\begin{align*}
\tilde \nu_{\gamma}(dx) := \frac{1}{2\pi \gamma} \frac{\sqrt{(x-a)(b-x)}}{x}I_{[a,b]}(x)dx
\end{align*}
with $a:=(1-\sqrt{\gamma})^2, b:=(1+\sqrt{\gamma})^2.$ More generally, if $\Sigma=\Sigma^{(n)}$ is an arbitrary covariance 
with normalized spectral measure $\mu_n:=d^{-1}\mu_{\Sigma^{(n)}}$ such that sequence $\{\mu_n\}$ converges weakly 
to a probability measure $\mu,$ then the weak limit of normalized spectral measures $\hat \nu_n$ is a probability measure 
$\nu_{\mu, \gamma},$ depending only $\mu$ and on $\gamma,$ that coincides with the so called free multiplicative convolution 
$\mu \boxtimes \nu_{\gamma}$ of measure $\mu$ and Marchenko-Pastur Law $\nu_{\gamma}$ (see \cite{Anderson}).
Finally, fluctuations of random variables $\int_{{\mathbb R}}fd\hat \nu_n$ have been studied in detail, in particular,
it was shown that the finite dimensional distributions of stochastic process 
\begin{align*}
d\Bigl(\int_{\mathbb R} f d\hat \nu_n-\int_{\mathbb R} f\nu_{\mu_n, \gamma_n}\Bigr)= {\rm tr}(f(\hat \Sigma_n)) - d\int_{\mathbb R}fd\nu_{\mu_n,\gamma_n},
\end{align*}
where $\gamma_n := \frac{d}{n}\to \gamma,$ converge weakly to the finite-dimensional distributions of a non-degenerate Gaussian process
as $n\to\infty$ in classes of analytic and smooth functions $f$ (see \cite{Bai_S}, Chapter 9 and references therein). 
All these asymptotic results have been proved not only in the case of Gaussian models, but also for more general models.

Since normalized trace functional $d^{-1}{\rm tr}(f(\hat \Sigma_n))$ provides an estimator of $\int_{{\mathbb R}}f d_{\nu_{\mu, \gamma}}$
(or $\int_{{\mathbb R}}f d_{\nu_{\mu_n, \gamma_n}}$) rather than of $\int_{{\mathbb R}}fd\mu$ (or $\int_{{\mathbb R}}fd\mu_n$), the statistical inference about the spectrum of $\Sigma$
requires a solution of a rather difficult inverse problem of recovery of normalized spectral measure $\mu_n$ of $\Sigma=\Sigma^{(n)}$ based 
on the observations of eigenvalues of the sample covariance (which could be viewed as a multiplicative free deconvolution problem),
see \cite{El Karoui, B-G, A_T_V}.

Concentration bounds for linear spectral statistics in various random matrix models have been also extensively studied in the 
literature, see \cite{Guionnet_Zeitouni, Anderson, Chatterjee, Houdre, Meckes, Adamczak_Wolff} and references therein.
More recent results with deep connections to free probability could be found in \cite{B_B_H}.

Among simple examples of trace functionals well studied in the literature is the log determinant of covariance 
$\log {\rm det}(\Sigma)= {\rm tr}(\log (\Sigma)).$ If ${\rm dim}({\mathbb H})=d=d_n\leq n,$ then it is known that 
\begin{align*}
\frac{\log {\rm det}(\hat \Sigma_n)- a_{n,d}-\log {\rm det}(\Sigma)}{b_{n,d}}
\end{align*}
converges in distribution to a standard normal random variable for some explicit sequences of constants $a_{n,d}, b_{n,d}$ (see \cite{CLZ} and references therein).
This means that $\log {\rm det}(\hat \Sigma_n)-a_{n,d}$ is an asymptotically normal estimator of $\log {\rm det}(\Sigma)$ 
with $a_{n,d}$ providing an explicit bias correction for the plug-in estimator $\log {\rm det}(\hat \Sigma_n).$
The convergence rate of this estimator is typically of the order $\sqrt{\frac{d}{n}}.$ In this case, the problem is relatively simple
since 
\begin{align*}
\log {\rm det}(\hat \Sigma_n)- \log {\rm det}(\Sigma) = \log {\rm det}(\hat \Sigma_n^Z),
\end{align*}
where $\hat \Sigma_n^Z$ is the sample covariance based on i.i.d. standard normal observations $Z_1,\dots, Z_n\sim N(0,I_d).$

In the general case, the plug-in estimator $\tau_f(\hat \Sigma_n)$ also suffers from a large bias (even in the case when $d=o(n)$), 
which has to be reduced in order to improve the convergence rate. In the 80s-90s, Girko (see \cite{Gir, Gir1} and references therein)
developed asymptotically normal estimators of several special trace functionals of high-dimensional covariance (including the Stieltjes transform of its spectral function). His approach was based on constructing a functional $g_f$ (defined implicitly by certain equations called $G$-equations) such that $g_f(\hat \Sigma_n)$ provides a better estimation of $\tau_f(\Sigma)$ than the naive plug-in estimator $\tau_f(\hat \Sigma_n).$
The proof of asymptotic normality of these estimators relied on the martingale central limit theorem, the centering and normalizing parameters 
of the resulting CLT were hard to interpret and the estimators were not asymptotically efficient.

We will study the problem of estimation of $\tau_f(\Sigma)$ and $\mu_{\Sigma}$ in a dimension-free framework with complexity of statistical estimation characterized by the effective rank of $\Sigma:$ 
\begin{align*}
{\bf r}(\Sigma):= \frac{{\mathbb E}\|X\|^2}{\|\Sigma\|} = \frac{{\rm tr}(\Sigma)}{\|\Sigma\|}.
\end{align*}
Note that 
\begin{align*}
{\bf r}(\Sigma) \leq {\rm rank}(\Sigma) \leq {\rm dim}({\mathbb H}).
\end{align*}
If ${\rm dim}({\mathbb H})=d<\infty$ and the spectrum of covariance operator $\Sigma$ is bounded from above and bounded away 
from $0$ by numerical constants, then ${\bf r}(\Sigma)\asymp d.$ On the other hand, if many eigenvalues of $\Sigma$ are close to $0,$
${\bf r}(\Sigma)$ could be much smaller than $d.$ Note also that ${\bf r}(\Sigma)$ is always finite (even when ${\rm dim}({\mathbb H})=\infty$).

It is known (see \cite{Koltchinskii_Lounici}) that the operator norm $\|\Sigma\|$ and the effective rank ${\bf r}(\Sigma)$ characterize the size of operator 
norm error $\|\hat \Sigma_n-\Sigma\|$ of the sample covariance operator $\hat \Sigma_n.$
This is true not only for covariance operators in Hilbert spaces, but also in a more general case of separable Banach spaces.
Namely, it was proved in \cite{Koltchinskii_Lounici} that 
\begin{align}
\label{KL_1}
{\mathbb E}\|\hat \Sigma_n- \Sigma\| \asymp \|\Sigma\|\Bigl(\sqrt{\frac{{\bf r}(\Sigma)}{n}}\vee \frac{{\bf r}(\Sigma)}{n}\Bigr)
\end{align}
and that for all $t>0$ with probability at least $1-e^{-t}$
\begin{align}
\label{KL_2}
\Bigl|\|\hat \Sigma_n-\Sigma\|- {\mathbb E}\|\hat \Sigma_n-\Sigma\|\Bigr| \lesssim \|\Sigma\|\Bigl(\sqrt{\frac{{\bf r}(\Sigma)}{n}}\vee 1\Bigr)\sqrt{\frac{t}{n}} \vee \|\Sigma\|\frac{t}{n}.
\end{align}
Moreover, in \cite{Koltchinskii_Lounici_bilinear, Koltchinskii_Nickl}, the effective rank was used as a complexity parameter in the 
problems of estimation of bilinear forms of unknown covariance operators and linear forms of their eigenvectors.  

In \cite{Koltchinskii_2017, Koltchinskii_2018, Koltchinskii_Zhilova_19, Koltchinskii_2022}, a problem of estimation of H\"older smooth 
functionals $g(\Sigma)$ of unknown covariance $\Sigma$ was studied in several high-dimensional and infinite-dimensional Gaussian models.
The space of operators was equipped with the operator norm and the H\"older $C^s$-norms of the functionals were defined in terms of operator norms 
of Fr\'echet derivatives of the orders up to $s$ viewed as multilinear forms on the space of operators. In particular, functionals of the form 
$g(\Sigma)={\rm tr}(f(\Sigma)B),$ where $f:{\mathbb R}\mapsto {\mathbb R}$ is a smooth function in the real line and $B$ is an operator 
from a unit nuclear norm ball, are H\"older smooth in this sense. The final results obtained in \cite{Koltchinskii_2022} showed that the maximum 
over the H\"older $C^s$-ball $\{g: \|g\|_{C^s}\leq 1\}$ of radius $1$ of the minimax $L_2$-errors of estimation of $g(\Sigma)$
in the class of covariance operators $\Sigma$ with $\|\Sigma\|\lesssim 1$ and ${\bf r}(\Sigma)\leq r$ is of the order 
$\frac{1}{\sqrt{n}}+ (\sqrt{\frac{r}{n}})^s.$ This result implies that there is a phase transition between parametric $n^{-1/2}$
error rate and slower rates depending on the degree $s$ of smoothness of the functional.
More precisely, under the assumptions $r\leq n^{\alpha}$ for some $\alpha\in (0,1)$ and $s\geq \frac{1}{1-\alpha},$ the optimal 
error rate is $n^{-1/2}$ and, if $r\geq n^{\alpha}$ and $s<\frac{1}{1-\alpha},$ the optimal rate is slower than $n^{-1/2}$
(for some functionals in the H\"older $C^s$-ball).
The estimation method used in \cite{Koltchinskii_2022} is based on the bias reduction via linear aggregation of several plug-in estimators 
of $g(\Sigma)$ with different sample sizes (see \cite{Jiao}) and the coefficients of linear combination are chosen in such a way that the biases of plug-in estimators 
almost cancel out. These results could not be applied to trace functionals $\tau_f(\Sigma)$ since such functionals typically do not belong to the H\"older 
$C^s$-ball of radius $1$ (for instance, it is not hard to check that the operator norm of the first derivative of $\tau_f(\Sigma)$ is of the order 
of nuclear norm $\|f'(\Sigma)\|_1,$ which could be as large as ${\bf r}(\Sigma)$). However, we will show in this paper that the estimation method used 
in \cite{Koltchinskii_2022} still yields estimators of $\tau_f(\Sigma)$ with optimal error rates if function $f$ is sufficiently smooth 
(although the error rates of efficient estimators could be slower in this case and the threshold on the smoothness of $f$ for which they hold 
could be larger).

Assuming that ${\bf r}(\Sigma)\lesssim n,$ our goal is to construct estimators of $\tau_f(\Sigma)$ with nearly optimal error rates and, moreover, to achieve asymptotically efficient estimation if ${\bf r}(\Sigma)$ is small comparing with $n.$ Our analysis is primarily based on perturbation theory 
for operator functions. In particular, we use the first order Taylor expansion 
\begin{align*}
\tau_f (\hat \Sigma_n) = \langle f'(\Sigma), \hat \Sigma_n-\Sigma\rangle + R_f(\Sigma, \hat \Sigma_n-\Sigma) 
\end{align*}
of the linear spectral statistic $\tau_f(\hat \Sigma_n)$ with the linear term $\langle f'(\Sigma), \hat \Sigma_n-\Sigma\rangle$ and 
the remainder term $R_f(\Sigma, \hat \Sigma_n-\Sigma)$ to study the concentration properties of $\tau_f (\hat \Sigma_n)$ 
and to show that, under the condition ${\bf r}(\Sigma)=o(n),$ the linear term is typically dominant in the resulting concentration 
bounds.
Since $\tau_f(\hat \Sigma_n)$ is a biased estimator of $\tau_f(\Sigma)$ and the bias could have a substantial impact on the error rate 
when ${\bf r}(\Sigma)$ is relatively large, we will study in what follows estimators $\hat T_{f,m}(X_1,\dots, X_n)$ of $\tau_f(\Sigma)$ based on linear aggregation of several plug-in estimators $\tau_f(\hat \Sigma_{n_j})$ with different sample sizes $n_j, j=1, \dots, m, m\geq 2:$ 
\begin{align*}
\hat T_{f,m}(X_1,\dots, X_n):=\sum_{j=1}^m C_j \tau_f(\hat \Sigma_{n_j}). 
\end{align*}
It is possible to choose the coefficients $C_1,\dots, C_m$ of the linear combination and the sample sizes $n_1,\dots, n_m$
in such a way that the biases of plug-in estimators $\tau_f(\hat \Sigma_{n_j})$ almost cancel out, resulting in a reduced bias of estimator 
$\hat T_{f,m}(X_1,\dots, X_n):$ 
\begin{align*}
|{\mathbb E}\hat T_{f,m}(X_1,\dots, X_n)-\tau_f(\Sigma)| \lesssim_m 
\|f^{(m+1)}\|_{L_{\infty}} \|\Sigma\|^{m+1} {\bf r}(\Sigma)\Bigl(\sqrt{\frac{{\bf r}(\Sigma)}{n}}\vee \sqrt{\frac{\log n}{n}}\Bigr)^{m+1}.
\end{align*}
The proof of this bound relies on deep results in perturbation theory for trace functionals, namely, on the sharp bounds on the remainder of higher order Taylor expansions of $\tau_f(\Sigma)$ (see \cite{P_S_S}).
Using concentration properties of plug-in estimator $\tau_f(\hat \Sigma_n),$ we study the concentration of estimator $\hat T_{f,m}(X_1,\dots, X_n)$ around its expectation and use the resulting concentration inequalities 
along with the bound on the bias to get, under the assumptions that $\|\Sigma\|\lesssim 1,$ $\|f'\|_{L_{\infty}}\lesssim 1,$ $\|f'\|_{\rm Lip}\lesssim 1$ 
and $\|f^{(m+1)}\|_{L_{\infty}}\lesssim 1,$ the error rates of estimator $\hat T_{f,m}(X_1,\dots, X_n)$  in the $L_p$-norms of the order 
\begin{align*}
\frac{\|\Sigma f'(\Sigma)\|_2}{\sqrt{n}}
+
\frac{{\bf r}(\Sigma)}{n}+
{\bf r}(\Sigma)\Bigl(\sqrt{\frac{{\bf r}(\Sigma)}{n}}\Bigr)^{m+1}
\lesssim
\sqrt{\frac{{\bf r}(\Sigma^2)}{n}} +
\frac{{\bf r}(\Sigma)}{n}+
{\bf r}(\Sigma)\Bigl(\sqrt{\frac{{\bf r}(\Sigma)}{n}}\Bigr)^{m+1}.
\end{align*}
In the case when ${\bf r}(\Sigma)\leq r\lesssim n^{\alpha}$ for some $\alpha\in (0,1)$ and $f$ is sufficiently smooth (so, $m$ is sufficiently large), 
the upper bound on the error rate is  
$\sqrt{\frac{r}{n}},$ which is minimax optimal in the set of covariances $\Sigma$ with effective rank bounded by $r.$
We also construct a symmetrized (jackknife) version  $\check T_{f,m}(X_1,\dots, X_n)$ of estimator $\hat T_{f,m}(X_1,\dots, X_n)$
and obtain bounds on the accuracy of approximation of  $\check T_{f,m}(X_1,\dots, X_n)-{\mathbb E}\check T_{f,m}(X_1,\dots, X_n)$ 
by the first order linear term $\langle f'(\Sigma), \hat \Sigma_n-\Sigma\rangle$ of Taylor expansion of plug-in estimator $\tau_f(\hat \Sigma_n).$
These bounds allow us to study the accuracy of normal approximation of estimator $\check T_{f,m}(X_1,\dots, X_n)$ and to show 
its asymptotic efficiency with the $L_2$-error rate $(\sqrt{2}+o(1))\frac{\|\Sigma f'(\Sigma)\|_2}{\sqrt{n}}.$

The construction of the estimators and precise statement of the results are given in Section \ref{Main_results}, where we also introduce the corresponding estimators $\hat \mu_{n,m}$ and $\check \mu_{n,m}$ of the spectral measure $\mu_{\Sigma},$ obtain bounds on 
$\|\hat \mu_{n,m}-\mu_{\Sigma}\|_{\mathcal F}$ and $\|\check \mu_{n,m}-\mu_{\Sigma}\|_{\mathcal F}$ and provide a Gaussian approximation 
for a stochastic process $\sqrt{n}\int_{{\mathbb R}_+} f d(\check \mu_{n,m}-\mu_{\Sigma}), f\in {\mathcal F}$
over certain classes ${\mathcal F}$ of smooth functions. In Section \ref{Conc}, we provide concentration bounds for plug-in estimator $\tau_f(\hat \Sigma_n)$
and for the remainder of its first order Taylor expansion around $\Sigma$ as well as the normal approximation for the first order linear term 
$\langle f'(\Sigma), \hat \Sigma_n-\Sigma\rangle$ of Taylor expansion of $\tau_f(\hat \Sigma_n),$ whereas the uniform versions of these 
results over classes ${\mathcal F}$ of smooth functions are provided in Section \ref{sup_bounds}. In Section \ref{bias_bounds}, we use 
the results from the operator theory on higher order Taylor expansions of traces of operator functions to obtain a decomposition of the bias of plug-in estimator $\tau_f(\hat \Sigma_n),$ yielding the bounds on the bias of $\hat T_{f,m}(X_1,\dots, X_n).$
The proofs of the main results are given in Section \ref{proof_of_main} and the proofs of the lower bounds are given in Section \ref{proof_lower}.

In the rest of this section, we introduce some notations used throughout the paper (some of them have already been used above).

For nonnegative real variables $A,B,$ the notation $A\lesssim B$ means that there exists a numerical constant $C>0$ such that $A\leq CB,$ 
the notation $A\gtrsim B$ means that $B\lesssim A$ and the notation $A\asymp B$ means that $A\lesssim B$ and $A\gtrsim B.$
The signs $\lesssim, \gtrsim$ and $\asymp$ will be provided with subscripts to indicate the dependence of the constants on certain 
parameters, say, $A\lesssim_{\gamma, \beta} B$ means that there exists $C_{\gamma, \beta}>0$ such that $A\leq C_{\gamma,\beta} B.$

The norm notation $\|\cdot\|$ (without any subscripts) is used for the norm of the underlying Hilbert space ${\mathbb H},$ for the operator
norms of linear operators acting in ${\mathbb H}$ and, occasionally, for other norms (if there is no ambiguity). If there is an ambiguity, the norms 
will be provided with subscripts. 

The notation $\|A\|_p$ for $p\geq 1$ means the Schatten $p$-norm of operator $A$ in Hilbert space 
${\mathbb H}:$ $\|A\|_{p}:= {\rm tr}^{1/p}(|A|^p),$ where $|A|:= \sqrt{A^{\ast}A}.$ In particular, $\|A\|_1$ is the nuclear norm, $\|A\|_2$ is the Hilbert--Schmidt norm and $\|A\|_{\infty}=\|A\|$
is the operator norm of $A.$ For $p\geq 1,$ the Schatten class ${\mathcal S}_p$ consists of all linear operators $A$ in ${\mathbb H}$ with 
$\|A\|_p<\infty.$ In particular, ${\mathcal S}_2$ is the space of all Hilbert--Schmidt operators. It is equipped with the Hilbert--Schmidt inner 
product $\langle A, B\rangle= {\rm tr}(A^{\ast}B)$ (for which we use the same notation as for the inner product of Hilbert space ${\mathbb H}$).
The notation $A\preceq B$ for self-adjoint operators $A,B$ means that $B-A$ is positively 
semidefinite. 
For vectors $x,y\in {\mathbb H},$ $x\otimes y$ is the tensor product of $x$ and $y.$ It is the operator in ${\mathbb H}$ defined as 
follows: $(x\otimes y)u = x \langle y,u\rangle, u\in {\mathbb H}.$

In what follows, $C^k ({\mathbb R}^+)$ denotes the space of all functions $f:{\mathbb R}_+\mapsto {\mathbb R}$ that are $k$ times differentiable 
in $(0,\infty)$ with all the derivatives $f^{(j)}, j=0,\dots, k$ being continuous functions in ${\mathbb R}_+.$ Such functions can be extended to $k$ times 
continuously differentiable functions in ${\mathbb R},$ and we will use such an extension whenever it is needed.  
We will use the notations 
\begin{align*}
\|f\|_{L_{\infty}}:= \sup_{x\in {\mathbb R}}|f(x)|\ {\rm and}\ \|f\|_{\rm Lip}:= \sup_{x\neq y}\frac{|f(x)-f(y)|}{|x-y|}.
\end{align*}
Similar norms defined for functions on a subset $B\subset {\mathbb R}$ will be denoted by $\|f\|_{L_{\infty}(B)}$ and $\|f\|_{{\rm Lip}(B)}.$

We will use below Orlicz norms of random variables on a probability space $(\Omega, {\mathcal A}, {\mathbb P}).$
Given a convex increasing function $\psi:{\mathbb R}_+\mapsto {\mathbb R}_+$ with $\psi(0)=0,$ define the $\psi$-norm 
of a random variable $\xi:\Omega \mapsto {\mathbb R}$ as 
\begin{align*}
\|\xi\|_{\psi} := \inf\Bigl\{c>0: {\mathbb E}\psi\Bigl(\frac{|\xi|}{c}\Bigr)\leq 1\Bigr\}.
\end{align*}  
Denote also $L_{\psi}({\mathbb P}):= \{\xi: \|\xi\|_{\psi}<\infty\}.$ Sometimes, it is convenient to use the notations $\|\cdot\|_{L_{\psi}}$
or $\|\cdot\|_{L_{\psi}({\mathbb P})}$ for the $\psi$-norm. If $\psi(u):=u^{p}, u\geq 0, p\geq 1,$ then the $\psi$-norm coincides with the $L_p$-norm
and $L_{\psi}({\mathbb P})=L_p({\mathbb P}).$ One can also consider functions $\psi_{\alpha}(u)= e^{u^{\alpha}}-1, u\geq 0, \alpha\geq 1.$
The corresponding $\psi_{\alpha}$-norms describe various types of exponential decay of the tails of r.v. $\xi.$ In particular, random variables 
in the space $L_{\psi_1}$ have subexponential tails and random variables in the space $L_{\psi_2}$ have subgaussian tails.
It is also well known that, for all $\alpha\geq 1,$ 
\begin{align}
\label{Orlicz_L_p}
\|\xi\|_{\psi_{\alpha}} \asymp \sup_{p\geq 1} p^{-1/\alpha} \|\xi\|_{L_p}.
\end{align}
Thus, one can define the equivalent norm to the $\psi_{\alpha}$-norm by the right hand side of \eqref{Orlicz_L_p},
characterizing the growth rate of the $L_p$-norms. 
Note also that, for $\alpha\in (0,1),$ the function $\psi_{\alpha}$ is not convex and $\|\cdot\|_{\psi_{\alpha}}$
is not a norm. However, two sided bound \eqref{Orlicz_L_p} still holds in this case and the right hand side of \eqref{Orlicz_L_p}
is still a norm. It will be convenient for our purposes to use it as an alternative definition of the $\psi_{\alpha}$-norm that holds in the whole 
range of $\alpha>0.$ It will be also convenient to extend the definition of $L_p$- and $\psi_{\alpha}$-norms to arbitrary functions 
$\xi:\Omega\mapsto {\mathbb R}.$ For instance, the $\psi_{\alpha}$-norm of such non-measurable random variable $\xi$ could be defined as
\begin{align*}
\|\xi\|_{\psi_{\alpha}}^{\ast} := \inf\Bigl\{\|\bar \xi\|_{\psi_{\alpha}}: |\bar \xi| \geq |\xi|, \bar \xi\ {\rm is\ a\ random\ variable}\Bigr\}.
\end{align*}
Similarly, we can define the outer expectation of nonnegative function $\xi:{\Omega}\mapsto {\mathbb R}$
as 
\begin{align*}
{\mathbb E}^{\ast} \xi := \inf\Bigl\{{\mathbb E}\bar \xi: \bar \xi \geq \xi, \bar \xi\ {\rm is\ a\ random\ variable}\Bigr\}.
\end{align*}
In particular, such notations are useful when we deal with expectations (or Orlicz norms) of the sup-norm 
of a stochastic process, since it allows as to avoid measurability issues. This approach is common in empirical 
processes literature.  
In what follows, we drop $^\ast$ sign from outer expectations and outer norms even in the case when they are applied to non-measurable 
random variables.

We will also use in what follows Wasserstein type distances between random variables (or, more precisely, between their distributions).
Namely, for random variables $\xi$ and $\eta,$ define 
\begin{align*}
W_p(\xi, \eta):= \inf\Bigl\{\|\bar \xi-\bar \eta\|_{L_p}: \bar \xi\overset{d}{=}\xi,  \bar \eta\overset{d}{=}\eta\Bigr\}, p\geq 1,
\end{align*}
where the infimum is taken over all random variables $\bar \xi, \bar \eta$ defined on probability space $(\Omega,{\mathcal A}, {\mathbb P})$
satisfying the constraints $\bar \xi\overset{d}{=}\xi,  \bar \eta\overset{d}{=}\eta.$ Similarly, for two stochastic processes 
$\xi, \eta: T\mapsto {\mathbb R},$ define
\begin{align*}
{\mathcal W}_{T,p}(\xi, \eta):= \inf\Bigl\{\Bigl\|\sup_{t\in T}|\bar \xi(t)-\bar \eta(t)|\Bigr\|_{L_p}: \bar \xi\overset{f.d.d.}{=}\xi,  \bar \eta\overset{f.d.d.}{=}\eta\Bigr\}, p\geq 1,
\end{align*}
where the infimum is taken over all stochastic processes $\bar \xi(t), t\in T$ and $\bar \eta(t), t\in T$ defined on probability 
space $(\Omega,{\mathcal A}, {\mathbb P})$ such that
$\bar \xi$ has the same finite dimensional distributions as $\xi$ and $\bar \eta$ has the same finite dimensional distributions as $\eta.$
Replacing the $L_p$-norm in the above definitions by the $\psi_{\alpha}$-norm, one can define the Wasserstein $\psi_{\alpha}$-distances 
$W_p$ and ${\mathcal W}_{T,p}$ that will be also used in what follows.

\section{Main results}
\label{Main_results}

Let $G_{\Sigma}(f), f\in C^1({\mathbb R}_+)$ be a centered Gaussian process with covariance function
\begin{align*}
{\mathbb E}G_{\Sigma}(f)G_{\Sigma}(g) := {\rm tr}(\Sigma^2 f'(\Sigma)g'(\Sigma)), f, g\in C^1({\mathbb R}_+).
\end{align*}

We start this section with the following simple result (which is an immediate consequence of Proposition \ref{Main_m=2_detailed} in Section \ref{Conc}).

\begin{proposition}
\label{Main_m=2}
Let $f\in C^1({\mathbb R}_+)$ with $f(0)=0$ and $\|f'\|_{\rm Lip}<\infty.$
Then, for all $p\geq 1,$
\begin{align*}
\Bigl\|\tau_f(\hat \Sigma_n)-\tau_f(\Sigma)- \langle f'(\Sigma), \hat \Sigma_n-\Sigma\rangle\Bigr\|_{L_{\psi_{1/2}}}
\lesssim \|f'\|_{\rm Lip} \|\Sigma\|^2 \frac{{\bf r}(\Sigma)^2}{n}.
\end{align*}
If, in addition, $\|f'\|_{L_{\infty}}<\infty,$ this implies that 
\begin{align*}
\|\tau_f(\hat \Sigma_n)-\tau_f(\Sigma)\|_{L_{\psi_{1/2}}}
&\lesssim \frac{\|\Sigma f'(\Sigma)\|_2}{\sqrt{n}}+ 
\|f'\|_{\rm Lip} \|\Sigma\|^2 \frac{{\bf r}(\Sigma)^2}{n}
\\
&
\lesssim  
\|f'\|_{L_{\infty}} \|\Sigma\|\sqrt{\frac{r(\Sigma^2)}{n}}+ 
\|f'\|_{\rm Lip} \|\Sigma\|^2 \frac{{\bf r}(\Sigma)^2}{n},
\end{align*}
and, moreover, 
\begin{align*}
W_{\psi_{1/2}} \Bigl(\sqrt{n/2}(\tau_f(\hat \Sigma_n)-\tau_f(\Sigma)), G_{\Sigma}(f)\Bigr)
&\lesssim \frac{\|\Sigma f'(\Sigma)\|_2}{\sqrt{n}}
+\|f'\|_{\rm Lip} \|\Sigma\|^2 \frac{{\bf r}(\Sigma)^2}{\sqrt{n}}
\\
&
\lesssim \|f'\|_{L_{\infty}} \|\Sigma\|\sqrt{\frac{{\bf r}(\Sigma^2)}{n}}
+\|f'\|_{\rm Lip} \|\Sigma\|^2 \frac{{\bf r}(\Sigma)^2}{\sqrt{n}}.
\end{align*}
\end{proposition}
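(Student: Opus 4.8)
The plan is to establish the first (main) bound — the $L_{\psi_{1/2}}$-norm control of the Taylor remainder — and then derive the remaining two displays by essentially soft arguments. The first order Taylor expansion of $\tau_f$ about $\Sigma$ reads
\[
\tau_f(\hat\Sigma_n)-\tau_f(\Sigma)-\langle f'(\Sigma),\hat\Sigma_n-\Sigma\rangle = R_f(\Sigma,\hat\Sigma_n-\Sigma),
\]
and the integral (Newton) form of the remainder gives $R_f(\Sigma, H) = \int_0^1 (1-t)\,\frac{d^2}{dt^2}\tau_f(\Sigma+tH)\,dt$ with a second-derivative bound of the type $|R_f(\Sigma,H)| \lesssim \|f'\|_{\mathrm{Lip}}\,\|H\|_2^2$ — this is the operator-theoretic input (a Birman–Solomyak / double operator integral type estimate for the second-order Taylor remainder of $\tau_f$, controlled by the Lipschitz norm of $f'$ in the Hilbert–Schmidt norm). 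Granting this, it remains to bound $\|\,\|\hat\Sigma_n-\Sigma\|_2^2\,\|_{L_{\psi_{1/2}}} = \|\,\|\hat\Sigma_n-\Sigma\|_2\,\|_{L_{\psi_1}}^2$, so the task reduces to the subexponential concentration of the Hilbert–Schmidt error $\|\hat\Sigma_n-\Sigma\|_2$.

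The key step is therefore the estimate $\|\,\|\hat\Sigma_n-\Sigma\|_2\,\|_{L_{\psi_1}} \lesssim \|\Sigma\|\,\frac{{\bf r}(\Sigma)}{\sqrt n}$ (up to lower-order $n^{-1}$-type terms absorbed by the assumption ${\bf r}(\Sigma)\lesssim n$, and recalling ${\bf r}(\Sigma)\ge 1$ so that $\sqrt{{\bf r}(\Sigma)/n}\le {\bf r}(\Sigma)/\sqrt n$). I would obtain this by writing $\hat\Sigma_n-\Sigma = \frac1n\sum_{j=1}^n (X_j\otimes X_j - \Sigma)$ as a sum of i.i.d.\ mean-zero terms in the Hilbert space ${\mathcal S}_2$, computing $\mathbb E\|\hat\Sigma_n-\Sigma\|_2^2 = \frac1n\,\mathbb E\|X\otimes X-\Sigma\|_2^2 \lesssim \frac{\operatorname{tr}(\Sigma)^2 + \|\Sigma\|^2\operatorname{tr}\ldots}{n} \lesssim \|\Sigma\|^2\frac{{\bf r}(\Sigma)^2}{n}$ (using $\mathbb E\|X\otimes X\|_2^2 = \mathbb E\|X\|^4 \asymp (\operatorname{tr}\Sigma)^2$ for Gaussian $X$), giving the right order for the expectation; then an exponential concentration inequality for the norm of a sum of independent random operators (Bernstein-type in Hilbert space, or Talagrand-type, controlling the fluctuation of $\|\hat\Sigma_n-\Sigma\|_2$ around its mean by a $\psi_1$ tail of the same order) upgrades this to the stated Orlicz bound. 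Squaring yields the claimed $L_{\psi_{1/2}}$ bound on the remainder, since $\psi_{1/2}$-norm of $Y^2$ equals the square of the $\psi_1$-norm of $Y$.

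For the second display I would combine the remainder bound with a bound on the linear term: $\langle f'(\Sigma),\hat\Sigma_n-\Sigma\rangle = \frac1n\sum_j (\langle f'(\Sigma)X_j,X_j\rangle - \operatorname{tr}(\Sigma f'(\Sigma)))$ is an average of i.i.d.\ centered quadratic forms in Gaussians, hence its $L_{\psi_1}$-norm is $\lesssim \frac{1}{\sqrt n}\|\Sigma^{1/2} f'(\Sigma)\Sigma^{1/2}\|_2 = \frac{1}{\sqrt n}\|\Sigma f'(\Sigma)\|_2$ by the Hanson–Wright inequality; the triangle inequality and $\psi_1 \hookrightarrow \psi_{1/2}$ then give the first line, and the inequality $\|\Sigma f'(\Sigma)\|_2 \le \|f'\|_{L_\infty}\|\Sigma\|_2 = \|f'\|_{L_\infty}\|\Sigma\|\sqrt{{\bf r}(\Sigma^2)}$ gives the second line. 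For the third display (normal approximation to $G_\Sigma(f)$, whose variance is exactly $\operatorname{tr}(\Sigma^2 f'(\Sigma)^2)$, matching the limiting variance of $\sqrt{n/2}$ times the linear term, noting the factor $2$), the strategy is: (i) a CLT-with-rate (Berry–Esseen in Wasserstein $\psi_{1/2}$ distance) for the normalized linear term — an average of i.i.d.\ subexponential variables, costing $O(n^{-1/2}\|\Sigma f'(\Sigma)\|_2)$ — and (ii) transferring the remainder bound through $W_{\psi_{1/2}}$, which merely multiplies the $L_{\psi_{1/2}}$-remainder bound $\|f'\|_{\mathrm{Lip}}\|\Sigma\|^2{\bf r}(\Sigma)^2/n$ by the scaling factor $\sqrt{n/2}$, producing the term $\|f'\|_{\mathrm{Lip}}\|\Sigma\|^2{\bf r}(\Sigma)^2/\sqrt n$; the two contributions add by the triangle inequality for $W_{\psi_{1/2}}$.

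The main obstacle is the operator-perturbation estimate $|R_f(\Sigma,H)|\lesssim \|f'\|_{\mathrm{Lip}}\|H\|_2^2$: controlling the second-order Taylor remainder of the trace functional in the \emph{Hilbert–Schmidt} norm (rather than, say, the operator norm times nuclear norm) is precisely what makes the $\frac{{\bf r}(\Sigma)^2}{n}$ scaling work, and it requires the double-operator-integral machinery for trace functionals — this is the place where I would invoke the cited perturbation-theory results rather than reprove them. Everything downstream is standard concentration of quadratic forms and sums of i.i.d.\ operators in a Hilbert space.
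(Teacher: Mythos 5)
Your proposal is correct in substance and follows essentially the same decomposition as the paper: the identity $\tau_f(\hat\Sigma_n)-\tau_f(\Sigma)-\langle f'(\Sigma),\hat\Sigma_n-\Sigma\rangle=R_f(\Sigma,\hat\Sigma_n-\Sigma)$, the pointwise bound $|R_f(\Sigma,H)|\lesssim \|f'\|_{\rm Lip}\|H\|_2^2$ from operator-perturbation theory, subexponential control of $\|\hat\Sigma_n-\Sigma\|_2$ at scale $\|\Sigma\|{\bf r}(\Sigma)/\sqrt n$, a Bernstein/Hanson--Wright bound for the linear term, and a Gaussian approximation of the linear term at rate $\|\Sigma f'(\Sigma)\|_2/\sqrt n$. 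Two points where your route differs and where you should be a bit more careful. First, for the remainder the paper does not use a second-order (Newton) form, which would require more regularity than $f'$ Lipschitz; it uses the first-order integral representation $R_f(\Sigma,H)=\int_0^1\langle f'(\Sigma+tH)-f'(\Sigma),H\rangle\,dt$ together with the fact that $A\mapsto f'(A)$ is Hilbert--Schmidt-Lipschitz (Potapov--Sukochev), plus a finite-rank approximation step to cover infinite-dimensional $\Sigma$; your final inequality is the same, so this is only a matter of formulation. Likewise, you get the $\psi_1$ bound for $\|\hat\Sigma_n-\Sigma\|_2$ by a Hilbert-space Bernstein inequality, whereas the paper uses Gaussian concentration via local Lipschitz constants; your coarser fluctuation bound suffices here. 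Second, and this is the one ingredient you should pin down: you invoke a ``Berry--Esseen in Wasserstein $\psi_{1/2}$ distance'' for averages of general i.i.d.\ subexponential variables with constant $\lesssim\|\Sigma f'(\Sigma)\|_2$. A uniform-in-$p$ (i.e.\ Orlicz--Wasserstein) CLT with an explicit dependence of the constant on the summand distribution is not a standard off-the-shelf statement; the paper avoids needing it by diagonalizing $\Sigma$, writing the linear term as $\sum_k\lambda_k f'(\lambda_k)\frac{1}{\sqrt{2n}}\sum_i(g_{i,k}^2-1)$, coupling each coordinate with a standard normal via Rio's theorem for the fixed chi-square law (universal constant), and then controlling the weighted sum of coupling errors by the same Bernstein-type bound used for the linear term. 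Your plan delivers the stated rates if you either supply such an Orlicz Berry--Esseen or adopt this coordinatewise coupling; with that caveat, the argument is sound.
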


Let ${\mathcal F}_{m}\subset C^{m+1}({\mathbb R}_+)$ be the class of all functions $f\in C^{m+1}({\mathbb R}_+)$ such that $f(0)=0$ and  
\begin{align*}
\max_{1\leq j\leq m+1}\|f^{(j)}\|_{L_{\infty}}\leq 1.
\end{align*}
Consider the following stochastic process 
\begin{align*}
\tilde G_n (f) := \sqrt{n/2} \int_{{\mathbb R}_+} f d(\mu_{\hat \Sigma_n}-\mu_{\Sigma})= \sqrt{n/2}(\tau_f(\hat \Sigma_n)-\tau_f(\Sigma)), f\in {\mathcal F}_1. 
\end{align*}
The next proposition provides bounds on the sup-norm of $\mu_{\hat \Sigma_n}-\mu_{\Sigma}$ over the class ${\mathcal F}_1$ as well 
as the approximation of stochastic process $\tilde G_n$ by the Gaussian process $G_{\Sigma}$ in the Wasserstein 
${\mathcal W}_{{\mathcal F}_1, \psi_{1/2}}$-distance (it follows from Proposition \ref{emp_pr_first_detail} of Section \ref{sup_bounds}).

\begin{proposition}
\label{emp_pr_first}
Suppose that $\|\Sigma\|\lesssim 1$ and ${\bf r}(\Sigma)\lesssim n.$ 
Then 
\begin{align}
&
\Big\|\|\hat \mu_{\hat \Sigma_n} - \mu_{\Sigma}\|_{{\mathcal F}_1}\Bigr\|_{L_{\psi_{1/2}}}
\lesssim \sqrt{\frac{{\bf r}(\Sigma^2)}{n}}
+ \frac{{\bf r}(\Sigma)^2}{n}
\end{align}
and 
\begin{align*}
&
\nonumber
{\mathcal W}_{{\mathcal F}_1, \psi_{1/2}} (\tilde G_n, G_{\Sigma})
\lesssim
\sqrt{\frac{{\bf r}(\Sigma^2)}{n}}+ \frac{{\bf r}(\Sigma)^2}{\sqrt{n}}.
\end{align*}
\end{proposition}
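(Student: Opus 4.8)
The strategy is to reduce the uniform bounds over $\mathcal{F}_1$ to a single sufficiently rich ``master'' random variable, exploiting the fact that the pointwise bounds of Proposition~\ref{Main_m=2} are already uniform over the relevant functions in a very strong sense: the constants there depend on $f$ only through $\|f'\|_{L_\infty}$ and $\|f'\|_{\mathrm{Lip}}$, both of which are $\leq 1$ for $f \in \mathcal{F}_1$. The first step is to handle the remainder. Write, for each $f\in\mathcal{F}_1$,
\begin{align*}
\tau_f(\hat\Sigma_n)-\tau_f(\Sigma) = \langle f'(\Sigma), \hat\Sigma_n-\Sigma\rangle + R_f(\Sigma,\hat\Sigma_n-\Sigma),
\end{align*}
and observe that the remainder $R_f(\Sigma,\hat\Sigma_n-\Sigma)$ depends on $f$ only through $f'(\Sigma)$ restricted to a neighbourhood of the spectrum and, more to the point, is controlled by a bound of the form $\|f'\|_{\mathrm{Lip}}\|\hat\Sigma_n-\Sigma\|^2 \mathbf{r}(\Sigma)$ type quantities that are themselves $f$-free once $\|f'\|_{\mathrm{Lip}}\leq 1$. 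So $\sup_{f\in\mathcal{F}_1}|R_f|$ is dominated by a single random variable whose $\psi_{1/2}$-norm is $\lesssim \|\Sigma\|^2 \mathbf{r}(\Sigma)^2/n$; under $\|\Sigma\|\lesssim 1$ this is $\lesssim \mathbf{r}(\Sigma)^2/n$. This uses Proposition~\ref{Main_m=2} (or rather its detailed version referenced there) essentially verbatim, just noting the $f$-uniformity of the estimate.

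The second step is to control $\sup_{f\in\mathcal{F}_1}|\langle f'(\Sigma),\hat\Sigma_n-\Sigma\rangle|$. Here I would note that $\langle f'(\Sigma),\hat\Sigma_n-\Sigma\rangle = \mathrm{tr}(f'(\Sigma)(\hat\Sigma_n-\Sigma))$, and since $\|f'\|_{L_\infty}\leq 1$ the operator $f'(\Sigma)$ has operator norm $\leq 1$; by the spectral calculus $\|f'(\Sigma)\|_2^2 = \sum_j f'(\lambda_j)^2 \leq \mathbf{r}(\Sigma)$ (in fact $\leq \mathbf{r}(\Sigma^2)$-type control is available because the linear term's variance is $\mathrm{tr}(\Sigma^2 f'(\Sigma)^2) \leq \|\Sigma\|^2 \mathbf{r}(\Sigma^2)$ when $\|f'\|_{L_\infty}\leq 1$). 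The linear term is, for fixed $f$, a quadratic form in the Gaussian sample, hence in $L_{\psi_1}$; taking the supremum over $\mathcal{F}_1$ we can either invoke a generic chaining / entropy argument over the (finite-entropy, since $\mathcal{F}_1$ is a bounded set in a $C^{2}$-type ball) class, or — more cleanly — bound $\sup_f |\mathrm{tr}(f'(\Sigma) M)| \leq \|M\|_1$ where $M := \hat\Sigma_n - \Sigma$, using that any Lipschitz-1 bounded $f'$ is dominated. But $\|\hat\Sigma_n-\Sigma\|_1$ is too crude. Instead the sharp route is to pass to the Gaussian approximation first: Proposition~\ref{Main_m=2} gives, for each $f$, a coupling of $\sqrt{n/2}(\tau_f(\hat\Sigma_n)-\tau_f(\Sigma))$ with $G_\Sigma(f)$ with error $\lesssim \|\Sigma f'(\Sigma)\|_2/\sqrt{n} + \|\Sigma\|^2\mathbf{r}(\Sigma)^2/\sqrt{n}$; the point is to upgrade this to a single coupling valid simultaneously over $\mathcal{F}_1$, so that $\|\sup_{f\in\mathcal{F}_1}|\tilde G_n(f)-G_\Sigma(f)|\|_{\psi_{1/2}}$ is controlled. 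This should follow from the detailed statement cited (Proposition~\ref{emp_pr_first_detail}), which I am permitted to assume, combined with the observation that the coupling construction in the pointwise result is performed through a universal randomization (e.g. a Skorokhod-type or a Gaussian-randomized quadratic-form representation) that does not depend on $f$. Given the coupling, $\|\sup_{f\in\mathcal{F}_1}|\tilde G_n(f)|\|_{\psi_{1/2}} \lesssim \|\sup_f |G_\Sigma(f)|\|_{\psi_{1/2}} + \|\sup_f|\tilde G_n-G_\Sigma|\|_{\psi_{1/2}}$, and $\sup_{f\in\mathcal{F}_1}|G_\Sigma(f)|$ is the sup of a Gaussian process with variance $\leq \|\Sigma\|^2\mathbf{r}(\Sigma^2)$ over a finite-dimensional-entropy index set, hence has $\psi_{1/2}$-norm (indeed $\psi_2$-norm) $\lesssim \|\Sigma\|\sqrt{\mathbf{r}(\Sigma^2)}$ up to an entropy integral factor that is a numerical constant for the unit $C^2$-ball.

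Putting the two steps together: $\|\,\|\hat\mu_{\hat\Sigma_n}-\mu_\Sigma\|_{\mathcal{F}_1}\|_{\psi_{1/2}} \leq \frac{2}{\sqrt n}\|\sup_{f}|\tilde G_n(f)|\|_{\psi_{1/2}} \lesssim \sqrt{\mathbf{r}(\Sigma^2)/n} + \mathbf{r}(\Sigma)^2/n$, which is the first display; and the Wasserstein bound ${\mathcal W}_{\mathcal{F}_1,\psi_{1/2}}(\tilde G_n,G_\Sigma) \lesssim \sqrt{\mathbf{r}(\Sigma^2)/n} + \mathbf{r}(\Sigma)^2/\sqrt n$ is exactly the coupling error from step two (the first term coming from the Gaussian-to-Gaussian matching error at rate $n^{-1/2}$, the second from the remainder, which carries the extra $\sqrt n$ relative to the sup-norm bound because it is not divided out). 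The main obstacle I anticipate is the \emph{uniformity} of the coupling in step two: the pointwise normal approximation in Proposition~\ref{Main_m=2} is stated per-$f$, and to get a process-level Wasserstein bound one needs that the whole family $\{\tilde G_n(f):f\in\mathcal{F}_1\}$ can be realized jointly close to $\{G_\Sigma(f)\}$. This is where I would lean on Proposition~\ref{emp_pr_first_detail} from Section~\ref{sup_bounds} (which the excerpt explicitly says this result follows from) rather than re-deriving the coupling; absent that, one would construct the coupling by first linearizing uniformly (step one, which is genuinely uniform), then coupling the Gaussian quadratic chaos $\langle f'(\Sigma),\hat\Sigma_n-\Sigma\rangle$ — a fixed bilinear functional of the sample, linear in $f'(\Sigma)$ — to its Gaussian counterpart via a single Komlós–Major–Tusnády / Zaitsev-type multivariate coupling applied to the $\mathbf{r}(\Sigma^2)$-effective-dimensional Gaussian vector that generates all these linear functionals at once.
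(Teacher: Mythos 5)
Your proposal is correct and follows essentially the paper's own route: Proposition \ref{emp_pr_first} is obtained from the detailed $L_p$-version (Proposition \ref{emp_pr_first_detail}) via the decomposition $\tau_f(\hat\Sigma_n)-\tau_f(\Sigma)=\langle f'(\Sigma),\hat\Sigma_n-\Sigma\rangle+R_f(\Sigma,\hat\Sigma_n-\Sigma)$, the $f$-uniform bound $\sup_{f\in\mathcal{F}_1}|R_f(\Sigma,\hat\Sigma_n-\Sigma)|\leq\tfrac12\|\hat\Sigma_n-\Sigma\|_2^2$, and a single $f$-independent coordinatewise coupling of the linear process with $G_{\Sigma}$ (Rio's theorem applied in the eigenbasis, as in Proposition \ref{norm_approx_lin}) combined with a chaining bound over $\mathcal{F}_1$ in the pseudometric $\|f'-g'\|_{L_\infty}$ — so your worry about uniformity of the coupling is resolved exactly as you guessed, without any KMT/Zaitsev-type multivariate strong approximation. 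Two harmless inaccuracies: the parenthetical claim $\|f'(\Sigma)\|_2^2\leq \mathbf{r}(\Sigma)$ is false in general (the quantity actually used is ${\rm tr}(\Sigma^2 f'(\Sigma)^2)\leq\|\Sigma\|^2\mathbf{r}(\Sigma^2)$), and for $m=1$ the first-power entropy integral over the Lipschitz ball diverges, which is why the paper's Section \ref{sup_bounds} bounds carry $\log n$ factors rather than a numerical constant — these factors multiply higher-order-in-$p$ terms and are absorbed in the final $\psi_{1/2}$ bound.
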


Note that the ``remainder terms" of the order $\frac{{\bf r}(\Sigma)^2}{n}$ in the first two bounds of Proposition \ref{Main_m=2}
($\frac{{\bf r}(\Sigma)^2}{\sqrt{n}}$ in the last bound) and similar terms in the bounds of Proposition \ref{emp_pr_first} could be 
prohibitively large. For instance, to make the remainder term negligible and to achieve the overall error rate of the order $\sqrt{\frac{{\bf r}(\Sigma)}{n}}$
(for which there exists a minimax lower bound, see Proposition \ref{min_max_lower_eff_rank}) for the plug-in estimator 
$\tau_f(\hat \Sigma_n),$ the condition ${\bf r}(\Sigma)\lesssim n^{1/3}$ should be satisfied. Even stronger conditions on the effective rank 
are needed to deduce the asymptotic normality of properly normalized linear spectral statistic  
$$\frac{\sqrt{n}(\tau_f(\hat \Sigma_n)-\tau_f(\Sigma))}{\sqrt{2}\|\Sigma f'(\Sigma)\|_2}$$ 
from the last bound of Proposition \ref{Main_m=2} and to establish the asymptotic efficiency of $\tau_f(\hat \Sigma_n).$

We will show that the remainder terms in the bounds of this type could be substantially improved for certain estimators of $\tau_f(\Sigma)$
with reduced bias, provided that function $f$ is sufficiently smooth.
To construct such estimators of trace functional $\tau_f(\Sigma),$ we will use a method of bias reduction based on linear aggregation of plug-in estimators with different 
samples sizes \cite{Jiao}. The sample sizes of plug-in estimators and the coefficients of their linear combination are chosen to ensure that the biases of plug-in estimators 
almost cancel out.
Namely, given $m\geq 2,$ let $n_1,\dots, n_m$ be the sample sizes of plug-in estimators. Denote $\vec{n}:=(n_1,\dots, n_m).$ Assume that, for some $c>1,$
 $n/c\leq n_1<\dots<n_m\leq n.$ Let 
 \begin{align*}
 \hat T_{f}(X_1,\dots, X_n) =\hat T_{f,m}(X_1,\dots, X_n)=\hat T_{f,\vec{n}}(X_1,\dots, X_n):=\sum_{j=1}^m C_j \tau_f(\hat \Sigma_{n_j}), 
 \end{align*}
where the coefficients $C_1,\dots C_m$ provide the unique solution of the following system of $m$ linear 
equations: 
\begin{align}
\label{cond_C_j_1}
\sum_{j=1}^m C_j=1
\end{align}
and 
\begin{align}
\label{cond_C_j_2}
\sum_{j=1}^m \frac{C_j}{n_j^l}=0, l=1,\dots, m-1.
\end{align}
It is easy to see that 
\begin{align*}
C_j:= \prod_{i\neq j} \frac{n_j}{n_j-n_i}, j=1,\dots, m.
\end{align*}
We will also assume that the following assumption holds:
\begin{align}
\label{assume_C_j}
\sum_{j=1}^m |C_j| \lesssim_m 1.
\end{align}
For this, it is necessary that $n_{j+1}-n_j\asymp n.$ A possible choice of the sample sizes is $n_j=q^{j-m}n,$ $j=1,\dots, m$ for some $q>1.$
The following signed measure
\begin{align*}
\hat \mu_n = \hat \mu_{n,m}=\hat \mu_{n,\vec{n}}:=\sum_{j=1}^m C_j \mu_{\hat \Sigma_{n_j}}
\end{align*}
will be used as an estimator of spectral measure $\mu_{\Sigma}.$ Clearly,
\begin{align*}
 \hat T_{f, \vec{n}}(X_1,\dots, X_n)=\int_{{\mathbb R}_+} fd\hat \mu_{n,\vec{n}}.
 \end{align*}

In what follows, we assume that $\|\Sigma\|$ is bounded from above by a numerical constant whereas the effective 
rank ${\bf r}(\Sigma)$ could be large. 
We state the main results of the paper in a somewhat simplified form in terms of $\psi_{1/2}$-norms of the errors  
and the corresponding Wasserstein distances used for Gaussian approximation. 
Since $\|\xi\|_{L_p}\lesssim p^2 \|\xi\|_{\psi_{1/2}},$ these results also imply the bounds 
on the $L_p$-norms for arbitrary $p\geq 1.$
More detailed and somewhat more complicated versions 
of the main theorems including the bounds on the $L_p$-norms (and Wasserstein $W_p$ distances) with more precise and more explicit 
dependence on the value of $p$ and on $\|\Sigma\|$  will be stated and proved in Section \ref{proof_of_main}.

\begin{theorem}
\label{Th_M_YYY}
Suppose $f\in C^{m+1}({\mathbb R}_{+})$ for some $m\geq 2,$ $f(0)=0,$ $\|f'\|_{{\rm Lip}}\lesssim 1$ and $\|f^{(m+1)}\|_{L_{\infty}}\lesssim 1.$ 
Suppose also 
that $\|\Sigma\|\lesssim 1$ and ${\bf r}(\Sigma)\lesssim n.$ Then 
\begin{align*}
&
\|\hat T_{f,m}(X_1,\dots, X_n)-\tau_f(\Sigma)\|_{L_{\psi_{1/2}}} 
\lesssim_m
\frac{\|\Sigma f'(\Sigma)\|_2}{\sqrt{n}}
+
\frac{{\bf r}(\Sigma)}{n}+
{\bf r}(\Sigma)\Bigl(\sqrt{\frac{{\bf r}(\Sigma)}{n}}\Bigr)^{m+1}.
\end{align*}
\end{theorem}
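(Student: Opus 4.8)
The plan is to combine a bias bound with a concentration bound for the aggregated estimator, exactly mirroring the decomposition already set up in the introduction. Write
\[
\hat T_{f,m}(X_1,\dots,X_n)-\tau_f(\Sigma)
= \bigl(\hat T_{f,m}-\E\hat T_{f,m}\bigr) + \bigl(\E\hat T_{f,m}-\tau_f(\Sigma)\bigr),
\]
and bound the two terms separately in the $\psi_{1/2}$-norm (using that $\|\xi\|_{\psi_{1/2}}\gtrsim |\E\xi|$, so the deterministic bias contributes at its absolute value). For the bias term, the introduction already records the key estimate
\[
|\E\hat T_{f,m}-\tau_f(\Sigma)|
\lesssim_m \|f^{(m+1)}\|_{L_\infty}\|\Sigma\|^{m+1}\,{\bf r}(\Sigma)\Bigl(\sqrt{\tfrac{{\bf r}(\Sigma)}{n}}\vee\sqrt{\tfrac{\log n}{n}}\Bigr)^{m+1},
\]
which (under $\|\Sigma\|\lesssim 1$, $\|f^{(m+1)}\|_{L_\infty}\lesssim 1$, and ${\bf r}(\Sigma)\lesssim n$, so that $\log n$ is absorbed when ${\bf r}(\Sigma)$ dominates, and otherwise ${\bf r}(\Sigma)(\log n/n)^{(m+1)/2}\lesssim {\bf r}(\Sigma)/n$ for $m\geq 2$) gives exactly the last term ${\bf r}(\Sigma)(\sqrt{{\bf r}(\Sigma)/n})^{m+1}$ together with a contribution absorbed into ${\bf r}(\Sigma)/n$. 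This step rests on the higher-order Taylor expansion results for trace functionals from \cite{P_S_S}, invoked in Section \ref{bias_bounds}; I would cite that section rather than redo it.

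For the fluctuation term, the idea is to reduce to the plug-in case already handled in Proposition \ref{Main_m=2}. Since $\hat T_{f,m}-\E\hat T_{f,m}=\sum_{j=1}^m C_j\bigl(\tau_f(\hat\Sigma_{n_j})-\E\tau_f(\hat\Sigma_{n_j})\bigr)$, apply the triangle inequality for $\|\cdot\|_{\psi_{1/2}}$ together with $\sum_j|C_j|\lesssim_m 1$ from \eqref{assume_C_j}. Each summand is controlled by writing
\[
\tau_f(\hat\Sigma_{n_j})-\E\tau_f(\hat\Sigma_{n_j})
= \langle f'(\Sigma),\hat\Sigma_{n_j}-\Sigma\rangle
+ \bigl(R_f(\Sigma,\hat\Sigma_{n_j}-\Sigma)-\E R_f(\Sigma,\hat\Sigma_{n_j}-\Sigma)\bigr);
\]
the remainder is $\lesssim \|f'\|_{\rm Lip}\|\Sigma\|^2 {\bf r}(\Sigma)^2/n_j \lesssim {\bf r}(\Sigma)^2/n$ by Proposition \ref{Main_m=2} (using $n_j\geq n/c$), and the centered linear term is a quadratic form in the Gaussian data whose $\psi_{1/2}$-norm is $\lesssim \|\Sigma f'(\Sigma)\|_2/\sqrt{n}$, again as extracted in Proposition \ref{Main_m=2}. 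This already yields a bound of the form $\|\Sigma f'(\Sigma)\|_2/\sqrt n + {\bf r}(\Sigma)^2/n$ for the fluctuation part — but the $\frac{{\bf r}(\Sigma)^2}{n}$ term is too large; that is precisely the bottleneck the theorem is designed to overcome.

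The main obstacle, therefore, is that naively bounding each remainder $R_f(\Sigma,\hat\Sigma_{n_j}-\Sigma)$ separately does not exploit the bias cancellation: the aggregated remainder $\sum_j C_j R_f(\Sigma,\hat\Sigma_{n_j}-\Sigma)$ must be shown to be small in a way that the individual terms are not. The resolution is to treat the \emph{whole} centered aggregate via the first-order Taylor expansion of the aggregated statistic: since $\sum_j C_j=1$ by \eqref{cond_C_j_1}, the linear terms add up to a single $\langle f'(\Sigma),\sum_j C_j(\hat\Sigma_{n_j}-\Sigma)\rangle$, whose $\psi_{1/2}$-norm is still $\lesssim \|\Sigma f'(\Sigma)\|_2/\sqrt n$ because $\sum_j C_j(\hat\Sigma_{n_j}-\Sigma)$ is a linear combination (with bounded coefficients) of empirical processes built from the same $n$ observations — one controls its Gaussian-chaos norm directly. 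What remains is the centered aggregated remainder $\sum_j C_j(R_f-\E R_f)$, which I would bound not through Proposition \ref{Main_m=2} but through the sharper concentration analysis of Section \ref{Conc}: the appropriate estimate there should give each centered remainder a $\psi_{1/2}$-norm of order $\|\Sigma\|^2 {\bf r}(\Sigma)^{3/2}/n$ (the fluctuation scale, one power of $\sqrt{{\bf r}(\Sigma)/n}$ smaller than the $\frac{{\bf r}(\Sigma)^2}{n}$ deterministic-size bound, which only reflects $\E R_f$), and ${\bf r}(\Sigma)^{3/2}/n = {\bf r}(\Sigma)(\sqrt{{\bf r}(\Sigma)/n})\cdot\sqrt{1/n}\lesssim {\bf r}(\Sigma)/n + {\bf r}(\Sigma)(\sqrt{{\bf r}(\Sigma)/n})^{m+1}$ for $m\geq 2$. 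Assembling the three pieces — linear term $\lesssim \|\Sigma f'(\Sigma)\|_2/\sqrt n$, centered remainder absorbed into $\frac{{\bf r}(\Sigma)}{n}+{\bf r}(\Sigma)(\sqrt{{\bf r}(\Sigma)/n})^{m+1}$, and bias $\lesssim_m {\bf r}(\Sigma)(\sqrt{{\bf r}(\Sigma)/n})^{m+1}$ (plus lower-order) — gives the claimed bound; the constant depends on $m$ only through \eqref{assume_C_j} and the combinatorial factor in the bias estimate.
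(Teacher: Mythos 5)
Your overall architecture is the same as the paper's (Theorem \ref{Th_M}): split into bias plus fluctuation, bound the bias by the higher-order expansion with the $C_j$-cancellation (Theorem \ref{bias_main}), and bound the fluctuation via $\sum_j|C_j|\lesssim_m 1$ and the first-order expansion of each $\tau_f(\hat\Sigma_{n_j})$ into the linear term plus the \emph{centered} remainder, the latter controlled by the concentration analysis of Section \ref{Conc}. The genuine gap is quantitative and sits at the decisive step: you assign the centered remainder the scale $\|\Sigma\|^2\,{\bf r}(\Sigma)^{3/2}/n$ and then claim that ${\bf r}(\Sigma)^{3/2}/n\lesssim \frac{{\bf r}(\Sigma)}{n}+{\bf r}(\Sigma)\bigl(\sqrt{{\bf r}(\Sigma)/n}\bigr)^{m+1}$ for $m\geq 2$. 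That inequality is false: take $m=2$ and ${\bf r}(\Sigma)\asymp n^{2/5}$; then the left side is $\asymp n^{-2/5}$ while the right side is $\asymp n^{-1/2}+n^{-3/5}$, so the left side exceeds the right by a factor $n^{1/10}$ (and the discrepancy only worsens for larger $m$). So with the fluctuation rate you posit, your assembly does not yield the stated bound.

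What actually closes the argument — and what the paper proves in Theorem \ref{prop_rem} (used through Corollary \ref{conc_tau_f}) — is that the centered remainder is much smaller than you guessed: Gaussian concentration with local Lipschitz constant $\lesssim \|\Sigma\|^{1/2}\|\hat\Sigma_n\|^{1/2}\|f'(\hat\Sigma_n)-f'(\Sigma)\|_2/\sqrt{n}$ and the operator-Lipschitz bound $\|f'(\hat\Sigma_n)-f'(\Sigma)\|_2\lesssim\|f'\|_{\rm Lip}\|\hat\Sigma_n-\Sigma\|_2$ give, for ${\bf r}(\Sigma)\lesssim n$ and fixed $p$, a rate $\|f'\|_{\rm Lip}\|\Sigma\|^2\,{\bf r}(\Sigma)/n$ — a full factor ${\bf r}(\Sigma)$ below ${\mathbb E}\|\hat\Sigma_n-\Sigma\|_2^2\asymp\|\Sigma\|^2{\bf r}(\Sigma)^2/n$, not merely a factor $\sqrt{{\bf r}(\Sigma)}$. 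With that rate no absorption is needed: the centered remainders contribute exactly the middle term ${\bf r}(\Sigma)/n$, the linear terms give $\|\Sigma f'(\Sigma)\|_2/\sqrt{n}$, and the bias gives ${\bf r}(\Sigma)(\sqrt{{\bf r}(\Sigma)/n}\vee\sqrt{\log n/n})^{m+1}$, whose $\log n$ branch is absorbed into ${\bf r}(\Sigma)/n$ for $m\geq 2$ as you correctly note. Incidentally, your framing of the obstacle is slightly off: no cancellation among the aggregated remainders is needed — the conditions \eqref{cond_C_j_1}--\eqref{cond_C_j_2} are exploited only in the bias via Proposition \ref{bias_decomp}, and the centered remainders are bounded term by term, exactly as you end up doing. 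The missing ingredient is therefore the correct concentration rate for $R_f(\Sigma,\hat\Sigma_{n_j}-\Sigma)-{\mathbb E}R_f(\Sigma,\hat\Sigma_{n_j}-\Sigma)$, which you neither derive nor cite precisely, and whose guessed value invalidates your final step.
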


\begin{remark}
\label{rem_on_eff_rank}
\normalfont
Note that in the case when the function $f'$ is uniformly bounded from above and bounded away from zero by numerical constants,
we have 
\begin{align*} 
\|\Sigma f'(\Sigma)\|_2 = \Bigl(\sum_{k\geq 1} \lambda_k^2 f'(\lambda_k)^2\Bigr)^{1/2}\asymp \|\Sigma\|_2 = \|\Sigma\|\sqrt{{\bf r}(\Sigma^2)},
\end{align*}
where $\lambda_1\geq \lambda_2\geq \dots \geq 0$ are the eigenvalues of $\Sigma.$
Thus, the upper bound of Theorem \ref{Th_M_YYY} could be rewritten as 
\begin{align}
\label{bd_eff_ranks}
\|\hat T_{f,m}(X_1,\dots, X_n)-\tau_f(\Sigma)\|_{L_{\psi_{1/2}}} 
\lesssim_m
\sqrt{\frac{{\bf r}(\Sigma^2)}{n}} +
\frac{{\bf r}(\Sigma)}{n}+
{\bf r}(\Sigma)\Bigl(\sqrt{\frac{{\bf r}(\Sigma)}{n}}\Bigr)^{m+1}.
\end{align}
In particular, it follows from this result that the upper bound on the error rate is of the order $\sqrt{\frac{{\bf r}(\Sigma)}{n}},$ provided that 
${\bf r}(\Sigma)\lesssim n^{m/(m+2)}$ (in Proposition \ref{lower_first_term} below, we show a minimax bound of the order $\sqrt{\frac{r}{n}}$
in the class of covariance operators with ${\bf r}(\Sigma)\leq r$).
\end{remark}

\begin{remark}
\label{rem_on_eff_rank_02}
\normalfont
To better understand how this bound could depend on the rate of decay of the eigenvalues of $\Sigma$ and on the degree of smoothness of function $f,$ assume that ${\rm dim}({\mathbb H})=d\lesssim n.$ If the eigenvalues of $\Sigma$ are uniformly 
bounded from above and bounded away from zero by numerical constants (so that $\lambda_k\asymp 1, k=1,\dots, d$),
then ${\bf r}(\Sigma)\asymp d$ and ${\bf r}(\Sigma^2)\asymp d$ implying that 
\begin{align}
\label{bd_d_spec}
\|\hat T_{f,m}(X_1,\dots, X_n)-\tau_f(\Sigma)\|_{L_{\psi_{1/2}}} 
\lesssim_m  \sqrt{\frac{d}{n}} 
+ d\Bigl(\sqrt{\frac{d}{n}}\Bigr)^{m+1}.
\end{align}
Therefore, if $d\lesssim n^{\alpha}$ for some $\alpha\in (0,1)$ and $m+1\geq \frac{1+\alpha}{1-\alpha},$ then the first term in 
the right hand side of bound \eqref{bd_d_spec} is dominant and the bound becomes 
\begin{align*}
\|\hat T_{f,m}(X_1,\dots, X_n)-\tau_f(\Sigma)\|_{L_{\psi_{1/2}}}  \lesssim_m  \sqrt{\frac{d}{n}},
\end{align*}
otherwise the second term is dominant and the error rate is slower than $\sqrt{\frac{d}{n}}.$ 
Faster error rates are possible when the eigenvalues $\lambda_k$ decay with certain rate. Assume, for instance,  that, for some $\beta>0,$  
\begin{align*}
\lambda_k \asymp k^{-\beta}, k=1,\dots, d.
\end{align*}
In this case 
\begin{align*}
{\bf r}(\Sigma) \asymp \sum_{k=1}^d k^{-\beta} \asymp 
\begin{cases}
d^{1-\beta} & {\rm for}\ \beta<1\\
\log d          & {\rm for}\ \beta=1\\
1                 & {\rm for}\ \beta>1.
\end{cases}
\end{align*}
On the other hand, 
\begin{align*}
{\bf r}(\Sigma^2) \asymp \sum_{k=1}^d k^{-2\beta} \asymp 
\begin{cases}
d^{1-2\beta} & {\rm for}\ \beta<1/2\\
\log d          & {\rm for}\ \beta=1/2\\
1                 & {\rm for}\ \beta>1/2.
\end{cases}
\end{align*}
Therefore, we can easily get from bound \eqref{bd_eff_ranks} that, for $m\geq 2,$ 
\begin{align*}
\|\hat T_{f,m}(X_1,\dots, X_n)-\tau_f(\Sigma)\|_{L_{\psi_{1/2}}} 
\lesssim_m 
\begin{cases}  
\frac{d^{1/2-\beta}}{n^{1/2}} + \frac{d^{(1-\beta)(m+3)/2}}{n^{(m+1)/2}} & {\rm for}\ \beta<1/2\\
\sqrt{\frac{\log d}{n}} + \frac{d^{(m+3)/4}}{n^{(m+1)/2}} & {\rm for}\ \beta=1/2\\
\frac{1}{\sqrt{n}} +\frac{d^{(1-\beta)(m+3)/2}}{n^{(m+1)/2}} & {\rm for}\ \beta\in (1/2,1)\\
\frac{1}{\sqrt{n}}   & {\rm for}\ \beta\geq 1.\\
\end{cases}
\end{align*}
Comparing the last bound with \eqref{bd_d_spec}, one can see that, in this case, consistent estimation could be possible 
even when $d$ is much larger than $n$ (depending, of course, on the values of $\beta$ and $m$). 
\end{remark}

\begin{remark}
\normalfont
\label{rem_on_eff_rank_03}
Even faster rates could be possible and could be easily derived from the bound of Theorem \ref {Th_M_YYY} when $f'(\lambda)\to 0$ as $\lambda\to 0.$ 
\end{remark}

\begin{remark}
\normalfont
\label{rem_on_eff_rank_04}
It is not hard to check that the bound of Theorem \ref{Th_M_YYY} and bound \eqref{bd_eff_ranks} also 
hold for any function $f\in C^{m+1}({\mathbb R}_{+})$ such that $f(0)=0$ with constants in the inequalities $\lesssim$ depending on $m$ and on function $f$ through 
$\|f'\|_{{\rm Lip}([0,A])}$ and $\|f^{(m+1)}\|_{L_{\infty}([0,A])}$ for $A=2\|\Sigma\|.$ 
\end{remark}

Although, at the moment, we could not prove minimax optimality of the error rate of Theorem \ref{Th_M_YYY} for a given function $f:{\mathbb R}_+\mapsto {\mathbb R},$ we will provide some partial results in this direction. 

Let ${\mathcal S}(a,r):= \{\Sigma: \|\Sigma\|\leq a, {\bf r}(\Sigma)\leq r\},\ a>0, r\geq 1.$
We start with the following rather simple proposition.

\begin{proposition}
\label{lower_first_term}
Let $f$ be a continuously differentiable function in ${\mathbb R}_+$ with $f(0)=0$ and suppose that there exist numbers 
$0\leq \gamma_1< \gamma_2\leq 1$ such that $|f'(x)|\geq \lambda>0$ for all $x\in [\gamma_1 a, \gamma_2 a]$ and 
for some constant $\lambda>0.$ Then 
\begin{align*}
\inf_{T_{n,f}}\sup_{\Sigma\in {\mathcal S}(a,r)}{\mathbb E}_{\Sigma}^{1/2}(T_{n,f}(X_1,\dots, X_n)-\tau_{f}(\Sigma))^2
\gtrsim_{\gamma_1,\gamma_2, \lambda} \ a\sqrt{\frac{r}{n}},
\end{align*}
where the infimum is taken over all the estimators $T_{n,f}(X_1,\dots, X_n)$ of $\tau_f(\Sigma).$
\end{proposition}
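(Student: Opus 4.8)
The plan is to reduce the problem to estimation of the variance parameter in a scalar Gaussian scale family, and then to apply a standard information-theoretic lower bound (the van Trees inequality). Fix $k:=\lfloor r\rfloor$, so $1\leq k\leq r$ and $k\asymp r$ (since $r\geq 1$), pick an orthonormal system $e_1,\dots,e_k$ in ${\mathbb H}$ (possible, as ${\rm dim}({\mathbb H})\geq r$ is implicit here), and let $P_k:=\sum_{l=1}^k e_l\otimes e_l$ be the associated rank-$k$ orthogonal projection. For $\theta$ ranging over a closed subinterval $[\theta_-,\theta_+]\subset[\gamma_1 a,\gamma_2 a]$ bounded away from $0$ — for instance $\theta_-:=\frac{\gamma_1+\gamma_2}{2}a>0$ and $\theta_+:=\gamma_2 a$, which is a genuine (nondegenerate) interval precisely because $\gamma_1<\gamma_2$ — set $\Sigma_\theta:=\theta P_k$. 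Then $\|\Sigma_\theta\|=\theta\leq a$ and ${\bf r}(\Sigma_\theta)={\rm tr}(\Sigma_\theta)/\|\Sigma_\theta\|=k\leq r$, so $\Sigma_\theta\in{\mathcal S}(a,r)$ for each such $\theta$; moreover, since $f(0)=0$, one has $\tau_f(\Sigma_\theta)=kf(\theta)=:g(\theta)$.

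Under $\Sigma_\theta$ the observations live in ${\rm span}(e_1,\dots,e_k)$ and the scalars $\langle X_i,e_l\rangle$, $1\leq i\leq n$, $1\leq l\leq k$, are i.i.d. $N(0,\theta)$, so the model for $(X_1,\dots,X_n)$ is statistically equivalent to $nk$ i.i.d. scalar $N(0,\theta)$ observations; the Fisher information about $\theta$ in this model is $I_n(\theta)=\frac{nk}{2\theta^2}$. I would then take a fixed smooth density $\pi_0$ on $[0,1]$ that vanishes together with its derivative at the endpoints and has finite Fisher information, and rescale it to a prior $\pi$ supported on $[\theta_-,\theta_+]$, so that its Fisher information is $I(\pi)=(\theta_+-\theta_-)^{-2}I(\pi_0)\asymp_{\gamma_1,\gamma_2}a^{-2}$, while on ${\rm supp}(\pi)$ we have $I_n(\theta)\leq\frac{nk}{2\theta_-^2}\asymp_{\gamma_1,\gamma_2}\frac{nk}{a^2}$. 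Since $f'$ is continuous and $|f'|\geq\lambda>0$ on $[\theta_-,\theta_+]$, it keeps a constant sign there, hence $\bigl|\int f'\pi\,d\theta\bigr|=\int|f'|\pi\,d\theta\geq\lambda$; also $g'(\theta)=kf'(\theta)$.

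Applying the van Trees inequality to the functional $g$ then yields, for every estimator $T_{n,f}=T_{n,f}(X_1,\dots,X_n)$,
\begin{align*}
\sup_{\Sigma\in{\mathcal S}(a,r)}{\mathbb E}_{\Sigma}\bigl(T_{n,f}-\tau_f(\Sigma)\bigr)^2
&\geq\int{\mathbb E}_{\Sigma_\theta}\bigl(T_{n,f}-kf(\theta)\bigr)^2\pi(\theta)\,d\theta
\geq\frac{\bigl(k\int f'\pi\,d\theta\bigr)^2}{\int I_n(\theta)\pi(\theta)\,d\theta+I(\pi)}\\
&\gtrsim_{\gamma_1,\gamma_2}\frac{k^2\lambda^2}{nk/a^2+1/a^2}\gtrsim_{\gamma_1,\gamma_2,\lambda}\frac{a^2 r}{n},
\end{align*}
using $nk\geq 1$ and $k\asymp r$ in the last step; taking the infimum over estimators and then a square root gives the claimed bound. (Alternatively, the same spiked family feeds a two-point Le Cam argument: with $\theta_1=\theta_0+\delta$ and $\delta\asymp a/\sqrt{nk}$, the Kullback--Leibler divergence between the two $n$-sample laws is $O(1)$ while $|g(\theta_1)-g(\theta_0)|\geq k\lambda\delta\gtrsim_{\gamma_1,\gamma_2,\lambda}a\sqrt{r/n}$ by the mean value theorem.) No step here is genuinely hard — this is a routine parametric lower bound once the covariance family is arranged to have effective rank of order $r$ — but one point deserves a little care: $\gamma_1$ is allowed to equal $0$, which would make $I_n(\theta)$ blow up near the origin, and this is exactly why the prior is concentrated on the subinterval $[\theta_-,\theta_+]$ bounded away from $0$ rather than on all of $[\gamma_1 a,\gamma_2 a]$.
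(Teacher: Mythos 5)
Your proposal is correct. The hardest subfamily you use is essentially the paper's: isotropic covariances $\theta P_k$ of rank $k=\lfloor r\rfloor$ on a fixed subspace, so that the problem collapses to estimating the scalar variance $\theta$ from $nk$ i.i.d.\ $N(0,\theta)$ observations and $\tau_f(\Sigma_\theta)=kf(\theta)$. Where you differ is the information-theoretic tool: the paper runs a two-point Le Cam argument with $\Sigma_0=\gamma_1 a I_d$ and $\Sigma_1=(1+c_1/\sqrt{nd})\gamma_1 a I_d$, bounding the Kullback--Leibler divergence of the $n$-sample laws by a constant and separating the functional values via $|f'|\geq\lambda$, whereas you apply the van Trees inequality with a smooth prior on a subinterval $[\theta_-,\theta_+]$ bounded away from $0$; your parenthetical two-point variant with $\delta\asymp a/\sqrt{nk}$ is in substance the paper's proof. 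The van Trees route costs a little more machinery (a prior with finite Fisher information, the sign argument for $\int f'\pi$) but buys one genuine improvement in care: the paper's constants $c_1=(\gamma_2-\gamma_1)/\gamma_1$ and the final factor $\gamma_1$ silently require $\gamma_1>0$, even though the statement allows $\gamma_1=0$ (the fix is the same as yours, replacing the base point by an interior point of $[\gamma_1 a,\gamma_2 a]$), while your choice $\theta_-=\frac{\gamma_1+\gamma_2}{2}a$ handles this from the start. All the bookkeeping — $\Sigma_\theta\in{\mathcal S}(a,r)$, $I_n(\theta)=nk/(2\theta^2)$, $I(\pi)\asymp_{\gamma_1,\gamma_2}a^{-2}$, and $k\asymp r$ for $r\geq 1$ — checks out, so the bound $\gtrsim_{\gamma_1,\gamma_2,\lambda}a\sqrt{r/n}$ follows as you claim.
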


We will also describe a somewhat artificial class of functional estimation problems where the bound of  Theorem \ref{Th_M_YYY} yields error rates for which some minimax optimality claims could be made. Namely, consider a problem of estimation of a ``piecewise trace functional" $g:D_g \mapsto {\mathbb R}$ defined on a set 
of covariance operators 
$
D_g := \bigcup_{j=1}^N B(\Sigma_j, \delta),
$
where $N\geq 1,$ $\Sigma_j, j=1,\dots, N$ are given covariance operators such that $\|\Sigma_i-\Sigma_j\|\geq 4\delta, i\neq j$
and $B(\Sigma_j,\delta):= \{\Sigma: \|\Sigma-\Sigma_j\|<\delta\}, j=1,\dots, N$ are operator norm balls with centers $\Sigma_j$ of radius $\delta>0,$
and given by the following formula
\begin{align}
\label{repr_g_D_g}
g(\Sigma):= \sum_{j=1}^N I_{B(\Sigma_j,\delta)}(\Sigma) \tau_{f_j}(\Sigma), \Sigma\in D_g
\end{align}
where, for all $j=1,\dots, N,$ 
\begin{align*}
f_j \in {\mathcal H}_m:=\{f\in C^{m+1}({\mathbb R}): f(0)=0, \|f'\|_{L_{\infty}}\leq 1, \|f''\|_{L_{\infty}}\leq 1, \|f^{(m+1)}\|_{L_{\infty}}\leq 1\}.
\end{align*}
Let ${\mathcal G}_{\delta}$ be the class of all such ``piecewise trace functionals" $g$ for a given $\delta>0,$ an arbitrary $N\geq 1,$
arbitrary $\Sigma_1,\dots, \Sigma_N$ and arbitrary functions $f_1, \dots, f_N\in {\mathcal H}_m.$ 
For a functional $g:D_g \mapsto {\mathbb R}$ with representation \eqref{repr_g_D_g}, consider the following 
estimator $\tilde T_g(X_1,\dots, X_n)$ of $g(\Sigma)$ based on i.i.d. observations $X_1,\dots, X_n\sim N(0,\Sigma):$
\begin{align*}
\tilde T_g(X_1,\dots, X_n):= \sum_{j=1}^N I_{B(\Sigma_j,\delta)}(\hat \Sigma_n) \hat T_{f_j, m}(X_1,\dots, X_n).
\end{align*}

\begin{proposition}
\label{upper_piecewise}
Suppose that $a\lesssim 1,$ $r\leq n$ and $\delta \geq C\sqrt{\frac{r}{n}}$ for a sufficiently large numerical constant $C>0.$
Then 
\begin{align*}
\sup_{g\in {\mathcal G}_{\delta}}\sup_{\Sigma\in {\mathcal S}(a,r)\cap D_g}{\mathbb E}_{\Sigma}^{1/2}(\tilde T_g(X_1,\dots, X_n)-g(\Sigma))^2
&
\lesssim_{m,a} \sqrt{\frac{r}{n}} + r \Bigl(\sqrt{\frac{r}{n}}\Bigr)^{m+1} + r \exp\{-n(\delta\wedge \delta^2)/2\}
\\
&
\lesssim_{m,a} \sqrt{\frac{r}{n}} + r \Bigl(\sqrt{\frac{r}{n}}\Bigr)^{m+1} + (\delta^{-1}\vee \delta^{-2})\frac{r}{n}.
\end{align*}
\end{proposition}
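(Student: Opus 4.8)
The plan is to combine the per-functional bound of Theorem \ref{Th_M_YYY} with a union bound over the $N$ pieces, controlling the event that the ``selector'' $I_{B(\Sigma_j,\delta)}(\hat\Sigma_n)$ picks the correct piece. First I would fix $\Sigma\in{\mathcal S}(a,r)\cap D_g$; by the definition of $D_g$ there is a unique index $j_0$ with $\Sigma\in B(\Sigma_{j_0},\delta)$, and since the balls $B(\Sigma_j,\delta)$ are $4\delta$-separated, $\|\Sigma-\Sigma_j\|\ge 3\delta$ for all $j\neq j_0$. On the ``good'' event $E:=\{\|\hat\Sigma_n-\Sigma\|<\delta\}$ we have $\hat\Sigma_n\in B(\Sigma_{j_0},\delta)$ and $\hat\Sigma_n\notin B(\Sigma_j,\delta)$ for every $j\neq j_0$ (using the triangle inequality and $3\delta - \delta = 2\delta > \delta$), so on $E$ the estimator reduces to $\tilde T_g = \hat T_{f_{j_0},m}(X_1,\dots,X_n)$ and $g(\Sigma)=\tau_{f_{j_0}}(\Sigma)$. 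Hence on $E$ the error equals $\hat T_{f_{j_0},m}(X_1,\dots,X_n)-\tau_{f_{j_0}}(\Sigma)$, which is exactly the quantity bounded by Theorem \ref{Th_M_YYY} applied with $f=f_{j_0}\in{\mathcal H}_m\subset {\mathcal F}_m$-type assumptions (note $f_{j_0}(0)=0$, $\|f_{j_0}''\|_{L_\infty}\le 1$ so $\|f_{j_0}'\|_{\rm Lip}\le 1$, and $\|f_{j_0}^{(m+1)}\|_{L_\infty}\le 1$). Since $\|\Sigma f_{j_0}'(\Sigma)\|_2\le \|f_{j_0}'\|_{L_\infty}\|\Sigma\|_2 \le \|\Sigma\|\sqrt{{\bf r}(\Sigma^2)}\lesssim_a \sqrt{r}$ and ${\bf r}(\Sigma^2)\le {\bf r}(\Sigma)\le r$, the bound of Theorem \ref{Th_M_YYY} gives a contribution $\lesssim_{m,a}\sqrt{r/n}+r(\sqrt{r/n})^{m+1}$ to the $L_2$ error on $E$.

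Next I would handle the complementary event $E^c=\{\|\hat\Sigma_n-\Sigma\|\ge\delta\}$. The key point is that on $E^c$ the estimator $\tilde T_g$ is a sum of at most the $N$ terms $I_{B(\Sigma_j,\delta)}(\hat\Sigma_n)\hat T_{f_j,m}$, but in fact at most one indicator is nonzero since the balls are disjoint; still, which one fires is uncontrolled. I would bound $|\tilde T_g - g(\Sigma)|$ crudely on $E^c$ by $\max_j|\hat T_{f_j,m}| + |\tau_{f_{j_0}}(\Sigma)|$, and note that each $|\tau_{f_j}(\Sigma)|\le \|f_j'\|_{L_\infty}{\rm tr}(\Sigma)\le \|\Sigma\|{\bf r}(\Sigma)\lesssim_a r$ (using $|f_j(x)|\le \|f_j'\|_{L_\infty}x$), and similarly $|\tau_{f_j}(\hat\Sigma_{n_j})|\lesssim {\rm tr}(\hat\Sigma_{n_j})$, whose moments are controlled via ${\mathbb E}\,{\rm tr}(\hat\Sigma_n)={\rm tr}(\Sigma)$ together with concentration of ${\rm tr}(\hat\Sigma_n)$ (a sum of i.i.d. $\psi_1$ variables). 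Thus $|\tilde T_g-g(\Sigma)|\mathbf 1_{E^c}$ is at most a polynomially-growing random variable, say of size $\lesssim_a r + \xi$ with $\xi$ a zero-mean $\psi_1$-type fluctuation whose $L_4$ norm is $\lesssim_a \sqrt{r/n}\cdot r$ or smaller, times $\mathbf 1_{E^c}$. By Cauchy–Schwarz, ${\mathbb E}^{1/2}\big[(\tilde T_g-g(\Sigma))^2\mathbf 1_{E^c}\big]\lesssim_a \big({\mathbb E}[(\tilde T_g - g(\Sigma))^4]\big)^{1/4}{\mathbb P}(E^c)^{1/4}$, so I need a strong tail bound on ${\mathbb P}(E^c)$.

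The tail bound on ${\mathbb P}(E^c)={\mathbb P}(\|\hat\Sigma_n-\Sigma\|\ge\delta)$ comes from \eqref{KL_1}–\eqref{KL_2}: since ${\mathbb E}\|\hat\Sigma_n-\Sigma\|\lesssim \|\Sigma\|(\sqrt{r/n}\vee r/n)\lesssim_a \sqrt{r/n}$ (using $r\le n$) and $\delta\ge C\sqrt{r/n}$ with $C$ large, the deviation $\delta$ exceeds twice the mean, and \eqref{KL_2} with $t\asymp n(\delta\wedge\delta^2)$ yields ${\mathbb P}(E^c)\lesssim \exp\{-c\,n(\delta\wedge\delta^2)\}$; choosing constants gives the factor $\exp\{-n(\delta\wedge\delta^2)/2\}$. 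Multiplying by the polynomial prefactor $\lesssim_{m,a} r$ (the $L_4$ bound grows at most polynomially in $r$, hence polynomially in $n$, so it is absorbed by the exponential up to adjusting constants) produces the term $r\exp\{-n(\delta\wedge\delta^2)/2\}$ in the first displayed bound. The second displayed inequality then follows from the elementary estimate $x\,e^{-tx}\le (et)^{-1}$ applied with $x=\delta\wedge\delta^2$ and $t=n/2$, which turns $r\exp\{-n(\delta\wedge\delta^2)/2\}$ into $\lesssim (\delta^{-1}\vee\delta^{-2})\frac{r}{n}$. The main obstacle I anticipate is making the $E^c$ contribution genuinely negligible: one must verify that the worst-case size of $|\tilde T_g-g(\Sigma)|$ over all of ${\mathcal G}_\delta$ (which allows arbitrarily large $N$ and arbitrary centers $\Sigma_j$) is still only polynomial in $n$ uniformly, so that it is killed by the exponential — this requires the a priori bounds $|\tau_{f_j}(\cdot)|\lesssim {\rm tr}(\cdot)$ and uniform moment control of ${\rm tr}(\hat\Sigma_{n_j})$, which hold because every $f_j$ lies in the fixed class ${\mathcal H}_m$ with $\|f_j'\|_{L_\infty}\le 1$; a union bound over the at-most-$N$ active indicators is then unnecessary precisely because disjointness of the balls means only one term survives.
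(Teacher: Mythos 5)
Your proposal is essentially the paper's own proof: the same split on the event $\{\|\hat\Sigma_n-\Sigma\|<\delta\}$, with Theorem \ref{Th_M_YYY} applied to $f_{j_0}$ on that event, and on the complement the same crude bounds $|g(\Sigma)|\lesssim_a r$ and $|\tilde T_g|\lesssim_m \max_i{\rm tr}(\hat\Sigma_{n_i})$ (controlled via Proposition \ref{trace_conc}) combined with Cauchy--Schwarz and the operator-norm deviation bounds \eqref{KL_1}--\eqref{KL_2} to produce the factor $\exp\{-n(\delta\wedge\delta^2)/2\}$, followed by $xe^{-tx}\leq (et)^{-1}$ for the second display. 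One minor imprecision: on $E$ the triangle inequality only yields $\hat\Sigma_n\in B(\Sigma_{j_0},2\delta)$, not $B(\Sigma_{j_0},\delta)$ as you assert, so strictly the selector could return $0$ there — but the paper's own proof makes exactly the same leap at this step, so your argument is faithful to it.
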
 

On the other hand, the following minimax lower bounds for estimation of functionals from class ${\mathcal G}_{\delta}.$ hold.

\begin{proposition}
\label{min_max_lower_eff_rank}
Suppose that $a\asymp 1$ and $r\leq n.$ If $\delta \gtrsim n^{-1/2},$ then 
\begin{align*}
\sup_{g\in {\mathcal G}_{\delta}}\inf_{T_{n,g}}\sup_{\Sigma\in {\mathcal S}(a,r)\cap D_g}{\mathbb E}_{\Sigma}^{1/2}(T_{n,g}(X_1,\dots, X_n)-g(\Sigma))^2
\gtrsim_{m,a} \sqrt{\frac{r}{n}}, 
\end{align*}
where the infimum is taken over all the estimators $T_{n,g}(X_1,\dots, X_n)$ of $g(\Sigma).$
Moreover, if $\delta = c\sqrt{\frac{r}{n}}$ with a sufficiently small numerical constant $c>0,$
then 
\begin{align*}
\sup_{g\in {\mathcal G}_{\delta}}\inf_{T_{n,g}}\sup_{\Sigma\in {\mathcal S}(a,r)\cap D_g}{\mathbb E}_{\Sigma}^{1/2}(T_{n,g}(X_1,\dots, X_n)-g(\Sigma))^2
\gtrsim_{m,a} \sqrt{\frac{r}{n}} + r \Bigl(\sqrt{\frac{r}{n}}\Bigr)^{m+1}.
\end{align*}
\end{proposition}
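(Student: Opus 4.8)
The plan is to prove the two lower bounds separately. For the $\sqrt{r/n}$ part, which is needed in both cases, take $N=1$, center $\Sigma_1=(a/2)P_r$ with $P_r$ a fixed rank-$r$ projection, and the test function $f_1(x)=x$; it belongs to $\mathcal H_m$ (indeed $f_1(0)=0$, $\|f_1'\|_{L_\infty}=1$, $f_1''\equiv f_1^{(m+1)}\equiv 0$) and has $|f_1'|\equiv 1$, so $g=\tau_{f_1}=\mathrm{tr}(\cdot)$ on $D_g=B(\Sigma_1,\delta)$ and it suffices to lower bound the minimax risk of estimating $\mathrm{tr}(\Sigma)$ over $B(\Sigma_1,\delta)\cap\mathcal S(a,r)$. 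I would re-run the construction behind Proposition \ref{lower_first_term}: perturb the $r$ eigenvalues of $\Sigma_1$ independently within an interval of length $\asymp a n^{-1/2}$ lying below $a/2$ (so that $\|\Sigma_\theta\|\le a$ and ${\bf r}(\Sigma_\theta)\le r$ automatically), and apply van Trees' inequality (or Assouad's lemma). Since every $\Sigma_\theta$ satisfies $\|\Sigma_\theta-\Sigma_1\|\lesssim a n^{-1/2}\lesssim\delta$ (here one uses $\delta\gtrsim n^{-1/2}$), the whole family lies in $B(\Sigma_1,\delta)\cap\mathcal S(a,r)$, and the argument yields the bound $\gtrsim a\sqrt{r/n}\asymp\sqrt{r/n}$. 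This settles both regimes of $\delta$ as far as the first term is concerned.

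For the $r(\sqrt{r/n})^{m+1}$ term I would use the method of two fuzzy hypotheses inside a single ball, exploiting that the eigenbasis of $\Sigma$ is unknown. Keep $N=1$, $\Sigma_1=(a/2)P_r$, and put two prior distributions $\Pi_0,\Pi_1$ on covariance operators $\Sigma=U\,\mathrm{diag}(\lambda_1,\dots,\lambda_r)\,U^{\top}$ on $\mathrm{range}(P_r)$, with $U$ Haar distributed on the orthogonal group of $\mathrm{range}(P_r)$ and $\lambda_1,\dots,\lambda_r$ i.i.d.\ from a probability measure $p_b$ supported in $[a/2-\delta/2,\,a/2+\delta/2]$, where $p_0,p_1$ are chosen to have identical moments of orders $1,\dots,m$ but $|\int (x-a/2)^{m+1}\,d(p_0-p_1)|\asymp\delta^{m+1}$ (such pairs, e.g.\ finitely supported ones, exist by classical moment theory). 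As bad test function take $f^\star(x)=\kappa_m\,\phi(x)\,(x-a/2)^{m+1}$, where $\phi$ is a fixed smooth cutoff equal to $1$ on a neighbourhood of $[a/2-\delta,\,a/2+\delta]$ and supported in a fixed bounded neighbourhood of $a/2$, and $\kappa_m>0$ is a constant chosen so that $\max_{j\in\{1,2,m+1\}}\|(f^\star)^{(j)}\|_{L_\infty}\le 1$; then $f^\star\in\mathcal H_m$, $(f^\star)^{(j)}(a/2)=0$ for $j\le m$ and $(f^\star)^{(m+1)}(a/2)\ne 0$. With $g=\tau_{f^\star}$ on $D_g=B(\Sigma_1,\delta)$ one has $\tau_{f^\star}(\Sigma)=\sum_k f^\star(\lambda_k)$, so $\mathbb E_{\Pi_b}\tau_{f^\star}(\Sigma)=r\int f^\star\,dp_b$ and, by Taylor expansion at $a/2$ together with the matching of the first $m$ moments, $|\mathbb E_{\Pi_0}\tau_{f^\star}(\Sigma)-\mathbb E_{\Pi_1}\tau_{f^\star}(\Sigma)|=r\,|\!\int f^\star\,d(p_0-p_1)|\asymp_m r\delta^{m+1}\asymp_{m,a} r(\sqrt{r/n})^{m+1}$; moreover $\mathrm{Var}_{\Pi_b}(\tau_{f^\star}(\Sigma))=r\,\mathrm{Var}_{p_b}(f^\star(\lambda))\lesssim r\delta^{2(m+1)}$, so $\tau_{f^\star}$ concentrates under $\Pi_b$ within $o(r\delta^{m+1})$ of $v_b:=r\int f^\star\,dp_b$, with $|v_0-v_1|\asymp_{m,a} r(\sqrt{r/n})^{m+1}$ (the regime ${\bf r}(\Sigma)\lesssim 1$ being already covered by the first term).

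By the standard two fuzzy hypotheses reduction it then suffices to show that the $n$-sample mixtures $\mathbb P_{\Pi_b}=\int \mathbb P_\Sigma^{\otimes n}\,\Pi_b(d\Sigma)$ of the laws of $(X_1,\dots,X_n)$ satisfy $\chi^2(\mathbb P_{\Pi_1}\,\|\,\mathbb P_{\Pi_0})\le 1/2$ (say); one then obtains $\inf_T\sup_{\Sigma}\mathbb E_\Sigma^{1/2}(T-\tau_{f^\star}(\Sigma))^2\gtrsim|v_0-v_1|\asymp_{m,a} r(\sqrt{r/n})^{m+1}$, and taking the maximum over the two constructions (both members of $\mathcal G_\delta$) yields the full bound. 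This $\chi^2$ estimate is the crux and the main obstacle. Since $\Pi_b$ is invariant under the orthogonal group of $\mathrm{range}(P_r)$, the spectrum of $\hat\Sigma_n$ is a sufficient statistic, and its distribution concentrates, at the fluctuation scale $\|\Sigma g'(\Sigma)\|_2/\sqrt n\asymp a\sqrt{r/n}$ of linear spectral statistics $g$ with $\|g'\|_{L_\infty}\lesssim 1$, around the free multiplicative convolution $p_b\boxtimes\nu_{r/n}$ of $p_b$ with the Marchenko--Pastur law. Because the moments of $p_b\boxtimes\nu_{r/n}$ of order $\le m$ depend only on the corresponding moments of $p_b$, the signed measure $\Delta=p_0\boxtimes\nu_{r/n}-p_1\boxtimes\nu_{r/n}$ has $m$ vanishing moments and is supported on an interval of width $\asymp a\sqrt{r/n}\asymp\delta/c$; since convolution against the Marchenko--Pastur profile, of comparable width, damps the scale-$\delta$ oscillations of $p_0-p_1$ by a factor $\asymp c$ per derivative, one expects $|\int g\,d\Delta|\lesssim_m c^{\,m}\,\delta\,\|g'\|_{L_\infty}$ for smooth $g$. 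Comparing this with the $\asymp a\sqrt{r/n}$ fluctuation scale and propagating it into a direct second-moment ($\chi^2$/Hellinger) bound on the mixture of sample-covariance laws, I expect the divergence to be controlled by a fixed power of $c$, which is forced below $1$ by choosing $c$ small — this is exactly where the hypothesis that $c$ is a sufficiently small numerical constant enters. Carrying this out rigorously amounts to a non-asymptotic quantification of the ill-posedness of free multiplicative deconvolution against $\nu_{r/n}$, uniform over the window $[a/2-\delta,\,a/2+\delta]$, and is the principal technical difficulty; the interplay between the smoothness order $m$ and the constant $c$ (through the factor $c^m$) is what makes this lower bound align with the $(m{+}1)$-st-order bias term appearing in Theorem \ref{Th_M_YYY}.
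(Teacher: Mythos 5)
Your first construction (the $\sqrt{r/n}$ term) is fine and is essentially the paper's own argument: the paper also takes the single functional $g(\Sigma)={\rm tr}(\Sigma)I_{B(\Sigma_0,\delta)}(\Sigma)$ (your $f_1(x)=x$) and gets the rate from a two-point (rather than van Trees/Assouad) reduction, exactly as in Proposition \ref{lower_first_term}; the condition $\delta\gtrsim n^{-1/2}$ enters in the same way, to keep the perturbed covariances inside one ball. (A small technical point: if you perturb all eigenvalues downward from $a/2$, the effective rank can slightly exceed $r$; pin the top eigenvalue or rescale.)

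The second bound is where there is a genuine gap. Your route — two fuzzy hypotheses with eigenvalue priors $p_0,p_1$ matching $m$ moments at scale $\delta$, Haar-random eigenbasis, and a $\chi^2$ bound between the $n$-sample mixtures — hinges entirely on the indistinguishability step, and that step is not proved; you explicitly flag it as the ``principal technical difficulty'' and argue only heuristically. The heuristic itself is fragile: free multiplicative convolution with $\nu_{r/n}$ is not an ordinary convolution with a smooth kernel (the Marchenko--Pastur profile has square-root edge singularities and the map $p_b\mapsto p_b\boxtimes\nu_{r/n}$ is nonlinear in $p_b$), so the ``damping by $c$ per derivative'' claim needs a real quantitative free-deconvolution estimate; moreover, even granting a bound on $|\int g\,d\Delta|$ for smooth $g$, this only controls tests based on smooth linear spectral statistics, whereas $\chi^2(\mathbb P_{\Pi_1}\|\mathbb P_{\Pi_0})$ accounts for the full likelihood ratio of the joint eigenvalue distribution of $\hat\Sigma_n$ — passing from the former to the latter would require exact Wishart eigenvalue-density computations or a quantitative CLT for the log-likelihood, none of which is supplied. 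As written, the claimed bound $r(\sqrt{r/n})^{m+1}$ is therefore not established.

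For contrast, the paper proves the second bound by a completely different mechanism that avoids any mixture/$\chi^2$ computation and crucially exploits the supremum over $g\in{\mathcal G}_{\delta}$ (i.e., the freedom to use piecewise trace functionals with exponentially many balls, not $N=1$). It builds centers $\Sigma_{\omega}=I_d+\eps(2I_d+2A_{\omega}/\sqrt d)$ with symmetric Rademacher matrices $A_{\omega}$ and $\eps\asymp\sqrt{d/n}$, keeps a Varshamov--Gilbert-separated subfamily $B$ of cardinality $2^{d^2/8}$ on which Wigner's theorem pins $\tau_f(\Sigma_{\omega})\approx \bar c_m\eps^{m+1}d$ for a test function $f$ vanishing to order $m$ at $1$, and then, for each matrix entry $(i,j)$, uses the piecewise functional $g_{ij}(\Sigma)=\sum_{\omega\in B}I_{B(\Sigma_{\omega},\delta)}(\Sigma)\,\tau_{\omega_{ij}f}(\Sigma)$, whose value at $\Sigma_{\omega}$ is $\approx \omega_{ij}\bar c_m\eps^{m+1}d$. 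Estimating every $g_{ij}$ with error $\ll \eps^{m+1}d$ would recover the signs $\omega_{ij}$, hence $\Sigma_{\omega}$ in Hilbert--Schmidt norm, contradicting a Fano-type bound (Lemma \ref{min_max_omega}); this is also exactly where $\delta=c\sqrt{r/n}$ with small $c$ enters (the balls must be $4\delta$-separated relative to $\eps$), rather than through any $c^m$ damping in a divergence bound. If you want to salvage your approach, you would need to supply the non-asymptotic free-deconvolution/indistinguishability estimate yourself; it is not a routine step.
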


\begin{remark}
\normalfont
Despite the fact that propositions \ref{upper_piecewise} and \ref{min_max_lower_eff_rank} shed some light on minimax rates 
of estimation of piecewise trace functionals, they do not provide a definitive solution of this problem. To do this, one would have 
to prove minimax lower bounds for larger values of $\delta\geq C\sqrt{\frac{r}{n}}$ and/or develop estimators and prove upper 
bounds for smaller values of $\delta \leq c\sqrt{\frac{r}{n}}$ (and also to understand minimax rates for intermediate values 
of $\delta \in [c\sqrt{\frac{r}{n}}, C\sqrt{\frac{r}{n}}]$).
\end{remark}

The next result provides a bound on the estimation error of spectral measure $\mu_{\Sigma}$ by the measure $\hat \mu_{n,m}$ in the sup-norm over the class ${\mathcal F}_m$ of smooth functions.

\begin{theorem}
\label{Th_M1_YYY}
Suppose that $\|\Sigma\|\lesssim 1$ and ${\bf r}(\Sigma)\lesssim n.$ 
Then, for all $m\geq 2,$
\begin{align}
&
\Big\|\|\hat \mu_{n,m} - \mu_{\Sigma}\|_{{\mathcal F}_m}\Bigr\|_{L_{\psi_{1/2}}}
\lesssim_m \sqrt{\frac{{\bf r}(\Sigma^2)}{n}}
+
\frac{{\bf r}(\Sigma)}{n}
+  {\bf r}(\Sigma)\Bigl(\sqrt{\frac{{\bf r}(\Sigma)}{n}}\Bigr)^{m+1}.
\end{align}
\end{theorem}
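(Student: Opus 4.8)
The plan is to deduce the sup-norm bound over the class $\mathcal F_m$ from the pointwise bound already available in Theorem \ref{Th_M_YYY} by combining it with a chaining/covering argument, exactly as Proposition \ref{emp_pr_first} is derived from Proposition \ref{Main_m=2} in the $m=1$ case. First I would observe that for each fixed $f\in\mathcal F_m$ we have $\int_{\mathbb R_+}fd(\hat\mu_{n,m}-\mu_\Sigma)=\hat T_{f,m}(X_1,\dots,X_n)-\tau_f(\Sigma)$, and that the hypotheses of Theorem \ref{Th_M_YYY} hold uniformly over $\mathcal F_m$ (indeed $\|f'\|_{\mathrm{Lip}}\le\|f''\|_{L_\infty}\le 1$ and $\|f^{(m+1)}\|_{L_\infty}\le 1$), so the $L_{\psi_{1/2}}$-bound of Theorem \ref{Th_M_YYY} holds with a constant depending only on $m$ for every individual $f\in\mathcal F_m$. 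The task is then to upgrade this family of pointwise bounds to a bound on the supremum.

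The key structural point is the decomposition of $\hat T_{f,m}-\tau_f(\Sigma)$ into a ``linear'' stochastic term, a higher-order remainder term, and a bias term, which I expect to be supplied by the detailed propositions of Sections \ref{Conc}--\ref{bias_bounds} (the uniform analogues are precisely what Section \ref{sup_bounds} is advertised to contain). The second step is therefore: (i) for the linear term $\langle f'(\Sigma),\hat\Sigma_{n_j}-\Sigma\rangle$, control $\sup_{f\in\mathcal F_m}|\langle f'(\Sigma),\hat\Sigma_{n_j}-\Sigma\rangle|$; here the map $f\mapsto f'(\Sigma)$ sends $\mathcal F_m$ into a bounded subset of the nuclear-norm ball scaled by $\|\Sigma\|_2=\|\Sigma\|\sqrt{\mathbf r(\Sigma^2)}$ (since $\|f'\|_{L_\infty}\le 1$ gives $\|f'(\Sigma)\|_2\le\|\Sigma^0\cdot\mathbf 1\|$-type control, and more precisely $\|\Sigma f'(\Sigma)\|_2\lesssim\|\Sigma\|_2$), and a Gaussian concentration / entropy bound for the quadratic form $\langle f'(\Sigma),\hat\Sigma_{n}-\Sigma\rangle$ uniformly over this image yields the $\sqrt{\mathbf r(\Sigma^2)/n}$ term; (ii) for the remainder term, apply the uniform version of the concentration bound for the Taylor remainder (the $\mathcal F_m$-analogue of Proposition \ref{Main_m=2}, which is Proposition \ref{emp_pr_first} in the $m=1$ case but must be iterated to order $m$ here), giving the $\mathbf r(\Sigma)/n$ contribution together with lower-order pieces; (iii) for the bias, use the uniform bound $\sup_{f\in\mathcal F_m}|\mathbb E\hat T_{f,m}-\tau_f(\Sigma)|\lesssim_m\mathbf r(\Sigma)(\sqrt{\mathbf r(\Sigma)/n})^{m+1}$ stated in the Introduction, whose proof via the sharp Taylor-remainder bounds of \cite{P_S_S} depends on $f$ only through $\|f^{(m+1)}\|_{L_\infty}\le 1$ and hence is automatically uniform over $\mathcal F_m$. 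Summing the three uniform contributions and using that $\|\cdot\|_{L_{\psi_{1/2}}}$ of a supremum of finitely-entropy-controlled terms behaves like the pointwise bound up to the chaining correction gives the asserted estimate.

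The main obstacle I anticipate is step (ii): controlling the supremum over $\mathcal F_m$ of the higher-order remainder $R_f(\Sigma,\hat\Sigma_{n_j}-\Sigma)$ in the $\psi_{1/2}$-norm. Unlike the linear term, the remainder is a genuinely nonlinear functional of $\hat\Sigma_n$, and one needs a Lipschitz-type estimate of $f\mapsto R_f(\Sigma,\hat\Sigma_n-\Sigma)$ with respect to an appropriate metric on $\mathcal F_m$ (e.g.\ involving $\|f'-g'\|_{\mathrm{Lip}}$ or higher Schatten-norm differences of $f^{(k)}(\Sigma)-g^{(k)}(\Sigma)$), so that a covering-number bound for $\mathcal F_m$ in that metric can be fed into a generic chaining bound for processes with mixed sub-exponential/sub-Weibull increments. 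Since $\mathcal F_m$ is a ball in a $C^{m+1}$-type space restricted to the compact interval $[0,2\|\Sigma\|]$ (cf.\ Remark \ref{rem_on_eff_rank_04}), its metric entropy is polynomially bounded, so the chaining integral converges and contributes only a constant factor; the delicate part is establishing the increment bound for the remainder process with the right dependence on $\mathbf r(\Sigma)$ and $n$, which is exactly the content I would borrow from the uniform concentration results of Section \ref{sup_bounds}. Once that is in hand, the theorem follows by assembling the three bounds and absorbing constants into $\lesssim_m$.
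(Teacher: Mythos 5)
Your plan follows essentially the same route as the paper: write $\int f\,d(\hat\mu_{n,m}-\mu_\Sigma)=\sum_j C_j(\tau_f(\hat\Sigma_{n_j})-{\mathbb E}\tau_f(\hat\Sigma_{n_j}))+({\mathbb E}\hat T_{f,m}-\tau_f(\Sigma))$, bound the supremum of the centered part by splitting it into the linear term and the Taylor remainder and chaining each in the pseudometrics $\|f'-g'\|_{L_\infty}$ and $\|f''-g''\|_{L_\infty}$ with Kolmogorov's entropy bound for the H\"older ball, and add the uniform bias bound of Theorem \ref{bias_main}. One small correction to your step (ii): the concentration/chaining part uses only the \emph{first-order} remainder $R_f(\Sigma,H)=\tau_f(\Sigma+H)-\tau_f(\Sigma)-\langle f'(\Sigma),H\rangle$ (whose increments $R_{f-g}$ are Lipschitz in $\|f''-g''\|_{L_\infty}$ via Proposition \ref{rem_first_order} and Theorem \ref{prop_rem}); nothing needs to be ``iterated to order $m$'' there, since the $m$-th order Taylor expansion enters solely through the bias via Proposition \ref{bias_decomp}.
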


We will now turn to another estimator of trace functional which is also based on an approach discussed in \cite{Jiao} and used in \cite{Koltchinskii_2022}.
Namely, denote by ${\mathcal A}_{\rm sym}$ the $\sigma$-algebra generated by all random variables of the form $\psi(X_1\otimes X_1,\dots, X_n\otimes X_n),$
where $\psi$ is a symmetric Borel function of $n$ variables with values in the space $L_{\rm sa}({\mathbb H})$ of bounded self-adjoint operators in ${\mathbb H}.$  
Denote 
\begin{align*}
 \check T_{f}(X_1,\dots, X_n) =\check T_{f,m}(X_1,\dots, X_n)=\check T_{f,\vec{n}}(X_1,\dots, X_n):= {\mathbb E}(\hat T_{f,\vec{n}}(X_1,\dots, X_n)|{\mathcal A}_{\rm sym}).
\end{align*}
Thus, estimator $\check T_{f,\vec{n}}(X_1,\dots, X_n)$ is the symmetrization of statistic $\hat T_{f,\vec{n}}(X_1,\dots, X_n),$
which could be easily expressed in terms of $U$-statistics and interpreted as a jackknife estimator. Indeed, if $h(x_1,\dots, x_k)$ is a symmetric 
kernel of order $k\leq n$ on $L_{\rm sa}({\mathbb H}),$ then 
\begin{align*} 
{\mathbb E}(h(X_1\otimes X_1,\dots, X_k\otimes X_k)|{\mathcal A}_{\rm sym})= 
\frac{1}{{n\choose m}}\sum_{i_1<\dots <i_k} h(X_{i_1}\otimes X_{i_1}, \dots, X_{i_k}\otimes X_{i_k}).
\end{align*}
Since plug-in estimator $f(\hat \Sigma_{n_i})$ is a symmetric function of $n_i$ variables $X_1\otimes X_1,\dots, X_{n_i}\otimes X_{n_i},$
we have 
\begin{align*}
 \check T_{f}(X_1,\dots, X_n) = \sum_{j=1}^m C_j U_n f(\hat \Sigma_{n_j}).
\end{align*}
We can also define a signed measure 
\begin{align*}
\check \mu_n = \check \mu_{n,m}=\check \mu_{n,\vec{n}}:={\mathbb E} (\hat \mu_n|{\mathcal A}_{\rm sym})
=\sum_{j=1}^m C_j  U_n\mu_{\hat \Sigma_{n_j}}
\end{align*}
as a symmetrization of $\hat \mu_n$ and we have 
\begin{align*}
 \check T_{f}(X_1,\dots, X_n) = \int_{{\mathbb R}_+} fd\check\mu_n.
\end{align*}

\begin{theorem}
\label{Th_M_A}
Under the conditions of theorems \ref{Th_M_YYY} and \ref{Th_M1_YYY}, the claims of these theorems hold also for estimator $\check T_{f,\vec{n}}(X_1,\dots, X_n)$
and for signed measure $\check \mu_{n,m}.$ 
\end{theorem}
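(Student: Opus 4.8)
The plan is to deduce Theorem \ref{Th_M_A} from theorems \ref{Th_M_YYY} and \ref{Th_M1_YYY} by a conditional-expectation (Jensen/contraction) argument together with the structural observation that symmetrization is a linear operation that commutes with the aggregation defining $\hat T_{f,\vec n}.$ First I would record the identities $\check T_{f,\vec n}(X_1,\dots,X_n)={\mathbb E}(\hat T_{f,\vec n}(X_1,\dots,X_n)\,|\,{\mathcal A}_{\rm sym})$ and $\check\mu_{n,m}={\mathbb E}(\hat\mu_{n,m}\,|\,{\mathcal A}_{\rm sym}),$ which are already in the excerpt; since $\int_{{\mathbb R}_+}fd\check\mu_n = \check T_f$ for each fixed $f,$ the second identity is just the first applied to all $f$ simultaneously. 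The key elementary fact is that conditional expectation is an $L_p$-contraction (indeed an $L_\psi$-contraction for any Orlicz norm generated by a convex $\psi$, by the conditional Jensen inequality): $\|{\mathbb E}(\xi\,|\,{\mathcal A}_{\rm sym})\|_{L_\psi}\le\|\xi\|_{L_\psi}.$ Applying this with $\xi=\hat T_{f,\vec n}(X_1,\dots,X_n)-\tau_f(\Sigma)$ (note $\tau_f(\Sigma)$ is deterministic, hence ${\mathcal A}_{\rm sym}$-measurable, so it passes through the conditional expectation) immediately gives
\begin{align*}
\|\check T_{f,\vec n}(X_1,\dots,X_n)-\tau_f(\Sigma)\|_{L_{\psi_{1/2}}}
\le
\|\hat T_{f,\vec n}(X_1,\dots,X_n)-\tau_f(\Sigma)\|_{L_{\psi_{1/2}}},
\end{align*}
which is bounded by the right-hand side of Theorem \ref{Th_M_YYY}.

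For the spectral-measure bound of Theorem \ref{Th_M1_YYY}, the same contraction has to be pushed through the supremum over $f\in{\mathcal F}_m.$ The point is that the random variable $\|\hat\mu_{n,m}-\mu_\Sigma\|_{{\mathcal F}_m}=\sup_{f\in{\mathcal F}_m}|\int f\,d(\hat\mu_{n,m}-\mu_\Sigma)|$ dominates, pointwise in $\omega,$ every $|\int f\,d(\hat\mu_{n,m}-\mu_\Sigma)|$; since ${\mathcal F}_m$ is closed under $f\mapsto -f$, this is $\sup_f \int f\,d(\hat\mu_{n,m}-\mu_\Sigma)$. Conditioning, for each fixed $f$ we get $\int f\,d(\check\mu_{n,m}-\mu_\Sigma)={\mathbb E}(\int f\,d(\hat\mu_{n,m}-\mu_\Sigma)\,|\,{\mathcal A}_{\rm sym})\le{\mathbb E}(\|\hat\mu_{n,m}-\mu_\Sigma\|_{{\mathcal F}_m}\,|\,{\mathcal A}_{\rm sym})$ a.s.; taking the supremum over the (countable, by separability/continuity) class on the left and using symmetry under negation yields $\|\check\mu_{n,m}-\mu_\Sigma\|_{{\mathcal F}_m}\le{\mathbb E}(\|\hat\mu_{n,m}-\mu_\Sigma\|_{{\mathcal F}_m}\,|\,{\mathcal A}_{\rm sym})$ a.s. A further application of the $L_{\psi_{1/2}}$-contraction to the right side then gives $\|\,\|\check\mu_{n,m}-\mu_\Sigma\|_{{\mathcal F}_m}\,\|_{L_{\psi_{1/2}}}\le\|\,\|\hat\mu_{n,m}-\mu_\Sigma\|_{{\mathcal F}_m}\,\|_{L_{\psi_{1/2}}},$ which is controlled by Theorem \ref{Th_M1_YYY}. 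I would also note that the interchange of the essential supremum over $f$ with conditional expectation is justified because ${\mathcal F}_m$ is norm-separable (uniformly bounded $C^{m+1}$ functions on compacts, restricted to a neighborhood of the spectrum of $\Sigma$ and $\hat\Sigma_{n_j}$, which are a.s.\ bounded) and $f\mapsto\int f\,d\nu$ is continuous, so a countable dense subset suffices and no measurability obstruction arises.

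The main obstacle — really the only place one must be slightly careful — is the measurability/regular-conditional-probability bookkeeping: one needs that the conditional-Jensen contraction genuinely applies to the (possibly non-measurable, handled via outer expectation) random variable $\|\hat\mu_{n,m}-\mu_\Sigma\|_{{\mathcal F}_m}$, and that ``$\le$ a.s.'' survives taking the sup over an uncountable family. This is resolved exactly as in the empirical-processes literature invoked in Section \ref{Intro}: replace the class by a countable norm-dense subset (legitimate by continuity of $f\mapsto\int f d\nu$ and separability of ${\mathcal F}_m$), so all suprema are countable and measurable, and the outer-expectation conventions adopted in the introduction make the $\psi_\alpha$-norm contraction formally valid. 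A secondary, essentially cosmetic point is that the ``claims of Theorem \ref{Th_M_YYY}'' include the $L_p$-versions and the remark-level reformulations; these all follow verbatim because the contraction $\|{\mathbb E}(\cdot\,|\,{\mathcal A}_{\rm sym})\|\le\|\cdot\|$ holds for the $L_p$-norm and for the $\psi_\alpha$-norms in the same way, and $\tau_f(\Sigma)$, $\|\Sigma f'(\Sigma)\|_2$, ${\bf r}(\Sigma)$ are all deterministic and unaffected by symmetrization. Hence no new probabilistic input is needed beyond theorems \ref{Th_M_YYY}–\ref{Th_M1_YYY} and Jensen's inequality for conditional expectations.
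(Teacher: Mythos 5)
Your proposal is correct and follows essentially the same route as the paper: the paper also deduces Theorem \ref{Th_M_A} from the conditional Jensen inequality, bounding ${\mathbb E}|\check T_{f,m}-\tau_f(\Sigma)|^p\leq {\mathbb E}|\hat T_{f,m}-\tau_f(\Sigma)|^p$ and, for the spectral measure, interchanging $\sup_{f\in{\mathcal F}_m}$ with the conditional expectation before applying Jensen, then passing from $L_p$-bounds (all $p\geq 1$) to the $\psi_{1/2}$-norm. The only cosmetic caveat is that $\psi_{1/2}$ is not convex, so the contraction should be justified, as the paper implicitly does, through the equivalent definition $\|\xi\|_{\psi_{1/2}}\asymp\sup_{p\geq 1}p^{-2}\|\xi\|_{L_p}$ rather than by conditional Jensen for the Orlicz function itself.
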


In addition to this, estimator $\check T_{f,\vec{n}}(X_1,\dots, X_n)$ could be approximated by a linear form $\langle f'(\Sigma), \hat \Sigma_n-\Sigma\rangle$
of $\hat \Sigma_n-\Sigma,$ which provides a way to establish normal approximation for this estimator and (under proper assumptions) to establish its asymptotic 
efficiency. Namely, the following result holds.

\begin{theorem}
\label{Th_M_B_YYY}
Suppose $f\in C^{m+1}({\mathbb R}_{+})$ for some $m\geq 2,$ $f(0)=0,$ $\|f'\|_{\rm Lip}\lesssim 1$ and $\|f^{(m+1)}\|_{L_{\infty}}\lesssim 1.$
Suppose also 
that $\|\Sigma\|\lesssim 1$ and ${\bf r}(\Sigma)\lesssim n.$ Then
\begin{align*}
&
\Bigl\|\check T_{f,m}(X_1,\dots, X_n)-\tau_f(\Sigma)- \langle f'(\Sigma), \hat \Sigma_n-\Sigma\rangle\Bigr\|_{L_{\psi_{1/2}}} 
\lesssim_m 
\frac{{\bf r}(\Sigma)}{n}
+ 
{\bf r}(\Sigma)\Bigl(\sqrt{\frac{{\bf r}(\Sigma)}{n}}\Bigr)^{m+1}.
\end{align*}
As a consequence, 
\begin{align*}
&
W_{\psi_{1/2}}\Bigl(\sqrt{\frac{n}{2}}\Bigl(\check T_{f,m}(X_1,\dots, X_n)-\tau_f(\Sigma)\Bigr), G_{\Sigma}(f)\Bigr) 
\lesssim_m 
\frac{{\bf r}(\Sigma)}{\sqrt{n}}
+{\bf r}(\Sigma)\sqrt{n}\Bigl(\sqrt{\frac{{\bf r}(\Sigma)}{n}}\Bigr)^{m+1}.
\end{align*}
\end{theorem}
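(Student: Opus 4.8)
The plan is to reduce everything to the analysis of the plug-in estimator $\tau_f(\hat\Sigma_{n_j})$ and its first-order Taylor expansion, exploiting the bias-cancellation built into the coefficients $C_j$. The starting point is the first order Taylor expansion
\begin{align*}
\tau_f(\hat\Sigma_{n_j}) = \tau_f(\Sigma) + \langle f'(\Sigma),\hat\Sigma_{n_j}-\Sigma\rangle + R_f(\Sigma,\hat\Sigma_{n_j}-\Sigma),
\end{align*}
applied for each $j=1,\dots,m$. Summing against $C_j$ and using $\sum_j C_j = 1$, the $\tau_f(\Sigma)$ terms reproduce $\tau_f(\Sigma)$, so
\begin{align*}
\hat T_{f,m} - \tau_f(\Sigma) = \sum_{j=1}^m C_j \langle f'(\Sigma),\hat\Sigma_{n_j}-\Sigma\rangle + \sum_{j=1}^m C_j R_f(\Sigma,\hat\Sigma_{n_j}-\Sigma).
\end{align*}
Here I would record that $\hat\Sigma_{n_j} = n_j^{-1}\sum_{i\le n_j} X_i\otimes X_i$ is a partial average of the same summands whose full average is $\hat\Sigma_n$; the linear term is genuinely linear in the data, so $\langle f'(\Sigma),\hat\Sigma_{n_j}-\Sigma\rangle = \frac{1}{n_j}\sum_{i\le n_j}\bigl(\langle f'(\Sigma),X_i\otimes X_i\rangle - \operatorname{tr}(\Sigma f'(\Sigma))\bigr)$. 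Since $\check T_{f,m} = {\mathbb E}(\hat T_{f,m}\mid{\mathcal A}_{\rm sym})$ and conditional expectation onto the symmetric $\sigma$-algebra is a contraction on every $L_{\psi_{1/2}}$ (indeed on every Orlicz space), it suffices to control $\check T_{f,m} - \tau_f(\Sigma) - \langle f'(\Sigma),\hat\Sigma_n-\Sigma\rangle$ directly after symmetrizing both of the two sums above. The key simplification is that the symmetrization (i.e.\ the $U$-statistic / Hajek-type projection) of the linear statistic $\langle f'(\Sigma),\hat\Sigma_{n_j}-\Sigma\rangle$ is exactly $\langle f'(\Sigma),\hat\Sigma_n-\Sigma\rangle$, uniformly in $j$: a symmetric linear average of $n_j$ out of $n$ i.i.d.\ centered summands has conditional expectation equal to the full average over all $n$. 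Hence $\sum_j C_j\, U_n\langle f'(\Sigma),\hat\Sigma_{n_j}-\Sigma\rangle = \bigl(\sum_j C_j\bigr)\langle f'(\Sigma),\hat\Sigma_n-\Sigma\rangle = \langle f'(\Sigma),\hat\Sigma_n-\Sigma\rangle$, and the entire linear part cancels against the centering term in the statement. What remains is
\begin{align*}
\check T_{f,m}-\tau_f(\Sigma)-\langle f'(\Sigma),\hat\Sigma_n-\Sigma\rangle = \sum_{j=1}^m C_j\, {\mathbb E}\bigl(R_f(\Sigma,\hat\Sigma_{n_j}-\Sigma)\mid{\mathcal A}_{\rm sym}\bigr),
\end{align*}
so I only need to bound the $\psi_{1/2}$-norm of the symmetrized remainders.

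For that I would invoke two ingredients already available. First, by Proposition \ref{Main_m=2} (its first display, applied with sample size $n_j\asymp n$), each remainder satisfies $\|R_f(\Sigma,\hat\Sigma_{n_j}-\Sigma)\|_{L_{\psi_{1/2}}}\lesssim \|f'\|_{\rm Lip}\|\Sigma\|^2 {\bf r}(\Sigma)^2/n$; but this crude bound gives $\frac{{\bf r}(\Sigma)^2}{n}$, which is too large by itself — it does not match the target $\frac{{\bf r}(\Sigma)}{n} + {\bf r}(\Sigma)(\sqrt{{\bf r}(\Sigma)/n})^{m+1}$. The point of the aggregation is that the \emph{bias} part of the remainder cancels to high order, while only the \emph{centered} (fluctuation) part of the remainder survives without cancellation. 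So I would split $R_f(\Sigma,\hat\Sigma_{n_j}-\Sigma) = {\mathbb E}R_f(\Sigma,\hat\Sigma_{n_j}-\Sigma) + \bigl(R_f - {\mathbb E}R_f\bigr)$. For the deterministic (bias) part, $\sum_j C_j {\mathbb E}R_f(\Sigma,\hat\Sigma_{n_j}-\Sigma) = {\mathbb E}\hat T_{f,m} - \tau_f(\Sigma)$, and this is exactly the quantity controlled by the bias bound stated in the introduction (proved via the operator-theoretic higher-order Taylor expansion results of \cite{P_S_S}, see Section \ref{bias_bounds}), giving $\lesssim_m \|f^{(m+1)}\|_{L_\infty}\|\Sigma\|^{m+1}{\bf r}(\Sigma)(\sqrt{{\bf r}(\Sigma)/n}\vee\sqrt{\log n/n})^{m+1}$; under ${\bf r}(\Sigma)\lesssim n$ this is absorbed into ${\bf r}(\Sigma)(\sqrt{{\bf r}(\Sigma)/n})^{m+1}$ plus a $\frac{{\bf r}(\Sigma)}{n}$-type term coming from the $\log n$ factor (one checks ${\bf r}(\Sigma)(\log n/n)^{(m+1)/2}\lesssim_m {\bf r}(\Sigma)/n$ when $m\ge 2$, possibly after noting $(\log n/n)^{(m+1)/2}\lesssim n^{-1}$ for $m\ge 2$ and $n$ large). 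For the centered part, I would use the concentration bound from Section \ref{Conc} (the detailed version of Proposition \ref{Main_m=2}, i.e.\ Proposition \ref{Main_m=2_detailed}) applied to $R_f - {\mathbb E}R_f$: the centered remainder concentrates at a rate strictly better than ${\bf r}(\Sigma)^2/n$ — of order $\frac{{\bf r}(\Sigma)}{n}$ up to logarithmic corrections — because the mean, which is the largest piece, has been removed. Each symmetrized centered remainder ${\mathbb E}((R_f-{\mathbb E}R_f)\mid{\mathcal A}_{\rm sym})$ then has $\psi_{1/2}$-norm $\lesssim \frac{{\bf r}(\Sigma)}{n}$ (conditional expectation being a contraction), and summing over the $m$ terms with $\sum_j|C_j|\lesssim_m 1$ preserves this. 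Collecting the bias and fluctuation contributions yields the first displayed bound.

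The second display is then immediate: write $\sqrt{n/2}(\check T_{f,m}-\tau_f(\Sigma)) = \sqrt{n/2}\langle f'(\Sigma),\hat\Sigma_n-\Sigma\rangle + \sqrt{n/2}\bigl(\check T_{f,m}-\tau_f(\Sigma)-\langle f'(\Sigma),\hat\Sigma_n-\Sigma\rangle\bigr)$. By the normal approximation result for the linear term in Section \ref{Conc} (the statement there gives $W_{\psi_{1/2}}(\sqrt{n/2}\langle f'(\Sigma),\hat\Sigma_n-\Sigma\rangle, G_\Sigma(f))\lesssim \|\Sigma f'(\Sigma)\|_2/\sqrt{n}$, or even exactly $0$ up to the appropriate CLT-type Wasserstein rate), together with the triangle inequality for $W_{\psi_{1/2}}$ and the bound on the $\psi_{1/2}$-norm of the difference just proved multiplied by $\sqrt{n/2}$, one obtains $W_{\psi_{1/2}}(\sqrt{n/2}(\check T_{f,m}-\tau_f(\Sigma)), G_\Sigma(f)) \lesssim_m \frac{{\bf r}(\Sigma)}{\sqrt{n}} + {\bf r}(\Sigma)\sqrt{n}(\sqrt{{\bf r}(\Sigma)/n})^{m+1}$, noting that the $\|\Sigma f'(\Sigma)\|_2/\sqrt n$ contribution from the linear-term normal approximation is dominated by ${\bf r}(\Sigma)/\sqrt n$ since $\|\Sigma f'(\Sigma)\|_2 \le \|f'\|_{L_\infty}\|\Sigma\|\sqrt{{\bf r}(\Sigma^2)}\lesssim \sqrt{{\bf r}(\Sigma)}\lesssim {\bf r}(\Sigma)$.

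\textbf{Main obstacle.} The delicate point is showing that the \emph{centered} remainder $R_f(\Sigma,\hat\Sigma_{n_j}-\Sigma) - {\mathbb E}R_f(\Sigma,\hat\Sigma_{n_j}-\Sigma)$ concentrates at rate $\frac{{\bf r}(\Sigma)}{n}$ rather than the naive $\frac{{\bf r}(\Sigma)^2}{n}$ that one reads off from the $\psi_{1/2}$-norm of the uncentered remainder in Proposition \ref{Main_m=2}. This is where the $\|\Sigma f'(\Sigma)\|_2/\sqrt n$ and $\|\Sigma\|^2{\bf r}(\Sigma)^2/n$ decomposition of the error has to be handled carefully: one must argue that the ${\bf r}(\Sigma)^2$ in the uncentered remainder bound comes predominantly from its expectation, and that the fluctuation of $R_f$ around its mean is governed by the Hilbert--Schmidt-type quantity $\|\Sigma\|{\bf r}(\Sigma)/\sqrt n$ times $\|\hat\Sigma_n-\Sigma\|$, i.e.\ is one factor of $\sqrt{{\bf r}(\Sigma)/n}$ smaller. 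I expect this to follow from the detailed concentration analysis of Section \ref{Conc} (which separates the linear, quadratic, and higher-order contributions to $\tau_f(\hat\Sigma_n)$ and tracks the Lipschitz behaviour of $R_f$ in the perturbation), combined with a Gaussian concentration / entropy argument for the map $\Sigma'\mapsto R_f(\Sigma,\Sigma'-\Sigma)$ restricted to the region $\|\hat\Sigma_n-\Sigma\|\lesssim\|\Sigma\|$; outside that region one uses the exponential tail from \eqref{KL_2}. Verifying that the symmetrization step does not destroy these rates is routine once one observes that $U$-statistic conditional expectation is an $L_{\psi_{1/2}}$-contraction and commutes with the linear/remainder decomposition.
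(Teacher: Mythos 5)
Your proposal is correct and follows essentially the same route as the paper's proof: symmetrize the first-order Taylor expansion of each $\tau_f(\hat \Sigma_{n_j})$, note that the conditional expectation of the linear terms given ${\mathcal A}_{\rm sym}$ collapses to $\langle f'(\Sigma),\hat \Sigma_n-\Sigma\rangle$ via $\sum_j C_j=1$, control the bias $\sum_j C_j{\mathbb E}R_f(\Sigma,\hat \Sigma_{n_j}-\Sigma)={\mathbb E}\hat T_{f,m}-\tau_f(\Sigma)$ by the higher-order spectral shift bound of Theorem \ref{bias_main}, bound the symmetrized centered remainders using the $L_p$-contractivity of conditional expectation, and deduce the Wasserstein bound from Proposition \ref{norm_approx_lin}. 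The only adjustment: the rate-$\frac{{\bf r}(\Sigma)}{n}$ concentration of the centered remainder, which you flag as the main obstacle and sketch via Gaussian concentration and the Lipschitz behaviour of $R_f$, is exactly Theorem \ref{prop_rem}, already established in Section \ref{Conc}; Proposition \ref{Main_m=2_detailed}, which you cite for this step, only bounds the uncentered remainder at the slower $\frac{{\bf r}(\Sigma)^2}{n}$ rate.
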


Finally, we provide a result on Gaussian approximation of stochastic process
\begin{align*}
\check G_n(f):=\sqrt{\frac{n}{2}}\Bigl(\check T_{f,m}(X_1,\dots, X_n)-\tau_f(\Sigma)\Bigr), f\in {\mathcal F}_m.
\end{align*}

\begin{theorem}
\label{Th_M_C_YYY}
Suppose that $\|\Sigma\|\lesssim 1$ and ${\bf r}(\Sigma)\lesssim n.$
Then,
for all $m \geq 2,$  
\begin{align*}
&
\nonumber
{\mathcal W}_{{\mathcal F}_m, \psi_{1/2}} (\check G_n, G_{\Sigma})
\lesssim_m 
\frac{{\bf r}(\Sigma)}{\sqrt{n}}
+ 
{\bf r}(\Sigma)\sqrt{n}\Bigl(\sqrt{\frac{{\bf r}(\Sigma)}{n}}\Bigr)^{m+1}.
\end{align*}
\end{theorem}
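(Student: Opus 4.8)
The plan is to split $\check G_n(f)=\sqrt{n/2}\bigl(\check T_{f,m}(X_1,\dots,X_n)-\tau_f(\Sigma)\bigr)$ into its first–order linear part and a remainder, to Gaussian–couple the linear part with $G_{\Sigma}$, and to bound the remainder uniformly over $\mathcal{F}_m$. Set $L_n(f):=\sqrt{n/2}\,\langle f'(\Sigma),\hat\Sigma_n-\Sigma\rangle$. Using $\hat\Sigma_n-\Sigma=\tfrac1n\sum_{i=1}^n(X_i\otimes X_i-\Sigma)$ and $\langle f'(\Sigma),x\otimes x\rangle=\langle f'(\Sigma)x,x\rangle$ one gets
\begin{align*}
L_n(f)=\frac{1}{\sqrt{2n}}\sum_{i=1}^n\Bigl(\langle f'(\Sigma)X_i,X_i\rangle-{\rm tr}(\Sigma f'(\Sigma))\Bigr),
\end{align*}
a normalized sum of i.i.d. centered processes that are quadratic (hence subexponential) in the Gaussian vectors $X_i$; a direct computation of the covariance of Gaussian quadratic forms gives $\mathbb{E}L_n(f)L_n(g)={\rm tr}(\Sigma^2 f'(\Sigma)g'(\Sigma))=\mathbb{E}G_{\Sigma}(f)G_{\Sigma}(g)$, so $L_n$ already carries exactly the covariance structure of the target process.

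First I would establish the uniform remainder bound. Every $f\in\mathcal{F}_m$ satisfies $f(0)=0$, $\|f'\|_{\rm Lip}=\|f''\|_{L_\infty}\le 1$ and $\|f^{(m+1)}\|_{L_\infty}\le 1$, so Theorem \ref{Th_M_B_YYY} applies to each such $f$ with constants depending only on $m$. To upgrade this pointwise estimate to a bound on the $L_{\psi_{1/2}}$–norm of $\sup_{f\in\mathcal{F}_m}|\check G_n(f)-L_n(f)|$, I would proceed exactly as in the passage from Proposition \ref{Main_m=2} to Proposition \ref{emp_pr_first}: combine the uniform concentration and bias decompositions of Section \ref{sup_bounds} with the Lipschitz dependence of the remainder on $f$ restricted to $[0,2\|\Sigma\|]$, using that $\{f|_{[0,2\|\Sigma\|]}:f\in\mathcal{F}_m\}$ is a bounded smooth–function class and hence has finite and controlled sup–norm metric entropy. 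This yields
\begin{align*}
\Bigl\|\sup_{f\in\mathcal{F}_m}\bigl|\check G_n(f)-L_n(f)\bigr|\Bigr\|_{L_{\psi_{1/2}}}\lesssim_m \frac{{\bf r}(\Sigma)}{\sqrt{n}}+{\bf r}(\Sigma)\sqrt{n}\Bigl(\sqrt{\tfrac{{\bf r}(\Sigma)}{n}}\Bigr)^{m+1}.
\end{align*}
Next I would Gaussian–couple $L_n$ with $G_{\Sigma}$ over $\mathcal{F}_m$ — this is the "second process" of the introduction, whose Gaussian approximation is the subject of Section \ref{sup_bounds} — obtaining, on a common probability space, a copy of $G_{\Sigma}$ with $\bigl\|\sup_{f\in\mathcal{F}_m}|L_n(f)-G_{\Sigma}(f)|\bigr\|_{L_{\psi_{1/2}}}\lesssim_m {\bf r}(\Sigma)/\sqrt{n}$ (up to terms dominated by the right–hand side above; note $\sqrt{{\bf r}(\Sigma^2)/n}\le{\bf r}(\Sigma)/\sqrt n$). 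The theorem then follows by the triangle inequality for $\|\sup_{f}|\cdot|\|_{L_{\psi_{1/2}}}$ and by taking the infimum over couplings in the definition of ${\mathcal W}_{{\mathcal F}_m,\psi_{1/2}}$.

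The main obstacle is the process–level Gaussian coupling of the linear part with the correct dependence on ${\bf r}(\Sigma)$: the index set $\mathcal{F}_m$ is infinite–dimensional and generic high–dimensional CLT/coupling bounds degrade with the ambient dimension. The key structural fact is that $L_n(f)$ depends on $f$ only through the values $(f'(\lambda_k))_k$ at the eigenvalues $\lambda_k$ of $\Sigma$ (in the eigenbasis one has $\langle f'(\Sigma)X_i,X_i\rangle=\sum_k f'(\lambda_k)\langle X_i,e_k\rangle^2$ with independent coordinates). One truncates the spectrum at a level $K$ at which the tail $\sum_{k>K}\lambda_k^2$ is negligible — exploiting $\sum_k\lambda_k^2=\|\Sigma\|^2{\bf r}(\Sigma^2)\le\|\Sigma\|^2{\bf r}(\Sigma)$ — applies a Yurinskii/Zaitsev–type coupling to the resulting effectively $K$–dimensional sum of centered Gaussian quadratic forms, and controls the supremum over $f$ through the finite sup–norm metric entropy of $\{f|_{[0,2\|\Sigma\|]}:f\in\mathcal{F}_m\}$. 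Balancing the truncation level against the coupling error, and tracking the second–order chaos structure (which is what fixes the natural Orlicz scale at $\psi_{1/2}$), is the delicate point; once it is in place, assembling the uniform remainder bound with the linear–term coupling gives the claimed estimate on ${\mathcal W}_{{\mathcal F}_m,\psi_{1/2}}(\check G_n,G_{\Sigma})$.
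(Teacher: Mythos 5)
Your decomposition is exactly the paper's: write $\check G_n(f)=G_n(f)+\sqrt{n/2}\,(\mathbb{E}\check T_{f,m}-\tau_f(\Sigma))+\sqrt{n/2}\sum_j C_j\,\mathbb{E}(R_f(\Sigma,\hat\Sigma_{n_j}-\Sigma)-\mathbb{E}R_f(\Sigma,\hat\Sigma_{n_j}-\Sigma)\,|\,\mathcal{A}_{\rm sym})$ (your $L_n$ is the paper's $G_n$), bound the bias by Theorem \ref{bias_main}, bound the sup of the conditional remainder by Jensen plus the uniform chaining bound of Proposition \ref{conc_F_smooth}, couple the linear process with $G_{\Sigma}$, and finish by the triangle inequality for ${\mathcal W}_{{\mathcal F}_m,\psi_{1/2}}$. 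That part of your plan is sound and matches Theorem \ref{Th_M_C} step for step.

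The one place you genuinely depart is the process-level Gaussian coupling of the linear part, and it is the weaker half of your proposal. The paper does not truncate the spectrum and apply a Yurinskii/Zaitsev multivariate coupling; it exploits the product structure you correctly identified ($G_n(f)=\sum_k\lambda_k f'(\lambda_k)\,\frac{1}{\sqrt{2n}}\sum_i(g_{i,k}^2-1)$ with independent eigen-coordinates) to couple \emph{each one-dimensional} normalized chi-squared sum with an independent standard normal via Rio's $W_{\psi_1}$ bound, so that the coupling error is $\sum_k\lambda_k f'(\lambda_k)\zeta_k$ with independent centered $\zeta_k$ satisfying $\|\zeta_k\|_{\psi_1}\lesssim n^{-1/2}$; Bernstein (Proposition \ref{sub_exp}) then gives a dimension-free $L_p$ bound for each fixed $f$, and chaining in the $\|f'-g'\|_{L_\infty}$ metric (propositions \ref{entr_Wasser}, \ref{conc_F_smooth_Wasser}) gives the uniform version. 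A Zaitsev-type coupling in truncated dimension $K$ typically carries factors polynomial (or worse) in $K$, and it is not clear that balancing those against the tail $\sum_{k>K}\lambda_k^2$ recovers the clean ${\bf r}(\Sigma)/\sqrt{n}$ rate; at minimum you would have to verify this balance, whereas the coordinate-wise one-dimensional coupling sidesteps the issue entirely. If you replace that step with the coordinate-wise Rio coupling plus chaining, your argument coincides with the paper's proof.
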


\begin{remark}
\normalfont
Note that $\frac{G_{\Sigma}(f)}{\|\Sigma f'(\Sigma)\|_2}=:Z\sim N(0,1).$ Thus, the second 
bound of Theorem \ref{Th_M_B_YYY} implies that 
\begin{align*}
&
W_{\psi_{1/2}}\Bigl(\frac{\sqrt{n}(\check T_{f,m}(X_1,\dots, X_n)-\tau_f(\Sigma))}{\sqrt{2}\|\Sigma f'(\Sigma)\|_2}, Z\Bigr) 
\lesssim_m 
\frac{{\bf r}(\Sigma)}{\|\Sigma f'(\Sigma)\|_2}\Bigl(\frac{1}{\sqrt{n}}
+\sqrt{n}\Bigl(\sqrt{\frac{{\bf r}(\Sigma)}{n}}\Bigr)^{m+1}\Bigr).
\end{align*}
As in Remark \ref{rem_on_eff_rank}, assume that function $f'$ is uniformly bounded from above and bounded away from zero by numerical constants.
Then, we get 
\begin{align*}
&
W_{\psi_{1/2}}\Bigl(\frac{\sqrt{n}(\check T_{f,m}(X_1,\dots, X_n)-\tau_f(\Sigma))}{\sqrt{2}\|\Sigma f'(\Sigma)\|_2}, Z\Bigr) 
\lesssim_m 
\frac{{\bf r}(\Sigma)}{\sqrt{{\bf r}(\Sigma^2)}}\Bigl(\frac{1}{\sqrt{n}}
+\sqrt{n}\Bigl(\sqrt{\frac{{\bf r}(\Sigma)}{n}}\Bigr)^{m+1}\Bigr).
\end{align*}
Finally, assume that ${\rm dim}({\mathbb H})=d<\infty$ and also that $\lambda_j\asymp 1, j=1,\dots, d.$ In this case, ${\bf r}(\Sigma)\asymp d$ and 
${\bf r}(\Sigma^2)\asymp d,$ and the bound simplifies as follows: 
\begin{align*}
&
W_{\psi_{1/2}}\Bigl(\frac{\sqrt{n}(\check T_{f,m}(X_1,\dots, X_n)-\tau_f(\Sigma))}{\sqrt{2}\|\Sigma f'(\Sigma)\|_2}, Z\Bigr) 
\lesssim_m 
\sqrt{\frac{d}{n}}
+\sqrt{nd}\Bigl(\sqrt{\frac{d}{n}}\Bigr)^{m+1}.
\end{align*}
Under the assumption $d=o(n^{m/(m+2)})$ as $n\to\infty,$ this proves the convergence of r.v.
\begin{align*}
\frac{\sqrt{n}(\check T_{f,m}(X_1,\dots, X_n)-\tau_f(\Sigma))}{\sqrt{2}\|\Sigma f'(\Sigma)\|_2} \overset{d}{\to} Z
\end{align*}
in distribution to a standard normal r.v. as well as the convergence to $1$ of properly normalized mean squared error of estimator 
$\check T_{f,m}(X_1,\dots, X_n):$
\begin{align*}
\frac{n{\mathbb E}_{\Sigma}(\check T_{f,m}(X_1,\dots, X_n)-\tau_f(\Sigma))^2}{2\|\Sigma f'(\Sigma)\|_2^2} \to 1\ {\rm as}\ n\to\infty.
\end{align*}
In particular, the convergence holds if $d\leq n^{\alpha}$
for some $\alpha\in (0,1)$ and $m+1>\frac{1+\alpha}{1-\alpha}.$ Theorem \ref{Th_M_C_YYY}
implies that, in this case, the convergence of stochastic processes $\frac{\check G_n(f)}{\|\Sigma f'(\Sigma)\|_2}, f\in {\mathcal F}_m$
to the process $\frac{G_{\Sigma}(f)}{\|\Sigma f'(\Sigma)\|_2}, f\in {\mathcal F}_m$ also holds.
\end{remark}

The following local minimax lower bound shows the asymptotic efficiency of estimator $\check T_{f,m}(X_1,\dots, X_n).$
Its proof easily follows from Theorem 2.3 in \cite{Koltchinskii_Li} (whose proof is based on van Trees inequality).

\begin{proposition}
\label{local_min_max}
Let $\Sigma_0$ be a finite rank covariance operator: ${\rm rank}(\Sigma_0)=r.$ Moreover, suppose that $\Sigma_0$ is invertible in the finite-dimensional subspace $L:={\rm Im}(\Sigma_0)$ of dimension $r.$ Let ${\mathcal S}_L$ be the set of all covariance operators $\Sigma$ with ${\rm Im}(\Sigma)\subset L$
(and, hence, of rank $\leq r$).
Let $f\in C^1({\mathbb R}_+)$ be a function such that $\|f'\|_{{\rm Lip}}<\infty$ and $\|f'(\Sigma_0)\|_2>0.$ Then, there exists 
a constant $D>0$ such that, for all 
$\delta\leq \frac{1}{2\|\Sigma_0^{-1}\|}\wedge 1,$
\begin{align*}
&
\inf_{T_n} \sup_{\Sigma\in {\mathcal S}_L,\|\Sigma-\Sigma_0\|_2\leq \delta} \frac{n {\mathbb E}_{\Sigma}(T_n(X_1,\dots, X_n)-\tau_f(\Sigma))^2}
{2\|\Sigma f'(\Sigma)\|_2^2}
\\
&
\geq 1 - D\|\Sigma_0\|^2 \|\Sigma_0^{-1}\|^2 \Bigl(\frac{\|f'\|_{\rm Lip}\delta}{\|f'(\Sigma_0)\|_2} + \|\Sigma_0\|^2 \|\Sigma_0^{-1}\|^2 \delta + \frac{\|\Sigma_0\|^2}{\delta^2 n}\Bigr),
\end{align*}
where the infimum is taken over all estimators $T_n(X_1,\dots, X_n)$ based on i.i.d. observations $X_1,\dots, X_n\sim N(0,\Sigma).$
\end{proposition}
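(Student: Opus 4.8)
\emph{Proof proposal.} The plan is to follow the argument behind Theorem~2.3 of \cite{Koltchinskii_Li}: reduce to a smooth one-parameter Gaussian submodel through $\Sigma_0$ along the \emph{least favorable direction} and apply the van Trees (Bayesian Cram\'er--Rao) inequality. Every $\Sigma\in{\mathcal S}_L$ is supported on the $r$-dimensional subspace $L$ on which $\Sigma_0$ is invertible, so, identifying $L$ with $\R^r$, we work inside a finite-dimensional regular Gaussian model where $\tau_f(\Sigma)=\sum_{\lambda_j(\Sigma)>0}f(\lambda_j(\Sigma))$ is smooth with $d\tau_f(\Sigma)[H]={\rm tr}(f'(\Sigma)H)$ for self-adjoint $H$ with range in $L$ (here $f'(\Sigma)$ is the operator on $L$ with eigenvalues $f'(\lambda_j(\Sigma))$). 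The Fisher quadratic form of $\Sigma\mapsto N(0,\Sigma)$ at $\Sigma_0$ on such perturbations is $\langle H_1,H_2\rangle_{\Sigma_0}=\tfrac12{\rm tr}(\Sigma_0^{-1}H_1\Sigma_0^{-1}H_2)$; the Riesz representer of $H\mapsto{\rm tr}(f'(\Sigma_0)H)$ for this form is proportional to $H_\ast:=\Sigma_0 f'(\Sigma_0)\Sigma_0$, a self-adjoint operator with range $L$, nonzero since $\|f'(\Sigma_0)\|_2>0$, and by Cauchy--Schwarz this is the one-dimensional direction maximizing the Cram\'er--Rao bound, whose value is $\tfrac2n\|\Sigma_0 f'(\Sigma_0)\|_2^2$. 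Set $\widehat H:=H_\ast/\|H_\ast\|_2$ and $\Sigma_s:=\Sigma_0+\delta s\widehat H$ for $s\in[-1,1]$. Since $\|\widehat H\|\le\|\widehat H\|_2=1$ and $\delta\le\tfrac12\|\Sigma_0^{-1}\|^{-1}$, the smallest positive eigenvalue of $\Sigma_s$ is at least $\|\Sigma_0^{-1}\|^{-1}-\delta\ge\tfrac12\|\Sigma_0^{-1}\|^{-1}$, so $\Sigma_s\in{\mathcal S}_L$ with $\|\Sigma_s-\Sigma_0\|_2=\delta|s|\le\delta$, and $\{N(0,\Sigma_s):s\in[-1,1]\}$ is a smooth submodel.

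\emph{Van Trees and the idealized value.} Fix a prior density $\pi$ on $[-1,1]$ that is smooth, positive on $(-1,1)$, vanishes at $\pm1$, and has $I_\pi:=\int(\pi')^2/\pi<\infty$. With $\psi(s):=\tau_f(\Sigma_s)$ and ${\mathcal I}(s):=\tfrac{\delta^2}{2}\|\Sigma_s^{-1/2}\widehat H\Sigma_s^{-1/2}\|_2^2$ the one-observation Fisher information in $s$, van Trees' inequality gives, for any estimator $T_n$,
\[
\int_{-1}^1{\mathbb E}_{\Sigma_s}(T_n-\psi(s))^2\pi(s)\,ds\ \ge\ \frac{\bigl(\int_{-1}^1\psi'(s)\pi(s)\,ds\bigr)^2}{n\int_{-1}^1{\mathcal I}(s)\pi(s)\,ds+I_\pi},
\]
and the left side is at most $\sup_{\Sigma\in{\mathcal S}_L,\,\|\Sigma-\Sigma_0\|_2\le\delta}{\mathbb E}_\Sigma(T_n-\tau_f(\Sigma))^2$; dividing by the worst-case normalizer $2\sup_s\|\Sigma_s f'(\Sigma_s)\|_2^2/n$ then lower-bounds the quantity in the proposition. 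Since $\Sigma_0$, $f'(\Sigma_0)$ and $H_\ast$ commute, $\psi'(0)=\delta\|\Sigma_0 f'(\Sigma_0)\|_2^2/\|H_\ast\|_2$ and ${\mathcal I}(0)=\delta^2\|\Sigma_0 f'(\Sigma_0)\|_2^2/(2\|H_\ast\|_2^2)$, so $\psi'(0)^2/(n{\mathcal I}(0))=\tfrac2n\|\Sigma_0 f'(\Sigma_0)\|_2^2$; hence, were $\psi'$ and ${\mathcal I}$ constant in $s$ and $I_\pi$ negligible, and were $\|\Sigma f'(\Sigma)\|_2$ equal to $\|\Sigma_0 f'(\Sigma_0)\|_2$ on the ball, the lower bound would be exactly $1$.

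\emph{Error terms.} The three summands in the correction come from the three deviations from this idealized value. (i) Prior term: $\|H_\ast\|_2^2={\rm tr}(\Sigma_0^4 f'(\Sigma_0)^2)\le\|\Sigma_0\|^2\|\Sigma_0 f'(\Sigma_0)\|_2^2$, so ${\mathcal I}(0)\gtrsim\delta^2\|\Sigma_0\|^{-2}$ and $I_\pi/(n\int{\mathcal I}\pi)\lesssim\|\Sigma_0\|^2/(\delta^2 n)$. (ii) Variation of $\psi'$: by the Hilbert--Schmidt operator-Lipschitz bound $\|f'(\Sigma_s)-f'(\Sigma_0)\|_2\le\|f'\|_{\rm Lip}\|\Sigma_s-\Sigma_0\|_2$, whence $|\psi'(s)-\psi'(0)|\le\delta^2\|f'\|_{\rm Lip}$ and the relative error in $\int\psi'\pi$ is $\lesssim\delta\|f'\|_{\rm Lip}\|H_\ast\|_2/\|\Sigma_0 f'(\Sigma_0)\|_2^2\lesssim\delta\|f'\|_{\rm Lip}\|\Sigma_0\|\|\Sigma_0^{-1}\|/\|f'(\Sigma_0)\|_2$, using $\|H_\ast\|_2\le\|\Sigma_0\|\|\Sigma_0 f'(\Sigma_0)\|_2$ and $\|\Sigma_0 f'(\Sigma_0)\|_2\ge\|\Sigma_0^{-1}\|^{-1}\|f'(\Sigma_0)\|_2$. (iii) Variation of ${\mathcal I}$ and of the normalizer: from $\|\Sigma_s^{-1}\|\le2\|\Sigma_0^{-1}\|$ and a routine first-order perturbation estimate, $|{\mathcal I}(s)-{\mathcal I}(0)|\lesssim{\mathcal I}(0)\|\Sigma_0\|^2\|\Sigma_0^{-1}\|^2\delta$, and similarly $\sup_s\|\Sigma_s f'(\Sigma_s)\|_2^2\le\|\Sigma_0 f'(\Sigma_0)\|_2^2(1+O(\|\Sigma_0\|^2\|\Sigma_0^{-1}\|^2\delta))$ via the Lipschitz property of $x\mapsto xf'(x)$ on $[0,\|\Sigma_0\|+\delta]$. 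Combining (i)--(iii) with the van Trees bound and absorbing everything into a prefactor $D\|\Sigma_0\|^2\|\Sigma_0^{-1}\|^2$ (we may assume this quantity is $<1$, since otherwise the claim is trivial) yields the assertion.

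\emph{Main obstacle.} The delicate point is (iii): the perturbation bound for the Fisher information ${\mathcal I}(s)$ over the prior support with the correct $\|\Sigma_0^{-1}\|$-dependence, together with the bookkeeping showing that every error contribution fits under the single prefactor $\|\Sigma_0\|^2\|\Sigma_0^{-1}\|^2$. Identifying the least favorable direction $H_\ast$ and the idealized computation are routine.
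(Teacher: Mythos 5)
The paper offers no self-contained argument for this proposition (it is deduced from Theorem 2.3 of Koltchinskii and Li, whose proof is van Trees based), and your overall route — the one-dimensional submodel $\Sigma_s=\Sigma_0+\delta s\hat H$ along the least favorable direction $H_\ast=\Sigma_0 f'(\Sigma_0)\Sigma_0$, plus the van Trees inequality — is the intended one. Your identification of $H_\ast$, the computations of $\psi'(0)$ and ${\mathcal I}(0)$, and the error items (i) and (ii) are correct. The genuine gap is in item (iii), exactly the point you flag as delicate: the two estimates asserted there are false in general, and without them the argument does not reach the stated error terms.

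Concretely, take $r=1$, $\Sigma_0=\epsilon$, $f(x)=x$ (so $\|f'\|_{\rm Lip}=0$, $\|f'(\Sigma_0)\|_2=1$) and $\delta=\epsilon/2$, which is admissible. Then $\hat H=1$, $\Sigma_s=\epsilon+\delta s$, ${\mathcal I}(s)=\frac{\delta^2}{2(\epsilon+\delta s)^2}$ and $\|\Sigma_sf'(\Sigma_s)\|_2^2=(\epsilon+\delta s)^2$, so over $s\in[-1,1]$ the Fisher information grows by the factor $4$ and the normalizer by $9/4$, while your claimed bounds are $1+O(\|\Sigma_0\|^2\|\Sigma_0^{-1}\|^2\delta)=1+O(\epsilon)$. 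What the perturbation argument along your path actually gives is a relative variation of order $\delta\|\hat H\|\,\|\Sigma_0^{-1}\|$ for both ${\mathcal I}(s)$ and $\sup_s\|\Sigma_sf'(\Sigma_s)\|_2^2$, and $\delta\|\Sigma_0^{-1}\|$ fits under the allowed term $\|\Sigma_0\|^2\|\Sigma_0^{-1}\|^2\cdot\|\Sigma_0\|^2\|\Sigma_0^{-1}\|^2\delta$ only when $\|\Sigma_0\|^4\|\Sigma_0^{-1}\|^3\gtrsim1$, which fails when $\|\Sigma_0\|$ is small. The discrepancy cannot be absorbed into $D$: in the example above with $\epsilon\to0$ and $n\to\infty$ the right-hand side of the proposition is $1-D\epsilon/2-4D/n\to1$, whereas spreading the prior over the full $\delta$-ball and pulling the normalizer out of the supremum can give at best $1-c\,\delta\|\Sigma_0^{-1}\|=1-c/2$; indeed, no choice of a shorter prior support rescues this, since balancing the "variation" term $\delta'\|\Sigma_0^{-1}\|$ against the prior term $\|\Sigma_0\|^2/(\delta'^2n)$ caps this scheme at a $1-O(n^{-1/3})$-type bound, while the statement demands $1-O(1/n)$ in that regime. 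To obtain the claimed error structure one must keep the $\Sigma$-dependent normalizer $2\|\Sigma f'(\Sigma)\|_2^2$ inside the Bayes average (a weighted van Trees bound, as in the cited Theorem 2.3) and/or parametrize the submodel so that the Fisher information is essentially constant along it (e.g., a multiplicative perturbation of $\Sigma_0$), rather than relying on near-constancy of ${\mathcal I}$ and of the normalizer over a Hilbert--Schmidt ball whose radius may be comparable to the smallest eigenvalue of $\Sigma_0$.
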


The bound of Proposition \ref{local_min_max} implies the following version of H\'ajek-LeCam local asymptotic minimax bound: 
\begin{align*}
&
\lim_{c\to \infty}\liminf_{n\to \infty}\inf_{T_n} \sup_{\Sigma\in {\mathcal S}_L,\|\Sigma-\Sigma_0\|_2\leq \frac{c}{\sqrt{n}}} 
\frac{n {\mathbb E}_{\Sigma}(T_n(X_1,\dots, X_n)-\tau_f(\Sigma))^2}
{2\|\Sigma f'(\Sigma)\|_2^2}\geq 1.
\end{align*}

\section{Concentration and normal approximation bounds}
\label{Conc}

Our goal in this section is to study concentration and normal approximation properties of the linear term $\langle f'(\Sigma), \hat \Sigma_n-\Sigma\rangle$
and the remainder $R_f(\Sigma,\hat \Sigma_n-\Sigma)$ of the first order Taylor expansion of trace functional $\tau_f(\hat \Sigma_n),$ which would yield 
concentration inequalities for $\tau_f(\hat \Sigma_n)$ itself. There are many results on concentration inequalities 
for linear spectral statistics and normalized spectral measures of random matrices available in the literature (see \cite{Guionnet_Zeitouni, Anderson, Chatterjee, Houdre, Meckes, Adamczak_Wolff, B_B_H} and references therein), but our aim will be to express concentration properties in terms of effective rank ${\bf r}(\Sigma).$ 

Consider an operator function $f(A),$ where $f:{\mathbb R}\mapsto {\mathbb R}$ and $A:{\mathbb H}\mapsto {\mathbb H}$ is a self-adjoint operator. 
If ${\mathbb H}$ is finite-dimensional, the spectral representation of $A$ could be written as $A=\sum_{\lambda \in \sigma(A)} \lambda P_{\lambda},$
where $\sigma(A)$ is the spectrum of $A$ and $P_{\lambda}$ is the orthogonal projection on the eigen-space of $A,$ corresponding to its eigenvalue 
$\lambda\in \sigma(A).$  The following formula for the derivative of $f(A)$ in the direction of a self-adjoint operator $H$ is well known (see, e.g., \cite{Bhatia}, Theorem V.3.3):
\begin{align}
\label{D_K_formula}
(Df)(A)[H] = \frac{d}{dt} f(A+tH)_{\vert_{t=0}}= \sum_{\lambda, \mu\in \sigma(A)}f^{[1]}(\lambda, \mu) P_{\lambda}HP_{\mu},
\end{align}
where 
\begin{align*}
f^{[1]}(\lambda,\mu):= 
\begin{cases}
\frac{f(\lambda)-f(\mu)}{\lambda-\mu} & \lambda\neq \mu\\
f'(\lambda) & \lambda=\mu
\end{cases}
\end{align*}
is the first order divided difference of function $f.$
Formula \eqref{D_K_formula} holds for any function $f$ continuously differentiable in an open interval that contains $\sigma(A)$ and, moreover,
in this case, the operator function $A\mapsto f(A)$ is Fr\'echet differentiable at $A.$ The following formula for the derivative of trace functional $\tau_f$  is an immediate 
consequence of \eqref{D_K_formula}: 
\begin{align*}
(D\tau_f)(A)[H]= \frac{d}{dt} \tau_f(A+tH)_{\vert_{t=0}}= \sum_{\lambda\in \sigma(A)} f'(\lambda){\rm tr}(P_{\lambda}HP_{\lambda})=\langle f'(A),H\rangle.
\end{align*}
Similar differentiation formulas hold in the infinite-dimensional case, although more smoothness of function $f$ could be required (for instance, that $f$ belongs to the Besov class $B^{1}_{\infty,1}({\mathbb R})$) and, in the case of operator $A$ with continuous spectrum, 
the double discrete sum in \eqref{D_K_formula} should be replaced by a double operator integral with respect to the resolution of identity of operator $A$
(see, e.g., \cite{Peller-06, Peller-16} and references therein).

Let 
\begin{align*}
R_f(A;H):= \tau_f(A+H)-\tau_f(A) - \langle f'(A),H\rangle
\end{align*}
be the remainder of the first order Taylor expansion of functional $\tau_f(A).$ 
In the finite dimensional case, it is easy to show the following bound on $R_f(A;H).$

\begin{proposition}
\label{rem_first_order}
Suppose $f\in C^1({\mathbb R})$ and $\|f'\|_{\rm Lip}<\infty.$
Then 
\begin{align*}
|R_f(A;H)|\leq \frac{\|f'\|_{\rm Lip}}{2} \|H\|_2^2.
\end{align*}
\end{proposition}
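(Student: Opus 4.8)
The plan is to prove the bound on $R_f(A;H)$ in the finite-dimensional setting by expressing the remainder via a Taylor-type integral formula for the operator function and then controlling it in Hilbert--Schmidt norm using the divided-difference representation \eqref{D_K_formula}. First I would write
\begin{align*}
R_f(A;H) = \tau_f(A+H)-\tau_f(A)-\langle f'(A),H\rangle
= \int_0^1 \Bigl(\langle f'(A+tH),H\rangle - \langle f'(A),H\rangle\Bigr)\,dt,
\end{align*}
using the fundamental theorem of calculus for the scalar function $t\mapsto \tau_f(A+tH)$ together with the derivative formula $(D\tau_f)(B)[H]=\langle f'(B),H\rangle$ recorded just above the statement. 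Thus $|R_f(A;H)| \le \int_0^1 \|f'(A+tH)-f'(A)\|_2\,\|H\|_2\,dt$ by Cauchy--Schwarz in the Hilbert--Schmidt inner product.

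Next I would invoke the standard fact that the map $B\mapsto f'(B)$ on self-adjoint operators is Lipschitz in Hilbert--Schmidt norm with constant $\|f'\|_{\rm Lip}$; that is, $\|f'(B_1)-f'(B_2)\|_2 \le \|f'\|_{\rm Lip}\,\|B_1-B_2\|_2$ for all self-adjoint $B_1,B_2$. Applying this with $B_1=A+tH$, $B_2=A$ gives $\|f'(A+tH)-f'(A)\|_2 \le \|f'\|_{\rm Lip}\, t\,\|H\|_2$, and integrating $\int_0^1 t\,dt = 1/2$ yields exactly $|R_f(A;H)| \le \tfrac{1}{2}\|f'\|_{\rm Lip}\,\|H\|_2^2$.

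The one step that needs genuine justification is the Hilbert--Schmidt Lipschitz property of $B\mapsto f'(B)$. In finite dimensions this follows from the divided-difference formula \eqref{D_K_formula}: writing $g:=f'$, one has $g(B_1)-g(B_2)$ expressed through $g^{[1]}$ evaluated on the eigenvalues, and since $|g^{[1]}(\lambda,\mu)| = \bigl|\tfrac{g(\lambda)-g(\mu)}{\lambda-\mu}\bigr| \le \|g\|_{\rm Lip} = \|f'\|_{\rm Lip}$ for all $\lambda,\mu$, a Daleckii--Krein / double-operator-integral estimate gives the bound with no extra dimensional factor. I expect this to be the main (and only real) obstacle; it can be handled either by citing \cite{Bhatia} (where the operator-Lipschitz property of scalar functions under Hilbert--Schmidt perturbations with the sharp constant $\|g\|_{\rm Lip}$ is classical) or by a short self-contained computation in an orthonormal eigenbasis of $B_1$ and $B_2$ expanding $\|g(B_1)-g(B_2)\|_2^2$ entrywise and bounding each entry by $\|f'\|_{\rm Lip}^2$ times the corresponding entry of $\|B_1-B_2\|_2^2$. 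Everything else is routine.
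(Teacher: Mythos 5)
Your proposal is correct and follows essentially the same route as the paper: write $R_f(A;H)=\int_0^1\langle f'(A+tH)-f'(A),H\rangle\,dt$ via the derivative formula $(D\tau_f)(B)[H]=\langle f'(B),H\rangle$, bound by Cauchy--Schwarz, and use the Hilbert--Schmidt operator-Lipschitz property of $B\mapsto f'(B)$ with constant $\|f'\|_{\rm Lip}$ (the paper cites Potapov--Sukochev for this, whereas you cite Bhatia or give the elementary two-eigenbasis divided-difference computation, which is a perfectly adequate substitute in the finite-dimensional setting where the proposition is used).
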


\begin{proof}
Note that 
\begin{align*}
\frac{d}{dt} \tau_f(A+tH)= \langle f'(A+tH), H\rangle, t\in [0,1].
\end{align*}
Therefore, 
\begin{align*}
R_f(A;H)= \tau_f(A+H)-\tau_f(A) - \langle f'(A),H\rangle = \int_0^1 \langle f'(A+tH)-f'(A), H\rangle dt,
\end{align*}
implying the bound 
\begin{align*}
|R_f(A;H)| \leq \int_0^1 \|f'(A+tH)-f'(A)\|_2 dt \|H\|_2.
\end{align*}
It is well known that, if $f'$ is Lipschitz, then the operator function $A\mapsto f'(A)$ is also Lipschitz with respect 
to the Hilbert--Schmidt norm (see, e.g. \cite{Pot_Suk} and references therein), which implies that
\begin{align*} 
\|f'(A+tH)-f'(A)\|_2 \leq \|f'\|_{{\rm Lip}} t \|H\|_2, t\in [0,1].
\end{align*}
The claim of the proposition now easily follows. 

\qed
\end{proof}

Clearly, we have
\begin{align*}
\tau_f (\hat \Sigma_n)&= \tau_f(\Sigma)+ \langle f'(\Sigma), \hat \Sigma_n-\Sigma\rangle + R_f(\Sigma, \hat \Sigma_n-\Sigma),
\end{align*}
which implies that
\begin{align*}
\tau_f(\hat \Sigma_n)-{\mathbb E}\tau_f(\hat \Sigma_n)= \langle f'(\Sigma), \hat \Sigma_n-\Sigma\rangle + R_f(\Sigma, \hat \Sigma_n-\Sigma)-{\mathbb E}R_f(\Sigma, \hat \Sigma_n-\Sigma).
\end{align*}
Thus, the study of concentration and normal approximation properties of plug-in estimator $\tau_f(\hat \Sigma_n)$ reduces to the study of concentration and normal approximation 
of the linear term $\langle f'(\Sigma), \hat \Sigma_n-\Sigma\rangle$ and concentration of the remainder  $R_f(\Sigma, \hat \Sigma_n-\Sigma).$
In what follows, we will obtain concentration and normal approximation bounds with explicit dependence on the effective rank ${\bf r}(\Sigma)$
and some other relevant quantities.

First, we deal with concentration properties of random variables 
$\langle f'(\Sigma), \hat \Sigma_n-\Sigma\rangle.$ 
For this, we will review several well know facts. 

\begin{proposition}
\label{sub_exp}
Let $\{\xi_k\}$ be independent centered random variables with $\sup_{k\geq 1}\|\xi_k\|_{\psi_1}<\infty$ and let $\{a_k\}$
be a sequence of real numbers with $\sum_{k\geq 1}a_k^2<\infty.$ Then, the series $\sum_{k\geq 1}a_k \xi_k$ converges 
a.s. and, for all $p\geq 1,$ 
\begin{align*}
\Bigl\| \sum_{k\geq 1}a_k \xi_k \Bigr\|_{L_p} \lesssim \sup_{k\geq 1}\|\xi_k\|_{\psi_1} \Bigl(\Bigl(\sum_{k\geq 1}a_k^2\Bigr)^{1/2}\sqrt{p} \vee \sup_{k\geq 1}|a_k| p\Bigr).
\end{align*}
\end{proposition}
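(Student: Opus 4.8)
The plan is to prove Proposition \ref{sub_exp} by combining a Bernstein-type tail/moment inequality for sums of independent subexponential random variables with a standard truncation-free argument using the equivalence \eqref{Orlicz_L_p} between $\psi_\alpha$-norms and growth of $L_p$-norms. First I would record that each summand $a_k\xi_k$ is centered with $\|a_k\xi_k\|_{\psi_1} = |a_k|\,\|\xi_k\|_{\psi_1} \le |a_k| K$, where $K := \sup_{k\ge 1}\|\xi_k\|_{\psi_1}$. A subexponential random variable $\eta$ with $\|\eta\|_{\psi_1}\le b$ satisfies a Bernstein-type condition: $\|\eta\|_{L_p}\lesssim b\, p$ for all $p\ge 1$, and more importantly the centered sum obeys the two-regime bound. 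Concretely, for independent centered $\eta_k$ with $\|\eta_k\|_{\psi_1}\le b_k$, one has for all $p \ge 1$
\begin{align*}
\Bigl\|\sum_k \eta_k\Bigr\|_{L_p} \lesssim \Bigl(\sum_k b_k^2\Bigr)^{1/2}\sqrt{p} \;\vee\; \max_k b_k\, p,
\end{align*}
which is exactly the statement specialized to $\eta_k = a_k\xi_k$, $b_k = |a_k|K$, since $\sum_k b_k^2 = K^2\sum_k a_k^2$ and $\max_k b_k = K\sup_k|a_k|$, and the factor $K$ pulls out of both terms. So the whole proposition reduces to this one moment inequality for sums of independent centered subexponential variables.

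To establish that moment inequality, I would appeal to the standard machinery: either (i) cite a known reference — this is essentially Bernstein's inequality in moment form, available in Vershynin's book or in van der Vaart–Wellner — or (ii) give the short self-contained argument via the moment-generating function. For (ii): subexponentiality $\|\eta_k\|_{\psi_1}\le b_k$ gives $\E e^{\lambda \eta_k} \le e^{C\lambda^2 b_k^2}$ for $|\lambda| \le c/b_k$ (using $\E\eta_k = 0$); multiplying over $k$ and using the union-type restriction $|\lambda|\le c/\max_k b_k$ yields $\E \exp(\lambda \sum_k\eta_k) \le \exp(C\lambda^2 V)$ with $V := \sum_k b_k^2$, valid for $|\lambda|\le c/\max_k b_k$. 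A Chernoff bound then produces the two-regime tail $\P(|\sum_k\eta_k| \ge t) \le 2\exp(-c'(t^2/V \wedge t/\max_k b_k))$, and integrating the tail (equivalently, optimizing in the Chernoff exponent at level determined by $p$) gives the claimed $L_p$ bound. The a.s.\ convergence of $\sum_k a_k\xi_k$ follows from the $L_2$ bound (the case $p=2$, giving $\sum_k a_k^2 < \infty \Rightarrow$ the partial sums are Cauchy in $L_2$) together with Kolmogorov's theorem for sums of independent random variables, or simply from the fact that a martingale bounded in $L_2$ converges a.s.

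The only mild subtlety — and the place I would be most careful rather than the "hard part" — is the interplay between the $\psi_1$-norm as literally defined via the Orlicz function $\psi_1(u) = e^u - 1$ and the equivalent $L_p$-growth characterization \eqref{Orlicz_L_p}; one should make sure the constants are handled so that "$\|\eta\|_{\psi_1}\le b$" really does give $\E e^{\lambda\eta} \le e^{C\lambda^2 b^2}$ on the relevant range of $\lambda$, which is a routine two-line computation but needs the centering $\E\eta = 0$ to kill the linear term. Everything else is bookkeeping: pulling the supremum $K$ out of the bounds, identifying $\sum_k b_k^2$ and $\max_k b_k$, and converting tails to moments. I expect no genuine obstacle here; this proposition is a standard preparatory lemma, and the cleanest write-up simply cites the Bernstein moment inequality and then does the one-line substitution $\eta_k = a_k\xi_k$.
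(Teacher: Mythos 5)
Your proposal is correct and follows essentially the same route as the paper: apply Bernstein's inequality for sums of independent subexponential random variables (the paper cites Vershynin, Theorem 2.8.1) and convert the tail bound into the stated $L_p$ moment bound, with the a.s.\ convergence being a standard consequence. No gaps worth noting.
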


\begin{proof}
The result easily follows from Bernstein's inequality for sums of independent subexponential random variables (see \cite{Vershynin}, Theorem 2.8.1) by expressing the $p$-th moments in terms of the tail probabilities and bounding the integrals. 

\qed
\end{proof}

\begin{proposition}
\label{prop_lin_form}
Let $B$ be a bounded self-adjoint operator in ${\mathbb H}.$ Then, for all $p\geq 1,$
\begin{align*}
\Bigl\|\langle\hat \Sigma_n-\Sigma, B\rangle\Bigr\|_{L_p} \lesssim \|\Sigma^{1/2}B\Sigma^{1/2}\|_2 \sqrt{\frac{p}{n}} \vee \|\Sigma^{1/2}B\Sigma^{1/2}\| \frac{p}{n}.
\end{align*}
\end{proposition}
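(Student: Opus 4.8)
The plan is to compute $\langle \hat\Sigma_n-\Sigma, B\rangle$ explicitly and recognize it as a sum of i.i.d. centered random variables, then apply a suitable moment/tail bound. First I would write
\begin{align*}
\langle \hat\Sigma_n-\Sigma, B\rangle = \frac{1}{n}\sum_{j=1}^n \bigl(\langle X_j, BX_j\rangle - \E\langle X_1, BX_1\rangle\bigr),
\end{align*}
using that $\langle X_j\otimes X_j, B\rangle = \langle X_j, B X_j\rangle$ and $\E(X_1\otimes X_1)=\Sigma$. Since $X_1\sim N(0,\Sigma)$, the quadratic form $\langle X_1, B X_1\rangle$ is (in distribution) a weighted sum of independent $\chi^2_1$ variables: diagonalizing $\Sigma^{1/2} B \Sigma^{1/2} = \sum_k \gamma_k (e_k\otimes e_k)$ with $\{e_k\}$ orthonormal, one has $\langle X_1, B X_1\rangle \overset{d}{=} \sum_k \gamma_k g_k^2$ with $g_k$ i.i.d.\ standard normal, so the centered version is $\sum_k \gamma_k (g_k^2-1)$. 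The eigenvalues $\gamma_k$ are precisely the eigenvalues of $\Sigma^{1/2}B\Sigma^{1/2}$, hence $\bigl(\sum_k \gamma_k^2\bigr)^{1/2} = \|\Sigma^{1/2}B\Sigma^{1/2}\|_2$ and $\sup_k|\gamma_k| = \|\Sigma^{1/2}B\Sigma^{1/2}\|$.

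Next I would control the $\psi_1$-norm of the centered quadratic form. Each $\gamma_k(g_k^2-1)$ is subexponential with $\|\gamma_k(g_k^2-1)\|_{\psi_1}\lesssim |\gamma_k|$, and by the Bernstein-type bound for sums of independent subexponential random variables (as in Proposition \ref{sub_exp} with $\xi_k = g_k^2-1$ and $a_k=\gamma_k$), for all $p\geq 1$
\begin{align*}
\bigl\| \langle X_1, BX_1\rangle - \E\langle X_1, BX_1\rangle \bigr\|_{L_p} \lesssim \|\Sigma^{1/2}B\Sigma^{1/2}\|_2\, \sqrt{p} \ \vee\ \|\Sigma^{1/2}B\Sigma^{1/2}\|\, p.
\end{align*}
This identifies the ``variance proxy'' $\sigma^2 \asymp \|\Sigma^{1/2}B\Sigma^{1/2}\|_2^2$ and the ``subexponential scale'' $K \asymp \|\Sigma^{1/2}B\Sigma^{1/2}\|$ of the summands $Y_j := \langle X_j, BX_j\rangle - \E\langle X_1, BX_1\rangle$.

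Finally I would apply Bernstein's inequality (or the $L_p$ version thereof) to the normalized sum $\frac1n\sum_{j=1}^n Y_j$ of $n$ i.i.d.\ centered subexponential variables: this yields
\begin{align*}
\Bigl\| \frac1n\sum_{j=1}^n Y_j \Bigr\|_{L_p} \lesssim \frac{\sigma \sqrt{p}}{\sqrt{n}} \vee \frac{K p}{n} = \|\Sigma^{1/2}B\Sigma^{1/2}\|_2 \sqrt{\frac{p}{n}} \vee \|\Sigma^{1/2}B\Sigma^{1/2}\| \frac{p}{n},
\end{align*}
which is the claimed bound. The main technical point — and the only place one must be a little careful — is the reduction of the Gaussian quadratic form to a weighted sum of centered $\chi^2_1$'s with the correct weights; everything after that is a routine invocation of Bernstein's inequality and bookkeeping of moments versus tails. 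One should also note that the same argument works verbatim when $\Sigma^{1/2}B\Sigma^{1/2}$ has continuous spectrum, replacing the sum over $k$ by the corresponding spectral integral, so no finite-dimensionality is needed.
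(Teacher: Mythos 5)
Your proposal is correct and follows essentially the same route as the paper: the paper also writes $\langle\hat\Sigma_n-\Sigma,B\rangle=\sum_{i,j}n^{-1}\mu_j(g_{ij}^2-1)$ after diagonalizing $\Sigma^{1/2}B\Sigma^{1/2}$ and then applies its Proposition \ref{sub_exp} once to this full double array (with $a_{ij}=n^{-1}\mu_j$), which directly produces $\|\Sigma^{1/2}B\Sigma^{1/2}\|_2\sqrt{p/n}\vee\|\Sigma^{1/2}B\Sigma^{1/2}\|\,p/n$. Your two-stage version is fine provided the final invocation of Bernstein tracks the variance proxy $\sigma$ and the subexponential scale $K$ of the $Y_j$ as separate parameters (a sub-gamma Bernstein bound), since using only a single $\psi_1$-norm bound on $Y_j$ would put $\|\Sigma^{1/2}B\Sigma^{1/2}\|_2$ rather than the operator norm in the $p/n$ term; also note the paper handles infinite rank by finite-rank approximation, and since $\Sigma^{1/2}B\Sigma^{1/2}$ is compact the continuous-spectrum case you mention never actually arises.
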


\begin{proof}
It is enough to prove the inequality in the case when covariance operator $\Sigma$ is of finite rank $N$ (the general case easily follows by a finite rank approximation 
and passing to the limit as $N\to \infty$). Let $\phi_1, \dots, \phi_N$ be the orthonormal eigenvectors of operator $\Sigma^{1/2} B\Sigma^{1/2}$ (acting 
in ${\rm Im}(\Sigma)$) corresponding 
to its eigenvalues $\mu_1,\dots, \mu_N.$ Clearly, we can represent $X=\Sigma^{1/2}Z,$ $Z=\sum_{j=1}^N g_j \phi_j,$ where $\{g_j\}$ are i.i.d. standard normal random variables. 
We can also represent $X_i =\Sigma^{1/2}Z_i, i=1,\dots, n,$ 
$Z_i= \sum_{j=1}^N g_{ij} \phi_j, i=1,\dots, n$ with $\{g_{ij}\}$ being i.i.d. standard normal.
Then, it follows that 
\begin{align*}
\langle B X, X\rangle = \langle \Sigma^{1/2}B\Sigma^{1/2}Z,Z\rangle = \sum_{j=1}^N \mu_j g_j^2
\end{align*}
and 
\begin{align*}
\langle \hat \Sigma_n-\Sigma, B\rangle&= n^{-1} \sum_{i=1}^n \langle B X_i, X_i\rangle-{\mathbb E}\langle BX,X\rangle
\\
&
=\sum_{i=1}^n \sum_{j=1}^N n^{-1} \mu_j (g_{i,j}^2-1).
\end{align*}
Since $\|g_{i,j}^2-1\|_{\psi_1}\lesssim 1,$
we can apply the bound of Proposition \ref{sub_exp} to get
\begin{align*}
\Bigl\|\langle \hat \Sigma_n-\Sigma, B\rangle\Bigr\|_{L_p} \lesssim \Bigl(\sum_{j=1}^N \mu_j^2\Bigr)^{1/2} \sqrt{\frac{p}{n}}\vee \max_{1\leq j\leq N}|\mu_j| \frac{p}{n},
\end{align*}
implying the claim.

\qed
\end{proof}


In particular, we will use the following immediate corollary of Proposition \ref{prop_lin_form}.

\begin{proposition}
\label{prop_lin}
Let $f\in C^1({\mathbb R}_+).$
Then, for all $p\geq 1,$
\begin{align*}
\Bigl\|\langle f'(\Sigma), \hat \Sigma_n-\Sigma\rangle\Bigr\|_{L_p} \lesssim \|\Sigma f'(\Sigma)\|_2 \sqrt{\frac{p}{n}} \vee \|\Sigma f'(\Sigma)\| \frac{p}{n}.
\end{align*}
\end{proposition}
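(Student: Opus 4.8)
The plan is simply to specialize Proposition~\ref{prop_lin_form} to the operator $B=f'(\Sigma)$. First I would observe that this is a legitimate choice: since $f\in C^1({\mathbb R}_+)$ and $\Sigma$ is a compact positively semidefinite operator whose spectrum lies in the compact interval $[0,\|\Sigma\|]$, the function $f'$ is continuous and hence bounded on $[0,\|\Sigma\|]$, so $f'(\Sigma)$ is a well-defined bounded self-adjoint operator and $\langle f'(\Sigma),\hat\Sigma_n-\Sigma\rangle=\langle\hat\Sigma_n-\Sigma,f'(\Sigma)\rangle$ by symmetry of the Hilbert--Schmidt inner product. Thus Proposition~\ref{prop_lin_form} is applicable with this $B$.

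The only computation needed is the identity
\begin{align*}
\Sigma^{1/2}f'(\Sigma)\Sigma^{1/2}=\Sigma f'(\Sigma),
\end{align*}
which holds because $\Sigma^{1/2}$, $\Sigma$ and $f'(\Sigma)$ are all functions of the same self-adjoint operator $\Sigma$ and therefore commute: diagonalizing $\Sigma$, on the eigenspace associated with eigenvalue $\lambda$ all three act as the scalars $\lambda^{1/2},\lambda,f'(\lambda)$, and $\lambda^{1/2}f'(\lambda)\lambda^{1/2}=\lambda f'(\lambda)$. Consequently $\|\Sigma^{1/2}f'(\Sigma)\Sigma^{1/2}\|_2=\|\Sigma f'(\Sigma)\|_2$ and $\|\Sigma^{1/2}f'(\Sigma)\Sigma^{1/2}\|=\|\Sigma f'(\Sigma)\|$, where the second is the operator norm.

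Plugging $B=f'(\Sigma)$ and these two norm identities into the bound of Proposition~\ref{prop_lin_form} gives exactly
\begin{align*}
\Bigl\|\langle f'(\Sigma),\hat\Sigma_n-\Sigma\rangle\Bigr\|_{L_p}\lesssim \|\Sigma f'(\Sigma)\|_2\sqrt{\frac{p}{n}}\vee \|\Sigma f'(\Sigma)\|\frac{p}{n},
\end{align*}
which is the assertion. There is essentially no obstacle here: the statement is a one-line corollary of the previous proposition. The only point deserving a line of justification is the functional-calculus identity above (equivalently, the fact that $f'(\Sigma)$ is a bounded operator, handled by compactness of the spectrum of $\Sigma$), after which the result is a direct substitution.
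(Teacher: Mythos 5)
Your proof is correct and is exactly the route the paper intends: the paper presents Proposition \ref{prop_lin} as an immediate corollary of Proposition \ref{prop_lin_form} with $B=f'(\Sigma)$, using the commutativity identity $\Sigma^{1/2}f'(\Sigma)\Sigma^{1/2}=\Sigma f'(\Sigma)$. Nothing is missing.
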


Another immediate consequence of Proposition \ref{prop_lin_form} is the following concentration bound for ${\rm tr}(\hat \Sigma_n).$

\begin{proposition}
\label{trace_conc}
For all $p\geq 1,$
\begin{align*}
\Bigl\|{\rm tr}(\hat \Sigma_n)-{\rm tr}(\Sigma)\Bigr\|_{L_p} \lesssim \|\Sigma\|_2 \sqrt{\frac{p}{n}} \vee \|\Sigma\| \frac{p}{n}.
\end{align*}
\end{proposition}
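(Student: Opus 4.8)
The plan is to observe that ${\rm tr}(\hat \Sigma_n)-{\rm tr}(\Sigma)$ is a linear form in $\hat\Sigma_n-\Sigma$ and to apply Proposition \ref{prop_lin_form} with a suitable choice of $B$. Concretely, I would take $B=I$, the identity operator in ${\mathbb H}$, which is bounded and self-adjoint, so that $\langle \hat\Sigma_n-\Sigma, I\rangle = {\rm tr}(\hat\Sigma_n-\Sigma) = {\rm tr}(\hat\Sigma_n)-{\rm tr}(\Sigma).$ The pairing against $I$ is legitimate because $\hat\Sigma_n$ is of finite rank and $\Sigma$ is trace class, hence $\hat\Sigma_n-\Sigma$ is a trace class operator; should one be uneasy about using the (non-Hilbert--Schmidt) identity, one can instead first reduce to the case of finite rank $\Sigma$, exactly as in the proof of Proposition \ref{prop_lin_form}, and then pass to the limit as the rank tends to infinity.

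With $B=I$ we have $\Sigma^{1/2}B\Sigma^{1/2}=\Sigma$, whence $\|\Sigma^{1/2}B\Sigma^{1/2}\|_2=\|\Sigma\|_2$ and $\|\Sigma^{1/2}B\Sigma^{1/2}\|=\|\Sigma\|$. Substituting these two identities into the conclusion of Proposition \ref{prop_lin_form} yields exactly
\begin{align*}
\Bigl\|{\rm tr}(\hat\Sigma_n)-{\rm tr}(\Sigma)\Bigr\|_{L_p}\lesssim \|\Sigma\|_2\sqrt{\frac{p}{n}}\vee \|\Sigma\|\frac{p}{n},
\end{align*}
which is the assertion.

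An equivalent route, bypassing Proposition \ref{prop_lin_form}, would be to compute directly: ${\rm tr}(\hat\Sigma_n)=n^{-1}\sum_{i=1}^n\|X_i\|^2$, and after a finite rank truncation one writes $X_i=\Sigma^{1/2}Z_i$ with $Z_i$ expanded in an orthonormal eigenbasis of $\Sigma$, so that ${\rm tr}(\hat\Sigma_n)-{\rm tr}(\Sigma)=\sum_{i=1}^n\sum_j n^{-1}\lambda_j(g_{ij}^2-1)$ with $\{g_{ij}\}$ i.i.d.\ standard normal and $\lambda_j$ the eigenvalues of $\Sigma$; since $\|g_{ij}^2-1\|_{\psi_1}\lesssim 1$, Proposition \ref{sub_exp} gives the same bound, with $\bigl(\sum_j\lambda_j^2\bigr)^{1/2}=\|\Sigma\|_2$ and $\sup_j\lambda_j=\|\Sigma\|$. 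There is no genuine difficulty in this statement; the only point that warrants a line of justification is the well-definedness of the Hilbert--Schmidt pairing against the identity, and that is handled by the finite rank reduction.
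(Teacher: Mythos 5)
Your proof is correct and is exactly the paper's intended argument: the paper presents this proposition as an immediate consequence of Proposition \ref{prop_lin_form} with $B=I$, so that $\Sigma^{1/2}B\Sigma^{1/2}=\Sigma$ gives the stated norms. Your remarks on the finite rank reduction and the alternative direct computation via Proposition \ref{sub_exp} are consistent with how Proposition \ref{prop_lin_form} itself is proved, so no further comment is needed.
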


Next we look at normal approximation in Wasserstein $W_2$-distances of random variables 
\begin{align*}
G_n(f):= \sqrt{\frac{n}{2}} \langle f'(\Sigma), \hat \Sigma_n-\Sigma\rangle, f\in C^1({\mathbb R}_+).
\end{align*}
Recall that $G_{\Sigma}(f), f\in C^1({\mathbb R}_+)$ is a centered Gaussian process with covariance function 
\begin{align*}
{\mathbb E} G_{\Sigma}(f) G_{\Sigma}(g):= {\rm tr} (\Sigma^2 f'(\Sigma)g'(\Sigma)), f,g\in C^1({\mathbb R}_+).
\end{align*}

\begin{proposition}
\label{norm_approx_lin}
For all $f\in C^1({\mathbb R}_+)$ and all $p\geq 1,$
\begin{align*}
W_p(G_n(f), G_{\Sigma}(f)) \lesssim \frac{\|\Sigma f'(\Sigma)\|_2\sqrt{p}\vee \|\Sigma f'(\Sigma)\| p}{\sqrt{n}}. 
\end{align*}
\end{proposition}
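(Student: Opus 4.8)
The plan is to reduce the statement to a one-dimensional normal approximation for a sum of i.i.d. centered random variables and then invoke a quantitative CLT in Wasserstein distance. First I would recall from the proof of Proposition \ref{prop_lin_form} the explicit representation: assuming (as there) that $\Sigma$ has finite rank $N$, writing $X_i = \Sigma^{1/2}Z_i$ with $Z_i = \sum_{j=1}^N g_{ij}\phi_j$, where $\phi_1,\dots,\phi_N$ are the orthonormal eigenvectors of $\Sigma^{1/2}f'(\Sigma)\Sigma^{1/2}$ with eigenvalues $\mu_1,\dots,\mu_N$ and $\{g_{ij}\}$ are i.i.d. standard normal, one gets
\begin{align*}
\langle f'(\Sigma), \hat\Sigma_n-\Sigma\rangle = \frac{1}{n}\sum_{i=1}^n \sum_{j=1}^N \mu_j (g_{ij}^2-1) = \frac{1}{n}\sum_{i=1}^n Y_i,
\end{align*}
where $Y_i := \sum_{j=1}^N \mu_j(g_{ij}^2-1)$ are i.i.d., centered, with $\mathrm{Var}(Y_i) = 2\sum_j \mu_j^2 = 2\|\Sigma^{1/2}f'(\Sigma)\Sigma^{1/2}\|_2^2 = 2\|\Sigma f'(\Sigma)\|_2^2$ (using cyclicity of the trace and self-adjointness). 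Hence $G_n(f) = \sqrt{n/2}\,\langle f'(\Sigma),\hat\Sigma_n-\Sigma\rangle = \frac{1}{\sqrt{n}}\sum_{i=1}^n \tilde Y_i$ with $\tilde Y_i := Y_i/\sqrt{2}$, so $\mathrm{Var}(\tilde Y_i) = \|\Sigma f'(\Sigma)\|_2^2 = \mathrm{Var}(G_\Sigma(f))$, i.e. $G_n(f)$ is a normalized sum of i.i.d. centered variables whose variance matches exactly that of the limiting Gaussian.

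Next I would apply a quantitative central limit theorem in the $W_p$ metric for normalized sums of i.i.d. variables — for instance a Berry–Esseen / Wasserstein bound of the form $W_p\bigl(\frac{1}{\sqrt n}\sum \tilde Y_i,\ N(0,\sigma^2)\bigr) \lesssim_p \frac{\|\tilde Y_1\|_{L_{2p}}^{?}\cdots}{\sqrt n}$; more concretely, since each $\tilde Y_i$ is a centered quadratic form in Gaussians it lies in a fixed (second) Wiener chaos and has $\|\tilde Y_i\|_{\psi_1}\lesssim \|\Sigma f'(\Sigma)\|_2 \vee \|\Sigma f'(\Sigma)\|$ (this follows from Proposition \ref{sub_exp} applied to $\sum_j \mu_j(g_{ij}^2-1)$, exactly as in the proof of Proposition \ref{prop_lin_form}, together with the equivalence \eqref{Orlicz_L_p}). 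A standard moment-based CLT bound in Wasserstein-$p$ then yields
\begin{align*}
W_p(G_n(f), G_\Sigma(f)) \lesssim \frac{\|\tilde Y_1\|_{L_p}\vee \|\tilde Y_1\|_{L_{2p}}}{\sqrt n} \vee \cdots \lesssim \frac{\|\Sigma f'(\Sigma)\|_2\sqrt{p} \vee \|\Sigma f'(\Sigma)\|\,p}{\sqrt n},
\end{align*}
where in the last step I use that $\|\tilde Y_1\|_{L_q} \lesssim \|\Sigma f'(\Sigma)\|_2 \sqrt q \vee \|\Sigma f'(\Sigma)\| q$ for all $q\geq 1$ (again Proposition \ref{sub_exp}), and $q$ here is a fixed multiple of $p$. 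Finally, the finite-rank restriction is removed by the same finite-rank approximation argument used in Proposition \ref{prop_lin_form}: approximate $\Sigma$ by its rank-$N$ truncations, note that both $G_n(f)$ and $G_\Sigma(f)$ (and hence $W_p$ between them) are continuous in $\Sigma$ with respect to the quantities $\|\Sigma f'(\Sigma)\|_2$ and $\|\Sigma f'(\Sigma)\|$ appearing in the bound, and pass to the limit $N\to\infty$.

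The main obstacle is locating and invoking the right quantitative CLT: one needs a Wasserstein-$W_p$ (not just $W_1$ or Kolmogorov) bound for sums of i.i.d. centered variables with good dependence both on $n$ (rate $n^{-1/2}$) and on $p$ (the stated bound grows like $\sqrt p$ in the ``Gaussian'' regime and like $p$ in the ``subexponential'' regime, matching the two-regime structure of the $\psi_1$ tail of $\tilde Y_1$). This is exactly the kind of bound available for variables in a fixed Wiener chaos, or one can derive it by a Lindeberg/Stein-type swapping argument keeping track of the $L_q$-norms of $\tilde Y_1$; either way the key input is the $\psi_1$ (equivalently, the two-regime $L_q$) control of $\tilde Y_1$ established via Proposition \ref{sub_exp}, and everything else is bookkeeping.
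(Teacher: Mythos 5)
Your reduction is set up correctly: aggregating over the eigendirections gives $G_n(f)=n^{-1/2}\sum_{i=1}^n \tilde Y_i$ with i.i.d.\ centered $\tilde Y_i$ whose variance is exactly $\|\Sigma f'(\Sigma)\|_2^2$, and Proposition \ref{sub_exp} does give $\|\tilde Y_1\|_{L_q}\lesssim \|\Sigma f'(\Sigma)\|_2\sqrt q\vee\|\Sigma f'(\Sigma)\|q$. But the argument then rests entirely on a quantitative CLT in $W_p$ of the form $W_p(n^{-1/2}\sum\tilde Y_i,\,\sigma Z)\lesssim \|\tilde Y_1\|_{L_{cp}}/\sqrt n$ with constants uniform in $p$, and this is precisely the step you do not supply -- you yourself call it the main obstacle. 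It is not a bookkeeping issue: the whole content of the proposition is the two-regime dependence $\|\Sigma f'(\Sigma)\|_2\sqrt p\vee\|\Sigma f'(\Sigma)\|p$, in which the large factor $p$ multiplies only the (possibly much smaller) operator norm. The off-the-shelf results you could cite do not deliver this. Treating $\tilde Y_1$ as a generic subexponential variable (its $\psi_1$-norm is of order $\|\Sigma f'(\Sigma)\|_2\vee\|\Sigma f'(\Sigma)\|\asymp\|\Sigma f'(\Sigma)\|_2$) and applying a $\psi_1$-type CLT bound, e.g.\ Rio's theorem applied directly to the sum over $i$, yields only $W_p\lesssim p\,\|\Sigma f'(\Sigma)\|_2/\sqrt n$, which is strictly weaker than the statement whenever $\|\Sigma f'(\Sigma)\|\ll\|\Sigma f'(\Sigma)\|_2$ (the interesting, high effective rank regime). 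A Lindeberg or Stein swapping argument that tracks the full Bernstein-type moment profile of $\tilde Y_1$ and still produces the $\sqrt p$-versus-$p$ split would have to be carried out in detail; as written, the proof has a genuine gap at its central step.

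The paper avoids this by performing the coupling in the opposite order. Writing $G_n(f)=\sum_k\lambda_k f'(\lambda_k)\,\xi_{n,k}$ with $\xi_{n,k}=(2n)^{-1/2}\sum_i(g_{i,k}^2-1)$, it applies Rio's theorem \emph{within each eigendirection} to the fixed, standardized chi-square sum $\xi_{n,k}$, obtaining couplings $(\bar\xi_k,\bar\eta_k)$, independent over $k$, with $\|\bar\xi_k-\bar\eta_k\|_{\psi_1}\lesssim n^{-1/2}$ (the constant is absolute because the law of $\xi_{n,k}$ does not depend on $k$). The coupled difference is then $\sum_k\lambda_k f'(\lambda_k)(\bar\xi_k-\bar\eta_k)$, a weighted sum of independent centered subexponential variables, and Proposition \ref{sub_exp} applied to \emph{this} sum automatically produces $n^{-1/2}(\|\Sigma f'(\Sigma)\|_2\sqrt p\vee\|\Sigma f'(\Sigma)\|p)$. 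If you want to salvage your route, you would need to either prove the refined i.i.d.\ $W_p$ bound you invoke, or switch to this coordinatewise coupling, where the only one-dimensional normal approximation needed is for a single fixed distribution and the $p$-dependence comes for free from Bernstein's inequality.
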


\begin{proof}
We will use the construction from the proof of Proposition \ref{prop_lin_form} with $\phi_1,\phi_2,\dots$ being the orthonormal eigenvectors of covariance operator $\Sigma$
corresponding to its eigenvalues $\lambda_1\geq \lambda_2\geq \dots.$ Then $X=\sum_{k\geq 1} \sqrt{\lambda}_k g_k \phi_k$ with i.i.d. standard normal r.v. $\{g_k\}$ and 
$X_j=\sum_{k\geq 1} \sqrt{\lambda}_k g_{j,k} \phi_k, j=1,\dots, n$ with  i.i.d. standard normal r.v. $\{g_{j,k}\}.$
The following representation holds for stochastic process $G_n(f), f\in C^1({\mathbb R}_+):$ 
\begin{align*}
G_n(f) = \sum_{k\geq 1} \lambda_k f'(\lambda_k) \frac{1}{\sqrt{2n}}\sum_{i=1}^n (g_{i,k}^2-1).
\end{align*}
On the other hand, Gaussian process $G_{\Sigma}(f)$ could be represented as 
\begin{align*}
G_{\Sigma}(f) := \sum_{k\geq 1} \lambda_k f'(\lambda_k) \eta_k,
\end{align*}
where $\{\eta_k\}$ are also i.i.d. standard normal r.v. Note that the series representing processes $G_n(f)$ and $G_{\Sigma}(f)$ converge a.s. 
and also in the $\psi_1$-norm (see Proposition \ref{sub_exp}).
By Theorem 2.1 in \cite{Rio}, for i.i.d. $g_i\sim N(0,1)$ and $\eta\sim N(0,1),$
\begin{align*}
W_{\psi_1}\Bigl(\frac{1}{\sqrt{2n}}\sum_{i=1}^n (g_{i}^2-1), \eta\Bigr) \lesssim \frac{1}{\sqrt{n}},
\end{align*} 
which implies that 
\begin{align*}
\sup_{k\geq1}W_{\psi_1}\Bigl(\frac{1}{\sqrt{2n}}\sum_{i=1}^n (g_{i,k}^2-1), \eta_k\Bigr) \lesssim \frac{1}{\sqrt{n}}
\end{align*}
and, moreover, random vectors $(g_{1,k},\dots, g_{n,k}, \eta_k), k\geq 1$ are independent. Therefore, one can construct independent random variables 
$(\bar \xi_k, \bar \eta_k), k\geq 1,$ such that 
\begin{align*}
\bar \xi_k \overset{d}{=}\frac{1}{2\sqrt{n}}\sum_{i=1}^n (g_{i}^2-1),\  \bar \eta_k\overset{d}{=}\eta_k
\end{align*}
and 
\begin{align*}
\sup_{k\geq 1}\|\bar \xi_k-\bar \eta_k\|_{\psi_1}\lesssim \frac{1}{\sqrt{n}}.
\end{align*}
Let $\zeta_k:= \bar \xi_k-\bar \eta_k, k\geq 1.$
Then, $\{\zeta_k\}$ are independent centered r.v. with $\sup_{k\geq 1}\|\zeta_k\|_{\psi_1}\lesssim \frac{1}{\sqrt{n}}.$ 
Also, stochastic process 
\begin{align*}
\bar G_n(f) := \sum_{k\geq 1} \lambda_k f'(\lambda_k) \bar \xi_k, f\in {\mathcal F}
\end{align*} 
has the same finite dimensional distributions as $G_n(f), f\in {\mathcal F}$ and stochastic process 
\begin{align*}
\bar G_{\Sigma}(f) := \sum_{k\geq 1} \lambda_k f'(\lambda_k) \bar \eta_k, f\in {\mathcal F}
\end{align*}
has the same finite dimensional distributions as $G_{\Sigma}(f), f\in {\mathcal F}.$ 
By Proposition \ref{sub_exp}, for all $p\geq 1,$
\begin{align*}
&
W_p(G_n(f), G_{\Sigma}(f)) \leq \|\bar G_n(f)-\bar G_{\Sigma}(f)\|_{L_p} 
= \Bigl\|\sum_{k\geq 1} \lambda_k f'(\lambda_k)\zeta_k\Bigr\|_{L_p} 
\\
&
\lesssim \frac{1}{\sqrt{n}}\Bigl(\Bigl(\sum_{k\geq 1}\lambda_k^2 (f'(\lambda_k))^2\Bigr)\sqrt{p}\vee 
\sup_{k\geq 1}|\lambda_k||f'(\lambda_k)| p\Bigr),
\end{align*}
which completes the proof.

\qed
\end{proof}

Note that $\frac{G_{\Sigma}(f)}{\|\Sigma f'(\Sigma)\|_2}$ is a standard normal random variable. Thus, the bound of Proposition \ref{norm_approx_lin} implies that 
\begin{align*}
W_p\Bigl(\frac{G_n(f)}{\|\Sigma f'(\Sigma)\|_2}, Z\Bigr) \lesssim \frac{\sqrt{p}}{\sqrt{n}}\vee \frac{\|\Sigma f'(\Sigma)\|}{\|\Sigma f'(\Sigma)\|_2}\frac{p}{\sqrt{n}} \lesssim \frac{p}{\sqrt{n}},
\end{align*}
yielding that 
\begin{align*}
W_{\psi_1}\Bigl(\frac{G_n(f)}{\|\Sigma f'(\Sigma)\|_2}, Z\Bigr) \lesssim \frac{1}{\sqrt{n}},
\end{align*}
where $Z$ is a standard normal random variable.

Several bounds for sample covariance operators will be useful in the study of concentration properties of the remainder 
$R_f(\Sigma, \hat \Sigma_n-\Sigma)$ of the first order Taylor expansion as well as for other purposes. Its proof is based 
on 

\begin{proposition}
\label{HS_error} 
The following bound holds:
\begin{align*}
{\mathbb E}^{1/2}\|\hat \Sigma_n-\Sigma\|_2^2 \lesssim \|\Sigma\| \frac{{\bf r}(\Sigma)}{\sqrt{n}}.
\end{align*}
\end{proposition}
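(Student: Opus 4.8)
The plan is to compute the second moment of $\|\hat\Sigma_n-\Sigma\|_2$ exactly, exploiting the i.i.d. sum structure, and then bound the resulting quantities by the effective rank ${\bf r}(\Sigma)$.

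First I would write $\hat\Sigma_n-\Sigma = n^{-1}\sum_{i=1}^n(X_i\otimes X_i-\Sigma)$ as a normalized sum of i.i.d. centered random elements of the Hilbert--Schmidt space ${\mathcal S}_2$. Since the summands are centered and independent, $\mathbb{E}\|\hat\Sigma_n-\Sigma\|_2^2 = n^{-1}\mathbb{E}\|X\otimes X-\Sigma\|_2^2$. Expanding the square and using $\mathbb{E}(X\otimes X)=\Sigma$ together with $\|X\otimes X\|_2=\|X\|^2$ and $\langle X\otimes X,\Sigma\rangle=\langle \Sigma X,X\rangle$, this reduces to $n^{-1}\bigl(\mathbb{E}\|X\|^4 - \|\Sigma\|_2^2\bigr)$.

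Next I would evaluate $\mathbb{E}\|X\|^4$ via the Karhunen--Lo\`eve expansion $X=\sum_k\sqrt{\lambda_k}\,g_k\phi_k$ with $\{g_k\}$ i.i.d. standard normal and $\lambda_1\geq\lambda_2\geq\cdots$ the eigenvalues of $\Sigma$, so that $\|X\|^2=\sum_k\lambda_k g_k^2$. A direct computation using $\mathbb{E}g_k^4=3$ and independence gives $\mathbb{E}\|X\|^4 = 3\sum_k\lambda_k^2 + \sum_{k\neq l}\lambda_k\lambda_l = 2\|\Sigma\|_2^2 + ({\rm tr}(\Sigma))^2$, hence $\mathbb{E}\|\hat\Sigma_n-\Sigma\|_2^2 = n^{-1}\bigl(\|\Sigma\|_2^2 + ({\rm tr}(\Sigma))^2\bigr)$. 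Finally, $\|\Sigma\|_2^2=\sum_k\lambda_k^2\leq \|\Sigma\|\,{\rm tr}(\Sigma)=\|\Sigma\|^2{\bf r}(\Sigma)$ and $({\rm tr}(\Sigma))^2=\|\Sigma\|^2{\bf r}(\Sigma)^2$; since ${\bf r}(\Sigma)\geq 1$ both terms are $\leq \|\Sigma\|^2{\bf r}(\Sigma)^2$, so $\mathbb{E}\|\hat\Sigma_n-\Sigma\|_2^2\leq 2\|\Sigma\|^2{\bf r}(\Sigma)^2/n$, which yields the claim with absolute constant $\sqrt{2}$.

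The argument is elementary and I do not expect a genuine obstacle. The only points needing a little care are the interchange of expectation with the infinite series when ${\rm dim}({\mathbb H})=\infty$ (handled by a finite-rank truncation of $\Sigma$ and monotone convergence, exactly as in the proof of Proposition \ref{prop_lin_form}) and the bookkeeping in the fourth-moment identity for $\|X\|^2$.
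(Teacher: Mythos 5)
Your proof is correct. The opening reduction is the same as the paper's: both write $\hat\Sigma_n-\Sigma$ as a normalized i.i.d. sum in ${\mathcal S}_2$ and reduce to ${\mathbb E}\|X\otimes X-\Sigma\|_2^2/n$, hence to controlling ${\mathbb E}\|X\|^4$. Where you diverge is in how that fourth moment is handled. You compute it exactly from the Karhunen--Lo\`eve expansion, obtaining the identity ${\mathbb E}\|\hat\Sigma_n-\Sigma\|_2^2=n^{-1}\bigl(\|\Sigma\|_2^2+({\rm tr}(\Sigma))^2\bigr)$ and the explicit constant $\sqrt{2}$ after using ${\bf r}(\Sigma)\geq 1$. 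The paper instead discards the cross term (bounding ${\mathbb E}\|X\otimes X-\Sigma\|_2^2\leq {\mathbb E}\|X\otimes X\|_2^2={\mathbb E}\|X\|^4$) and then bounds ${\mathbb E}^{1/4}\|X\|^4$ by ${\mathbb E}\|X\|+C\|\Sigma\|^{1/2}$ via the Gaussian concentration inequality for the Lipschitz function $\|X\|$. Your route is more elementary and slightly sharper (an exact identity rather than an upper bound, and no unspecified constant); the paper's route costs nothing extra because the Gaussian concentration machinery is already deployed throughout Section 3, and it adapts more directly to situations where exact Gaussian moment computations are unavailable. Your remark about justifying the infinite-series interchange by finite-rank truncation is the right care to take and matches how the paper handles the infinite-dimensional case elsewhere.
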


\begin{proof}
Indeed,
\begin{align*}
{\mathbb E}\|\hat \Sigma_n-\Sigma\|_2^2 = \frac{{\mathbb E}\|X\otimes X-\Sigma\|_2^2}{n}
\leq \frac{{\mathbb E}\|X\otimes X\|_2^2}{n}= \frac{{\mathbb E}\|X\|^4}{n}.
\end{align*}
It follows from the Gaussian concentration inequality that 
\begin{align*}
{\mathbb E}^{1/4} (\|X\|-{\mathbb E}\|X\|)^4 \lesssim \|\Sigma\|^{1/2},
\end{align*}
which implies that 
\begin{align*}
{\mathbb E}^{1/4}\|X\|^4 \leq {\mathbb E}\|X\| + C \|\Sigma\|^{1/2} \leq 
{\mathbb E}^{1/2}\|X\|^2 + C \|\Sigma\|^{1/2} = \|\Sigma\|^{1/2}\sqrt{{\bf r}(\Sigma)}+ C \|\Sigma\|^{1/2}
\lesssim \|\Sigma\|^{1/2}\sqrt{{\bf r}(\Sigma)}.
\end{align*}
The claim now easily follows.

\qed
\end{proof}

The following result could be viewed as an extension of the bound of Proposition \ref{HS_error} to Schatten $p$-norms for $p\geq 2.$
Its proof is based on  non-commutative Khintchine inequality \cite{Lust_Piquard, LP-Pisier} (these inequalities were first used to obtain 
bounds for the sample covariance in \cite{Rudelson}).


\begin{proposition}
\label{prop_schat}
Suppose ${\bf r}(\Sigma)\lesssim n.$
Then, for all $p\geq 2,$
\begin{align*}
{\mathbb E}^{1/p} \|\hat \Sigma_n-\Sigma\|_p^p
&
\lesssim
 \|\Sigma\| {\bf r}(\Sigma)^{1/p}(\sqrt{{\bf r}(\Sigma)}\vee \sqrt{p\log n}) \Bigl(1\vee \Bigl(\frac{p}{n}\Bigr)^{1/4-1/2p}\vee \Bigl(\frac{p}{n}\Bigr)^{1/2-1/p}\Bigr)\sqrt{\frac{p}{n}}
\\
&
\lesssim_p 
\|\Sigma\| {\bf r}(\Sigma)^{1/p} \Bigl(\sqrt{\frac{{\bf r}(\Sigma)}{n}} \vee \sqrt{\frac{\log n}{n}}\Bigr).
\end{align*}
\end{proposition}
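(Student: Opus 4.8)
The plan is to reduce the estimate to a bound on the symmetrized version $\left\|\sum_{i=1}^n \varepsilon_i\, X_i\otimes X_i\right\|_p$, where $\{\varepsilon_i\}$ are i.i.d. Rademacher signs independent of the data, and then to apply the noncommutative Khintchine inequality of \cite{Lust_Piquard, LP-Pisier} (in the form used for sample covariances in \cite{Rudelson}). First I would invoke the standard symmetrization inequality in $L_p$ of a Banach space (here $\mathcal S_p$ of self-adjoint operators), which gives
\begin{align*}
{\mathbb E}^{1/p}\|\hat\Sigma_n-\Sigma\|_p^p \lesssim \frac{1}{n}\,{\mathbb E}^{1/p}\Bigl\|\sum_{i=1}^n \varepsilon_i\, X_i\otimes X_i\Bigr\|_p^p.
\end{align*}
Conditioning on $X_1,\dots,X_n$ and applying the noncommutative Khintchine inequality to the sum of signed rank-one operators $\varepsilon_i (X_i\otimes X_i)$ produces, for $p\ge 2$, an upper bound of the order $\sqrt p$ times the Schatten-$p$ norm of the "square function'' operators $\bigl(\sum_i (X_i\otimes X_i)^2\bigr)^{1/2} = \bigl(\sum_i \|X_i\|^2\, X_i\otimes X_i\bigr)^{1/2}$. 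Taking the $L_p$ norm over the data and using $\|A^{1/2}\|_p = \|A\|_{p/2}^{1/2}$, the problem is reduced to controlling ${\mathbb E}^{1/p}\bigl\|\sum_{i=1}^n \|X_i\|^2\, X_i\otimes X_i\bigr\|_{p/2}^{p/2}$, i.e. a weighted analogue of a sample covariance.

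The next step is to split that weighted sum by isolating the deterministic part: $\sum_i \|X_i\|^2 X_i\otimes X_i = \sum_i \|X_i\|^2(X_i\otimes X_i - \Sigma) + \bigl(\sum_i \|X_i\|^2\bigr)\Sigma$. The second term contributes $\|\Sigma\|_{p/2}$ times $\sum_i\|X_i\|^2$, whose $L_{p/2}$ norm is controlled via the Gaussian concentration bounds for $\|X\|$ (as in the proof of Proposition \ref{HS_error}): ${\mathbb E}^{1/2}\|X\|^2 = \|\Sigma\|^{1/2}\sqrt{{\bf r}(\Sigma)}$ and the fluctuations of $\|X\|$ are of order $\|\Sigma\|^{1/2}$, while $\|\Sigma\|_{p/2} \le {\bf r}(\Sigma)^{2/p}\|\Sigma\|$. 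The first term requires a further iteration: one removes the weights $\|X_i\|^2$ by a truncation argument — on the event $\bigcap_i\{\|X_i\|^2 \le C\|\Sigma\|({\bf r}(\Sigma)\vee p\log n)\}$, which by Gaussian concentration and a union bound has probability $1-n^{-cp}$ for a large enough constant, one may pull the weight out as a scalar $\lesssim \|\Sigma\|({\bf r}(\Sigma)\vee p\log n)$ and reduce back to ${\mathbb E}^{1/p}\|\hat\Sigma_n-\Sigma\|_{p/2}^{p/2}$ — while on the complement a crude deterministic bound plus the small probability suffices. This sets up a recursion in the Schatten index ($p \mapsto p/2$) that terminates at $p=2$, where Proposition \ref{HS_error} provides the base estimate ${\mathbb E}^{1/2}\|\hat\Sigma_n-\Sigma\|_2^2 \lesssim \|\Sigma\|{\bf r}(\Sigma)/\sqrt n$.

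Running the recursion, each halving of the index multiplies the bound by a factor of the shape $\sqrt p\cdot {\bf r}(\Sigma)^{1/p}\cdot(\sqrt{{\bf r}(\Sigma)}\vee\sqrt{p\log n})/\sqrt n$ together with lower-order correction factors $(1\vee (p/n)^{1/4-1/2p}\vee(p/n)^{1/2-1/p})$ coming from the interplay between the Khintchine $\sqrt p$ and the weighted term; since there are only $O(\log p)$ steps and the factors ${\bf r}(\Sigma)^{1/p}$ telescope harmlessly, the accumulated constant is absorbed into $\lesssim_p$, yielding the first displayed inequality, and then the second one follows by simplifying the correction factors under ${\bf r}(\Sigma)\lesssim n$. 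The main obstacle I anticipate is bookkeeping the recursion cleanly: one must verify that the truncation event has probability at least $1-n^{-cp}$ with a constant $c$ large enough to kill the crude deterministic bound on the complement (whose operator norm is at most $\sum_i\|X_i\|^2 \lesssim n\|\Sigma\|{\bf r}(\Sigma)$ deterministically times a polynomial loss), and that the ${\bf r}(\Sigma)\lesssim n$ hypothesis is exactly what makes the $\sqrt{{\bf r}(\Sigma)/n}\vee\sqrt{\log n/n}$ factor (rather than a larger power) emerge at each stage — so the constant-tracking in the truncation step, not the probabilistic inputs, is the delicate part.
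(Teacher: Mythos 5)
Your opening two steps (symmetrization in $\mathcal S_p$ followed by the noncommutative Khintchine inequality, reducing the problem to $\mathbb{E}^{1/p}\bigl\|\frac1n\sum_{i}\|X_i\|^2\,X_i\otimes X_i\bigr\|_{p/2}^{p/2}$) coincide exactly with the paper's proof. From that point on, however, your argument has a genuine gap. You split off the deterministic part and then claim that, after truncating on the event $\{\max_i\|X_i\|^2\le W\}$ with $W\asymp\|\Sigma\|({\bf r}(\Sigma)\vee p\log n)$, you may "pull the weight out as a scalar" and bound $\bigl\|\sum_i\|X_i\|^2(X_i\otimes X_i-\Sigma)\bigr\|_{p/2}$ by $W\,\bigl\|\sum_i(X_i\otimes X_i-\Sigma)\bigr\|_{p/2}$, thereby recursing to $\|\hat\Sigma_n-\Sigma\|_{p/2}$. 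That inequality is false for weighted sums of self-adjoint operators that are not positive semidefinite: taking $A_1=E$, $A_2=-E$ with weights $w_1=1$, $w_2=0$ gives $\|\sum_i w_iA_i\|_{p/2}=\|E\|_{p/2}$ while $(\max_i w_i)\|\sum_i A_i\|_{p/2}=0$. Domination by the maximal weight is only available for sums of PSD terms (or, after a further round of symmetrization, via the contraction principle — but your summands $\|X_i\|^2(X_i\otimes X_i-\Sigma)$ are not even mean zero, since $\mathbb{E}[\|X\|^2(X\otimes X-\Sigma)]=2\Sigma^2$ in the Gaussian case, so that route also requires extra work). As written, the recursion in the Schatten index therefore does not get off the ground, and the bookkeeping claim that the accumulated factors reproduce the stated bound — in particular the ${\bf r}(\Sigma)^{1/p}$ prefactor — is not substantiated.

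The paper avoids the recursion entirely by exploiting positivity at the right moment: since each $X_i\otimes X_i\succeq 0$, one has the operator inequality $\frac1n\sum_i\|X_i\|^2\,X_i\otimes X_i\preceq\max_i\|X_i\|^2\,\hat\Sigma_n$, hence $\bigl\|\frac1n\sum_i\|X_i\|^2X_i\otimes X_i\bigr\|_{p/2}\le\max_i\|X_i\|^2\,\|\hat\Sigma_n\|_{p/2}$, followed by the interpolation $\|\hat\Sigma_n\|_{p/2}\le\|\hat\Sigma_n\|^{1-2/p}({\rm tr}\,\hat\Sigma_n)^{2/p}$ and H\"older's inequality over the data to separate the three factors $\max_i\|X_i\|$, $\|\hat\Sigma_n\|$ and ${\rm tr}(\hat\Sigma_n)$. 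Each is then controlled by Gaussian concentration, \eqref{KL_1}--\eqref{KL_2} and Proposition \ref{trace_conc}; the $({\rm tr}\,\hat\Sigma_n)^{1/p}$ factor is precisely the source of ${\bf r}(\Sigma)^{1/p}$ in the statement. If you want to salvage your plan, replace the "pull out the weight from the centered sum" step by this one-line PSD domination applied to the \emph{uncentered} weighted sum; the rest of your estimates (Gaussian concentration for $\max_i\|X_i\|$, the trace bound) are then exactly what is needed, and no recursion or truncation-event analysis is required.
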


\begin{remark}
\normalfont
If ${\rm dim}({\mathbb H})=d<\infty,$ $\|\hat \Sigma_n-\Sigma\|_p \leq d^{1/p}\|\hat \Sigma_n-\Sigma\|.$ In this case, it easily follows from \eqref{KL_1} and \eqref{KL_2} that 
\begin{align*}
{\mathbb E}^{1/p} \|\hat \Sigma_n-\Sigma\|_p^p \lesssim_p d^{1/p} \|\Sigma\|\sqrt{\frac{{\bf r}(\Sigma)}{n}}.
\end{align*}
Proposition \ref{prop_schat} shows that, in the bounds of this type, the factor $d^{1/p}$ can be replaced by ${\bf r}(\Sigma)^{1/p}$ (at least, for $p\geq 2$).
\end{remark}

\begin{proof}
Without loss of generality, we can assume that ${\rm dim}({\mathbb H})<\infty$ (otherwise, we can use a finite-dimensional approximation).

By symmetrization inequality, we have 
\begin{align*}
{\mathbb E} \|\hat \Sigma_n-\Sigma\|_p^p\leq 2^p {\mathbb E}\Bigl\|\frac{1}{n}\sum_{j=1}^n \eps_j (X_j\otimes X_j)\Bigr\|_p^p
=\frac{2^p}{n^{p/2}} {\mathbb E}{\mathbb E}_{\eps}\Bigl\|\frac{1}{\sqrt{n}}\sum_{j=1}^n \eps_j (X_j\otimes X_j)\Bigr\|_p^p,
\end{align*}
where $\eps_1,\dots, \eps_n$ are i.i.d. Rademacher random variables independent of $X_1,\dots, X_n.$
By non-commutative Khintchine inequality \cite{Lust_Piquard, LP-Pisier},
\begin{align*}
{\mathbb E}_{\eps}^{1/p}\Bigl\|\frac{1}{\sqrt{n}}\sum_{j=1}^n \eps_j (X_j\otimes X_j)\Bigr\|_p^p 
&\lesssim 
C\sqrt{p} \Bigl\|\Bigl(\frac{1}{n}\sum_{j=1}^n (X_j\otimes X_j)^2\Bigr)^{1/2}\Bigr\|_p
\\
&
=C\sqrt{p} \Bigl\|\frac{1}{n}\sum_{j=1}^n (X_j\otimes X_j)^2\Bigr\|_{p/2}^{1/2},
\end{align*}
which implies 
\begin{align*}
{\mathbb E}_{\eps}\Bigl\|\frac{1}{\sqrt{n}}\sum_{j=1}^n \eps_j (X_j\otimes X_j)\Bigr\|_p^p \lesssim C^p(\sqrt{p})^p 
\Bigl\|\frac{1}{n}\sum_{j=1}^n \|X_j\|^2(X_j\otimes X_j)\Bigr\|_{p/2}^{p/2}.
\end{align*}
Therefore,
\begin{align*}
{\mathbb E} \|\hat \Sigma_n-\Sigma\|_p^p\leq 
\frac{2^p C^p (\sqrt{p})^p}{n^{p/2}}{\mathbb E}\Bigl\|\frac{1}{n}\sum_{j=1}^n \|X_j\|^2(X_j\otimes X_j)\Bigr\|_{p/2}^{p/2}
\end{align*}
and 
\begin{align*}
{\mathbb E}^{1/p} \|\hat \Sigma_n-\Sigma\|_p^p\leq 
\frac{2 C \sqrt{p}}{\sqrt{n}}{\mathbb E}^{1/p}\Bigl\|\frac{1}{n}\sum_{j=1}^n \|X_j\|^2(X_j\otimes X_j)\Bigr\|_{p/2}^{p/2}.
\end{align*}
Note that 
\begin{align*}
\frac{1}{n}\sum_{j=1}^n \|X_j\|^2(X_j\otimes X_j) \preceq \max_{1\leq j\leq n}\|X_j\|^2 \hat \Sigma_n,
\end{align*}
which implies that 
\begin{align*}
\Bigl\|\frac{1}{n}\sum_{j=1}^n \|X_j\|^2(X_j\otimes X_j)\Bigr\|_{p/2} &\leq \max_{1\leq j\leq n}\|X_j\|^2 \|\hat \Sigma_n\|_{p/2}
= \max_{1\leq j\leq n}\|X_j\|^2 \Bigl({\rm tr}(\hat \Sigma_n^{p/2})\Bigr)^{2/p}
\\
&
\leq \max_{1\leq j\leq n}\|X_j\|^2 \|\hat \Sigma_n\|{\bf r}(\Sigma_n)^{2/p}= \max_{1\leq j\leq n}\|X_j\|^2 \|\hat \Sigma_n\|^{1-2/p}\Bigl({\rm tr}(\hat \Sigma_n)\Bigr)^{2/p}.
\end{align*}
This yields the following bounds:
\begin{align}
\label{basic_sch_p}
\nonumber
{\mathbb E}^{1/p} \|\hat \Sigma_n-\Sigma\|_p^p
&\leq 2C\frac{\sqrt{p}}{\sqrt{n}} {\mathbb E}^{1/p} \max_{1\leq j\leq n}\|X_j\|^p \|\hat \Sigma_n\|^{p/2-1} {\rm tr}(\hat \Sigma_n)
\\
&
\nonumber
= 2C\frac{\sqrt{p}}{\sqrt{n}}\Bigl\|\max_{1\leq j\leq n}\|X_j\| \|\hat \Sigma_n\|^{1/2-1/p} {\rm tr}(\hat \Sigma_n)^{1/p}\Bigr\|_{L_p}
\\
&
\nonumber
\leq 
2C\frac{\sqrt{p}}{\sqrt{n}}\Bigl\|\max_{1\leq j\leq n}\|X_j\|\Bigr\|_{L_{3p}} \Bigl\|\|\hat \Sigma_n\|^{1/2-1/p}\Bigr\|_{L_{3p}} \Bigl\|{\rm tr}(\hat \Sigma_n)^{1/p}\Bigr\|_{L_{3p}}
\\
&
=
2C\frac{\sqrt{p}}{\sqrt{n}}\Bigl\|\max_{1\leq j\leq n}\|X_j\|\Bigr\|_{L_{3p}} \Bigl\|\|\hat \Sigma_n\|\Bigr\|_{L_{3p}}^{1/2-1/p} \Bigl\|{\rm tr}(\hat \Sigma_n)\Bigr\|_{L_{3}}^{1/p}.
\end{align}
Using Gaussian concentration inequality for $\|X\|,$ we get 
\begin{align*}
\Bigl\|\max_{1\leq j\leq n} \|X_j\|\Bigr\|_{L_p} &\leq {\mathbb E}\|X\| + {\mathbb E}^{1/p}\max_{1\leq j\leq n}\Bigl|\|X_j\|-{\mathbb E}\|X\|\Bigr|^p
\\
&
\leq {\mathbb E}\|X\| + {\mathbb E}^{1/p\log n}\max_{1\leq j\leq n}\Bigl|\|X_j\|-{\mathbb E}\|X\|\Bigr|^{p\log n}
\\
&
\leq {\mathbb E}\|X\| + \Bigl({\mathbb E}\sum_{j=1}^n\Bigl|\|X_j\|-{\mathbb E}\|X\|\Bigr|^{p\log n}\Bigr)^{1/p\log n}
\\
&
\leq {\mathbb E}\|X\| + n^{1/p\log n} {\mathbb E}^{1/p\log n}\Bigl|\|X\|-{\mathbb E}\|X\|\Bigr|^{p\log n}
\\
&
\leq \|\Sigma\|^{1/2}\sqrt{{\bf r}(\Sigma)}+  e^{1/p}C \|\Sigma\|^{1/2}\sqrt{p\log n}.
\end{align*}
Thus,
\begin{align}
\label{L_p_max_norm}
\Bigl\|\max_{1\leq j\leq n} \|X_j\|\Bigr\|_{L_p} \lesssim \|\Sigma\|^{1/2}(\sqrt{{\bf r}(\Sigma)}+ \sqrt{p\log n}).
\end{align}
As a consequence of \eqref{KL_1}, \eqref{KL_2}, we have under the assumption ${\bf r}(\Sigma)\lesssim n$ that 
\begin{align}
\label{L_p_n_Sigma}
\Bigl\|\|\hat \Sigma_n\|\Bigr\|_{L_p} \leq \|\Sigma\|+\|\|\hat \Sigma_n-\Sigma\|\|_{L_p}  \lesssim \|\Sigma\|\Bigl(1\vee \sqrt{\frac{p}{n}}\vee \frac{p}{n}\Bigr).
\end{align}
Finally, using the bound of Proposition \ref{trace_conc}, we get
\begin{align}
\label{L_p_trace}
\nonumber
\Bigl\|{\rm tr}(\hat \Sigma_n)\Bigr\|_{L_p} &\leq {\rm tr}(\Sigma)+ \Bigl\|{\rm tr}(\hat \Sigma_n)-{\rm tr}(\Sigma)\Bigr\|_{L_p} 
\lesssim \|\Sigma\|{\bf r}(\Sigma) + \|\Sigma\|_2 \sqrt{\frac{p}{n}} + \|\Sigma\|\frac{p}{n}
\\
&
\leq \|\Sigma\|{\bf r}(\Sigma) + \|\Sigma\|^{1/2} \sqrt{{\bf r}(\Sigma)}\|\Sigma\|^{1/2}\sqrt{\frac{p}{n}} + \|\Sigma\|\frac{p}{n}
\lesssim \|\Sigma\|\Bigl({\bf r}(\Sigma)\vee \frac{p}{n}\Bigr).
\end{align}
Using bounds \eqref{L_p_max_norm} and \eqref{L_p_n_Sigma} with $3p$ instead of $p$ and bound \eqref{L_p_trace} with 
$p=3,$ we easily get from \eqref{basic_sch_p} that 
\begin{align*}
{\mathbb E}^{1/p}\|\hat \Sigma_n-\Sigma\|_p^p \lesssim 
 \|\Sigma\| {\bf r}(\Sigma)^{1/p}(\sqrt{{\bf r}(\Sigma)}\vee \sqrt{p\log n}) \Bigl(1\vee \Bigl(\frac{p}{n}\Bigr)^{1/4-1/2p}\vee \Bigl(\frac{p}{n}\Bigr)^{1/2-1/p}\Bigr)\sqrt{\frac{p}{n}},
\end{align*}
which completes the proof.
\qed
\end{proof}

We are interested in concentration 
inequalities for the remainder $R_f(\Sigma, \hat \Sigma_n-\Sigma)$ of the first order Taylor expansion around its expectation. 
Namely, we will prove the following result.

\begin{theorem}
\label{prop_rem}
Let $f\in C^1({\mathbb R}_+)$ with $f'$ being Lipschitz in ${\mathbb R}$ and $f(0)=0.$
Suppose ${\bf r}(\Sigma)\lesssim n.$
Then, for all $p\geq 1,$
\begin{align}
\label{conc_rem_end}
\Bigl\|R_f(\Sigma, \hat \Sigma_n-\Sigma)-{\mathbb E}R_f(\Sigma, \hat \Sigma_n-\Sigma)\Bigr\|_{L_p} &\lesssim 
\|f'\|_{\rm Lip} \|\Sigma\|^2\Bigl(\frac{{\bf r}(\Sigma)}{\sqrt{n}}\sqrt{\frac{p}{n}} \vee \Bigl(\frac{{\bf r}(\Sigma)}{\sqrt{n}}\vee 1\Bigr)\frac{p}{n}\vee \Bigl(\frac{p}{n}\Bigr)^{2}\Bigr).
\end{align}
\end{theorem}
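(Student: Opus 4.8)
The plan is to treat $g:=R_f(\Sigma,\hat\Sigma_n-\Sigma)$ as a function of the i.i.d. vectors $X_1,\dots,X_n$ and to apply an exponential Efron--Stein (jackknife) moment inequality. First I would reduce to finite-dimensional $\mathbb{H}$ by truncating $\Sigma$. Then, fixing independent copies $X_1',\dots,X_n'$, writing $\hat\Sigma_n^{(k)}$ for $\hat\Sigma_n$ with $X_k$ replaced by $X_k'$, $g^{(k)}:=R_f(\Sigma,\hat\Sigma_n^{(k)}-\Sigma)$ and $\Delta_k:=X_k\otimes X_k-X_k'\otimes X_k'$ (so $\hat\Sigma_n-\hat\Sigma_n^{(k)}=\tfrac1n\Delta_k$), I would use an inequality of the form
\begin{align*}
\|g-\mathbb{E} g\|_{L_p}\lesssim \sqrt p\,\bigl\|V^{1/2}\bigr\|_{L_p}+p\,\bigl\|\textstyle\max_{k\le n}|g-g^{(k)}|\bigr\|_{L_p},\qquad V:=\sum_{k=1}^n\mathbb{E}_k'\bigl[(g-g^{(k)})^2\bigr],
\end{align*}
where $\mathbb{E}_k'$ integrates in $X_k'$ only; such an inequality is standard for functions of independent random variables.

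The heart of the argument is the estimate of one jackknifed increment. Since the directional derivative of $H\mapsto R_f(\Sigma,H)$ at $H'$ is $K\mapsto\langle f'(\Sigma+H')-f'(\Sigma),K\rangle$ and the associated second-order remainder is bounded by $\tfrac{\|f'\|_{\rm Lip}}{2}\|\cdot\|_2^2$ via the Hilbert--Schmidt operator-Lipschitz property of $f'$ already used for Proposition \ref{rem_first_order} (see \cite{Pot_Suk}), one gets
\begin{align*}
g-g^{(k)}=\tfrac1n\bigl\langle f'(\hat\Sigma_n^{(k)})-f'(\Sigma),\ \Delta_k\bigr\rangle+\rho_k,\qquad |\rho_k|\le\frac{\|f'\|_{\rm Lip}}{2n^2}\,\|\Delta_k\|_2^2.
\end{align*}
For the leading linear term I would condition on all variables but $X_k,X_k'$, so that $B_k:=f'(\hat\Sigma_n^{(k)})-f'(\Sigma)$ is fixed up to a negligible $O(\|X_k'\|^2/n)$ correction (absorbed into $\rho_k$) and $\Delta_k$ is centered; the point is that this term is an \emph{inner product} with the spread-out, mean-zero $\Delta_k$, so $\mathbb{E}_k'[(\tfrac1n\langle B_k,\Delta_k\rangle)^2]\lesssim\tfrac1{n^2}\|\Sigma^{1/2}B_k\Sigma^{1/2}\|_2^2+\tfrac1{n^2}\langle B_k,X_k\otimes X_k-\Sigma\rangle^2$ with $\|\Sigma^{1/2}B_k\Sigma^{1/2}\|_2\le\|\Sigma\|\,\|B_k\|_2\le\|\Sigma\|\,\|f'\|_{\rm Lip}\,\|\hat\Sigma_n^{(k)}-\Sigma\|_2$. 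Summing over $k$ and using Proposition \ref{HS_error} gives $\mathbb{E} V\lesssim\tfrac1{n^2}\|f'\|_{\rm Lip}^2\|\Sigma\|^4{\bf r}(\Sigma)^2$, which is the source of the leading term $\|f'\|_{\rm Lip}\|\Sigma\|^2\tfrac{{\bf r}(\Sigma)}{\sqrt n}\sqrt{p/n}$; it is exactly this cancellation that beats the much larger bound obtained from a pointwise estimate of $|g-g^{(k)}|$. To upgrade $\mathbb{E} V$ to the $L_{p/2}$-control of $V$ that the inequality requires, I would invoke concentration of $\|\hat\Sigma_n^{(k)}-\Sigma\|_2$ around $\|\Sigma\|{\bf r}(\Sigma)/\sqrt n$ (via the moment bounds of Proposition \ref{prop_schat} together with the Hoeffding decomposition of $\|\hat\Sigma_n-\Sigma\|_2^2$) and the bound $\|\langle B_k,X_k\otimes X_k-\Sigma\rangle\|_{\psi_1}\lesssim\|\Sigma\|\,\|B_k\|_2$ (Proposition \ref{sub_exp}); the sub-leading pieces of these estimates produce the $(\tfrac{{\bf r}(\Sigma)}{\sqrt n}\vee1)\tfrac pn$ term. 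For $\rho_k$ I would use only $\|\Delta_k\|_2^2\le2(\|X_k\|^4+\|X_k'\|^4)$ and $\|\,\|X\|^2\,\|_{L_p}\lesssim\|\Sigma\|({\bf r}(\Sigma)+p)$ (Gaussian concentration, as in Proposition \ref{trace_conc}); the $\|\Sigma\|^2p^2$ part of $\|\,\|X\|^4\,\|_{L_p}$, divided by $n^2$, is what gives the $(\tfrac pn)^2$ term. The maximal term $p\,\|\max_k|g-g^{(k)}|\|_{L_p}$ is treated by the same bounds after a union bound over $k\le n$, which only costs a logarithmic factor absorbed into the constants under ${\bf r}(\Sigma)\lesssim n$ (passing to moments of order $\sim p\log n$, exactly as in the proof of Proposition \ref{prop_schat}).

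The hard part will be the sharp $p$-dependence in \eqref{conc_rem_end}: it forces control of $V$ and of the maximal jackknife increment in $L_{p/2}$ rather than merely in expectation, which hinges on $\|\hat\Sigma_n-\Sigma\|_2$ concentrating well enough that $\langle f'(\hat\Sigma_n^{(k)})-f'(\Sigma),\Delta_k\rangle$ behaves like a linear form in $\Delta_k$ with a \emph{stable} coefficient of order $\|f'\|_{\rm Lip}\|\Sigma\|\,{\bf r}(\Sigma)/\sqrt n$; one must also keep track of the correlation between $\hat\Sigma_n^{(k)}$ (inside $f'$) and the remaining randomness and carry everything out with only $f'\in{\rm Lip}$, so that no $C^2$-smoothness or operator-norm Lipschitzness is available and all perturbation bounds must pass through the Hilbert--Schmidt norm.
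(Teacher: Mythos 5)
Your reduction to finite rank, the identity $g-g^{(k)}=\tfrac1n\langle f'(\hat\Sigma_n^{(k)})-f'(\Sigma),\Delta_k\rangle+\rho_k$ with $|\rho_k|\le\tfrac{\|f'\|_{\rm Lip}}{2n^2}\|\Delta_k\|_2^2$, and the computation showing $\E V\lesssim \|f'\|_{\rm Lip}^2\|\Sigma\|^4{\bf r}(\Sigma)^2/n^2$ are all sound, and they do identify the correct leading mechanism (the cancellation coming from the inner product with the centered $\Delta_k$). This is a genuinely different route from the paper, which never touches Efron--Stein: there the directional derivative of $H\mapsto R_f(\Sigma,H)$ gives the pointwise bound $(LR_f(\Sigma,\tilde\Sigma_n-\Sigma))({\mathcal Z})\le \tfrac{2\|\Sigma\|^{1/2}\|\hat\Sigma_n\|^{1/2}}{\sqrt n}\|f'(\hat\Sigma_n)-f'(\Sigma)\|_2$, Gaussian concentration (Proposition \ref{L_p_conce}) is applied to $R_f$ with this random Lipschitz constant, and then a second application of Gaussian concentration controls the $L_p$-norm of the random Lipschitz constant itself. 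However, your sketch has a genuine gap exactly at the step you describe as ``upgrading $\E V$ to $L_{p/2}$-control of $V$'': the tools you propose for it do not yield the $p$-dependence claimed in \eqref{conc_rem_end}, and that $p$-dependence is the substance of the theorem (it is what later produces the $\psi_{1/2}$-bounds and survives the chaining with moments of order $p\log N_k$ in Section \ref{sup_bounds}).

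Concretely, if $\|V\|_{p/2}$ is bounded term by term (triangle inequality over $k$, then the moment bound for $\langle B_k, X_k\otimes X_k-\Sigma\rangle$ conditionally on $B_k$), each summand costs $\|\langle B_k,X_k\otimes X_k-\Sigma\rangle\|_{L_p}^2\gtrsim p\,\|\Sigma^{1/2}B_k\Sigma^{1/2}\|_2^2$, so after multiplying by the outer $\sqrt p$ you obtain at best terms of order $\|f'\|_{\rm Lip}\|\Sigma\|^2\bigl(\tfrac{{\bf r}(\Sigma)p}{n}\vee \tfrac{{\bf r}(\Sigma)p^{3/2}}{n^{3/2}}\bigr)$: already the first loses a factor $\sqrt p$ against the target leading term $\tfrac{{\bf r}(\Sigma)\sqrt p}{n}$, and for, say, ${\bf r}(\Sigma)\asymp n^{0.9}$, $p\asymp n^{0.1}$ (or ${\bf r}(\Sigma)\asymp p\asymp\sqrt n$) these quantities exceed the whole right-hand side of \eqref{conc_rem_end} by a power of $n$; the bound has no slack in $p$ (and essentially none in $\log n$, so the claim that the union bound over $k$ in the maximal-increment term ``only costs a logarithmic factor absorbed into the constants'' is also not justified in general -- that term, treated as you propose, is of order ${\bf r}(\Sigma)p(p+\log n)/n^{3/2}$, again too large for intermediate $p$). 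The point is that \eqref{conc_rem_end} requires showing that $V$ itself concentrates around $\E V$, i.e.\ a sharp concentration inequality for $\tfrac1{n^2}\sum_k\langle f'(\hat\Sigma_n^{(-k)})-f'(\Sigma),X_k\otimes X_k-\Sigma\rangle^2$, keeping separate Hilbert--Schmidt and operator-norm scales of $\Sigma^{1/2}B_k\Sigma^{1/2}$ and handling the dependence of $B_k$ on the remaining observations; this is a problem of the same nature as the original one, so your argument would have to be iterated (or replaced by a tool such as the paper's nested Gaussian concentration) and none of that is supplied by the proposal. Until that recursion is carried out with two-scale (Bernstein/Hanson--Wright type) bounds for the $\xi_k$'s and a genuine Rosenthal-type treatment of $\sum_k\xi_k^2$, the proof does not establish the stated inequality beyond small $p$.
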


For the proof, we need several auxiliary statements. 

Let $g:S\mapsto {\mathbb R}$ be a locally Lipschitz function on a metric space $(S,d).$ Its local Lipschitz constant $(Lg)(x)$ at point $x\in S$ is defined as 
\begin{align*}
(Lg)(x):= \inf_{U\ni x} \sup_{x_1,x_2\in U} \frac{|f(x_1)-f(x_2)|}{d(x_1,x_2)},
\end{align*}
where the infimum is taken over all balls centered at $x.$ In the case when $S={\mathbb R}^N,$ it is equipped with the standard Euclidean norm and $f$ is continuously differentiable, 
$(Lg)(x)$ coincides with the Euclidean norm of the gradient $(\nabla g)(x).$ The next statement is a well known form of Gaussian concentration inequality. 

\begin{proposition}
\label{L_p_conce}
Let ${\mathcal Z}$ be a standard normal random variable in ${\mathbb R}^N.$ For any locally Lipschitz function $g:{\mathbb R}^N \mapsto {\mathbb R}$
and for all $p\geq 1,$
\begin{align*}
\|g({\mathcal Z})-{\mathbb E}g({\mathcal Z})\|_{L_p}\lesssim \sqrt{p}\|(Lg)({\mathcal Z})\|_{L_p}.
\end{align*}
\end{proposition}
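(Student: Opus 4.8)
The plan is to deduce this from the Maurey--Pisier smoothing (rotation) argument, which produces a bound with an absolute constant, independent of the dimension $N.$ First I would reduce to the case of a smooth $g.$ Given a locally Lipschitz $g,$ mollify it by setting $g_\delta:=g\ast\phi_\delta\in C^\infty(\mathbb{R}^N)$ for a standard approximate identity $\phi_\delta.$ Then $g_\delta\to g$ locally uniformly, and, since Rademacher's theorem gives $|\nabla g|=Lg$ a.e., one has $|\nabla g_\delta|=|(\nabla g)\ast\phi_\delta|\le (Lg)\ast\phi_\delta$ a.e.; using the upper semicontinuity of $Lg$ this leads to $\limsup_{\delta\to 0}\|(Lg)\ast\phi_\delta(\mathcal{Z})\|_{L_p}\le\|(Lg)(\mathcal{Z})\|_{L_p}$ (and there is nothing to prove if the latter is infinite), while $g_\delta(\mathcal{Z})\to g(\mathcal{Z})$ and $\mathbb{E}g_\delta(\mathcal{Z})\to\mathbb{E}g(\mathcal{Z}).$ So it is enough to treat $g\in C^1(\mathbb{R}^N)$ with $\mathbb{E}|\nabla g(\mathcal{Z})|^p<\infty.$

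For such $g,$ I would introduce an independent copy $\mathcal{Z}'$ of $\mathcal{Z}$ and interpolate along the rotation $\mathcal{Z}_\theta:=\mathcal{Z}\sin\theta+\mathcal{Z}'\cos\theta,$ $\theta\in[0,\pi/2],$ whose velocity is $\dot{\mathcal{Z}}_\theta=\mathcal{Z}\cos\theta-\mathcal{Z}'\sin\theta$; the key point is that, for each fixed $\theta,$ the pair $(\mathcal{Z}_\theta,\dot{\mathcal{Z}}_\theta)$ consists of two independent standard Gaussians. Writing $g(\mathcal{Z})-g(\mathcal{Z}')=\tfrac{\pi}{2}\int_0^{\pi/2}\langle\nabla g(\mathcal{Z}_\theta),\dot{\mathcal{Z}}_\theta\rangle\,\tfrac{2}{\pi}\,d\theta$ by the fundamental theorem of calculus, I would apply Jensen's inequality for $t\mapsto|t|^p$ against the probability measure $\tfrac{2}{\pi}\,d\theta$ on $[0,\pi/2],$ take expectations, and use the distributional identity $(\mathcal{Z}_\theta,\dot{\mathcal{Z}}_\theta)\overset{d}{=}(\mathcal{Z},\mathcal{Z}')$ to obtain $\mathbb{E}|g(\mathcal{Z})-g(\mathcal{Z}')|^p\le(\tfrac{\pi}{2})^p\,\mathbb{E}|\langle\nabla g(\mathcal{Z}),\mathcal{Z}'\rangle|^p.$ Conditioning on $\mathcal{Z},$ the inner product is a centered Gaussian of variance $|\nabla g(\mathcal{Z})|^2,$ so this equals $(\tfrac{\pi}{2})^p\,\mathbb{E}|\nabla g(\mathcal{Z})|^p\,\mathbb{E}|Z|^p$ with $Z\sim N(0,1)$; since $(\mathbb{E}|Z|^p)^{1/p}\lesssim\sqrt p$ and, by Jensen once more, $\mathbb{E}|g(\mathcal{Z})-\mathbb{E}g(\mathcal{Z})|^p\le\mathbb{E}|g(\mathcal{Z})-g(\mathcal{Z}')|^p$ (condition on $\mathcal{Z}$ and use $\mathbb{E}_{\mathcal{Z}'}g(\mathcal{Z}')=\mathbb{E}g(\mathcal{Z})$), this yields $\|g(\mathcal{Z})-\mathbb{E}g(\mathcal{Z})\|_{L_p}\lesssim\sqrt p\,\||\nabla g(\mathcal{Z})|\|_{L_p}=\sqrt p\,\|(Lg)(\mathcal{Z})\|_{L_p},$ which is the assertion.

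The Jensen manipulations and the Gaussian moment estimate are routine; the step I expect to require the most care is the regularity reduction, since $Lg$ is not literally a gradient --- this is exactly what mollification, Rademacher's theorem, and the upper semicontinuity of $Lg$ are there to handle. An alternative would be to run the Gaussian logarithmic Sobolev inequality together with Herbst's argument, or simply to cite a standard reference on concentration of measure; I would prefer the Maurey--Pisier route since it exhibits the dimension-free constant most transparently.
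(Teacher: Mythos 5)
Your core argument is fine, and it is worth noting that the paper itself offers no proof of this proposition at all: it is stated as "a well known form of Gaussian concentration inequality," so a self-contained derivation like yours is a legitimate addition. The Maurey--Pisier interpolation along $\mathcal{Z}_\theta=\mathcal{Z}\sin\theta+\mathcal{Z}'\cos\theta$, Jensen against $\tfrac{2}{\pi}d\theta$, the observation that $(\mathcal{Z}_\theta,\dot{\mathcal{Z}}_\theta)\overset{d}{=}(\mathcal{Z},\mathcal{Z}')$, the conditional Gaussian computation giving the factor $(\mathbb{E}|Z|^p)^{1/p}\lesssim\sqrt{p}$, and the centering step $\mathbb{E}|g(\mathcal{Z})-\mathbb{E}g(\mathcal{Z})|^p\le\mathbb{E}|g(\mathcal{Z})-g(\mathcal{Z}')|^p$ are all correct and give the dimension-free constant, exactly as intended, at least for $g\in C^1$ with $|\nabla g(\mathcal{Z})|\in L_p$.

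The gap is in the reduction to smooth $g$. Two issues. First, "$|\nabla g|=Lg$ a.e." is false in general (take $g'=I_A$ in dimension one with $A$ measurable such that both $A$ and $A^c$ meet every interval in positive measure: then $|g'|\in\{0,1\}$ a.e.\ while $Lg\equiv 1$); fortunately only the inequality $|\nabla g|\le Lg$ at points of differentiability is needed, and that is true. Second, and more seriously, the claimed bound $\limsup_{\delta\to0}\|((Lg)\ast\phi_\delta)(\mathcal{Z})\|_{L_p}\le\|(Lg)(\mathcal{Z})\|_{L_p}$ does not follow from upper semicontinuity alone: it is a reverse-Fatou statement and needs an integrable majorant, which need not exist because $Lg$ may grow almost as fast as the inverse Gaussian density. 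Concretely, with $N=1$ and $h(y)^p= e^{y^2/2-|y|^{1/2}}/(1+y^2)$, any $g$ with $Lg=h$ satisfies $\mathbb{E}h(\mathcal{Z})^p<\infty$, yet $(h\ast\phi_\delta)(y)\gtrsim h(y+\delta/2)$ for large $y$, so $\mathbb{E}((h\ast\phi_\delta)(\mathcal{Z}))^p=\infty$ for every $\delta>0$; the mollified bound is then vacuous and the limit argument collapses. The clean fix is to dispense with mollification and run your interpolation directly on the locally Lipschitz $g$: for each fixed $\theta$ the variable $\mathcal{Z}_\theta$ has a density, so by Rademacher and Fubini, for a.e.\ $(\mathcal{Z},\mathcal{Z}')$ the map $\theta\mapsto g(\mathcal{Z}_\theta)$ is locally Lipschitz and $g$ is differentiable at $\mathcal{Z}_\theta$ for a.e.\ $\theta$; hence $g(\mathcal{Z})-g(\mathcal{Z}')=\int_0^{\pi/2}\langle\nabla g(\mathcal{Z}_\theta),\dot{\mathcal{Z}}_\theta\rangle\,d\theta$ a.s., and $|\nabla g(\mathcal{Z}_\theta)|\le (Lg)(\mathcal{Z}_\theta)$ a.s., after which your computation goes through verbatim (with the small additional remark that $\mathbb{E}|g(\mathcal{Z}-g(\mathcal{Z}')|^p<\infty$ forces $g(\mathcal{Z})\in L_p$, so $\mathbb{E}g(\mathcal{Z})$ is well defined and the final Jensen step is licit). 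Alternatively, localize to a ball and truncate before mollifying; but as written the smoothing step would fail.
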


The following simple lemma will be useful.

\begin{lemma}
\label{lem_ABC}
For all $u_i, v_i\in {\mathbb H}, i=1,\dots, n,$
\begin{align*}
&
\Bigl\|\sum_{i=1}^n u_i\otimes v_i\Bigr\|_2 \leq \Bigl(\sum_{i=1}^n \|u_i\|^2\Bigr)^{1/2} \Bigl\|\sum_{j=1}^n v_j\otimes v_j\Bigr\|^{1/2}.
\end{align*}
\end{lemma}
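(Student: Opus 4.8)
The plan is to realise the finite-rank operator $A:=\sum_{i=1}^n u_i\otimes v_i$ as a composition of two operators and then invoke the ideal property of the Hilbert--Schmidt class. First, introduce $S:{\mathbb H}\to{\mathbb R}^n$ defined by $Sh:=(\langle v_1,h\rangle,\dots,\langle v_n,h\rangle)$ and $T:{\mathbb R}^n\to{\mathbb H}$ defined by $T(c_1,\dots,c_n):=\sum_{i=1}^n c_i u_i$. A direct computation shows that $TS=\sum_{i=1}^n u_i\otimes v_i=A$; that, with $e_1,\dots,e_n$ the canonical basis of ${\mathbb R}^n$, one has $\|T\|_2^2=\sum_{i=1}^n\|Te_i\|^2=\sum_{i=1}^n\|u_i\|^2$; and that the adjoint $S^\ast:{\mathbb R}^n\to{\mathbb H}$ acts by $S^\ast c=\sum_{i=1}^n c_i v_i$, whence $S^\ast S=\sum_{i=1}^n v_i\otimes v_i$. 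Since $\|S\|^2=\|S^\ast S\|$ for any bounded operator, this yields $\|S\|=\bigl\|\sum_{i=1}^n v_i\otimes v_i\bigr\|^{1/2}$.

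Next, I would bound $\|A\|_2=\|TS\|_2\le\|T\|_2\,\|S\|$, using the standard fact that the Hilbert--Schmidt operators form a two-sided ideal, so that $\|TS\|_2\le\|T\|_2\|S\|$ for $T$ Hilbert--Schmidt and $S$ bounded. Concretely, $\|TS\|_2^2={\rm tr}(S^\ast T^\ast TS)={\rm tr}(T^\ast T\,SS^\ast)\le\|SS^\ast\|\,{\rm tr}(T^\ast T)=\|S\|^2\|T\|_2^2$, where the inequality uses that $T^\ast T$ and $SS^\ast$ are positive and of finite rank, so no convergence issue arises. Combining the three identities from the previous paragraph with this inequality gives exactly the asserted bound.

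There is essentially no serious obstacle here; the only points requiring care are verifying the factorisation $A=TS$ and the identity $S^\ast S=\sum_i v_i\otimes v_i$, together with the ideal inequality $\|TS\|_2\le\|T\|_2\|S\|$. An alternative, purely matrix-theoretic route would be to note that $\|A\|_2^2={\rm tr}(G_uG_v)$, where $G_u=(\langle u_i,u_j\rangle)_{i,j}$ and $G_v=(\langle v_i,v_j\rangle)_{i,j}$ are the two Gram matrices, then use ${\rm tr}(G_uG_v)\le\|G_v\|\,{\rm tr}(G_u)=\|G_v\|\sum_i\|u_i\|^2$ together with the fact that $G_v=SS^\ast$ has the same nonzero eigenvalues as $S^\ast S=\sum_i v_i\otimes v_i$, hence $\|G_v\|=\bigl\|\sum_i v_i\otimes v_i\bigr\|$; this produces the same conclusion.
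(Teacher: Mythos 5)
Your proof is correct and follows essentially the same route as the paper: you factor $\sum_i u_i\otimes v_i$ as a Hilbert--Schmidt operator (built from the $u_i$) composed with a bounded operator (built from the $v_i$) and apply $\|TS\|_2\leq \|T\|_2\|S\|$, which is exactly the paper's factorization $\bigl(\sum_i u_i\otimes e_i\bigr)\bigl(\sum_j e_j\otimes v_j\bigr)$ with ${\mathbb R}^n$ identified with the span of the first $n$ basis vectors. The only cosmetic difference is that you compute $\|S\|$ via $\|S\|^2=\|S^{\ast}S\|=\bigl\|\sum_j v_j\otimes v_j\bigr\|$, while the paper bounds the same operator norm by a direct Cauchy--Schwarz estimate of the bilinear form.
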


\begin{proof}
Observe that, for an orthonormal basis $\{e_j:j\geq 1\}$ of ${\mathbb H},$
\begin{align*}
&
\Bigl\|\sum_{i=1}^n u_i\otimes v_i\Bigr\|_2 = \Bigl\|\Bigl(\sum_{i=1}^n u_i\otimes e_i\Bigr)\Bigl(\sum_{j=1}^n e_j\otimes v_j\Bigr)\Bigr\|_2
\leq \Bigl\|\sum_{i=1}^n u_i\otimes e_i\Bigr\|_2 \Bigl\|\sum_{j=1}^n e_j\otimes v_j\Bigr\|
\\
&
\leq \Bigl(\sum_{i=1}^n \|u_i\|^2\Bigr)^{1/2} \sup_{\|s\|\leq 1, \|t\|\leq 1} \sum_{j=1}^n \langle e_j, s\rangle \langle v_j, t\rangle
\leq \Bigl(\sum_{i=1}^n \|u_i\|^2\Bigr)^{1/2} \sup_{\|s\|\leq 1} \Bigl(\sum_{j=1}^n \langle e_j, s\rangle^2\Bigr)^{1/2} \sup_{\|t\|\leq 1}\Bigl(\sum_{j=1}^n \langle v_j, t\rangle^2\Bigr)^{1/2}
\\
&
\leq\Bigl(\sum_{i=1}^n \|u_i\|^2\Bigr)^{1/2} \Bigl\|\sum_{j=1}^n v_j\otimes v_j\Bigr\|^{1/2}.
\end{align*}

\qed
\end{proof}

The next proposition provides a simple bound on the local Lipschitz constant $(L\hat \Sigma_n)(X_1,\dots, X_n)$ of the function 
${\mathbb H}\times \dots \times {\mathbb H}\ni (X_1,\dots, X_n)\mapsto \hat \Sigma_n (X_1,\dots, X_n)\in {\mathcal S}_2.$

\begin{proposition}
\label{prop_one}
The following bound holds:
\begin{align*}
(L\hat \Sigma_n)(X_1,\dots, X_n)\leq \frac{2\|\hat \Sigma_n\|^{1/2}}{\sqrt{n}}.
\end{align*}
\end{proposition}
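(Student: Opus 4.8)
The plan is to bound the Fr\'echet derivative of the map $\Phi:(X_1,\dots,X_n)\mapsto \hat\Sigma_n = n^{-1}\sum_{i=1}^n X_i\otimes X_i$, viewed as a function from $\mathbb{H}^n$ equipped with the Euclidean product metric $d((X_i)_i,(X_i')_i) = (\sum_{i=1}^n\|X_i-X_i'\|^2)^{1/2}$ into ${\mathcal S}_2$, and then to deduce the bound on $(L\hat\Sigma_n)$ from the mean value inequality. Since $\Phi$ is a quadratic (hence $C^{\infty}$) polynomial map, its derivative at $(X_1,\dots,X_n)$ in a direction $(H_1,\dots,H_n)$ is
\begin{align*}
(D\Phi)(X_1,\dots,X_n)[H_1,\dots,H_n] = \frac{1}{n}\sum_{i=1}^n\bigl(H_i\otimes X_i + X_i\otimes H_i\bigr).
\end{align*}

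Next I would bound the Hilbert--Schmidt norm of this derivative. Since $(X_i\otimes H_i)^{\ast} = H_i\otimes X_i$ and the Hilbert--Schmidt norm is invariant under taking adjoints, the triangle inequality gives
\begin{align*}
\Bigl\|(D\Phi)(X_1,\dots,X_n)[H_1,\dots,H_n]\Bigr\|_2 \leq \frac{2}{n}\Bigl\|\sum_{i=1}^n H_i\otimes X_i\Bigr\|_2.
\end{align*}
Applying Lemma \ref{lem_ABC} with $u_i = H_i$, $v_i = X_i$ and using $\sum_{j=1}^n X_j\otimes X_j = n\hat\Sigma_n$, so that $\|\sum_{j=1}^n X_j\otimes X_j\| = n\|\hat\Sigma_n\|$, we obtain
\begin{align*}
\Bigl\|\sum_{i=1}^n H_i\otimes X_i\Bigr\|_2 \leq \Bigl(\sum_{i=1}^n\|H_i\|^2\Bigr)^{1/2}\bigl(n\|\hat\Sigma_n\|\bigr)^{1/2}.
\end{align*}
Combining the last two displays yields
\begin{align*}
\Bigl\|(D\Phi)(X_1,\dots,X_n)[H_1,\dots,H_n]\Bigr\|_2 \leq \frac{2\|\hat\Sigma_n\|^{1/2}}{\sqrt{n}}\Bigl(\sum_{i=1}^n\|H_i\|^2\Bigr)^{1/2},
\end{align*}
i.e. the operator norm of $(D\Phi)(X_1,\dots,X_n)$ is at most $\frac{2\|\hat\Sigma_n\|^{1/2}}{\sqrt{n}}$.

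Finally I would pass from this derivative bound to the local Lipschitz constant. Given a ball $U$ centered at $(X_1,\dots,X_n)$, the mean value inequality for the $C^1$ map $\Phi$ gives $\|\Phi(x)-\Phi(x')\|_2 \leq \sup_{y\in U}\|(D\Phi)(y)\|\,d(x,x')$ for all $x,x'\in U$, and $y\mapsto \|(D\Phi)(y)\|$ is bounded by $\frac{2\|\hat\Sigma_n(y)\|^{1/2}}{\sqrt{n}}$, which is continuous in $y$; letting $U$ shrink to $(X_1,\dots,X_n)$ and using the definition of $(L\hat\Sigma_n)$ gives the claimed bound. (Equivalently, one can avoid differentiation altogether: for $x,x'\in U$ write $X_i\otimes X_i - X_i'\otimes X_i' = (X_i-X_i')\otimes X_i + X_i'\otimes(X_i-X_i')$, apply Lemma \ref{lem_ABC} to each of the two resulting sums to get $\|\hat\Sigma_n(x)-\hat\Sigma_n(x')\|_2 \leq \frac{1}{\sqrt{n}}\bigl(\|\hat\Sigma_n(x)\|^{1/2}+\|\hat\Sigma_n(x')\|^{1/2}\bigr)\,d(x,x')$, and then take the supremum over $U$ followed by the infimum over $U$.) There is essentially no obstacle here; the only point needing a little care is that $\hat\Sigma_n$ enters both at the base point and at nearby points, which is handled cleanly by the limiting definition of the local Lipschitz constant together with the continuity of $(X_1,\dots,X_n)\mapsto\|\hat\Sigma_n\|$.
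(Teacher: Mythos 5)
Your proposal is correct and follows essentially the same route as the paper: the same decomposition of the increment into $(X_i-X_i')\otimes X_i$ and $X_i'\otimes(X_i-X_i')$ pieces and the same application of Lemma \ref{lem_ABC}, with your parenthetical ``differentiation-free'' variant being the paper's own argument verbatim. The Fr\'echet-derivative/mean-value packaging of your main route is only a cosmetic difference.
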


\begin{proof}
Let $\hat \Sigma_n'= \hat \Sigma_n(x_1,\dots, x_n)$ and $\hat \Sigma_n''= \hat \Sigma_n(y_1,\dots, y_n)$ 
for $x_1,\dots, x_n, y_1,\dots, y_n\in {\mathbb H}.$ Then
\begin{align*}
\|\hat \Sigma_n'-\hat \Sigma_n''\|_2 &= \Bigl\|n^{-1}\sum_{j=1}^n x_j\otimes x_j- n^{-1}\sum_{j=1}^n y_j\otimes y_j\Bigr\|_2 
\\
&
\leq 
\Bigl\|n^{-1}\sum_{j=1}^n (x_j-y_j)\otimes x_j \Bigr\|_2 + 
 \Bigl\|n^{-1}\sum_{j=1}^n y_j\otimes (x_j-y_j) \Bigr\|_2.
\end{align*}
The bound of Lemma \ref{lem_ABC} implies that 
\begin{align*}
\Bigl\|n^{-1}\sum_{j=1}^n (x_j-y_j)\otimes x_j \Bigr\|_2\leq n^{-1/2}\Bigl\|n^{-1} \sum_{j=1}^n x_j\otimes x_j\Bigr\|^{1/2} \Bigl(\sum_{i=1}^n\|x_i-y_i\|^2\Bigr)^{1/2} 
\leq \frac{\|\hat \Sigma_n'\|^{1/2}}{\sqrt{n}}\Bigl(\sum_{i=1}^n\|x_i-y_i\|^2\Bigr)^{1/2}.
\end{align*}
Similarly, 
\begin{align*}
\Bigl\|n^{-1}\sum_{j=1}^n y_j\otimes (x_j-y_j) \Bigr\|_2 \leq \frac{\|\hat \Sigma_n''\|^{1/2}}{\sqrt{n}}\Bigl(\sum_{i=1}^n\|x_i-y_i\|^2\Bigr)^{1/2}.
\end{align*}
Thus, 
\begin{align*}
\|\hat \Sigma_n'-\hat \Sigma_n''\|_2\leq \frac{\|\hat \Sigma_n'\|^{1/2}+\|\hat \Sigma_n''\|^{1/2}}{\sqrt{n}}\Bigl(\sum_{i=1}^n\|x_i-y_i\|^2\Bigr)^{1/2}.
\end{align*}
If now we let $x_i \to X_i$ and $y_i\to X_i$ for all $i=1,\dots, n,$ we have that $\|\hat \Sigma_n'\|^{1/2}\to \|\hat \Sigma_n\|^{1/2}$
and $\|\hat \Sigma_n''\|^{1/2}\to \|\hat \Sigma_n\|^{1/2},$ implying that 
\begin{align*}
(L\hat \Sigma_n)(X_1,\dots, X_n)\leq \frac{2\|\hat \Sigma_n\|^{1/2}}{\sqrt{n}}.
\end{align*}

\qed

\end{proof}

In the proofs of concentration inequalities for smooth functionals of sample covariance $\hat \Sigma_n,$ it is convenient to assume that  covariance operator $\Sigma$ is of finite rank $N$ (the general case could be then handled by a finite rank approximation).
In this case, we can write $X=\Sigma^{1/2}Z,$ $Z=\sum_{j=1}^N g_j \phi_j,$ where $g_1,g_2,\dots$ are i.i.d. standard normal random variables 
and $\phi_j, j\geq 1$ is an orthonormal basis of ${\mathbb H}.$ We can also write $X_j=\Sigma^{1/2}Z_j, j\geq 1,$ where $Z_1,Z_2, \dots$
are i.i.d. copies of $Z.$ Finally, denote ${\mathcal Z}:= (Z_1,\dots, Z_n).$ Clearly, ${\mathcal Z}$ takes values in an $n N$-dimensional Euclidean 
space and it can be identified with a standard normal random variable in ${\mathbb R}^{nN},$ so, the bound of Proposition \ref{L_p_conce}
can be applied to locally Lipschitz functions of ${\mathcal Z}.$
Let 
\begin{align*}
\tilde \Sigma_n({\mathcal Z}):= \hat \Sigma_n(\Sigma^{1/2}Z_1, \dots, \Sigma^{1/2} Z_n).
\end{align*}
Then, it follows from the bound of Proposition \ref{prop_one} that 
\begin{align}
\label{L_Sigma_Z}
(L\tilde \Sigma_n)({\mathcal Z})\leq \frac{2\|\Sigma\|^{1/2}\|\hat \Sigma_n\|^{1/2}}{\sqrt{n}}.
\end{align}

The next proposition provides a concentration bound for Schatten norm errors $\|\hat \Sigma_n-\Sigma\|_r$ of sample covariance 
for all $r\geq 2.$

\begin{proposition}
\label{conc_Schatten_hat}
Suppose ${\bf r}(\Sigma)\lesssim n.$
Then, for all $p\geq 1$ and $r\in [2,+\infty],$
\begin{align*}
\Bigl\|\|\hat \Sigma_n-\Sigma\|_r - {\mathbb E}\|\hat \Sigma_n-\Sigma\|_r\Bigr\|_{L_p} \lesssim \|\Sigma\|\Bigl(\sqrt{\frac{p}{n}}\vee \frac{p}{n}\Bigr).
\end{align*}
\end{proposition}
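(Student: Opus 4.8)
The plan is to derive this from the $L_p$-form of the Gaussian concentration inequality, Proposition \ref{L_p_conce}, exactly as in the arguments preceding it. As usual I would first reduce to the case where $\Sigma$ has finite rank $N$ (the general case following by finite-rank approximation of $\Sigma$ and passing to the limit), write $X_j=\Sigma^{1/2}Z_j$ with $Z_j$ i.i.d.\ standard normal, and identify ${\mathcal Z}:=(Z_1,\dots,Z_n)$ with a standard normal vector in ${\mathbb R}^{nN}$, so that Proposition \ref{L_p_conce} applies to locally Lipschitz functions of ${\mathcal Z}$. The quantity of interest is then $g({\mathcal Z}):=\|\tilde\Sigma_n({\mathcal Z})-\Sigma\|_r$, where $\tilde\Sigma_n({\mathcal Z})=\hat\Sigma_n(\Sigma^{1/2}Z_1,\dots,\Sigma^{1/2}Z_n)$.

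The first step is the observation that, for any $r\in[2,+\infty]$, the Schatten-norm monotonicity $\|C\|_r\le\|C\|_2$ makes the map $A\mapsto\|A-\Sigma\|_r$ globally $1$-Lipschitz with respect to the Hilbert--Schmidt norm, since $\bigl|\,\|A-\Sigma\|_r-\|B-\Sigma\|_r\,\bigr|\le\|A-B\|_r\le\|A-B\|_2$. Composing with the ${\mathcal S}_2$-valued map ${\mathcal Z}\mapsto\tilde\Sigma_n({\mathcal Z})$, whose local Lipschitz constant is controlled by \eqref{L_Sigma_Z}, gives
\[
(Lg)({\mathcal Z})\le (L\tilde\Sigma_n)({\mathcal Z})\le\frac{2\|\Sigma\|^{1/2}\|\hat\Sigma_n\|^{1/2}}{\sqrt n},
\]
and Proposition \ref{L_p_conce} then yields, for every $p\ge 1$,
\[
\Bigl\|\,\|\hat\Sigma_n-\Sigma\|_r-{\mathbb E}\|\hat\Sigma_n-\Sigma\|_r\Bigr\|_{L_p}\lesssim\sqrt p\,\bigl\|(Lg)({\mathcal Z})\bigr\|_{L_p}\lesssim\sqrt{\frac pn}\,\|\Sigma\|^{1/2}\,\Bigl\|\,\|\hat\Sigma_n\|\,\Bigr\|_{L_p}^{1/2},
\]
where the last step is just H\"older's inequality together with a routine adjustment of the moment index in $\|\,\|\hat\Sigma_n\|^{1/2}\,\|_{L_p}$.

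The final step is to insert the bound \eqref{L_p_n_Sigma}, valid under ${\bf r}(\Sigma)\lesssim n$, namely $\|\,\|\hat\Sigma_n\|\,\|_{L_p}\lesssim\|\Sigma\|\,(1\vee\sqrt{p/n}\vee p/n)$, to obtain
\[
\Bigl\|\,\|\hat\Sigma_n-\Sigma\|_r-{\mathbb E}\|\hat\Sigma_n-\Sigma\|_r\Bigr\|_{L_p}\lesssim\|\Sigma\|\sqrt{\frac pn}\,\Bigl(1\vee\sqrt{\frac pn}\vee\frac pn\Bigr)^{1/2}=\|\Sigma\|\,\Bigl(\sqrt{\frac pn}\vee\frac pn\Bigr),
\]
the last identity being checked by treating $p\le n$ and $p>n$ separately. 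The estimate is uniform over $r\in[2,+\infty]$ because the $1$-Lipschitz property used above holds for all such $r$ (including $r=\infty$). I do not expect a genuine obstacle here: all the substantive work is already contained in Proposition \ref{prop_one} (hence \eqref{L_Sigma_Z}), Proposition \ref{L_p_conce}, and bound \eqref{L_p_n_Sigma}, and what remains is the elementary Lipschitz-composition bookkeeping and the finite-rank reduction, which could be spelled out in a sentence or two.
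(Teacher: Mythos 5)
Your proposal is correct and follows essentially the same route as the paper: reduce to finite rank, note that $A\mapsto\|A-\Sigma\|_r$ is $1$-Lipschitz in the Hilbert--Schmidt norm for all $r\in[2,\infty]$, combine with \eqref{L_Sigma_Z} and Proposition \ref{L_p_conce}, and then control the resulting factor involving $\|\hat\Sigma_n\|^{1/2}$. The only (immaterial) difference is in that last factor: the paper splits $\|\hat\Sigma_n\|^{1/2}$ into mean plus fluctuation and applies Gaussian concentration a second time via \eqref{bd_dva} together with \eqref{bd_hat_1/2}, whereas you invoke \eqref{L_p_n_Sigma} (which rests on the external bounds \eqref{KL_1}--\eqref{KL_2}, so no circularity), and both give $\|\Sigma\|(\sqrt{p/n}\vee p/n)$ after the same elementary case check.
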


\begin{proof}
Assume that $\Sigma$ is of finite rank $N$ and represent $\hat \Sigma_n$ as a function of ${\mathcal Z}:$ $\hat \Sigma_n= \tilde \Sigma_n({\mathcal Z}).$ 
Since, for $r\geq 2,$ 
\begin{align*}
|\|A\|_r - \|B\|_r| \leq \|A-B\|_r \leq \|A-B\|_2,
\end{align*}
bound \eqref{L_Sigma_Z} implies that 
\begin{align*}
(L\|\tilde \Sigma_n-\Sigma\|_r)({\mathcal Z})\leq \frac{2\|\Sigma\|^{1/2}\|\hat \Sigma_n\|^{1/2}}{\sqrt{n}}.
\end{align*}
Hence, we can use the bound of Proposition \ref{L_p_conce} to get 
\begin{align}
\label{bd_co_co}
&
\nonumber
\Bigl\|\|\hat \Sigma_n-\Sigma\|_r - {\mathbb E}\|\hat \Sigma_n-\Sigma\|_r\Bigr\|_{L_p} \lesssim \|\Sigma\|^{1/2}\frac{\sqrt{p}}{\sqrt{n}} \|\|\hat \Sigma_n\|^{1/2}\|_{L_p}
\\
&
\lesssim \|\Sigma\|^{1/2} {\mathbb E}\|\hat \Sigma_n\|^{1/2}\sqrt{\frac{p}{n}} + \|\Sigma\|^{1/2}\|\|\hat \Sigma_n\|^{1/2}-{\mathbb E}\|\hat \Sigma_n\|^{1/2}\|_{L_p}\sqrt{\frac{p}{n}}.
\end{align}
Also, using bound \eqref{L_Sigma_Z} along with the bound 
\begin{align*}
\Bigl|\|A\|^{1/2}- \|B\|^{1/2}\Bigr| \leq \frac{\|A-B\|}{\|A\|^{1/2}+ \|B\|^{1/2}}  \leq \frac{\|A-B\|_2}{\|A\|^{1/2}+ \|B\|^{1/2}} 
\end{align*}
that holds for all self-adjoint positively semidefinite operators $A,B,$ it is easy to see that 
\begin{align}
\label{bd_dva}
(L\|\tilde \Sigma_n\|^{1/2})({\mathcal Z})\leq \frac{\|\Sigma\|^{1/2}}{\sqrt{n}}.
\end{align}
Using again the bound of Proposition \ref{L_p_conce}, we get 
\begin{align*}
\|\|\hat \Sigma_n\|^{1/2}-{\mathbb E}\|\hat \Sigma_n\|^{1/2}\|_{L_p} \lesssim \|\Sigma\|^{1/2} \sqrt{\frac{p}{n}}.
\end{align*}
It is enough to substitute the last bound into bound \eqref{bd_co_co} and also observe that 
\begin{align}
\label{bd_hat_1/2}
\nonumber
{\mathbb E}\|\hat \Sigma_n\|^{1/2} &\leq {\mathbb E}^{1/2}\|\hat \Sigma_n\|
\leq \|\Sigma\|^{1/2} + {\mathbb E}^{1/2}\|\hat \Sigma_n-\Sigma\|
\\
&
\lesssim \|\Sigma\|^{1/2} + \|\Sigma\|^{1/2} \Bigl(\sqrt{\frac{{\bf r}(\Sigma)}{n}}\vee \frac{{\bf r}(\Sigma)}{n}\Bigr)^{1/2}\lesssim \|\Sigma\|^{1/2},
\end{align}
to complete the proof.

\qed
\end{proof}

\begin{remark}
\normalfont
For $r=\infty,$ the inequality of Proposition \ref{conc_Schatten_hat} also follows 
from bound \eqref{KL_2}.
\end{remark}

We now turn to the proof of Theorem \ref{prop_rem}.

\begin{proof}
First we assume that ${\rm rank}(\Sigma)=N<\infty.$ Let $\lambda_1\geq \dots \geq \lambda_N>0$ be the non-zero 
eigenvalues of $\Sigma$ (repeated with their multiplicities) and let $\phi_1,\dots, \phi_N$ be the corresponding orthonormal 
eigenvectors. In this case, $X= \sum_{k=1}^N \sqrt{\lambda_k} g_k \phi_k,$ where $g_1,\dots, g_N$ are i.i.d. standard normal 
random variables and $X_j= \sum_{k=1}^N \sqrt{\lambda_k} g_{j,k} \phi_k, j=1,\dots, n,$ $\{g_{j,k}: 1\leq k\leq N, j=1,\dots, n\}$
being also i.i.d. standard normal. Since $X, X_1,\dots , X_n$ take values in $L_N:= {\rm l.s.}(\{\phi_1,\dots, \phi_N\})\subset {\mathbb H},$ we can assume that operators 
$\Sigma$ and $\hat \Sigma_n$ act in subspace $L_N.$ Let $f\in C^1({\mathbb R}).$ Then, the derivative $D_H R_f(A,H)[H_1]$ 
of function $H\mapsto  R_f(A,H)$ in the direction of operator $H_1$ is well defined for all self-adjoint operators $H,H_1: L_N\mapsto L_N$
and is given by the following formula:
\begin{align*}
D_H R_f(A,H)[H_1] = \langle f'(A+H)-f'(A), H_1\rangle.
\end{align*}
Therefore, the local Lipschitz constant of the map ${\mathcal S}_2(L_N)\ni H\mapsto R_f(A,H)$ could be bounded 
as follows:
\begin{align*}
(L R_f(A,\cdot))(H) \leq \|f'(A+H)-f'(A)\|_2.
\end{align*}
Together with \eqref{L_Sigma_Z}, this implies that 
\begin{align*}
(L R_f(\Sigma, \tilde \Sigma_n-\Sigma))({\mathcal Z})\leq 
\frac{2\|\Sigma\|^{1/2}\|\hat \Sigma_n\|^{1/2}}{\sqrt{n}}
\|f'(\hat \Sigma_n)-f'(\Sigma)\|_2,
\end{align*}
where ${\mathcal Z}:= (Z_1,\dots, Z_n),$ $Z_j=\sum_{k=1}^n g_{j,k}\phi_k, j=1,\dots, n$ and $\tilde \Sigma_n({\mathcal Z}):= \hat \Sigma_n(\Sigma^{1/2}Z_1, \dots, \Sigma^{1/2} Z_n).$

Thus, it follows from Proposition \ref{L_p_conce} that the following bound holds for all $p\geq 1:$
\begin{align}
\label{conc_rem_start}
\nonumber
\Bigl\|R_f(\Sigma, \hat \Sigma_n-\Sigma)-{\mathbb E}R_f(\Sigma, \hat \Sigma_n-\Sigma)\Bigr\|_{L_p} &\lesssim \sqrt{p} \Bigl\|(L R_f(\Sigma, \tilde \Sigma_n-\Sigma))({\mathcal Z})\Bigr\|_{L_p}
\\
&
\lesssim \sqrt{p}\frac{\|\Sigma\|^{1/2}}{\sqrt{n}}
\Bigl\|\|\hat \Sigma_n\|^{1/2}\|f'(\hat \Sigma_n)-f'(\Sigma)\|_2\Bigr\|_{L_p}. 
\end{align}
Note that 
\begin{align*}
{\mathbb E}\|\hat \Sigma_n\|^{1/2}\|f'(\hat \Sigma_n)-f'(\Sigma)\|_2 \leq 
{\mathbb E}^{1/2}\|\hat \Sigma_n\|\ {\mathbb E}^{1/2}\|f'(\hat \Sigma_n)-f'(\Sigma)\|_2^2.
\end{align*}
By bound \eqref{bd_hat_1/2},
we also have 
$
{\mathbb E}^{1/2}\|\hat \Sigma_n\|\lesssim \|\Sigma\|^{1/2} 
$
provided that ${\bf r}(\Sigma)\lesssim n.$ If $f'$ is a Lipschitz function, then the operator function  $A\mapsto f'(A)$ is also 
Lipschitz with respect to the Hilbert-Schmidt norm (see \cite{Pot_Suk}). Therefore, using bound of Proposition \ref{HS_error},  we get 
\begin{align*}
{\mathbb E} \|f'(\hat \Sigma_n)-f'(\Sigma)\|_2^2 \leq \|f'\|_{\rm Lip}^2 {\mathbb E}\|\hat \Sigma_n-\Sigma\|_2^2
\leq \|f'\|_{\rm Lip}^2\|\Sigma\|^2\frac{{\bf r}(\Sigma)^2}{n}. 
\end{align*}  
Thus, we get 
\begin{align*}
{\mathbb E}^{1/2} \|f'(\hat \Sigma_n)-f'(\Sigma)\|_2^2 \lesssim \|f'\|_{\rm Lip} \|\Sigma\|\frac{{\bf r}(\Sigma)}{\sqrt{n}}
\end{align*}
and 
\begin{align}
\label{expect_prod}
{\mathbb E}\|\hat \Sigma_n\|^{1/2}\|f'(\hat \Sigma_n)-f'(\Sigma)\|_2 \lesssim  \|f'\|_{\rm Lip} \|\Sigma\|^{3/2}\frac{{\bf r}(\Sigma)}{\sqrt{n}}.
\end{align}
We now use Gaussian concentration bound of Proposition \ref{L_p_conce}
to control 
\begin{align*}
\Bigl\|\|\hat \Sigma_n\|^{1/2}\|f'(\hat \Sigma_n)-f'(\Sigma)\|_2- {\mathbb E}\|\hat \Sigma_n\|^{1/2}\|f'(\hat \Sigma_n)-f'(\Sigma)\|_2\Bigr\|_{L_p}.
\end{align*}
It is easy to get from bound \eqref{L_Sigma_Z} and the Lipschitz property of operator function $f'(A)$ that  
\begin{align}
\label{bd_odin}
(L\|f'(\tilde \Sigma_n)-f'(\Sigma)\|_2)({\mathcal Z}) 
\leq 2\|f'\|_{\rm Lip} 
\frac{\|\Sigma\|^{1/2}\|\hat \Sigma_n\|^{1/2}}{\sqrt{n}}.
\end{align} 
It follows from \eqref{bd_odin} and \eqref{bd_dva} that 
\begin{align*}
(L\|\tilde \Sigma_n\|^{1/2}\|f'(\tilde \Sigma_n)-f'(\Sigma)\|_2)({\mathcal Z}) 
&
\leq 
\frac{\|\Sigma\|^{1/2}}{\sqrt{n}}\|f'(\hat \Sigma_n)-f'(\Sigma)\|_2 + 2\|f'\|_{\rm Lip} 
\frac{\|\Sigma\|^{1/2}\|\hat \Sigma_n\|}{\sqrt{n}}
\\
&
\leq \|f'\|_{\rm Lip} \frac{\|\Sigma\|^{1/2}}{\sqrt{n}}(\|\hat \Sigma_n-\Sigma\|_2+ 2\|\hat \Sigma_n\|).
\end{align*}
Using again Proposition \ref{L_p_conce}, we get 
\begin{align}
\label{conc_prod}
&
\nonumber
\Bigl\|\|\hat \Sigma_n\|^{1/2}\|f'(\hat \Sigma_n)-f'(\Sigma)\|_2- {\mathbb E}\|\hat \Sigma_n\|^{1/2}\|f'(\hat \Sigma_n)-f'(\Sigma)\|_2\Bigr\|_{L_p}
\\
&
\lesssim \sqrt{p} \|f'\|_{\rm Lip} \frac{\|\Sigma\|^{1/2}}{\sqrt{n}}\Bigl(\Bigl\|\|\hat \Sigma_n-\Sigma\|_2\Bigr\|_{L_p}+\Bigl\|\|\hat \Sigma_n\|\Bigr\|_{L_p}\Bigr).
\end{align}
It follows from Proposition \ref{conc_Schatten_hat} (for $r=\infty$) and \eqref{KL_1} that, under the assumption ${\bf r}(\Sigma)\lesssim n,$
\begin{align}
\label{co_op}
\nonumber
\Bigl\|\|\hat \Sigma_n\|\Bigr\|_{L_p} &\leq \|\Sigma\| + {\mathbb E}\|\hat \Sigma_n-\Sigma\|+\Bigl\|\|\hat \Sigma_n-\Sigma\| -{\mathbb E}\|\hat \Sigma_n-\Sigma\|\Bigr\|_{L_p}
\\
&
\leq \|\Sigma\| + C\|\Sigma\|\Bigl(\sqrt{\frac{{\bf r}(\Sigma)}{n}}\vee \sqrt{\frac{p}{n}}\vee \frac{p}{n}\Bigr)
\lesssim \|\Sigma\| \Bigl(\sqrt{\frac{p}{n}}\vee \frac{p}{n}\vee 1\Bigr).
\end{align}
On the other hand, the bounds of Propositions \ref{HS_error} and Proposition \ref{conc_Schatten_hat} (for $r=2$) 
imply that 
\begin{align}
\label{co_HS}
&
\nonumber
\Bigl\|\|\hat \Sigma_n-\Sigma\|_2\Bigr\|_{L_p} \leq {\mathbb E}\|\hat \Sigma_n-\Sigma\|_2 + \Bigl\|\|\hat \Sigma_n-\Sigma\|_2 -{\mathbb E}\|\hat \Sigma_n-\Sigma\|_2\Bigr\|_{L_p}
\\
&
\lesssim 
\|\Sigma\| \Bigl(\frac{{\bf r}(\Sigma)}{\sqrt{n}} \vee \sqrt{\frac{p}{n}}\vee \frac{p}{n}\Bigr).
\end{align}
Thus, we can use bounds \eqref{conc_prod}, \eqref{co_op} and \eqref{co_HS} to conclude that 
\begin{align*}
&
\Bigl\|\|\hat \Sigma_n\|^{1/2}\|f'(\hat \Sigma_n)-f'(\Sigma)\|_2- {\mathbb E}\|\hat \Sigma_n\|^{1/2}\|f'(\hat \Sigma_n)-f'(\Sigma)\|_2\Bigr\|_{L_p}
\\
&
\lesssim \|f'\|_{\rm Lip} \|\Sigma\|^{3/2}\Bigl(\frac{{\bf r}(\Sigma)}{\sqrt{n}}\sqrt{\frac{p}{n}} \vee \sqrt{\frac{p}{n}}\vee \Bigl(\frac{p}{n}\Bigr)^{3/2}\Bigr).
\end{align*}
Combining the last bound with \eqref{expect_prod}, we get 
\begin{align*}
&
\Bigl\|\|\hat \Sigma_n\|^{1/2}\|f'(\hat \Sigma_n)-f'(\Sigma)\|_2\Bigr\|_{L_p}
\\
&
\lesssim \|f'\|_{\rm Lip} \|\Sigma\|^{3/2}\Bigl(\frac{{\bf r}(\Sigma)}{\sqrt{n}}\Bigl(\sqrt{\frac{p}{n}}\vee 1\Bigr) \vee \sqrt{\frac{p}{n}}\vee \Bigl(\frac{p}{n}\Bigr)^{3/2}\Bigr).
\end{align*}
Together with \eqref{conc_rem_start}, this yields bound \eqref{conc_rem_end} in the case of covariance $\Sigma$ of finite rank.

In the general case, let $\lambda_1\geq \lambda_2\geq \dots \geq 0$  be the eigenvalues of $\Sigma$ (repeated with their multiplicities) and let $\phi_1,\phi_2,\dots$ be an orthonormal basis of corresponding 
eigenvectors. Then, $X=\sum_{k\geq 1}\sqrt{\lambda_k} g_k \phi_k,$ $\{g_k\}$ being i.i.d. standard normal r.v. Denote $X^{(N)}:= \sum_{k=1}^N \sqrt{\lambda_k} g_k \phi_k$ and 
let 
\begin{align*}
\Sigma^{(N)}:= {\mathbb E}(X^{(N)}\otimes X^{(N)})= \sum_{k=1}^N \lambda_k (\phi_k\otimes \phi_k).
\end{align*} 
Define $X_j^{(N)}:= \sum_{k=1}^N \sqrt{\lambda_k} g_{j,k} \phi_k, j=1,\dots, n,$ where $\{g_{j,k}\}$ are i.i.d. standard normal r.v. and let $\hat \Sigma_n^{(N)}$ be the sample covariance 
based on $X_1^{(N)}, \dots, X_n^{(N)}.$ Then, 
\begin{align}
\label{conc_rem_N}
&
\nonumber
\Bigl\|R_f(\Sigma^{(N)}, \hat \Sigma_n^{(N)}-\Sigma^{(N)})-{\mathbb E}R_f(\Sigma^{(N)}, \hat \Sigma_n^{(N)}-\Sigma^{(N)})\Bigr\|_{L_p} 
\\
&
\lesssim 
\|f'\|_{\rm Lip} \|\Sigma^{(N)}\|^2\Bigl(\frac{{\bf r}(\Sigma^{(N)})}{\sqrt{n}}\Bigl(\sqrt{\frac{p}{n}}\vee \frac{p}{n}\Bigr) \vee \frac{p}{n}\vee \Bigl(\frac{p}{n}\Bigr)^{2}\Bigr),
\end{align}
and it remains to pass in the above inequality to the limit as $N\to \infty.$ 

Observe that $\|\Sigma^{(N)}\|=\lambda_1=\|\Sigma\|$ and 
\begin{align*}
{\bf r}(\Sigma^{(N)}) = \sum_{j=1}^N \frac{\lambda_j}{\lambda_1} \to \sum_{j=1}^{\infty} \frac{\lambda_j}{\lambda_1}= {\bf r}(\Sigma)\ {\rm as}\ N\to\infty.
\end{align*}
We will prove that 
\begin{align}
\label{rem_N_inf}
R_f(\Sigma^{(N)}, \hat \Sigma_n^{(N)}-\Sigma^{(N)}) \to R_f(\Sigma, \hat \Sigma_n-\Sigma)\ {\rm as}\ N\to\infty\ {\rm a.s.}
\end{align} 
Indeed, since $f(0)=0,$ $f\in C^1({\mathbb R}_+)$ and $\|f'\|_{\rm Lip}<\infty,$ 
we have 
\begin{align*}
|f'(x)| \leq |f'(0)| + \|f'\|_{\rm Lip}x\ {\rm and}\  |f(x)|\leq |f'(0)|x+ \|f'\|_{\rm LIp}\frac{x^2}{2}, x\geq 0.
\end{align*}
Therefore,
\begin{align}
\label{odin_N}
|\tau_f(\Sigma)-\tau_f(\Sigma^{(N)})| \leq  \sum_{k>N} |f(\lambda_k)| \leq |f'(0)|\sum_{k>N} \lambda_k + \frac{\|f'\|_{\rm Lip}}{2} \sum_{k>N} \lambda_k^2\to 0\ {\rm as}\ N\to \infty.
\end{align} 
Note also that 
\begin{align*}
{\mathbb E}\|X-X^{(N)}\|\leq {\mathbb E}^{1/2}\|X-X^{N}\|^2 = (\sum_{k>N} \lambda_k)^{1/2}\to 0\ {\rm as}\ N\to \infty.
\end{align*} 
By Gaussian concentration, it is easy to conclude that $X^{(N)}\to X$ a.s. and, similarly, $X_j^{(N)}\to X_j$ a.s. for  $j=1,\dots, n$ as $N\to \infty.$
This easily implies that $\|\hat \Sigma_n^{(N)}-\hat \Sigma_n\|\to 0$ as $N\to \infty,$ and, since ${\rm rank}(\hat \Sigma_n^{(N)})\leq n, {\rm rank}(\hat \Sigma_n)\leq n,$
the convergence also holds in Hilbert-Schmidt and nuclear norms. Note also that 
\begin{align*}
\|\hat \Sigma_n\|\leq n^{-1}\sum_{j=1}^n \|X_j\otimes X_j\|= n^{-1}\sum_{j=1}^n \|X_j\|^2 =: A
\end{align*}
and, similarly,
\begin{align*}
\|\hat \Sigma_n^{(N)}\|\leq n^{-1}\sum_{j=1}^n \|X_j^{(N)}\|^2 \leq n^{-1}\sum_{j=1}^n \|X_j\|^2 =A.
\end{align*}
Since $f$ is Lipschitz on $[0,A],$ it can be extended to a Lipschitz function $\bar f: {\mathbb R}_+\mapsto {\mathbb R}$ with preservation of 
its Lipschitz constant (that might depend on $A$). Since $\sigma (\hat \Sigma_n)\subset [0,A]$ and $\sigma(\hat \Sigma_n^{(N)})\subset [0,A],$
we have $f(\hat \Sigma_n)= \bar f (\hat \Sigma_n)$ and $f(\hat \Sigma_n^{(N)})=\bar f (\hat \Sigma_n^{(N)}).$ 
Therefore, taking also into account that ${\rm rank}(f(\hat \Sigma_n))\leq n$ and ${\rm rank}(f(\hat \Sigma_n^{(N)}))\leq n,$ we get
\begin{align}
\label{dva_N}
\nonumber
&
|\tau_f (\hat \Sigma_n^{(N)})- \tau_f(\hat \Sigma_n)|= |{\rm tr}(\bar f(\hat \Sigma_n^{(N)}))- {\rm tr} (\bar f(\hat \Sigma_n))|\leq 
\|\bar f(\hat \Sigma_n^{(N)})-\bar f(\hat \Sigma_n)\|_1 
\\
&
\leq \sqrt{2n} \|\bar f(\hat \Sigma_n^{(N)})-\bar f(\hat \Sigma_n)\|_2 \leq \sqrt{2n}\|\bar f\|_{\rm Lip} \|\hat \Sigma_n^{(N)}-\hat \Sigma_n\|_2
=\sqrt{2n}\|f\|_{\rm Lip([0,A])} \|\hat \Sigma_n^{(N)}-\hat \Sigma_n\|_2\to 0\ {\rm a.s.}
\end{align}
as $N\to\infty.$
Finally, note that 
\begin{align*}
\|f'(\Sigma^{(N)})-f'(\Sigma)\|_2 
\leq \|f'\|_{\rm Lip} \|\Sigma^{(N)}-\Sigma\|_2\to 0\ {\rm as}\ N\to\infty
\end{align*}
and 
\begin{align*}
\|(\hat \Sigma_n^{(N)}-\Sigma^{(N)})-(\hat \Sigma_n-\Sigma)\|_2 \leq \|\hat \Sigma_n^{(N)}-\hat \Sigma_n\|_2 + \|\Sigma^{(N)}-\Sigma\|_2 \to 0\ {\rm as}\ N\to\infty,
\end{align*}
implying that 
\begin{align}
\label{tri_N} 
\langle f'(\Sigma^{(N)}), \hat \Sigma_n^{(N)}-\Sigma^{(N)}\rangle \to  \langle f'(\Sigma), \hat \Sigma_n - \Sigma\rangle \ {\rm as}\ N\to\infty \ {\rm a.s.}
\end{align} 
It follows from \eqref{odin_N}, \eqref{dva_N} and \eqref{tri_N} that \eqref{rem_N_inf} holds. 
 
 Next we show that 
 \begin{align}
\label{exepect_rem_N_inf}
{\mathbb E}R_f(\Sigma^{(N)}, \hat \Sigma_n^{(N)}-\Sigma^{(N)}) \to {\mathbb E}R_f(\Sigma, \hat \Sigma_n-\Sigma)\ {\rm as}\ N\to\infty.
\end{align} 
For this, note that, in view of Proposition \ref{rem_first_order}, 
\begin{align*} 
|R_f(\Sigma^{(N)}, \hat \Sigma_n^{(N)}-\Sigma^{(N)})| \leq \frac{\|f'\|_{\rm Lip}}{2} \|\hat \Sigma_n^{(N)}-\Sigma^{(N)}\|_2^2. 
\end{align*} 
Using bound  \eqref{co_HS}  with $p=4,$ we get 
\begin{align*}
\sup_{N\geq 1}{\mathbb E}|R_f(\Sigma^{(N)}, \hat \Sigma_n^{(N)}-\Sigma^{(N)})|^2 &\leq \frac{\|f'\|_{\rm Lip}^2}{4} 
\sup_{N\geq 1}{\mathbb E}\|\hat \Sigma_n^{(N)}-\Sigma^{(N)}\|_2^4
=\frac{\|f'\|_{\rm Lip}^2}{4}\sup_{N\geq 1}\Bigl\|\|\hat \Sigma_n^{(N)}-\Sigma^{(N)}\|_2\Bigr\|_{L_4}^4
\\
&
\lesssim \|f'\|_{\rm Lip}^2\|\Sigma\|^4 \sup_{N\geq 1}\frac{{\bf r}(\Sigma^{(N)})^4}{n^2}\lesssim \|f'\|_{\rm Lip}^2\|\Sigma\|^4 \frac{{\bf r}(\Sigma)^4}{n^2} <\infty.
\end{align*}
Thus, one can pass to the limit as $N\to\infty$ under the expectation to prove \eqref{exepect_rem_N_inf}.

Finally, observe that, by bound \eqref{conc_rem_N} for all $p'>p\geq 1,$
\begin{align*}
&
\sup_{N\geq 1}\Bigl\|R_f(\Sigma^{(N)}, \hat \Sigma_n^{(N)}-\Sigma^{(N)})-{\mathbb E}R_f(\Sigma^{(N)}, \hat \Sigma_n^{(N)}-\Sigma^{(N)})\Bigr\|_{L_{p'}} 
\\
&
\lesssim 
\|f'\|_{\rm Lip} \sup_{N\geq 1}\|\Sigma^{(N)}\|^2\Bigl(\frac{{\bf r}(\Sigma^{(N)})}{\sqrt{n}}\Bigl(\sqrt{\frac{p'}{n}}\vee \frac{p'}{n}\Bigr) \vee \frac{p'}{n}\vee \Bigl(\frac{p'}{n}\Bigr)^{2}\Bigr)
\\
&
\lesssim 
\|f'\|_{\rm Lip} \|\Sigma\|^2\Bigl(\frac{{\bf r}(\Sigma)}{\sqrt{n}}\Bigl(\sqrt{\frac{p'}{n}}\vee \frac{p'}{n}\Bigr) \vee \frac{p'}{n}\vee \Bigl(\frac{p'}{n}\Bigr)^{2}\Bigr)<\infty.
\end{align*}
Since also 
\begin{align}
R_f(\Sigma^{(N)}, \hat \Sigma_n^{(N)}-\Sigma^{(N)})-{\mathbb E}R_f(\Sigma^{(N)}, \hat \Sigma_n^{(N)}-\Sigma^{(N)}) \to R_f(\Sigma, \hat \Sigma_n-\Sigma)-{\mathbb E} R_f(\Sigma, \hat \Sigma_n-\Sigma)  
\end{align} 
as $N\to \infty$ a.s., it is possible to pass to the limit as $N\to\infty$ in bound \eqref{conc_rem_N} to complete the proof.
\qed
\end{proof}

The next corollary is immediate.

\begin{corollary}
\label{conc_tau_f}
Let $f\in C^{1}({\mathbb R}_+)$ be such that $f(0)=0,$ $\|f'\|_{L_{\infty}}<\infty$ and $\|f'\|_{\rm Lip}<\infty.$ 
Suppose that ${\bf r}(\Sigma)\lesssim n.$
Then, for all $p\geq 1,$
\begin{align*}
&
\Bigl\|\tau_f(\hat \Sigma_n)-{\mathbb E}\tau_f(\hat \Sigma_n)\Bigr\|_{L_p} 
\\
&
\lesssim_m 
\Bigl(\|\Sigma f'(\Sigma)\|_2\sqrt{\frac{p}{n}}\vee \|\Sigma f'(\Sigma)\|\frac{p}{n}\Bigr)
+\|f'\|_{\rm Lip} \|\Sigma\|^2
\Bigl(\frac{{\bf r}(\Sigma)}{\sqrt{n}}\sqrt{\frac{p}{n}}\vee \Bigl(\frac{{\bf r}(\Sigma)}{\sqrt{n}}\vee 1\Bigr)\frac{p}{n} \vee\Bigl(\frac{p}{n}\Bigr)^{2}\Bigr)
\\
&
\lesssim \|f'\|_{L_{\infty}} \|\Sigma\|\Bigl(\sqrt{{\bf r}(\Sigma^2)}\sqrt{\frac{p}{n}}\vee \frac{p}{n}\Bigr)
+\|f'\|_{\rm Lip} \|\Sigma\|^2
\Bigl(\frac{{\bf r}(\Sigma)}{\sqrt{n}}\sqrt{\frac{p}{n}}\vee \Bigl(\frac{{\bf r}(\Sigma)}{\sqrt{n}}\vee 1\Bigr)\frac{p}{n} \vee\Bigl(\frac{p}{n}\Bigr)^{2}\Bigr).
\end{align*}
\end{corollary}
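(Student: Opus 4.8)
The plan is to combine the exact decomposition
\[
\tau_f(\hat \Sigma_n)-{\mathbb E}\tau_f(\hat \Sigma_n)= \langle f'(\Sigma), \hat \Sigma_n-\Sigma\rangle + \Bigl(R_f(\Sigma, \hat \Sigma_n-\Sigma)-{\mathbb E}R_f(\Sigma, \hat \Sigma_n-\Sigma)\Bigr)
\]
already recorded above with the $L_p$-bounds established separately for the linear term and for the centered remainder, and to conclude by the triangle inequality in $L_p.$

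First I would apply Proposition \ref{prop_lin} to the linear term, which gives
\[
\Bigl\|\langle f'(\Sigma), \hat \Sigma_n-\Sigma\rangle\Bigr\|_{L_p} \lesssim \|\Sigma f'(\Sigma)\|_2 \sqrt{\frac{p}{n}} \vee \|\Sigma f'(\Sigma)\| \frac{p}{n}.
\]
Since the hypotheses of the corollary ($f\in C^1({\mathbb R}_+)$, $f'$ Lipschitz on ${\mathbb R}$, $f(0)=0$, and ${\bf r}(\Sigma)\lesssim n$) are exactly those required by Theorem \ref{prop_rem}, the latter gives
\[
\Bigl\|R_f(\Sigma, \hat \Sigma_n-\Sigma)-{\mathbb E}R_f(\Sigma, \hat \Sigma_n-\Sigma)\Bigr\|_{L_p} \lesssim \|f'\|_{\rm Lip} \|\Sigma\|^2\Bigl(\frac{{\bf r}(\Sigma)}{\sqrt{n}}\sqrt{\frac{p}{n}} \vee \Bigl(\frac{{\bf r}(\Sigma)}{\sqrt{n}}\vee 1\Bigr)\frac{p}{n}\vee \Bigl(\frac{p}{n}\Bigr)^{2}\Bigr).
\]
Adding these two estimates yields the first displayed inequality of the corollary.

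For the second displayed inequality I would only need the elementary bounds $\|\Sigma f'(\Sigma)\|_2 = \bigl(\sum_{k\geq 1} \lambda_k^2 f'(\lambda_k)^2\bigr)^{1/2} \leq \|f'\|_{L_{\infty}}\|\Sigma\|_2 = \|f'\|_{L_{\infty}}\|\Sigma\|\sqrt{{\bf r}(\Sigma^2)}$ and $\|\Sigma f'(\Sigma)\| \leq \|f'\|_{L_{\infty}}\|\Sigma\|$, and substitute them into the first bound, absorbing the term $\|f'\|_{L_{\infty}}\|\Sigma\|\frac{p}{n}$ into the remainder terms. The only thing to check is that the various $\vee$-terms combine precisely as claimed, and this is routine; there is no genuine obstacle, since all the analytic content is already in Proposition \ref{prop_lin} and Theorem \ref{prop_rem}. (I would note in passing that the subscript in $\lesssim_m$ is vacuous here, no dependence on $m$ actually entering; it is kept only for uniformity with later uses of this corollary.)
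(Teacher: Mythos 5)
Your proposal is correct and follows essentially the same route as the paper: the first order Taylor decomposition of $\tau_f(\hat \Sigma_n)-{\mathbb E}\tau_f(\hat \Sigma_n)$ into the linear term and the centered remainder, bounded respectively by Proposition \ref{prop_lin} and Theorem \ref{prop_rem}, with the second display obtained from $\|\Sigma f'(\Sigma)\|_2\leq \|f'\|_{L_{\infty}}\|\Sigma\|\sqrt{{\bf r}(\Sigma^2)}$ and $\|\Sigma f'(\Sigma)\|\leq \|f'\|_{L_{\infty}}\|\Sigma\|$. Your parenthetical about the vacuous $m$-dependence in $\lesssim_m$ is also accurate.
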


\begin{proof}
Using the first order Taylor expansion, we get 
\begin{align*}
\tau_f(\hat \Sigma_n)-{\mathbb E}\tau_f(\hat \Sigma_n) = \langle f'(\Sigma),\hat \Sigma_n-\Sigma\rangle + R_f(\Sigma, \hat \Sigma_n-\Sigma)- 
{\mathbb E}R_f(\Sigma, \hat \Sigma_n-\Sigma).
\end{align*}
It remains to use bounds of Proposition \ref{prop_lin} and Theorem \ref{prop_rem} and to observe that 
$\|\Sigma f'(\Sigma)\|_2 \leq \|f'\|_{L_{\infty}}\|\Sigma\|_2 = \|f'\|_{L_{\infty}}\|\Sigma\| \sqrt{{\bf r}(\Sigma^2)}$ and $\|\Sigma f'(\Sigma)\|\leq \|f'\|_{L_{\infty}}\|\Sigma\|.$


\qed
\end{proof}

The results of this section also imply the following simple proposition (that itself implies Proposition \ref{Main_m=2} of Section \ref{Main_results}).

\begin{proposition}
\label{Main_m=2_detailed} 
Let $f\in C^1({\mathbb R}_+)$ with $f(0)=0$ and $\|f'\|_{\rm Lip}<\infty.$
Then, for all $p\geq 1,$
\begin{align*}
\Bigl\|\tau_f(\hat \Sigma_n)-\tau_f(\Sigma)- \langle f'(\Sigma), \hat \Sigma_n-\Sigma\rangle\Bigr\|_{L_p}
\lesssim \|f'\|_{\rm Lip} \|\Sigma\|^2 \Bigl(\frac{{\bf r}(\Sigma)^2}{n} \vee 
\frac{p}{n} \vee \Bigl(\frac{p}{n}\Bigr)^2\Bigr).
\end{align*}
If, in addition, $\|f'\|_{L_{\infty}}<\infty,$ this implies that 
\begin{align*}
\|\tau_f(\hat \Sigma_n)-\tau_f(\Sigma)\|_{L_p}
&\lesssim \Bigl(\|\Sigma f'(\Sigma)\|_2\sqrt{\frac{p}{n}}\vee \|\Sigma f'(\Sigma)\|\frac{p}{n}\Bigr)+ 
\|f'\|_{\rm Lip} \|\Sigma\|^2 \Bigl(\frac{{\bf r}(\Sigma)^2}{n} \vee 
\frac{p}{n} \vee \Bigl(\frac{p}{n}\Bigr)^2\Bigr)
\\
&
\lesssim  \|f'\|_{L_{\infty}} \|\Sigma\|\Bigl(\sqrt{{\bf r}(\Sigma^2)}\sqrt{\frac{p}{n}}\vee \frac{p}{n}\Bigr)+ 
\|f'\|_{\rm Lip} \|\Sigma\|^2 \Bigl(\frac{{\bf r}(\Sigma)^2}{n} \vee 
\frac{p}{n} \vee \Bigl(\frac{p}{n}\Bigr)^2\Bigr),
\end{align*}
and, moreover, 
\begin{align*}
&
W_p \Bigl(\sqrt{n/2}(\tau_f(\hat \Sigma_n)-\tau_f(\Sigma)), G_{\Sigma}(f)\Bigr)
\lesssim \frac{\|\Sigma f'(\Sigma)\|_2\sqrt{p}\vee \|\Sigma f'(\Sigma)\| p}{\sqrt{n}}
+\|f'\|_{\rm Lip} \|\Sigma\|^2 \Bigl(\frac{{\bf r}(\Sigma)^2}{\sqrt{n}} \vee 
\frac{p}{\sqrt{n}} \vee \frac{p^2}{n^{3/2}}\Bigr)
\\
&
\lesssim \|f'\|_{L_{\infty}} \|\Sigma\|\Bigl(\sqrt{{\bf r}(\Sigma^2)}\sqrt{\frac{p}{n}}\vee \frac{p}{\sqrt{n}}\Bigr)
+\|f'\|_{\rm Lip} \|\Sigma\|^2 \Bigl(\frac{{\bf r}(\Sigma)^2}{\sqrt{n}} \vee 
\frac{p}{\sqrt{n}} \vee \frac{p^2}{n^{3/2}}\Bigr).
\end{align*}
\end{proposition}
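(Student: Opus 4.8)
The plan is to recognize the quantity in the first inequality as the remainder of the first order Taylor expansion. Writing
\begin{align*}
\tau_f(\hat\Sigma_n)=\tau_f(\Sigma)+\langle f'(\Sigma),\hat\Sigma_n-\Sigma\rangle+R_f(\Sigma,\hat\Sigma_n-\Sigma),
\end{align*}
the left-hand side of the first bound is exactly $\bigl\|R_f(\Sigma,\hat\Sigma_n-\Sigma)\bigr\|_{L_p}$, so the whole proof reduces to an $L_p$-bound on this remainder, together with the bounds on the linear term already available. I would split $R_f(\Sigma,\hat\Sigma_n-\Sigma)=\mathbb{E}R_f(\Sigma,\hat\Sigma_n-\Sigma)+\bigl(R_f(\Sigma,\hat\Sigma_n-\Sigma)-\mathbb{E}R_f(\Sigma,\hat\Sigma_n-\Sigma)\bigr)$. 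The fluctuation part is controlled directly by Theorem \ref{prop_rem}. For the mean, Proposition \ref{rem_first_order} gives $|R_f(\Sigma,H)|\le\frac12\|f'\|_{\rm Lip}\|H\|_2^2$, so that Proposition \ref{HS_error} yields
\begin{align*}
\bigl|\mathbb{E}R_f(\Sigma,\hat\Sigma_n-\Sigma)\bigr|\le\tfrac12\|f'\|_{\rm Lip}\,\mathbb{E}\|\hat\Sigma_n-\Sigma\|_2^2\lesssim\|f'\|_{\rm Lip}\|\Sigma\|^2\frac{{\bf r}(\Sigma)^2}{n}.
\end{align*}

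Adding the two contributions, the estimate for $\|R_f(\Sigma,\hat\Sigma_n-\Sigma)\|_{L_p}$ becomes $\|f'\|_{\rm Lip}\|\Sigma\|^2$ times $\frac{{\bf r}(\Sigma)^2}{n}+\frac{{\bf r}(\Sigma)}{\sqrt n}\sqrt{\frac pn}+\bigl(\frac{{\bf r}(\Sigma)}{\sqrt n}\vee1\bigr)\frac pn+\bigl(\frac pn\bigr)^2$, and the remaining task is purely elementary: applying $ab\le\frac12(a^2+b^2)$ with $a=\frac{{\bf r}(\Sigma)}{\sqrt n}$ gives $\frac{{\bf r}(\Sigma)}{\sqrt n}\sqrt{\frac pn}\le\frac12\bigl(\frac{{\bf r}(\Sigma)^2}{n}+\frac pn\bigr)$ and $\frac{{\bf r}(\Sigma)}{\sqrt n}\cdot\frac pn\le\frac12\bigl(\frac{{\bf r}(\Sigma)^2}{n}+(\frac pn)^2\bigr)$, while $1\cdot\frac pn=\frac pn$; hence the whole expression is $\lesssim\frac{{\bf r}(\Sigma)^2}{n}\vee\frac pn\vee(\frac pn)^2$, which is the first claim. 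This is also the only place where the standing regime ${\bf r}(\Sigma)\lesssim n$ enters, since it is what allows Theorem \ref{prop_rem} to be invoked. The $\psi_{1/2}$-form asserted in Proposition \ref{Main_m=2} then follows from \eqref{Orlicz_L_p}: the $L_p$-bound grows at most quadratically in $p$, so taking $\sup_{p\ge1}p^{-2}(\cdot)$ leaves exactly the $p=1$ term $\|f'\|_{\rm Lip}\|\Sigma\|^2\frac{{\bf r}(\Sigma)^2}{n}$.

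For the second inequality I would simply add the linear term: by the triangle inequality, $\|\tau_f(\hat\Sigma_n)-\tau_f(\Sigma)\|_{L_p}\le\|\langle f'(\Sigma),\hat\Sigma_n-\Sigma\rangle\|_{L_p}+\|R_f(\Sigma,\hat\Sigma_n-\Sigma)\|_{L_p}$, where the first summand is bounded by Proposition \ref{prop_lin} and the second by what was just shown; the subsequent ``$\lesssim$'' uses only the pointwise estimates $\|\Sigma f'(\Sigma)\|_2\le\|f'\|_{L_\infty}\|\Sigma\|_2=\|f'\|_{L_\infty}\|\Sigma\|\sqrt{{\bf r}(\Sigma^2)}$ and $\|\Sigma f'(\Sigma)\|\le\|f'\|_{L_\infty}\|\Sigma\|$. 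For the Wasserstein bound I would write $\sqrt{n/2}\bigl(\tau_f(\hat\Sigma_n)-\tau_f(\Sigma)\bigr)=G_n(f)+\sqrt{n/2}\,R_f(\Sigma,\hat\Sigma_n-\Sigma)$ and combine the triangle inequality for $W_p$ with the trivial coupling that leaves $G_n(f)$ unchanged, giving
\begin{align*}
W_p\Bigl(\sqrt{n/2}\bigl(\tau_f(\hat\Sigma_n)-\tau_f(\Sigma)\bigr),G_{\Sigma}(f)\Bigr)\le\sqrt{n/2}\,\|R_f(\Sigma,\hat\Sigma_n-\Sigma)\|_{L_p}+W_p\bigl(G_n(f),G_{\Sigma}(f)\bigr).
\end{align*}
The first summand is $\sqrt{n/2}$ times the remainder bound already obtained, which converts $\frac1n$ into $\frac1{\sqrt n}$ and produces $\|f'\|_{\rm Lip}\|\Sigma\|^2\bigl(\frac{{\bf r}(\Sigma)^2}{\sqrt n}\vee\frac p{\sqrt n}\vee\frac{p^2}{n^{3/2}}\bigr)$; the second summand is exactly Proposition \ref{norm_approx_lin}; and the final ``$\lesssim$'' is once more the pointwise bound on $\|\Sigma f'(\Sigma)\|_2$.

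I do not expect a serious obstacle: all the genuinely analytic input — the concentration of the Taylor remainder in Theorem \ref{prop_rem}, the Hilbert--Schmidt second moment bound of Proposition \ref{HS_error}, the linear term estimate of Proposition \ref{prop_lin}, and the normal approximation of Proposition \ref{norm_approx_lin} — is already in place, so this proposition is essentially an assembly step. The only mildly delicate point is the bookkeeping that collapses the several cross-terms coming out of Theorem \ref{prop_rem} and the mean of the remainder into the stated three-term maximum $\frac{{\bf r}(\Sigma)^2}{n}\vee\frac pn\vee(\frac pn)^2$, which is handled by the arithmetic--geometric mean inequalities displayed above.
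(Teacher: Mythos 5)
Your proposal is correct, and its skeleton — identifying the left side of the first bound with the Taylor remainder $R_f(\Sigma,\hat\Sigma_n-\Sigma)$, then deducing the second and third claims by the triangle inequality in $L_p$ and in $W_p$ together with Propositions \ref{prop_lin} and \ref{norm_approx_lin}, and the elementary replacements $\|\Sigma f'(\Sigma)\|_2\le\|f'\|_{L_\infty}\|\Sigma\|\sqrt{{\bf r}(\Sigma^2)}$, $\|\Sigma f'(\Sigma)\|\le\|f'\|_{L_\infty}\|\Sigma\|$ — is exactly the paper's. Where you genuinely diverge is the estimate of $\|R_f(\Sigma,\hat\Sigma_n-\Sigma)\|_{L_p}$: the paper never centers the remainder, but simply uses the a.s. quadratic bound $|R_f(\Sigma,\hat\Sigma_n-\Sigma)|\le\tfrac12\|f'\|_{\rm Lip}\|\hat\Sigma_n-\Sigma\|_2^2$ (Proposition \ref{rem_first_order} combined with the finite-rank approximation \eqref{rem_N_inf}) and then bounds $\bigl\|\|\hat\Sigma_n-\Sigma\|_2\bigr\|_{L_{2p}}^2$ via Propositions \ref{HS_error} and \ref{conc_Schatten_hat} (i.e.\ bound \eqref{co_HS}), which yields $\|\Sigma\|^2\bigl(\frac{{\bf r}(\Sigma)^2}{n}\vee\frac{p}{n}\vee(\frac{p}{n})^2\bigr)$ in one line. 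You instead split off the mean, control it by the same quadratic bound plus Proposition \ref{HS_error}, invoke the deeper Gaussian-concentration result Theorem \ref{prop_rem} for the centered part, and recombine by AM--GM; your bookkeeping is right and the argument is valid, but it uses a strictly stronger input than needed — the whole point of Theorem \ref{prop_rem} is that the centered remainder is smaller than ${\bf r}(\Sigma)^2/n$, an improvement this proposition does not exploit — so the paper's cruder route is shorter. One shared caveat: you correctly note that the condition ${\bf r}(\Sigma)\lesssim n$ enters through Theorem \ref{prop_rem}; the paper's proof has the same implicit dependence through Proposition \ref{conc_Schatten_hat}, even though the statement of the proposition does not list this assumption, so you are not worse off than the paper on this point. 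Your $W_p$ step (metric triangle inequality plus the identity coupling that leaves $G_n(f)$ unchanged) is a correct reconstruction of what the paper leaves implicit.
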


\begin{proof}
It easily follows from Proposition \ref{rem_first_order} and \eqref{rem_N_inf} that a.s.
\begin{align}
\label{bd_on_R_f_gen}
|R_f(\Sigma, \hat \Sigma_n-\Sigma)| \leq \frac{\|f'\|_{\rm Lip}}{2} \|\hat \Sigma_n-\Sigma\|_2^2.
\end{align}
Therefore, using the bounds of propositions \ref{HS_error} and \ref{conc_Schatten_hat}, we get
\begin{align*}
&
\Bigl\|\tau_f(\hat \Sigma_n)-\tau_f(\Sigma)- \langle f'(\Sigma), \hat \Sigma_n-\Sigma\rangle\Bigr\|_{L_p}
= \Bigl\|R_f(\Sigma, \hat \Sigma_n-\Sigma)\Bigr\|_{L_p} \leq \frac{\|f'\|_{\rm Lip}}{2} \Bigl\|\|\hat \Sigma_n-\Sigma\|_2^2\Bigr\|_{L_p}
\\
&
=\frac{\|f'\|_{\rm Lip}}{2} \Bigl\|\|\hat \Sigma_n-\Sigma\|_2\Bigr\|_{L_{2p}}^2 \lesssim \|f'\|_{\rm Lip} \|\Sigma\|^2 \Bigl(\frac{{\bf r}(\Sigma)^2}{n} \vee 
\frac{p}{n} \vee \Bigl(\frac{p}{n}\Bigr)^2\Bigr).
\end{align*}
The last two bounds follow from propositions \ref{prop_lin} and \ref{norm_approx_lin}.
\qed 
\end{proof}

\section{Bounds on sup-norms of empirical spectral processes}
\label{sup_bounds}

Our next goal is to develop bounds on the sup-norms of empirical spectral processes, such as 
\begin{align*}
\tau_f (\hat \Sigma_n)- {\mathbb E} \tau_f(\hat \Sigma_n) = \int_{\mathbb R_+}f d\mu_{\hat \Sigma_n} -  {\mathbb E}\int_{\mathbb R_+}f d\mu_{\hat \Sigma_n}, f\in {\mathcal F}
\end{align*}
over a class ${\mathcal F}$ of sufficiently smooth functions $f:{\mathbb R}_+\mapsto {\mathbb R}$ satisfying the condition $f(0)=0.$ 
This problem could be reduced to bounding the sup-norms of the linear process 
$\langle f'(\Sigma), \hat \Sigma_n-\Sigma\rangle, f\in {\mathcal F}$ and the centered remainder process
$R_f(\Sigma, \hat \Sigma_n-\Sigma)-{\mathbb E}R_f(\Sigma, \hat \Sigma_n-\Sigma), f\in {\mathcal F}.$

Consider the following pseudometric: for $f,g\in C^1({\mathbb R}_+),$
\begin{align*}
\tau(f,g):= \|f'-g'\|_{L_{\infty}}=\sup_{x\in {\mathbb R}_{+}}|f'(x)-g'(x)|,
\end{align*}
and denote 
\begin{align*}
\rho_{\tau}=\rho_{\tau}({\mathcal F}):=\sup_{f\in {\mathcal F}}\|f'\|_{L_{\infty}}.
\end{align*}
We will use standard notations $N_{\tau}({\mathcal F}, \eps)$ for the $\eps$-covering number of ${\mathcal F}$ with respect to pseudometric $\tau$ and 
\begin{align*}
H_{\tau}({\mathcal F},\eps):= \log N_{\tau}({\mathcal F}, \eps), \eps>0
\end{align*}
be the corresponding $\eps$-entropy of ${\mathcal F}.$ Similar notations will be used for other pseudometrics. 

The following result for the process $\langle f'(\Sigma), \hat \Sigma_n-\Sigma\rangle, f\in {\mathcal F}$ holds.

\begin{proposition}
\label{entr_bd_lin_part}
Let ${\mathcal F}\subset C^1({\mathbb R}_+)$ be a class of functions $f$ satisfying the condition $f(0)=0.$
Suppose ${\bf r}(\Sigma)\lesssim n$ and let $\delta\in [0,\rho_{\tau}/2].$ For all $p\geq 1,$
\begin{align*}
&
\Big\|\sup_{f\in {\mathcal F}}|\langle f'(\Sigma), \hat \Sigma_n-\Sigma\rangle|\Bigr\|_{L_p}
\lesssim 
\|\Sigma\|_2 \sqrt{\frac{p}{n}} \int_{\delta}^{\rho_{\tau}} H_{\tau}^{1/2}({\mathcal F};\eps) d\eps + \|\Sigma\| \frac{p}{n}\int_{\delta}^{\rho_{\tau}} H_{\tau}({\mathcal F};\eps) d\eps
+ \delta \|\Sigma\| \Bigl(\frac{{\bf r}(\Sigma)}{\sqrt{n}} \vee \frac{p}{\sqrt{n}}\Bigr).
\end{align*}
In particular, for $\delta=0,$
\begin{align*}
&
\Big\|\sup_{f\in {\mathcal F}}|\langle f'(\Sigma), \hat \Sigma_n-\Sigma\rangle|\Bigr\|_{L_p}
\lesssim 
\|\Sigma\|_2 \sqrt{\frac{p}{n}} \int_{0}^{\rho_{\tau}} H_{\tau}^{1/2}({\mathcal F};\eps) d\eps + \|\Sigma\| \frac{p}{n}\int_{0}^{\rho_{\tau}} H_{\tau}({\mathcal F};\eps) d\eps.
\end{align*}
\end{proposition}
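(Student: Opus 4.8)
The plan is to reduce the supremum of the linear process $\langle f'(\Sigma),\hat\Sigma_n-\Sigma\rangle$ over $\mathcal F$ to a chaining argument controlled by Proposition~\ref{prop_lin_form}. First I would record the one-dimensional increment bound: for $f,g\in\mathcal F$, writing $B:=f'(\Sigma)-g'(\Sigma)$, Proposition~\ref{prop_lin_form} gives
\begin{align*}
\bigl\|\langle\hat\Sigma_n-\Sigma, f'(\Sigma)-g'(\Sigma)\rangle\bigr\|_{L_p}
\lesssim \|\Sigma^{1/2}B\Sigma^{1/2}\|_2\sqrt{\tfrac{p}{n}}\vee\|\Sigma^{1/2}B\Sigma^{1/2}\|\tfrac{p}{n}.
\end{align*}
Since $\|\Sigma^{1/2}B\Sigma^{1/2}\|_2\le\|B\|\,\|\Sigma\|_2$ and $\|\Sigma^{1/2}B\Sigma^{1/2}\|\le\|B\|\,\|\Sigma\|$, and $\|B\|=\|f'(\Sigma)-g'(\Sigma)\|\le\|f'-g'\|_{L_\infty}=\tau(f,g)$ by the spectral mapping bound on the operator norm, the process has a mixed sub-gaussian/sub-exponential increment of the Bernstein type with ``variance-scale'' $\|\Sigma\|_2\,\tau(f,g)$ and ``tail-scale'' $\|\Sigma\|\,\tau(f,g)$ with respect to the pseudometric $\tau$. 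The diameter of $\mathcal F$ in $\tau$ is at most $2\rho_\tau$ (and centering at some fixed $f_0\in\mathcal F$ only changes things by a term already controlled).

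Next I would apply a generic chaining / Bernstein-type chaining bound (the two-term Dudley-entropy estimate for processes with Bernstein increments; see e.g. the mixed-tail chaining in van der Vaart--Wellner or Talagrand). This yields, for any $\delta\in[0,\rho_\tau/2]$,
\begin{align*}
\Bigl\|\sup_{f,g\in\mathcal F,\,\tau(f,g)\le\delta}|\langle f'(\Sigma)-g'(\Sigma),\hat\Sigma_n-\Sigma\rangle|\Bigr\|_{L_p}
&\lesssim \|\Sigma\|_2\sqrt{\tfrac{p}{n}}\int_0^\delta H_\tau^{1/2}(\mathcal F;\eps)\,d\eps
+\|\Sigma\|\tfrac{p}{n}\int_0^\delta H_\tau(\mathcal F;\eps)\,d\eps,
\end{align*}
and the same with $\int_\delta^{\rho_\tau}$ for the ``macroscopic'' part above scale $\delta$. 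Adding the two pieces, the contribution from scales below $\delta$ can alternatively be bounded crudely: on a $\delta$-net of $\mathcal F$ one controls each increment via the one-dimensional bound, while the residual fluctuation within a $\delta$-ball is bounded by $\delta$ times $\sup_{\|B\|\le 1}\|\langle\hat\Sigma_n-\Sigma,B\rangle\|$-type quantity. Concretely, for any $B$ with $\|B\|\le 1$ we have $|\langle\hat\Sigma_n-\Sigma,B\rangle|\le\|\hat\Sigma_n-\Sigma\|_1$, but that is too lossy; instead, since $B=f'(\Sigma)-g'(\Sigma)$ is simultaneously diagonalized with $\Sigma$, we can write $\langle\hat\Sigma_n-\Sigma,B\rangle=\sum_k b_k\langle\hat\Sigma_n-\Sigma,P_{\lambda_k}\rangle$ with $\sum|b_k|\le \mathbf r(\Sigma)\|B\|$-type control, giving a residual term $\lesssim\delta\|\Sigma\|({\bf r}(\Sigma)/\sqrt n\vee p/\sqrt n)$ after invoking Proposition~\ref{trace_conc} and Proposition~\ref{prop_lin_form} applied to the projections. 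This is exactly the third term in the asserted bound.

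To finish, I would combine: the chaining bound over $[\delta,\rho_\tau]$ gives the first two integrals (with the lower limit $\delta$), centering at $f_0\in\mathcal F$ contributes $\|\langle f_0'(\Sigma),\hat\Sigma_n-\Sigma\rangle\|_{L_p}\lesssim\|\Sigma f_0'(\Sigma)\|_2\sqrt{p/n}\vee\|\Sigma f_0'(\Sigma)\|p/n\lesssim\|\Sigma\|_2\rho_\tau\sqrt{p/n}\vee\|\Sigma\|\rho_\tau p/n$ which is absorbed into the integral terms (since $\rho_\tau\asymp\int_\delta^{\rho_\tau}d\eps$ up to the entropy being $\ge$ a constant, or can be added harmlessly), and the sub-$\delta$ residual gives the third term; the case $\delta=0$ is the special case where the residual term vanishes. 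The main obstacle I anticipate is making the residual-within-a-$\delta$-ball estimate genuinely sharp — i.e., getting the clean factor $\delta\|\Sigma\|({\bf r}(\Sigma)/\sqrt n\vee p/\sqrt n)$ rather than something involving a nuclear norm of $\hat\Sigma_n-\Sigma$ — which requires exploiting that all the operators $f'(\Sigma)$, $f\in\mathcal F$, commute with $\Sigma$ and hence reduces the relevant linear forms to weighted sums $\sum_k c_k(\langle\hat\Sigma_n-\Sigma,P_{\lambda_k}\rangle)$ with $\ell^1$-control on the weights of size $\le\delta\,{\bf r}(\Sigma)$, plus a careful bookkeeping of $p$-dependence; the chaining itself is standard once the mixed-tail increment bound is in place.
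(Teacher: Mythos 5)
Your overall architecture is the same as the one the paper intends (the paper only says the proof parallels that of Proposition \ref{entr_bd}): a chaining argument over scales in $[\delta,\rho_\tau]$ driven by the increment bound
\begin{align*}
\bigl\|\langle f'(\Sigma)-g'(\Sigma),\hat\Sigma_n-\Sigma\rangle\bigr\|_{L_p}\lesssim \tau(f,g)\Bigl(\|\Sigma\|_2\sqrt{\tfrac pn}\vee\|\Sigma\|\tfrac pn\Bigr),
\end{align*}
which is correct (since $f'(\Sigma)-g'(\Sigma)$ commutes with $\Sigma$, $\|\Sigma^{1/2}B\Sigma^{1/2}\|_2=\|\Sigma B\|_2\le\tau(f,g)\|\Sigma\|_2$), together with a separate treatment of the fluctuation below scale $\delta$; invoking a standard mixed-tail (Bernstein-type) chaining theorem in place of the paper's explicit $L_{p\log N_k}$ device is fine, provided you use a moment (not just expectation) version so that $\sqrt p$ and $p$ multiply the two entropy integrals as stated. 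The genuine gap is in the step producing the third term $\delta\|\Sigma\|\bigl(\tfrac{{\bf r}(\Sigma)}{\sqrt n}\vee\tfrac p{\sqrt n}\bigr)$, which is precisely the "main obstacle" you flag but do not resolve, and the tools you name there would not deliver it. In the eigenbasis of $\Sigma$ write $\langle h'(\Sigma),\hat\Sigma_n-\Sigma\rangle=\sum_k h'(\lambda_k)\lambda_k\xi_k$ with $\xi_k:=n^{-1}\sum_{i=1}^n(g_{ik}^2-1)$, so what is needed is $\bigl\|\sum_k\lambda_k|\xi_k|\bigr\|_{L_p}\lesssim\|\Sigma\|\bigl(\tfrac{{\bf r}(\Sigma)}{\sqrt n}\vee\tfrac p{\sqrt n}\bigr)$. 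Applying Proposition \ref{prop_lin_form} to each spectral projection and summing over the $\ell^1$ weights gives only $\delta\,{\rm tr}(\Sigma)\bigl(\sqrt{p/n}\vee p/n\bigr)=\delta\|\Sigma\|{\bf r}(\Sigma)\bigl(\sqrt{p/n}\vee p/n\bigr)$, which is larger than the claimed term by a factor of order $\sqrt p$ in the regime $1\ll p\le{\bf r}(\Sigma)^2$; and Proposition \ref{trace_conc} controls $\sum_k\lambda_k\xi_k$ \emph{without} absolute values, so it cannot be invoked for this sum.

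The missing idea is to center the absolute values and use independence across $k$: first, $\mathbb E\sum_k\lambda_k|\xi_k|\lesssim{\rm tr}(\Sigma)/\sqrt n=\|\Sigma\|{\bf r}(\Sigma)/\sqrt n$ carries no $p$; second, the variables $|\xi_k|-\mathbb E|\xi_k|$ are independent, centered, with $\psi_1$-norm $\lesssim1/\sqrt n$, so Proposition \ref{sub_exp} with coefficients $\lambda_k$ yields a fluctuation $\lesssim\bigl(\|\Sigma\|_2\sqrt p\vee\|\Sigma\|p\bigr)/\sqrt n\le\|\Sigma\|\bigl({\bf r}(\Sigma)\vee p\bigr)/\sqrt n$, since $\|\Sigma\|_2\sqrt p\le\|\Sigma\|\sqrt{{\bf r}(\Sigma)p}$. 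This gives exactly the analogue of the paper's small-ball bound \eqref{small_ball_R_f}, after which your chaining down to scale $\asymp\delta$ finishes the proof as in Proposition \ref{entr_bd}. Two smaller corrections: your intermediate claim ``$\sum_k|b_k|\le{\bf r}(\Sigma)\|B\|$'' is not right as written (the number of distinct eigenvalues has nothing to do with ${\bf r}(\Sigma)$); the usable $\ell^1$ control is the $\lambda_k$-weighted one, $\sum_k\lambda_k|b_k|\le\delta\,{\rm tr}(\Sigma)$. And the base-point/first-link contribution of size $\rho_\tau\bigl(\|\Sigma\|_2\sqrt{p/n}\vee\|\Sigma\|p/n\bigr)$ is not absorbed by the stated integrals when the entropy is small (they can vanish for a trivial class); as in the paper's own Proposition \ref{entr_bd}, one should either read the entropies as $H_\tau+1$ or add this term explicitly, so make that step precise rather than ``harmless.''
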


A consequence is the following statement.

\begin{proposition}
\label{conc_F_smooth_lin}
Let ${\mathcal F}\subset C^{m+1}({\mathbb R}_+)$ be a class of functions $f$ such that $f(0)=0$ and  
\begin{align*}
\max_{1\leq j\leq m+1}\|f^{(j)}\|_{L_{\infty}}\leq 1.
\end{align*}
Suppose that ${\bf r}(\Sigma)\lesssim n.$
Then, for $m \geq 2,$  
\begin{align*}
&
\Big\|\sup_{f\in {\mathcal F}}|\langle f'(\Sigma), \hat \Sigma_n-\Sigma\rangle|\Bigr\|_{L_p}
\lesssim 
(\|\Sigma\|+1)^{1/2}\|\Sigma\|_2 \sqrt{\frac{p}{n}}  + (\|\Sigma\|+1)\|\Sigma\| \frac{p}{n}
\end{align*}
and, for $m=1,$
\begin{align*}
&
\Big\|\sup_{f\in {\mathcal F}}|\langle f'(\Sigma), \hat \Sigma_n-\Sigma\rangle|\Bigr\|_{L_p}
\lesssim 
(\|\Sigma\|+1)^{1/2}\|\Sigma\|_2 \sqrt{\frac{p}{n}}  + (\|\Sigma\|+1)\|\Sigma\| \log n\frac{p}{n}.
\end{align*}
\end{proposition}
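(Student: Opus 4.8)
The plan is to derive Proposition~\ref{conc_F_smooth_lin} as an essentially direct consequence of Proposition~\ref{entr_bd_lin_part}; the only genuine inputs are a metric entropy estimate for the class ${\mathcal F}$ (which is contained in ${\mathcal F}_m$) and the evaluation of the two Dudley-type integrals in that bound. First I would record the trivial observation that $\|f'\|_{L_{\infty}}\le 1$ for every $f\in{\mathcal F}$, so $\rho_\tau({\mathcal F})\le 1$ and the integrals in Proposition~\ref{entr_bd_lin_part} are taken over $[\delta,1]$.

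The key step is the entropy bound. Since $\langle f'(\Sigma),\hat\Sigma_n-\Sigma\rangle$ depends on $f$ only through the restriction of $f'$ to an interval $[0,A]$ containing the spectra of $\Sigma$ and of $\hat\Sigma_n$, and since under ${\bf r}(\Sigma)\lesssim n$ one has ${\mathbb E}\|\hat\Sigma_n\|\lesssim\|\Sigma\|$ (Proposition~\ref{HS_error} together with \eqref{KL_1}), one may take $A\lesssim\|\Sigma\|+1$; this is the same reduction to a bounded interval as in Remark~\ref{rem_on_eff_rank_04}. On $[0,A]$ the family $\{f':f\in{\mathcal F}\}$ is contained in the set $\{g:\|g^{(j)}\|_{L_{\infty}([0,A])}\le 1,\ j=0,\dots,m\}$ of functions whose derivatives up to order $m$ are bounded by $1$, because $\|f^{(j)}\|_{L_{\infty}}\le 1$ for $1\le j\le m+1$. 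By the classical Kolmogorov--Tikhomirov metric entropy estimates for balls in $C^{m}$ on a bounded interval,
\begin{align*}
H_\tau({\mathcal F};\eps)\lesssim_m(\|\Sigma\|+1)\,\eps^{-1/m},\qquad \eps\in(0,1].
\end{align*}

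For $m\ge 2$ both exponents satisfy $\tfrac1{2m},\tfrac1m<1$, so with $\delta=0$ the integrals converge, $\int_0^1 H_\tau^{1/2}({\mathcal F};\eps)\,d\eps\lesssim_m(\|\Sigma\|+1)^{1/2}$ and $\int_0^1 H_\tau({\mathcal F};\eps)\,d\eps\lesssim_m\|\Sigma\|+1$, and substituting into Proposition~\ref{entr_bd_lin_part} gives the first assertion at once. The case $m=1$ is the one requiring care: there $\int_0^1\eps^{-1}d\eps$ diverges, so one must keep $\delta>0$. The $H^{1/2}$-integral $\int_\delta^1\eps^{-1/2}d\eps$ still converges with value $\lesssim 1$, while $\int_\delta^1\eps^{-1}d\eps=\log(1/\delta)$. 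Choosing $\delta\asymp 1/n$ turns this into $\asymp\log n$, and the residual term $\delta\|\Sigma\|\bigl(\tfrac{{\bf r}(\Sigma)}{\sqrt n}\vee\tfrac{p}{\sqrt n}\bigr)\lesssim\|\Sigma\|\bigl(\tfrac1{\sqrt n}\vee\tfrac{p}{n^{3/2}}\bigr)$ (using ${\bf r}(\Sigma)\lesssim n$) is absorbed into the first two terms since $\|\Sigma\|_2\ge\|\Sigma\|$ and $p\ge 1$; this yields the $m=1$ bound with the extra $\log n$ on the $p/n$ term.

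The main obstacle is thus the logarithmic divergence of the entropy integral $\int_0^{\rho_\tau}H_\tau({\mathcal F};\eps)\,d\eps$ at smoothness level $m=1$, which is precisely why a strictly positive truncation $\delta$ must be used there and why the $\log n$ factor appears; for $m\ge 2$ the argument is a routine substitution. A secondary technical point is the reduction to a bounded interval $[0,A]$ with $A\lesssim\|\Sigma\|+1$, so that the entropy is finite and the factors $(\|\Sigma\|+1)^{1/2}$ and $\|\Sigma\|+1$ come out with the right powers; this rests on the concentration of $\|\hat\Sigma_n\|$ at scale $\|\Sigma\|$ under ${\bf r}(\Sigma)\lesssim n$.
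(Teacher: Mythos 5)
Your proposal is correct and follows essentially the paper's own route: the paper handles Proposition \ref{conc_F_smooth_lin} exactly as Proposition \ref{conc_F_smooth}, i.e.\ Kolmogorov's sup-norm entropy bound for the class of derivatives on an interval of length $\lesssim \|\Sigma\|+1$ substituted into the chaining bound of Proposition \ref{entr_bd_lin_part}, with $\delta=0$ for $m\geq 2$ and a truncation $\delta\asymp n^{-1}$ producing the $\log n$ factor for $m=1$. Your way of reducing to a bounded interval (observing that $\langle f'(\Sigma),\hat\Sigma_n-\Sigma\rangle$ sees $f'$ only on $\sigma(\Sigma)$, so one may work with the restricted sup-distance) replaces the paper's cutoff-by-$\psi_A$ device and is legitimate — indeed simpler here, since $(f(1-\psi_A))'(\Sigma)=0$ deterministically for the linear term, so no exceptional event is needed.
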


We also have the following result on approximation of stochastic process 
\begin{align*}
G_n(f)= \sqrt{\frac{n}{2}}\langle f'(\Sigma), \hat \Sigma_n-\Sigma\rangle, f\in {\mathcal F}
\end{align*}
by the Gaussian process $G_{\Sigma}(f), f\in {\mathcal F}$ in Wasserstein distance ${\mathcal W}_{{\mathcal F},p}.$

\begin{proposition}
\label{entr_Wasser}
Let ${\mathcal F}\subset C^1({\mathbb R}_+)$ be a class of functions $f$ satisfying the condition $f(0)=0.$
Suppose ${\bf r}(\Sigma)\lesssim n$ and let $\delta\in [0,\rho_{\tau}/2].$ For all $p\geq 1,$
\begin{align*}
&
{\mathcal W}_{{\mathcal F},p} (G_n, G_{\Sigma})
\lesssim 
\|\Sigma\|_2 \sqrt{\frac{p}{n}} \int_{\delta}^{\rho_{\tau}} H_{\tau}^{1/2}({\mathcal F};\eps) d\eps + \|\Sigma\| \frac{p}{\sqrt{n}}\int_{\delta}^{\rho_{\tau}} H_{\tau}({\mathcal F};\eps) d\eps
+ \delta \|\Sigma\| {\bf r}(\Sigma)\frac{p}{\sqrt{n}}.
\end{align*}
In particular, for $\delta=0,$
\begin{align*}
&
{\mathcal W}_{{\mathcal F},p} (G_n, G_{\Sigma})
\lesssim 
\|\Sigma\|_2 \sqrt{\frac{p}{n}} \int_{0}^{\rho_{\tau}} H_{\tau}^{1/2}({\mathcal F};\eps) d\eps + \|\Sigma\| \frac{p}{\sqrt{n}}\int_{0}^{\rho_{\tau}} H_{\tau}({\mathcal F};\eps) d\eps.
\end{align*}
\end{proposition}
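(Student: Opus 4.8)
The plan is to reduce the statement to a single chaining estimate, in complete analogy with the proof of Proposition \ref{entr_bd_lin_part}, the only genuinely new ingredient being a good coupling of $G_n$ with $G_{\Sigma}$. First I would reuse the coupling from the proof of Proposition \ref{norm_approx_lin}: letting $\lambda_1\ge\lambda_2\ge\cdots\ge0$ be the eigenvalues of $\Sigma,$ one can realize on a common probability space independent pairs $(\bar\xi_k,\bar\eta_k),\ k\ge1,$ such that the processes $\bar G_n(f):=\sum_{k\ge1}\lambda_k f'(\lambda_k)\bar\xi_k$ and $\bar G_{\Sigma}(f):=\sum_{k\ge1}\lambda_k f'(\lambda_k)\bar\eta_k$ have the same finite-dimensional distributions as $G_n$ and $G_{\Sigma}$ respectively, while the differences $\zeta_k:=\bar\xi_k-\bar\eta_k$ are independent, centered, with $\sup_{k\ge1}\|\zeta_k\|_{\psi_1}\lesssim n^{-1/2}$ (this is where Rio's coupling theorem enters, via Proposition \ref{norm_approx_lin}). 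Setting $D(f):=\bar G_n(f)-\bar G_{\Sigma}(f)=\sum_{k\ge1}\lambda_k f'(\lambda_k)\zeta_k,$ the definition of ${\mathcal W}_{{\mathcal F},p}$ gives ${\mathcal W}_{{\mathcal F},p}(G_n,G_{\Sigma})\le\bigl\|\sup_{f\in{\mathcal F}}|D(f)|\bigr\|_{L_p}$ (with outer expectations, as elsewhere in the paper), so everything reduces to bounding the sup-norm of $D$ over ${\mathcal F}.$

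Next I would record the increment bound for $D.$ Applying Proposition \ref{sub_exp} to $D(f)-D(g)=\sum_k\lambda_k\bigl(f'(\lambda_k)-g'(\lambda_k)\bigr)\zeta_k,$ and using $\sum_k\lambda_k^2\bigl(f'(\lambda_k)-g'(\lambda_k)\bigr)^2\le\|\Sigma\|_2^2\,\tau(f,g)^2$ together with $\sup_k\lambda_k|f'(\lambda_k)-g'(\lambda_k)|\le\|\Sigma\|\,\tau(f,g),$ one obtains $\|D(f)-D(g)\|_{L_p}\lesssim\tau(f,g)\bigl(\|\Sigma\|_2\sqrt{p}/\sqrt{n}\,\vee\,\|\Sigma\|\,p/\sqrt{n}\bigr)$ for all $p\ge1.$ This is exactly a mixed sub-Gaussian/sub-exponential increment condition with respect to $\tau,$ with sub-Gaussian width $\|\Sigma\|_2/\sqrt{n}$ and sub-exponential width $\|\Sigma\|/\sqrt{n}$ — note that, unlike in Proposition \ref{entr_bd_lin_part} where the sharper Bernstein tail of the chi-square averages is exploited, here only the $\psi_1$-coupling bound is available, which is why the second term will carry $p/\sqrt{n}$ rather than $p/n.$ Feeding this into the same Dudley-type chaining used for Proposition \ref{entr_bd_lin_part} — minimal $\delta$-nets of $({\mathcal F},\tau)$ with a measurable nearest-point map $\pi_\delta$, the square-root part of the increment producing the $\int H_\tau^{1/2}$ contribution and the linear part the $\int H_\tau$ contribution — yields, for a fixed base point $f_0\in{\mathcal F},$
$\bigl\|\sup_{f\in{\mathcal F}}|D(\pi_\delta f)-D(f_0)|\bigr\|_{L_p}\lesssim\|\Sigma\|_2\sqrt{p/n}\int_\delta^{\rho_\tau}H_\tau^{1/2}({\mathcal F};\eps)\,d\eps+\|\Sigma\|\,(p/\sqrt{n})\int_\delta^{\rho_\tau}H_\tau({\mathcal F};\eps)\,d\eps,$ while the single point $\|D(f_0)\|_{L_p}\lesssim\bigl(\|\Sigma f_0'(\Sigma)\|_2\sqrt{p}\vee\|\Sigma f_0'(\Sigma)\|p\bigr)/\sqrt{n}$ (again Proposition \ref{sub_exp}) is of the same order and absorbed.

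The remaining ingredient is the residual at resolution $\delta$: pointwise one has $|D(f)-D(\pi_\delta f)|=\bigl|\sum_k\lambda_k\bigl(f'(\lambda_k)-(\pi_\delta f)'(\lambda_k)\bigr)\zeta_k\bigr|\le\tau(f,\pi_\delta f)\sum_k\lambda_k|\zeta_k|\le\delta\sum_k\lambda_k|\zeta_k|,$ and since $\|\zeta_k\|_{L_p}\lesssim p\,\|\zeta_k\|_{\psi_1}\lesssim p/\sqrt{n}$ we get $\bigl\|\sum_k\lambda_k|\zeta_k|\bigr\|_{L_p}\le\sum_k\lambda_k\|\zeta_k\|_{L_p}\lesssim(p/\sqrt{n})\,{\rm tr}(\Sigma)=\|\Sigma\|\,{\bf r}(\Sigma)\,p/\sqrt{n},$ hence $\bigl\|\sup_{f\in{\mathcal F}}|D(f)-D(\pi_\delta f)|\bigr\|_{L_p}\lesssim\delta\,\|\Sigma\|\,{\bf r}(\Sigma)\,p/\sqrt{n}.$ Combining the three pieces through $\sup_f|D(f)|\le|D(f_0)|+\sup_f|D(\pi_\delta f)-D(f_0)|+\sup_f|D(f)-D(\pi_\delta f)|$ gives the asserted bound, and the case $\delta=0$ follows by letting $\delta\downarrow0$ (or directly, since then the residual term vanishes).

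I expect the only delicate step to be the coupling in Step 1: one needs the pointwise differences $\zeta_k$ to be simultaneously independent across $k$ and uniformly $O(n^{-1/2})$ in $\psi_1$-norm, so that $D$ is again a weighted sum of independent subexponential random variables and the chaining of Proposition \ref{entr_bd_lin_part} applies essentially verbatim; this is precisely what the Rio-based construction in the proof of Proposition \ref{norm_approx_lin} supplies. Once $D$ is in this form, Steps 2--4 are bookkeeping, and the new ${\bf r}(\Sigma)$-factor in the residual is nothing but $\sum_k\lambda_k=\|\Sigma\|{\bf r}(\Sigma)={\rm tr}(\Sigma).$ Minor points to keep in mind are measurability of the suprema (handled, as throughout the paper, via outer expectations) and, if preferred, a harmless preliminary reduction to finite-rank $\Sigma$ with a passage to the limit, although the trace-class property of $\Sigma$ already makes all the series above converge in $L_p$ without it.
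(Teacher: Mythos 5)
Your proposal is correct and follows exactly the route the paper indicates for this proposition: couple $G_n$ and $G_\Sigma$ via the Rio-based construction from the proof of Proposition \ref{norm_approx_lin}, so that $D(f)=\bar G_n(f)-\bar G_\Sigma(f)$ is a weighted sum of independent centered $\psi_1$-variables of size $O(n^{-1/2})$, and then run the same chaining argument as for Proposition \ref{entr_bd_lin_part}, with the $\psi_1$-increment bound producing the $p/\sqrt{n}$ factor on $\int H_\tau$ and the residual $\delta\sum_k\lambda_k|\zeta_k|$ producing the $\delta\|\Sigma\|{\bf r}(\Sigma)p/\sqrt{n}$ term. All the individual estimates check out, so no gaps.
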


In particular, the following statement is a consequence of Proposition \ref{entr_Wasser}.

\begin{proposition}
\label{conc_F_smooth_Wasser}
Let ${\mathcal F}\subset C^{m+1}({\mathbb R}_+)$ be a class of functions $f$ such that $f(0)=0$ and  
\begin{align*}
\max_{1\leq j\leq m+1}\|f^{(j)}\|_{L_{\infty}}\leq 1.
\end{align*}
Suppose also that ${\bf r}(\Sigma)\lesssim n.$
Then, for $m\geq 2,$
\begin{align*}
&
{\mathcal W}_{{\mathcal F},p} (G_n, G_{\Sigma})
\lesssim 
(\|\Sigma\|+1)^{1/2}\|\Sigma\|_2 \sqrt{\frac{p}{n}}  + (\|\Sigma\|+1)\|\Sigma\| \frac{p}{\sqrt{n}}.
\end{align*}
On the other hand, for $m=1,$
\begin{align*}
&
{\mathcal W}_{{\mathcal F},p} (G_n, G_{\Sigma})
\lesssim 
(\|\Sigma\|+1)^{1/2}\|\Sigma\|_2 \sqrt{\frac{p}{n}}  + (\|\Sigma\|+1)\|\Sigma\| \log n \frac{p}{\sqrt{n}}.
\end{align*}
\end{proposition}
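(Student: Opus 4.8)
The plan is to derive Proposition~\ref{conc_F_smooth_Wasser} from Proposition~\ref{entr_Wasser} by estimating the metric entropy $H_\tau({\mathcal F};\eps)$ of ${\mathcal F}$ with respect to the pseudometric $\tau(f,g)=\|f'-g'\|_{L_\infty}$. The first step is a localization reduction. Both $G_n(f)=\sqrt{n/2}\langle f'(\Sigma),\hat\Sigma_n-\Sigma\rangle$ and $G_\Sigma(f)$ depend on $f$ only through the restriction of $f'$ to $\sigma(\Sigma)\subset[0,\|\Sigma\|]$, since $f'(\Sigma)$ and the covariance ${\rm tr}(\Sigma^2 f'(\Sigma)g'(\Sigma))$ are determined by this restriction. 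Using a standard Whitney-type extension I would replace each $f\in{\mathcal F}$ by a function $\tilde f$ with $\tilde f(0)=0$, $\tilde f'=f'$ on $[0,\|\Sigma\|]$, and $\tilde f'$ supported in an interval of length $\lesssim\|\Sigma\|+1$ with $\|\tilde f^{(j)}\|_{L_\infty}\lesssim_m 1$ for $1\le j\le m+1$. This changes neither the finite-dimensional distributions of $G_n$ over ${\mathcal F}$ nor those of $G_\Sigma$, so ${\mathcal W}_{{\mathcal F},p}(G_n,G_\Sigma)$ is unchanged; in particular $\rho_\tau=\sup_{f\in{\mathcal F}}\|f'\|_{L_\infty}\le 1$ still holds, but now (after relabelling) ${\mathcal F}$ is a class of uniformly compactly supported smooth functions, which has finite $\tau$-entropy — the unlocalized smoothness class is \emph{not} totally bounded in $\tau$.

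For this localized class, the functions $\{f':f\in{\mathcal F}\}$ lie in a ball of radius $\lesssim_m 1$ of the smoothness class of order $m$ on an interval of length $\lesssim\|\Sigma\|+1$. By the classical Kolmogorov--Tikhomirov metric entropy estimates for such balls (covering the interval by $O(\|\Sigma\|+1)$ unit subintervals) this yields
\begin{align*}
H_\tau({\mathcal F};\eps)\ \lesssim_m\ (\|\Sigma\|+1)\,\eps^{-1/m},\qquad 0<\eps\le\rho_\tau .
\end{align*}
For $m\ge 2$ both exponents $1/(2m)$ and $1/m$ are $<1$, so the entropy integrals in Proposition~\ref{entr_Wasser} converge at $\eps=0$: one gets $\int_0^{\rho_\tau}H_\tau^{1/2}({\mathcal F};\eps)\,d\eps\lesssim_m(\|\Sigma\|+1)^{1/2}$ and $\int_0^{\rho_\tau}H_\tau({\mathcal F};\eps)\,d\eps\lesssim_m(\|\Sigma\|+1)$. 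Applying Proposition~\ref{entr_Wasser} with $\delta=0$ and inserting these bounds gives ${\mathcal W}_{{\mathcal F},p}(G_n,G_\Sigma)\lesssim_m(\|\Sigma\|+1)^{1/2}\|\Sigma\|_2\sqrt{p/n}+(\|\Sigma\|+1)\|\Sigma\|\,p/\sqrt n$, which is the claimed bound for $m\ge 2$.

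For $m=1$ the integral $\int_0^{\rho_\tau}H_\tau({\mathcal F};\eps)\,d\eps$ diverges logarithmically, so I would instead apply Proposition~\ref{entr_Wasser} with $\delta=c/n$ for a suitable numerical constant $c$. Then $\int_\delta^{\rho_\tau}H_\tau({\mathcal F};\eps)\,d\eps\lesssim(\|\Sigma\|+1)\log(1/\delta)\lesssim(\|\Sigma\|+1)\log n$, the half-entropy integral is still $\lesssim(\|\Sigma\|+1)^{1/2}$, and the residual term $\delta\|\Sigma\|{\bf r}(\Sigma)\,p/\sqrt n$ is $\lesssim\|\Sigma\|\,p/\sqrt n$ since ${\bf r}(\Sigma)\lesssim n$, hence is absorbed into the logarithmic term; this yields the $m=1$ bound. (The companion Proposition~\ref{conc_F_smooth_lin} follows in exactly the same way from Proposition~\ref{entr_bd_lin_part}.)

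The main obstacle is the entropy estimate in the display above: one must first carry out the localization so that $H_\tau({\mathcal F};\eps)$ is finite, and then track the linear dependence on the spectral interval length $\|\Sigma\|+1$ through the Kolmogorov--Tikhomirov bound so that it produces the factors $(\|\Sigma\|+1)^{1/2}$ and $(\|\Sigma\|+1)$ appearing in the statement. Once this is in place, the rest is a routine substitution into Proposition~\ref{entr_Wasser}, with the dichotomy between $m\ge 2$ and $m=1$ caused solely by the (non)convergence of $\int_0\eps^{-1/m}\,d\eps$.
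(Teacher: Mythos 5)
Your proposal is correct and follows essentially the route the paper intends for this proposition: localize the class so that the Kolmogorov--Tikhomirov entropy bound $H_{\tau}({\mathcal F};\eps)\lesssim_m (\|\Sigma\|+1)\eps^{-1/m}$ applies, then substitute into Proposition \ref{entr_Wasser} with $\delta=0$ for $m\geq 2$ and $\delta\asymp 1/n$ for $m=1$ (the divergent entropy integral producing the $\log n$, and the residual term $\delta\|\Sigma\|{\bf r}(\Sigma)p/\sqrt{n}$ being absorbed since ${\bf r}(\Sigma)\lesssim n$). Your observation that the localization is exact for the linear process --- because $G_n(f)$ and $G_{\Sigma}(f)$ depend on $f$ only through $f'$ on $\sigma(\Sigma)\subset[0,\|\Sigma\|]$ --- is precisely the kind of simplification the paper alludes to, its remainder-process analogue (Proposition \ref{conc_F_smooth}) having needed a cutoff $\psi_A$ together with a probability bound on the event $\{\|\hat\Sigma_n\|\geq A/2\}$.
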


For $f,g:{\mathbb R}_+\mapsto {\mathbb R},$ denote 
\begin{align*}
d(f,g):= \|f''-g''\|_{L_{\infty}}=\sup_{x\in {\mathbb R}_{+}}|f''(x)-g''(x)|.
\end{align*}
Let 
\begin{align*}
\rho_d=\rho_d({\mathcal F}):=\sup_{f\in {\mathcal F}}\|f''\|_{L_{\infty}}.
\end{align*}

For the remainder process $R_f(\Sigma, \hat \Sigma_n-\Sigma)-{\mathbb E}R_f(\Sigma, \hat \Sigma_n-\Sigma), f\in {\mathcal F},$
the following result will be proved.

\begin{proposition}
\label{entr_bd}
Let ${\mathcal F}\subset C^2({\mathbb R}_+)$ be a class of functions $f:{\mathbb R}_+\mapsto {\mathbb R}$ satisfying the condition $f(0)=0.$
Suppose ${\bf r}(\Sigma)\lesssim n$ and let $\delta\in [0,\rho_d/2].$ Then, for all $p\geq 1,$
\begin{align*}
&
\Big\|\sup_{f\in {\mathcal F}}|R_f(\Sigma, \hat \Sigma_n-\Sigma)-{\mathbb E}R_f(\Sigma, \hat \Sigma_n-\Sigma)|\Bigr\|_{L_p}
\\
&
\lesssim 
\|\Sigma\|^2\Bigl(\frac{{\bf r}(\Sigma)}{\sqrt{n}}\sqrt{\frac{p}{n}} \int_{\delta}^{\rho_d} (H_{d}^{1/2}({\mathcal F}, \eps)+1)d\eps+\Bigl(\frac{{\bf r}(\Sigma)}{\sqrt{n}}\vee 1\Bigr)\frac{p}{n}
\int_{\delta}^{\rho_d} (H_{d}({\mathcal F}, \eps)+1)d\eps 
\\
&
+ \Bigl(\frac{p}{n}\Bigr)^{2} \int_{\delta}^{\rho_d} (H_{d}^2({\mathcal F}, \eps)+1)d\eps\Bigr)
+ \delta \|\Sigma\|^2 \Bigl(\frac{{\bf r}(\Sigma)^2}{n} \vee \frac{p}{n}\vee \Bigl(\frac{p}{n}\Bigr)^2\Bigr).
\end{align*}
In particular, for $\delta=0,$
\begin{align*}
&
\Big\|\sup_{f\in {\mathcal F}}|R_f(\Sigma, \hat \Sigma_n-\Sigma)-{\mathbb E}R_f(\Sigma, \hat \Sigma_n-\Sigma)|\Bigr\|_{L_p}
\\
&
\lesssim 
\|\Sigma\|^2\Bigl(\frac{{\bf r}(\Sigma)}{\sqrt{n}}\sqrt{\frac{p}{n}} \int_{0}^{\rho_d} (H_{d}^{1/2}({\mathcal F}, \eps)+1)d\eps+\Bigl(\frac{{\bf r}(\Sigma)}{\sqrt{n}}\vee 1\Bigr)\frac{p}{n}\int_{0}^{\rho_d} (H_{d}({\mathcal F}, \eps)+1)d\eps 
\\
&
+ \Bigl(\frac{p}{n}\Bigr)^{2} \int_{0}^{\rho_d} (H_{d}^2({\mathcal F}, \eps)+1)d\eps\Bigr).
\end{align*}
\end{proposition}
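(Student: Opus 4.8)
The plan is to obtain the bound from a Dudley-type chaining over $({\mathcal F},d)$ whose only probabilistic input is the one-function estimate of Theorem \ref{prop_rem}. The decisive observation is that $f\mapsto R_f(A;H)$ is \emph{linear} in $f$ (since $f\mapsto{\rm tr}(f(\cdot))$ and $f\mapsto f'(\cdot)$ are linear), so for $h:=f-g$ one has $R_h=R_f-R_g$, and hence the increments of the centered remainder process $Z_f:=R_f(\Sigma,\hat \Sigma_n-\Sigma)-{\mathbb E}R_f(\Sigma,\hat \Sigma_n-\Sigma)$ satisfy $Z_f-Z_g=R_h(\Sigma,\hat \Sigma_n-\Sigma)-{\mathbb E}R_h(\Sigma,\hat \Sigma_n-\Sigma)$. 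Here $h(0)=0$ and, extending $h'|_{{\mathbb R}_+}$ to a globally Lipschitz function on ${\mathbb R}$ with Lipschitz constant $\sup_{x\ge0}|h''(x)|=\|f''-g''\|_{L_\infty}=d(f,g)$ — legitimate because $\Sigma,\hat \Sigma_n\succeq0$, exactly as with the $\bar f$-extension in the proof of Theorem \ref{prop_rem} — Theorem \ref{prop_rem} applies to $h$ and gives, for all $p\ge1$,
\begin{align*}
\|Z_f-Z_g\|_{L_p}\lesssim d(f,g)\,\|\Sigma\|^2\Bigl(\frac{{\bf r}(\Sigma)}{\sqrt n}\sqrt{\frac pn}\,\vee\,\Bigl(\frac{{\bf r}(\Sigma)}{\sqrt n}\vee1\Bigr)\frac pn\,\vee\,\Bigl(\frac pn\Bigr)^2\Bigr)=:d(f,g)\,\bigl(A\sqrt p\vee Bp\vee Cp^2\bigr),
\end{align*}
with $A=\|\Sigma\|^2{\bf r}(\Sigma)/n$, $B=\|\Sigma\|^2(\tfrac{{\bf r}(\Sigma)}{\sqrt n}\vee1)/n$, $C=\|\Sigma\|^2/n^2$. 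Thus $(Z_f)_{f\in{\mathcal F}}$ has increments with a mixed sub-Gaussian / sub-exponential / heavy ($\psi_{1/2}$-type) tail with respect to the pseudometric $d$.

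\smallskip\noindent I would then run the chaining between scales $\delta$ and $\rho_d$. Fix a base point $f_0\in{\mathcal F}$ (set $\pi_0\equiv f_0$) and, for $j\ge1$, a minimal $2^{-j}\rho_d$-net ${\mathcal F}_j$ of $({\mathcal F},d)$ (so $\log|{\mathcal F}_j|=:H_j$ is $\asymp H_d({\mathcal F},2^{-j}\rho_d)$), stopping at the level $j^\ast$ with $2^{-j^\ast}\rho_d\asymp\delta$; let $\pi_j(f)\in{\mathcal F}_j$ be nearest to $f$. From
\begin{align*}
\sup_{f\in{\mathcal F}}|Z_f|\le|Z_{f_0}|+\sup_{f\in{\mathcal F}}|Z_f-Z_{\pi_{j^\ast}(f)}|+\sup_{f\in{\mathcal F}}|Z_{\pi_{j^\ast}(f)}-Z_{f_0}|
\end{align*}
I bound the three pieces. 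For the anchor, Theorem \ref{prop_rem} with $\|f_0'\|_{\rm Lip}\le\rho_d$ gives $\|Z_{f_0}\|_{L_p}\lesssim\rho_d\bigl(A\sqrt p\vee Bp\vee Cp^2\bigr)$. The middle piece is at most $\sup_{d(f,g)\le c\delta}|Z_f-Z_g|$, and for such $f,g$ the a.s. bound \eqref{bd_on_R_f_gen} applied to $h=f-g$ gives, up to an absolute constant, $|Z_f-Z_g|\le\delta\|\hat \Sigma_n-\Sigma\|_2^2+\delta\,{\mathbb E}\|\hat \Sigma_n-\Sigma\|_2^2$; taking $L_p$-norms and invoking Propositions \ref{HS_error} and \ref{conc_Schatten_hat} (as in the proof of Proposition \ref{Main_m=2_detailed}) bounds it by $\delta\|\Sigma\|^2\bigl(\tfrac{{\bf r}(\Sigma)^2}{n}\vee\tfrac pn\vee(\tfrac pn)^2\bigr)$, precisely the residual term in the statement. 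For the last piece I telescope, $Z_{\pi_{j^\ast}(f)}-Z_{f_0}=\sum_{j=1}^{j^\ast}\bigl(Z_{\pi_j(f)}-Z_{\pi_{j-1}(f)}\bigr)$; at level $j$ there are at most $|{\mathcal F}_j|\,|{\mathcal F}_{j-1}|\le e^{2H_j}$ link pairs, each with $d(\pi_j(f),\pi_{j-1}(f))\lesssim2^{-j}\rho_d$, and evaluating the increment estimate at $p+H_j$ in place of $p$ (the standard device $\|\max_{i\le M}|\xi_i|\|_{L_p}\lesssim\max_i\|\xi_i\|_{L_{p+\log M}}$) yields
\begin{align*}
\Bigl\|\max_{\text{links at level }j}\bigl|Z_{\pi_j(f)}-Z_{\pi_{j-1}(f)}\bigr|\Bigr\|_{L_p}\lesssim2^{-j}\rho_d\bigl(A\sqrt{p+H_j}\,\vee\,B(p+H_j)\,\vee\,C(p+H_j)^2\bigr).
\end{align*}
Summing over $j\le j^\ast$, using $\sqrt{p+H_j}\le\sqrt p+\sqrt{H_j}$, $(p+H_j)^2\lesssim p^2+H_j^2$, and monotonicity of $\varepsilon\mapsto H_d({\mathcal F},\varepsilon)$ to pass from $\sum_j2^{-j}\rho_d\,H_j^{1/2}$, $\sum_j2^{-j}\rho_d\,H_j$, $\sum_j2^{-j}\rho_d\,H_j^2$ to $\int_\delta^{\rho_d}H_d^{1/2}({\mathcal F},\varepsilon)\,d\varepsilon$, $\int_\delta^{\rho_d}H_d({\mathcal F},\varepsilon)\,d\varepsilon$, $\int_\delta^{\rho_d}H_d^2({\mathcal F},\varepsilon)\,d\varepsilon$, and $\sum_j2^{-j}\rho_d\le\rho_d\le2\int_\delta^{\rho_d}d\varepsilon$ (valid since $\delta\le\rho_d/2$) to absorb the ``pure-$p$'' contributions $A\sqrt p\,\rho_d$, $Bp\,\rho_d$, $Cp^2\rho_d$ and the anchor term into the $(H_d^{1/2}+1)$, $(H_d+1)$, $(H_d^2+1)$ summands, gives the asserted bound. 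The case $\delta=0$ follows by letting $\delta\downarrow0$, which is admissible because the three entropy integrals over $(0,\rho_d]$ are assumed to converge.

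\smallskip\noindent The main obstacle is the chaining step itself: the increments are not sub-Gaussian but display three separate tail regimes ($A\sqrt p$, $Bp$, $Cp^2$), so the classical Dudley bound does not apply, and one must carry all three budgets through the chain simultaneously, producing the $\gamma_2$-, $\gamma_1$- and $\gamma_{1/2}$-type entropy integrals $\int H_d^{1/2}$, $\int H_d$, $\int H_d^2$ with exactly the coefficients $A\sqrt p$, $Bp$, $Cp^2$; the delicate point is calibrating the per-level union budget $p+H_j$ so that it dominates the union bound yet, after summation of $\sqrt{p+H_j}$, $p+H_j$ and $(p+H_j)^2$ over $j$, reproduces precisely these integrals plus admissible $\rho_d$-residuals. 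Everything else — the finite-rank reduction and the $N\to\infty$ passage, both already embedded in Theorem \ref{prop_rem}, the a.s. bound \eqref{bd_on_R_f_gen}, and the elementary net geometry — is routine and already available in the excerpt.
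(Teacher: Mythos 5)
Your proposal is correct and follows essentially the same route as the paper's proof: exploit linearity of $f\mapsto R_f$ so that increments $Z_f-Z_g$ are again centered remainders for $h=f-g$ with $\|h'\|_{\rm Lip}\leq d(f,g)$, apply Theorem \ref{prop_rem} to each link, chain over dyadic nets between scales $\rho_d$ and $\delta$ with the $L_{p+\log M}$ (resp.\ $L_{p\log N_k}$ in the paper) maximum device carrying the three tail regimes into the integrals $\int H_d^{1/2}$, $\int H_d$, $\int H_d^2$, and control the below-$\delta$ residual by the a.s.\ bound $|R_h|\leq\tfrac{\|h'\|_{\rm Lip}}{2}\|\hat\Sigma_n-\Sigma\|_2^2$ together with the moment bounds for $\|\hat\Sigma_n-\Sigma\|_2$. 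The only differences are cosmetic (anchor $f_0\in{\mathcal F}$ absorbed via $\rho_d\leq 2\int_\delta^{\rho_d}d\eps$ instead of adjoining $0$ to ${\mathcal F}$, and the choice of moment level in the union step), and they do not affect the validity of the argument.
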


In particular, Proposition \ref{entr_bd} implies the following statement.

\begin{proposition}
\label{conc_F_smooth}
Let ${\mathcal F}\subset C^{m+1}({\mathbb R}_+)$ be a class of functions $f$ such that $f(0)=0$ and 
\begin{align*}
\max_{2\leq j\leq m+1}\|f^{(j)}\|_{L_{\infty}}\leq 1.
\end{align*}
Suppose that ${\bf r}(\Sigma)\lesssim n.$
Then,
for $m \geq 4,$  
\begin{align}
\label{bd_m>3_A'}
&
\nonumber
\Big\|\sup_{f\in {\mathcal F}}|R_{f}(\Sigma, \hat \Sigma_n-\Sigma)-{\mathbb E}R_{f}(\Sigma, \hat \Sigma_n-\Sigma)|\Bigr\|_{L_p}
\\
&
\lesssim_m 
\|\Sigma\|^2\Bigl((\|\Sigma\|+1)^{1/2}\frac{{\bf r}(\Sigma)}{\sqrt{n}}\sqrt{\frac{p}{n}} +(\|\Sigma\|+1)\Bigl(\frac{{\bf r}(\Sigma)}{\sqrt{n}}\vee 1\Bigr)\frac{p}{n} 
+ (\|\Sigma\|+1)^2\Bigl(\frac{{\bf r}(\Sigma)^2}{n}\vee 1\Bigr)\Bigl(\frac{p}{n}\Bigr)^{2}\Bigr);
\end{align}
for $m=3,$
\begin{align}
\label{bd_m_3_A'}
&
\nonumber
\Big\|\sup_{f\in {\mathcal F}}|R_{f}(\Sigma, \hat \Sigma_n-\Sigma)-{\mathbb E}R_{f}(\Sigma, \hat \Sigma_n-\Sigma)|\Bigr\|_{L_p}
\\
&
\lesssim 
\|\Sigma\|^2\Bigl((\|\Sigma\|+1)^{1/2}\frac{{\bf r}(\Sigma)}{\sqrt{n}}\sqrt{\frac{p}{n}} +(\|\Sigma\|+1)\Bigl(\frac{{\bf r}(\Sigma)}{\sqrt{n}}\vee 1\Bigr)\frac{p}{n} 
+ (\|\Sigma\|+1)^2 \Bigl(\frac{{\bf r}(\Sigma)^2}{n}\vee \log n\Bigr)\Bigl(\frac{p}{n}\Bigr)^{2}\Bigr);
\end{align}
and, for $m=2,$
\begin{align}
\label{bd_m_2_A'}
&
\nonumber
\Big\|\sup_{f\in {\mathcal F}}|R_{f}(\Sigma, \hat \Sigma_n-\Sigma)-{\mathbb E}R_{f}(\Sigma, \hat \Sigma_n-\Sigma)|\Bigr\|_{L_p}
\\
&
\lesssim 
\|\Sigma\|^2\Bigl((\|\Sigma\|+1)^{1/2}\frac{{\bf r}(\Sigma)}{\sqrt{n}}\sqrt{\frac{p}{n}} +(\|\Sigma\|+1)\log n \Bigl(\frac{{\bf r}(\Sigma)}{\sqrt{n}}\vee 1\Bigr)\frac{p}{n}
+ (\|\Sigma\|+1)^2\frac{p^2}{n}\Bigr).
\end{align}
\end{proposition}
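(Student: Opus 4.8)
The plan is to derive the three displayed bounds by specializing the general entropy bound of Proposition \ref{entr_bd} to the specific class $\mathcal{F}\subset C^{m+1}(\mathbb{R}_+)$ under consideration, and then estimating the metric entropy $H_d(\mathcal{F},\eps)$ with respect to the pseudometric $d(f,g)=\|f''-g''\|_{L_\infty}$. First I would record that since $\|f''\|_{L_\infty}\le 1$ and $\|f^{(j)}\|_{L_\infty}\le 1$ for $2\le j\le m+1$, the functions $f''$, $f\in\mathcal F$, form a bounded subset of the H\"older-type ball $C^{m-1}$ on a bounded domain; for the purpose of the entropy integral only the behaviour on an interval of length $O(\|\Sigma\|)$ around the spectrum matters, as in Remark \ref{rem_on_eff_rank_04}, and this is where the factors $(\|\Sigma\|+1)$ enter. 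The key point is that the $\eps$-entropy of a ball in $C^{s}$ on a bounded interval behaves like $\eps^{-1/s}$, so here $H_d(\mathcal F,\eps)\lesssim_m \eps^{-1/(m-1)}$ (up to the $\|\Sigma\|$-dependent rescaling of the interval). Also note $\rho_d=\rho_d(\mathcal F)\le 1$.

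Next I would plug this entropy estimate into the three entropy integrals appearing in Proposition \ref{entr_bd} with $\delta=0$:
\begin{align*}
\int_0^{\rho_d}\bigl(H_d^{1/2}(\mathcal F,\eps)+1\bigr)\,d\eps,\qquad
\int_0^{\rho_d}\bigl(H_d(\mathcal F,\eps)+1\bigr)\,d\eps,\qquad
\int_0^{\rho_d}\bigl(H_d^{2}(\mathcal F,\eps)+1\bigr)\,d\eps.
\end{align*}
With $H_d(\mathcal F,\eps)\lesssim_m \eps^{-1/(m-1)}$, the first integral converges (finite, $\lesssim_m 1$) whenever $1/(2(m-1))<1$, i.e. always for $m\ge 2$; the second converges whenever $1/(m-1)<1$, i.e. for $m\ge 3$, and for $m=2$ it produces a logarithmic factor $\log(1/\delta)$ which, after the truncation argument with $\delta\asymp 1/n$, becomes $\log n$; the third converges whenever $2/(m-1)<1$, i.e. for $m\ge 4$, it is logarithmic ($\asymp\log n$) for $m=3$, and for $m=2$ it genuinely diverges like $\eps^{-1}$, forcing the use of the $\delta>0$ version of Proposition \ref{entr_bd} with $\delta\asymp 1/n$: the divergent integral $\int_\delta^{\rho_d}\eps^{-2}d\eps\asymp 1/\delta\asymp n$ combines with the prefactor $(p/n)^2$ to give the term $(p^2/n)$ in \eqref{bd_m_2_A'}, while the extra $\delta\|\Sigma\|^2(\tfrac{{\bf r}(\Sigma)^2}{n}\vee\cdots)$ correction term is then negligible against the other terms once $\delta\asymp1/n$ and ${\bf r}(\Sigma)\lesssim n$. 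Matching the surviving terms against the prefactors $\|\Sigma\|^2\tfrac{{\bf r}(\Sigma)}{\sqrt n}\sqrt{\tfrac pn}$, $\|\Sigma\|^2(\tfrac{{\bf r}(\Sigma)}{\sqrt n}\vee1)\tfrac pn$ and $\|\Sigma\|^2(\tfrac pn)^2$ in Proposition \ref{entr_bd}, and absorbing the $\eps$-interval rescaling into powers of $(\|\Sigma\|+1)$, yields exactly \eqref{bd_m>3_A'}, \eqref{bd_m_3_A'} and \eqref{bd_m_2_A'}.

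The main obstacle I expect is getting the entropy estimate $H_d(\mathcal F,\eps)\lesssim_m \eps^{-1/(m-1)}$ cleanly with the right dependence on $\|\Sigma\|$: strictly speaking $\mathcal F$ is a class of functions on all of $\mathbb{R}_+$ (or $\mathbb{R}$), so one must first argue (as in Remark \ref{rem_on_eff_rank_04}, invoking that $R_f$ only sees $\sigma(\Sigma)\cup\sigma(\hat\Sigma_n)\subset[0,2\|\Sigma\|]$ with overwhelming probability by \eqref{KL_1}--\eqref{KL_2}) that one may replace $\mathcal F$ by its restrictions to an interval $[0,A]$ with $A\asymp\|\Sigma\|+1$, and only then apply the classical Kolmogorov--Tikhomirov bound for $C^{m-1}([0,A])$-balls, tracking how $A$ enters. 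The remaining work is purely the bookkeeping of matching the three convergence/divergence regimes $m\ge4$, $m=3$, $m=2$ to the three stated inequalities, together with checking that the truncation-induced error term in the $\delta>0$ version of Proposition \ref{entr_bd} is dominated; both are routine once the entropy bound is in hand.
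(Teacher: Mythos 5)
Your plan matches the paper's own proof: it specializes Proposition \ref{entr_bd} with the Kolmogorov entropy bound $H_d({\mathcal F},\eps)\lesssim A\,\eps^{-1/(m-1)}$ for the class viewed on an interval $[0,A]$ with $A\asymp\|\Sigma\|+1$, takes $\delta=0$ for $m\geq 4$ and $\delta$ equal to a negative power of $n$ for $m=3$ and $m=2$ (producing the $\log n$ and $p^2/n$ terms), and reduces from ${\mathbb R}_+$ to the bounded interval via the high-probability containment of $\sigma(\hat\Sigma_n)$ in $[0,A/2]$. The paper implements this last reduction with a smooth cutoff, writing $f=f\psi_A+f(1-\psi_A)$ and bounding $R_{f(1-\psi_A)}(\Sigma,\hat\Sigma_n-\Sigma)$ on the rare event $\{\|\hat\Sigma_n\|\geq A/2\}$ (which is also where the factor $\bigl(\tfrac{{\bf r}(\Sigma)^2}{n}\vee 1\bigr)$ in the $m\geq 4$ bound originates), but this is exactly the mechanism you identify as the remaining obstacle.
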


Finally, we state the bounds for the stochastic process 
$
\tau_f (\hat \Sigma_n)- {\mathbb E} \tau_f(\hat \Sigma_n), f\in {\mathcal F}
$
that immediately follow from propositions \ref{conc_F_smooth_lin}, \ref{conc_F_smooth_Wasser} and \ref{conc_F_smooth}.

\begin{proposition}
Let ${\mathcal F}\subset C^{m+1}({\mathbb R}_+)$ be a class of functions $f$ such that $f(0)=0$ and 
\begin{align*}
\max_{1\leq j\leq m+1}\|f^{(j)}\|_{L_{\infty}}\leq 1.
\end{align*}
Suppose that ${\bf r}(\Sigma)\lesssim n.$
Then,
for $m \geq 4,$  
\begin{align}
\label{bd_m>3_A'_B}
&
\nonumber
\Big\|\sup_{f\in {\mathcal F}}|\tau_f (\hat \Sigma_n)- {\mathbb E} \tau_f(\hat \Sigma_n)|\Bigr\|_{L_p}
\lesssim_m (\|\Sigma\|+1)^{1/2}\|\Sigma\|\sqrt{{\bf r}(\Sigma^2)} \sqrt{\frac{p}{n}}  + (\|\Sigma\|+1)\|\Sigma\| \frac{p}{n}
\\
&
+
\|\Sigma\|^2\Bigl((\|\Sigma\|+1)^{1/2}\frac{{\bf r}(\Sigma)}{\sqrt{n}}\sqrt{\frac{p}{n}} +(\|\Sigma\|+1)\Bigl(\frac{{\bf r}(\Sigma)}{\sqrt{n}}\vee 1\Bigr)\frac{p}{n} 
+ (\|\Sigma\|+1)^2\Bigl(\frac{{\bf r}(\Sigma)^2}{n}\vee 1\Bigr)\Bigl(\frac{p}{n}\Bigr)^{2}\Bigr);
\end{align}
for $m=3,$
\begin{align}
\label{bd_m_3_A'_B}
&
\nonumber
\Big\|\sup_{f\in {\mathcal F}}|\tau_f (\hat \Sigma_n)- {\mathbb E} \tau_f(\hat \Sigma_n)|\Bigr\|_{L_p}
\lesssim (\|\Sigma\|+1)^{1/2}\|\Sigma\|\sqrt{{\bf r}(\Sigma^2)} \sqrt{\frac{p}{n}}  + (\|\Sigma\|+1)\|\Sigma\| \frac{p}{n}
\\
&
+\|\Sigma\|^2\Bigl((\|\Sigma\|+1)^{1/2}\frac{{\bf r}(\Sigma)}{\sqrt{n}}\sqrt{\frac{p}{n}} +(\|\Sigma\|+1)\Bigl(\frac{{\bf r}(\Sigma)}{\sqrt{n}}\vee 1\Bigr)\frac{p}{n} 
+ (\|\Sigma\|+1)^2 \Bigl(\frac{{\bf r}(\Sigma)^2}{n}\vee \log n\Bigr)\Bigl(\frac{p}{n}\Bigr)^{2}\Bigr);
\end{align}
and, for $m=2,$
\begin{align}
\label{bd_m_2_A'_B}
&
\nonumber
\Big\|\sup_{f\in {\mathcal F}}|\tau_f (\hat \Sigma_n)- {\mathbb E} \tau_f(\hat \Sigma_n)|\Bigr\|_{L_p}
\lesssim (\|\Sigma\|+1)^{1/2}\|\Sigma\|\sqrt{{\bf r}(\Sigma^2)} \sqrt{\frac{p}{n}}  + (\|\Sigma\|+1)\|\Sigma\| \frac{p}{n}
\\
&
+
\|\Sigma\|^2\Bigl((\|\Sigma\|+1)^{1/2}\frac{{\bf r}(\Sigma)}{\sqrt{n}}\sqrt{\frac{p}{n}} +(\|\Sigma\|+1)\log n \Bigl(\frac{{\bf r}(\Sigma)}{\sqrt{n}}\vee 1\Bigr)\frac{p}{n}
+ (\|\Sigma\|+1)^2\frac{p^2}{n}\Bigr).
\end{align}
\end{proposition}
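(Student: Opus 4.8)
The plan is to reduce the statement to the two sup-norm estimates already established in this section, namely Proposition~\ref{conc_F_smooth_lin} for the linear part and Proposition~\ref{conc_F_smooth} for the centered remainder. The starting point is the first order Taylor expansion of the trace functional recalled at the beginning of Section~\ref{Conc}: for every $f\in C^1({\mathbb R}_+)$ with $f(0)=0$,
\begin{align*}
\tau_f(\hat \Sigma_n) = \tau_f(\Sigma) + \langle f'(\Sigma), \hat \Sigma_n-\Sigma\rangle + R_f(\Sigma, \hat \Sigma_n-\Sigma).
\end{align*}
Since ${\mathbb E}\hat \Sigma_n=\Sigma$, the linear term is centered; taking expectations and subtracting gives the identity
\begin{align*}
\tau_f(\hat \Sigma_n)-{\mathbb E}\tau_f(\hat \Sigma_n) = \langle f'(\Sigma), \hat \Sigma_n-\Sigma\rangle + \bigl(R_f(\Sigma, \hat \Sigma_n-\Sigma)-{\mathbb E}R_f(\Sigma, \hat \Sigma_n-\Sigma)\bigr),
\end{align*}
valid simultaneously for all $f\in {\mathcal F}$. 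Each $f\in {\mathcal F}$ satisfies $\max_{1\le j\le m+1}\|f^{(j)}\|_{L_\infty}\le 1$, which is precisely the hypothesis of Proposition~\ref{conc_F_smooth_lin}, and which also implies the weaker hypothesis $\max_{2\le j\le m+1}\|f^{(j)}\|_{L_\infty}\le 1$ of Proposition~\ref{conc_F_smooth}; in particular $\|f'\|_{\rm Lip}=\|f''\|_{L_\infty}\le 1$, so $R_f$ is well defined and the results of this section apply uniformly over ${\mathcal F}$.

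Next I would pass to suprema over ${\mathcal F}$ in the displayed identity. By the elementary subadditivity $\sup_{f\in {\mathcal F}}|a_f+b_f|\le \sup_{f\in {\mathcal F}}|a_f|+\sup_{f\in {\mathcal F}}|b_f|$ (which remains valid for the outer expectations and outer $L_p$-norms used throughout the paper, these being monotone and subadditive),
\begin{align*}
\sup_{f\in {\mathcal F}}\bigl|\tau_f(\hat \Sigma_n)-{\mathbb E}\tau_f(\hat \Sigma_n)\bigr| \le \sup_{f\in {\mathcal F}}\bigl|\langle f'(\Sigma), \hat \Sigma_n-\Sigma\rangle\bigr| + \sup_{f\in {\mathcal F}}\bigl|R_f(\Sigma, \hat \Sigma_n-\Sigma)-{\mathbb E}R_f(\Sigma, \hat \Sigma_n-\Sigma)\bigr|.
\end{align*}
Applying the triangle inequality for the $L_p$-norm then reduces the claim to bounding the $L_p$-norms of the two suprema on the right hand side separately.

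Finally I would simply insert the available estimates. Proposition~\ref{conc_F_smooth_lin} (valid for $m\ge 2$) bounds $\bigl\|\sup_{f\in {\mathcal F}}|\langle f'(\Sigma), \hat \Sigma_n-\Sigma\rangle|\bigr\|_{L_p}$ by $(\|\Sigma\|+1)^{1/2}\|\Sigma\|_2\sqrt{p/n}+(\|\Sigma\|+1)\|\Sigma\|\,p/n$; rewriting $\|\Sigma\|_2=\|\Sigma\|\sqrt{{\bf r}(\Sigma^2)}$ (since $\|\Sigma\|_2^2=\sum_k\lambda_k^2=\mathrm{tr}(\Sigma^2)$ and ${\bf r}(\Sigma^2)=\mathrm{tr}(\Sigma^2)/\|\Sigma\|^2$) this is exactly the first line of each of \eqref{bd_m>3_A'_B}, \eqref{bd_m_3_A'_B}, \eqref{bd_m_2_A'_B}. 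Proposition~\ref{conc_F_smooth}, in its three cases $m\ge 4$, $m=3$, $m=2$, supplies precisely the remaining terms of \eqref{bd_m>3_A'_B}, \eqref{bd_m_3_A'_B}, \eqref{bd_m_2_A'_B} respectively. Adding the two bounds term by term finishes the proof. There is essentially no analytic obstacle: the work is pure bookkeeping — keeping the $m\ge 4$ / $m=3$ / $m=2$ case split aligned and carrying the $\|\Sigma\|$-dependent prefactors — since the substantive estimates (the chaining arguments behind Propositions~\ref{conc_F_smooth_lin} and \ref{conc_F_smooth}, which themselves rest on the Schatten-norm concentration bounds of this section for $\|\hat\Sigma_n-\Sigma\|_r$ and on the entropy of the smooth function classes) have already been established.
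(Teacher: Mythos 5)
Your proposal is correct and coincides with the paper's own (essentially one-line) argument: the paper states that these bounds ``immediately follow'' from Propositions \ref{conc_F_smooth_lin} and \ref{conc_F_smooth}, i.e.\ from the first-order decomposition $\tau_f(\hat\Sigma_n)-{\mathbb E}\tau_f(\hat\Sigma_n)=\langle f'(\Sigma),\hat\Sigma_n-\Sigma\rangle+R_f(\Sigma,\hat\Sigma_n-\Sigma)-{\mathbb E}R_f(\Sigma,\hat\Sigma_n-\Sigma)$, the triangle inequality for the sup and the $L_p$-norm, and the identity $\|\Sigma\|_2=\|\Sigma\|\sqrt{{\bf r}(\Sigma^2)}$, exactly as you do with the correct alignment of the $m\geq 4$, $m=3$, $m=2$ cases.
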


Denote 
\begin{align*}
T_n(f):=\sqrt{\frac{n}{2}}\Bigl(\tau_f (\hat \Sigma_n)- {\mathbb E} \tau_f(\hat \Sigma_n)\Bigr), f\in {\mathcal F}
\end{align*}

\begin{proposition}
Let ${\mathcal F}\subset C^{m+1}({\mathbb R}_+)$ be a class of functions $f$ such that $f(0)=0$ and 
\begin{align*}
\max_{1\leq j\leq m+1}\|f^{(j)}\|_{L_{\infty}}\leq 1.
\end{align*}
Suppose that ${\bf r}(\Sigma)\lesssim n.$
Then,
for $m \geq 4,$  
\begin{align}
\label{bd_m>3_A'_C}
&
\nonumber
{\mathcal W}_{{\mathcal F}, p} (T_n, G_{\Sigma})
\lesssim_m (\|\Sigma\|+1)^{1/2}\|\Sigma\| \sqrt{{\bf r}(\Sigma^2)}\sqrt{\frac{p}{n}}  + (\|\Sigma\|+1)\|\Sigma\| \frac{p}{\sqrt{n}}
\\
&
+
\|\Sigma\|^2\Bigl((\|\Sigma\|+1)^{1/2}{\bf r}(\Sigma)\sqrt{\frac{p}{n}} +(\|\Sigma\|+1)\Bigl({\bf r}(\Sigma)\frac{p}{n}\vee \frac{p}{\sqrt{n}}\Bigr)
+ (\|\Sigma\|+1)^2\Bigl(\frac{{\bf r}(\Sigma)^2}{n}\vee 1\Bigr)\frac{p^2}{n^{3/2}}\Bigr);
\end{align}
for $m=3,$
\begin{align}
\label{bd_m_3_A'_C}
&
\nonumber
{\mathcal W}_{{\mathcal F}, p} (T_n, G_{\Sigma})
\lesssim (\|\Sigma\|+1)^{1/2}\|\Sigma\| \sqrt{{\bf r}(\Sigma^2)}\sqrt{\frac{p}{n}}  + (\|\Sigma\|+1)\|\Sigma\| \frac{p}{\sqrt{n}}
\\
&
+\|\Sigma\|^2\Bigl((\|\Sigma\|+1)^{1/2}{\bf r}(\Sigma)\sqrt{\frac{p}{n}} +(\|\Sigma\|+1)\Bigl({\bf r}(\Sigma)\frac{p}{n} \vee \frac{p}{\sqrt{n}}\Bigr)
+ (\|\Sigma\|+1)^2 \Bigl(\frac{{\bf r}(\Sigma)^2}{n}\vee \log n\Bigr)\frac{p^2}{n^{3/2}}\Bigr);
\end{align}
and, for $m=2,$
\begin{align}
\label{bd_m_2_A'_C}
&
\nonumber
{\mathcal W}_{{\mathcal F}, p} (T_n, G_{\Sigma})
\lesssim (\|\Sigma\|+1)^{1/2}\|\Sigma\| \sqrt{{\bf r}(\Sigma^2)} \sqrt{\frac{p}{n}}  + (\|\Sigma\|+1)\|\Sigma\| \frac{p}{\sqrt{n}}
\\
&
+
\|\Sigma\|^2\Bigl((\|\Sigma\|+1)^{1/2}{\bf r}(\Sigma)\sqrt{\frac{p}{n}} +(\|\Sigma\|+1)\log n \Bigl({\bf r}(\Sigma)\frac{p}{n}\vee \frac{p}{\sqrt{n}}\Bigr)
+ (\|\Sigma\|+1)^2\frac{p^2}{\sqrt{n}}\Bigr).
\end{align}
\end{proposition}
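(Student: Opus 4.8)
The plan is to obtain this bound as a direct corollary of the two ingredients already prepared in this section: the Gaussian approximation for the linear term (Proposition \ref{conc_F_smooth_Wasser}) and the size of the centered remainder process (Proposition \ref{conc_F_smooth}). The first step is to recall the first order Taylor decomposition from Section \ref{Conc}: pointwise in $f$ and on the underlying probability space,
\[
\tau_f(\hat\Sigma_n)-{\mathbb E}\tau_f(\hat\Sigma_n)
=\langle f'(\Sigma),\hat\Sigma_n-\Sigma\rangle
+\bigl(R_f(\Sigma,\hat\Sigma_n-\Sigma)-{\mathbb E}R_f(\Sigma,\hat\Sigma_n-\Sigma)\bigr),
\]
so that, multiplying by $\sqrt{n/2}$,
\[
T_n(f)=G_n(f)+\widetilde R_n(f),\qquad
\widetilde R_n(f):=\sqrt{\tfrac n2}\bigl(R_f(\Sigma,\hat\Sigma_n-\Sigma)-{\mathbb E}R_f(\Sigma,\hat\Sigma_n-\Sigma)\bigr),
\]
where $G_n$ and $\widetilde R_n$ are Borel functions of $\hat\Sigma_n$ living on the same space.

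Next I would run the coupling underlying Proposition \ref{conc_F_smooth_Wasser} on the Gaussian coordinate space used in the proof of Proposition \ref{norm_approx_lin}. Writing $X_j=\sum_k\sqrt{\lambda_k}g_{j,k}\phi_k$, one has $G_n(f)=\sum_k\lambda_k f'(\lambda_k)\frac1{\sqrt{2n}}\sum_i(g_{i,k}^2-1)$; coupling the inner sums with i.i.d.\ $N(0,1)$ variables $\eta_k$ produces $\bar G_\Sigma(f):=\sum_k\lambda_k f'(\lambda_k)\eta_k$ with $\bar G_\Sigma\overset{f.d.d.}{=}G_\Sigma$, and on that same space $\hat\Sigma_n$ — hence $\widetilde R_n$ and $T_n=G_n+\widetilde R_n$ — is still well defined, with $T_n$ having the right finite dimensional distributions. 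The triangle inequality for ${\mathcal W}_{{\mathcal F},p}$ then gives
\[
{\mathcal W}_{{\mathcal F},p}(T_n,G_\Sigma)\le
\Bigl\|\sup_{f\in{\mathcal F}}|\widetilde R_n(f)|\Bigr\|_{L_p}
+\Bigl\|\sup_{f\in{\mathcal F}}|G_n(f)-\bar G_\Sigma(f)|\Bigr\|_{L_p},
\]
where the second term, after optimizing over such couplings, is ${\mathcal W}_{{\mathcal F},p}(G_n,G_\Sigma)$, bounded by Proposition \ref{conc_F_smooth_Wasser}; rewriting $\|\Sigma\|_2=\|\Sigma\|\sqrt{{\bf r}(\Sigma^2)}$ produces the first two summands in each of \eqref{bd_m>3_A'_C}, \eqref{bd_m_3_A'_C}, \eqref{bd_m_2_A'_C}.

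Finally I would insert the remainder bound. Since $\bigl\|\sup_f|\widetilde R_n(f)|\bigr\|_{L_p}=\sqrt{n/2}\,\bigl\|\sup_f|R_f(\Sigma,\hat\Sigma_n-\Sigma)-{\mathbb E}R_f(\Sigma,\hat\Sigma_n-\Sigma)|\bigr\|_{L_p}$, I would plug in Proposition \ref{conc_F_smooth} in the regimes $m\ge 4$, $m=3$, $m=2$. Multiplying each bound there by $\sqrt n$ turns $\frac{{\bf r}(\Sigma)}{\sqrt n}\sqrt{\frac pn}$ into ${\bf r}(\Sigma)\sqrt{\frac pn}$, the factor $\bigl(\frac{{\bf r}(\Sigma)}{\sqrt n}\vee1\bigr)\frac pn$ into $\bigl({\bf r}(\Sigma)\frac pn\vee\frac p{\sqrt n}\bigr)$, and the $\bigl(\frac pn\bigr)^2$ terms into $\frac{p^2}{n^{3/2}}$ (respectively $\frac{p^2}{\sqrt n}$ when $m=2$), which are exactly the remaining terms in the stated inequalities; summing the two contributions finishes the argument. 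Every step is a direct invocation of an already established result, so there is no genuine obstacle; the only point needing a line of care is the triangle inequality for the process Wasserstein distance ${\mathcal W}_{{\mathcal F},p}$, which is why the coupling is carried out on the common Gaussian coordinate space — equivalently, one glues the near-optimal coupling of $(G_n,G_\Sigma)$ to the identity coupling of $(G_n,T_n)$ along $G_n$, legitimate because all these processes are Borel functions of $\hat\Sigma_n$, exactly as in the proof of Proposition \ref{norm_approx_lin}.
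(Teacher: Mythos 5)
Your proposal is correct and follows essentially the same route the paper intends: the paper states that these bounds "immediately follow" from Propositions \ref{conc_F_smooth_Wasser} and \ref{conc_F_smooth}, i.e.\ exactly your decomposition $T_n=G_n+\widetilde R_n$, the coupling of $G_n$ with $G_\Sigma$ built on the original Gaussian coordinate space as in Proposition \ref{norm_approx_lin}, and the remainder bounds multiplied by $\sqrt{n}$. Your handling of the coupling/triangle-inequality subtlety for ${\mathcal W}_{{\mathcal F},p}$ matches the paper's device of constructing versions on the same probability space, and your bookkeeping of the three regimes $m\geq 4$, $m=3$, $m=2$ reproduces the stated terms.
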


We will provide below detailed proofs of propositions \ref{entr_bd} and \ref{conc_F_smooth}. The proofs of propositions \ref{entr_bd_lin_part}, \ref{conc_F_smooth_lin}, \ref{entr_Wasser} and \ref{conc_F_smooth_Wasser}
are quite similar and we will only provide a few comments on their proofs.

We start with the proof of Proposition \ref{entr_bd}.

\begin{proof}
In what follows, we write $\rho:=\rho_d.$
We will use the notations of the proof of Theorem \ref{prop_rem}. 
Recall that, by Proposition \ref{rem_first_order}, for all $f$ satisfying $\|f''\|_{L_{\infty}}\leq \delta,$
\begin{align*} 
|R_f(\Sigma^{(N)}, \hat \Sigma_n^{(N)}-\Sigma^{(N)})| \leq \frac{\delta}{2} \|\hat \Sigma_n^{(N)}-\Sigma^{(N)}\|_2^2. 
\end{align*} 
In view of \eqref{rem_N_inf}, we can pass to the limit as $N\to\infty$ to get that, for all such $f,$
$|R_f(\Sigma, \hat \Sigma_n-\Sigma)| \leq \frac{\delta}{2} \|\hat \Sigma_n-\Sigma\|_2^2.$ In other words, 
\begin{align*} 
\sup_{\|f''\|_{L_{\infty}}\leq \delta}|R_f(\Sigma, \hat \Sigma_n-\Sigma)| \leq \frac{\delta}{2} \|\hat \Sigma_n-\Sigma\|_2^2. 
\end{align*} 
By bound  \eqref{co_HS}, for all $p\geq 1,$
\begin{align} 
\label{small_ball_R_f_A}
\nonumber
\Bigl\|\sup_{\|f''\|_{L_{\infty}}\leq \delta}|R_f(\Sigma, \hat \Sigma_n-\Sigma)|\Bigr\|_{L_p} &\leq \frac{\delta}{2} \Bigl\|\|\hat \Sigma_n-\Sigma\|_2^2\Bigr\|_{L_p} 
=\frac{\delta}{2} \Bigl\|\|\hat \Sigma_n-\Sigma\|_2\Bigr\|_{L_{2p}}^2 
\\
&
\lesssim \delta\|\Sigma\|^2 \Bigl(\frac{{\bf r}(\Sigma)^2}{n} \vee \frac{p}{n}\vee \Bigl(\frac{p}{n}\Bigr)^2\Bigr),
\end{align} 
which easily implies that 
\begin{align}
\label{small_ball_R_f} 
\Bigl\|\sup_{\|f''\|_{L_{\infty}}\leq \delta}|R_f(\Sigma, \hat \Sigma_n-\Sigma)-{\mathbb E}R_f(\Sigma, \hat \Sigma_n-\Sigma)|\Bigr\|_{L_p} 
\lesssim \delta\|\Sigma\|^2 \Bigl(\frac{{\bf r}(\Sigma)^2}{n} \vee \frac{p}{n}\vee \Bigl(\frac{p}{n}\Bigr)^2\Bigr).
\end{align}

The rest of the proof is based on a standard chaining argument for stochastic process
\begin{align*}
Y_n(f):= R_f(\Sigma, \hat \Sigma_n-\Sigma)-{\mathbb E}R_f(\Sigma, \hat \Sigma_n-\Sigma), f\in {\mathcal F}.
\end{align*}
Without loss of generality, we can assume that $0\in {\mathcal F}$ (otherwise, we can just replace class ${\mathcal F}$ by ${\mathcal F}\cup \{0\}$).
Let $\eps_k:= \rho 2^{-k}, k\geq 0,$
let ${\mathcal F}_0=\{0\}$ and, for $k\geq 1,$ let ${\mathcal F}_k\subset {\mathcal F}$ be an $\eps_k$-net of cardinality $N_d({\mathcal F},\eps_k).$
For $f\in {\mathcal F}$ and $k\geq 0,$ let 
\begin{align*}
\pi_k f\in {\rm Argmin}_{g\in {\mathcal F}_k} d(f,g).
\end{align*}
Then $\pi_0 f=0$ and $d(f,\pi_k f)\leq \eps_k, k\geq 0.$

Note that 
\begin{align*}
Y_n(f)-Y_n(\pi_k f)= R_{f-\pi_k f}(\Sigma, \hat \Sigma_n-\Sigma)-{\mathbb E}R_{f-\pi_k f}(\Sigma, \hat \Sigma_n-\Sigma)
\end{align*}
It follows from bound \eqref{small_ball_R_f} that, for all $k\geq 1$ and $p\geq 1,$ 
\begin{align}
\label{f_pi_k_f}
\Bigl\|\sup_{f\in {\mathcal F}}|Y_n(f)-Y_n(\pi_k f)|\Bigr\|_{L_p}\lesssim \eps_k \|\Sigma\|^2 \Bigl(\frac{{\bf r}(\Sigma)^2}{n} \vee \frac{p}{n}\vee \Bigl(\frac{p}{n}\Bigr)^2\Bigr).
\end{align}
Let $K$ be such that $\delta\in [\eps_{K+3}, \eps_{K+2}).$
Then, the following representation holds:
\begin{align*}
Y_n(\pi_{K+1}f)=\sum_{k=0}^{K}(Y_n(\pi_{k+1}f)-Y_n(\pi_k f))
\end{align*}
and it implies the bound 
\begin{align*}
\sup_{f\in {\mathcal F}}|Y_n(\pi_{K+1}f)| \leq \sum_{k=0}^{K}\sup_{f\in {\mathcal F}}|Y_n(\pi_{k+1}f)-Y_n(\pi_k f)|.
\end{align*}
Denote ${\mathcal G}_k:= \Bigl\{f_1-f_2: f_1\in {\mathcal F}_k, f_2\in {\mathcal F}_{k+1}, d(f_1,f_2)\leq 3\eps_{k+1}\Bigr\}.$
Then 
\begin{align*}
\sup_{f\in {\mathcal F}}|Y_n(\pi_{k+1}f)-Y_n(\pi_k f)|\leq \max_{g\in {\mathcal G}_k}|Y_n(g)|,
\end{align*}
and we get 
\begin{align*}
\Big\|\sup_{f\in {\mathcal F}}|Y_n(\pi_{K+1}f)|\Bigr\|_{L_p} \leq \sum_{k=0}^{K} \Bigl\|\max_{g\in {\mathcal G}_k}|Y_n(g)|\Bigr\|_{L_p}.
\end{align*}
Note that 
\begin{align*}
{\rm card}({\mathcal G}_k)\leq N_d({\mathcal F},\eps_k) N_d({\mathcal F}, \eps_{k+1})\leq e N_d({\mathcal F}, \eps_{k+1})^2=: N_k\geq e.
\end{align*}
Thus, since $\log N_k\geq 1,$
\begin{align*}
\Bigl\|\max_{g\in {\mathcal G}_k}|Y_n(g)|\Bigr\|_{L_p} &
\leq \Bigl\|\max_{g\in {\mathcal G}_k}|Y_n(g)|\Bigr\|_{L_{p\log N_k}}= \Bigl({\mathbb E}\max_{g\in {\mathcal G}_k}|Y_n(g)|^{p\log N_k}\Bigr)^{1/p\log N_k}
\\
&
\leq \Bigl({\mathbb E} \sum_{g\in {\mathcal G}_k} |Y_n(g)|^{p\log N_k}\Bigr)^{1/p\log N_k}
=\Bigl(\sum_{g\in {\mathcal G}_k} {\mathbb E}|Y_n(g)|^{p\log N_k}\Bigr)^{1/p\log N_k}
\\
&
\leq \Bigl({\rm card}({\mathcal G}_k)
\max_{g\in {\mathcal G}_k} {\mathbb E}|Y_n(g)|^{p\log N_k}\Bigr)^{1/p\log N_k}
\leq N_k^{1/p\log N_k} \max_{g\in {\mathcal G}_k} \Bigl({\mathbb E}|Y_n(g)|^{p\log N_k}\Bigr)^{1/p\log N_k}
\\
&
= e^{1/p} \max_{g\in {\mathcal G}_k}\|Y_n(g)\|_{L_{p\log N_k}}.
\end{align*}
Using bound of Theorem \ref{prop_rem} and taking into account that, for all $g\in {\mathcal G}_k,$ 
$
\|g''\|_{L_{\infty}}\leq 3\eps_{k+1},
$
we get 
\begin{align*}
&
\max_{g\in {\mathcal G}_k}\|Y_n(g)\|_{L_{p\log N_k}}
\\
&
\lesssim \eps_{k+1} \|\Sigma\|^2\Bigl(\frac{{\bf r}(\Sigma)}{\sqrt{n}}\sqrt{\frac{p\log N_k}{n}}\vee \Bigl(\frac{{\bf r}(\Sigma)}{\sqrt{n}}\vee 1\Bigr)\frac{p\log N_k}{n}\vee \Bigl(\frac{p\log N_k}{n}\Bigr)^{2}\Bigr)
\end{align*}
Therefore,
\begin{align*}
\Big\|\sup_{f\in {\mathcal F}}|Y_n(\pi_{K+1}f)|\Bigr\|_{L_p}
&\lesssim
\|\Sigma\|^2\Bigl(\frac{{\bf r}(\Sigma)}{\sqrt{n}}\sqrt{\frac{p}{n}}\sum_{k=0}^K \eps_{k+1}\sqrt{\log N_k}+ \Bigl(\frac{{\bf r}(\Sigma)}{\sqrt{n}}\vee 1\Bigr)\frac{p}{n}\sum_{k=0}^K \eps_{k+1}\log N_k
\\
&
+ \Bigl(\frac{p}{n}\Bigr)^{2} \sum_{k=0}^{K}\eps_{k+1}\log^2 N_k\Bigr).
\end{align*}
Observe that 
\begin{align*}
&
\sum_{k=0}^K \eps_{k+1}\sqrt{\log N_k} \lesssim \int_{\delta}^\rho (H_{d}^{1/2}({\mathcal F}, \eps)+1)d\eps, 
\sum_{k=0}^K \eps_{k+1}\log N_k \lesssim \int_{\delta}^\rho (H_{d}({\mathcal F}, \eps)+1)d\eps, 
\\
&
\sum_{k=0}^K \eps_{k+1}\log^2 N_k \lesssim \int_{\delta}^\rho (H_{d}^2({\mathcal F}, \eps)+1)d\eps.
\end{align*}
Thus, we get 
\begin{align*}
&
\Big\|\sup_{f\in {\mathcal F}}|Y_n(\pi_{K+1}f)|\Bigr\|_{L_p}
\\
&
\lesssim
\|\Sigma\|^2\Bigl(\frac{{\bf r}(\Sigma)}{\sqrt{n}}\sqrt{\frac{p}{n}} \int_{\delta}^\rho (H_{d}^{1/2}({\mathcal F}, \eps)+1)d\eps+\Bigl(\frac{{\bf r}(\Sigma)}{\sqrt{n}}\vee 1\Bigr)\frac{p}{n}\int_{\delta}^\rho (H_{d}({\mathcal F}, \eps)+1)d\eps 
\\
&
+ \Bigl(\frac{p}{n}\Bigr)^{2} \int_{\delta}^\rho (H_{d}^2({\mathcal F}, \eps)+1)d\eps\Bigr).
\end{align*}
Combining the last bound with \eqref{f_pi_k_f} (for $k=K+1$), we get 
\begin{align*}
&
\Big\|\sup_{f\in {\mathcal F}}|Y_n(f)|\Bigr\|_{L_p}\leq \Big\|\sup_{f\in {\mathcal F}}|Y_n(\pi_{K+1}f)|\Bigr\|_{L_p}+ 
\Big\|\sup_{f\in {\mathcal F}}|Y_n(f)-Y_n(\pi_{K+1}f)|\Bigr\|_{L_p}
\\
&
\lesssim 
\|\Sigma\|^2\Bigl(\frac{{\bf r}(\Sigma)}{\sqrt{n}}\sqrt{\frac{p}{n}} \int_{\delta}^\rho (H_{d}^{1/2}({\mathcal F}, \eps)+1)d\eps+\Bigl(\frac{{\bf r}(\Sigma)}{\sqrt{n}}\vee 1\Bigr)\frac{p}{n}\int_{\delta}^\rho (H_{d}({\mathcal F}, \eps)+1)d\eps 
\\
&
+ \Bigl(\frac{p}{n}\Bigr)^{2} \int_{\delta}^\rho (H_{d}^2({\mathcal F}, \eps)+1)d\eps\Bigr)
+ \delta \|\Sigma\|^2 \Bigl(\frac{{\bf r}(\Sigma)^2}{n} \vee \frac{p}{n}\vee \Bigl(\frac{p}{n}\Bigr)^2\Bigr),
\end{align*}
implying the claim.

\qed
\end{proof}

Next we turn to the proof of Proposition \ref{conc_F_smooth}.

\begin{proof}
Let ${\mathcal F}\subset C^{m+1}({\mathbb R}_+)$ be a class of functions $f$ satisfying the assumptions of the proposition and, in addition, 
such that ${\rm supp}(f)\subset [0,A]$ for some $A\geq 1.$ By a well known theorem (due to Kolmogorov), the $\eps$-entropy 
of the H\"older ball $\{h: \max_{0\leq j\leq m-1}\|h^{(j)}\|_{L_{\infty}[0,A]}\leq 1\}$ with respect to the sup-norm distance on $[0,A]$ is upper bounded 
by $\lesssim A\eps^{-1/(m-1)}.$ Define 
\begin{align*}
{\mathcal G}:= \{f'': f\in {\mathcal F}\}\subset \{f'': \max_{0\leq j\leq m-1}\|f^{(2+j)}\|_{L_{\infty}[0,A]}\leq 1\}
\end{align*}
and $\tilde d(g_1,g_2):= \|g_1-g_2\|_{L_{\infty}[0,A]}, g_1,g_2\in {\mathcal G}.$
Then
\begin{align*}
H_d({\mathcal F},\eps)=H_{\tilde d}({\mathcal G}, \eps) \lesssim A\eps^{-1/(m-1)}.
\end{align*}
For $m\geq 4,$ we can use the bound of Proposition \ref{entr_bd} with $\rho_d=1$ and $\delta=0.$ We have 
\begin{align*}
\int_{0}^1 (H_{d}^{1/2}({\mathcal F}, \eps)+1)d\eps \lesssim \sqrt{A},\int_{0}^1 (H_{d}({\mathcal F}, \eps)+1)d\eps \lesssim A,\ 
\int_{0}^1 (H_{d}^{2}({\mathcal F}, \eps)+1)d\eps \lesssim A^2,
\end{align*}
and the bound yields:
\begin{align}
\label{bd_m>3}
\nonumber
&
\Big\|\sup_{f\in {\mathcal F}}|R_f(\Sigma, \hat \Sigma_n-\Sigma)-{\mathbb E}R_f(\Sigma, \hat \Sigma_n-\Sigma)|\Bigr\|_{L_p}
\\
&
\lesssim 
\|\Sigma\|^2\Bigl(\sqrt{A}\frac{{\bf r}(\Sigma)}{\sqrt{n}}\sqrt{\frac{p}{n}} +A\Bigl(\frac{{\bf r}(\Sigma)}{\sqrt{n}}\vee 1\Bigr)\frac{p}{n} 
+ A^2\Bigl(\frac{p}{n}\Bigr)^{2}\Bigr). 
\end{align}
For $m=2$ or $m=3,$ some of the entropy integrals diverge and bound of Proposition \ref{entr_bd} with $\rho_d=1$ and $\delta\in (0,1/2)$ should be used.
In particular, for $m=3,$ we have 
\begin{align*}
\int_{\delta}^1 (H_{d}^{1/2}({\mathcal F}, \eps)+1)d\eps \lesssim \sqrt{A},\ \int_{\delta}^1 (H_{d}({\mathcal F}, \eps)+1)d\eps \lesssim A,\ 
\int_{\delta}^1 (H_{d}^{2}({\mathcal F}, \eps)+1)d\eps \lesssim A^2 \log\frac{1}{\delta}.
\end{align*}
Therefore,
\begin{align*}
&
\Big\|\sup_{f\in {\mathcal F}}|R_f(\Sigma, \hat \Sigma_n-\Sigma)-{\mathbb E}R_f(\Sigma, \hat \Sigma_n-\Sigma)|\Bigr\|_{L_p}
\\
&
\lesssim 
\|\Sigma\|^2\Bigl(\sqrt{A}\frac{{\bf r}(\Sigma)}{\sqrt{n}}\sqrt{\frac{p}{n}} +A\Bigl(\frac{{\bf r}(\Sigma)}{\sqrt{n}}\vee 1\Bigr)\frac{p}{n}
+ A^2 \log\frac{1}{\delta}\Bigl(\frac{p}{n}\Bigr)^{2}\Bigr) 
+ \delta \|\Sigma\|^2 \Bigl(\frac{{\bf r}(\Sigma)^2}{n} \vee \frac{p}{n}\vee \Bigl(\frac{p}{n}\Bigr)^2\Bigr).
\end{align*}
For $\delta=n^{-2},$ we get under the assumption that ${\bf r}(\Sigma)\lesssim n,$
\begin{align}
\label{bd_m_3}
\nonumber
&
\Big\|\sup_{f\in {\mathcal F}}|R_f(\Sigma, \hat \Sigma_n-\Sigma)-{\mathbb E}R_f(\Sigma, \hat \Sigma_n-\Sigma)|\Bigr\|_{L_p}
\\
&
\lesssim 
\|\Sigma\|^2\Bigl(\sqrt{A}\frac{{\bf r}(\Sigma)}{\sqrt{n}}\sqrt{\frac{p}{n}} +A\Bigl(\frac{{\bf r}(\Sigma)}{\sqrt{n}}\vee 1\Bigr)\frac{p}{n}
+ A^2 \log n\Bigl(\frac{p}{n}\Bigr)^{2}\Bigr).
\end{align}
Finally, for $m=2,$ we have 
\begin{align*}
\int_{\delta}^1(H_{d}^{1/2}({\mathcal F}, \eps)+1)d\eps \lesssim \sqrt{A},\ \int_{\delta}^1 (H_{d}({\mathcal F}, \eps)+1)d\eps \lesssim A \log\frac{1}{\delta},\ 
\int_{\delta}^1 (H_{d}^{2}({\mathcal F}, \eps)+1)d\eps \lesssim \frac{A^2}{\delta}. 
\end{align*}
Thus, bound of Proposition \ref{entr_bd} yields
\begin{align*}
&
\Big\|\sup_{f\in {\mathcal F}}|R_f(\Sigma, \hat \Sigma_n-\Sigma)-{\mathbb E}R_f(\Sigma, \hat \Sigma_n-\Sigma)|\Bigr\|_{L_p}
\\
&
\lesssim 
\|\Sigma\|^2\Bigl(\sqrt{A}\frac{{\bf r}(\Sigma)}{\sqrt{n}}\sqrt{\frac{p}{n}} +A\log\frac{1}{\delta}\Bigl(\frac{{\bf r}(\Sigma)}{\sqrt{n}}\vee 1\Bigr)\frac{p}{n}
+ \frac{A^2}{\delta} \Bigl(\frac{p}{n}\Bigr)^{2}\Bigr) 
+ \delta \|\Sigma\|^2 \Bigl(\frac{{\bf r}(\Sigma)^2}{n} \vee \frac{p}{n}\vee \Bigl(\frac{p}{n}\Bigr)^2\Bigr).
\end{align*}
Setting $\delta:= n^{-1},$ we get (assuming that ${\bf r}(\Sigma)\lesssim n$),
\begin{align}
\label{bd_m_2}
\nonumber
&
\Big\|\sup_{f\in {\mathcal F}}|R_f(\Sigma, \hat \Sigma_n-\Sigma)-{\mathbb E}R_f(\Sigma, \hat \Sigma_n-\Sigma)|\Bigr\|_{L_p}
\\
&
\lesssim 
\|\Sigma\|^2\Bigl(\sqrt{A}\frac{{\bf r}(\Sigma)}{\sqrt{n}}\sqrt{\frac{p}{n}} +A\log n \Bigl(\frac{{\bf r}(\Sigma)}{\sqrt{n}}\vee 1\Bigr)\frac{p}{n}
+ A^2\frac{p^2}{n}\Bigr).
\end{align}

Let now ${\mathcal F}\subset C^{m+1}({\mathbb R}_+)$ be an arbitrary class of functions $f$ satisfying the assumptions of the proposition 
(with no assumptions on ${\rm supp}(f)$). Let $\psi:{\mathbb R}\mapsto [0,1]$ be a $C^{\infty}$ function such that $\psi(x)=1$ for $x\leq 1/2$ 
and $\psi(x)=0$ for $x\geq 1.$ 
Let $A\geq 2c\|\Sigma\|+2,$ where $c>0$ is a sufficiently large numerical constant. Denote $\psi_A(x):= \psi(x/A), x\in {\mathbb R}.$ It is easy to check that, for all $f\in {\mathcal F},$ 
$(f\psi_A)(0)=0$ and 
\begin{align*}
 \max_{0\leq j\leq m-1}\|(f\psi_A)^{(2+j)}\|_{L_{\infty}[0,A]}\lesssim_m 1
\end{align*}
and ${\rm supp}(f\psi_A)\subset [0, A].$ Thus, bounds \eqref{bd_m>3}, \eqref{bd_m_3} and \eqref{bd_m_2} hold for the class $\{f\psi_A: f\in {\mathcal F}\}:$ 
for $m\geq 4,$
\begin{align}
\label{bd_m>3_A}
\nonumber
&
\Big\|\sup_{f\in {\mathcal F}}|R_{f\psi_A}(\Sigma, \hat \Sigma_n-\Sigma)-{\mathbb E}R_{f\psi_A}(\Sigma, \hat \Sigma_n-\Sigma)|\Bigr\|_{L_p}
\\
&
\lesssim_m 
\|\Sigma\|^2\Bigl(\sqrt{A}\frac{{\bf r}(\Sigma)}{\sqrt{n}}\sqrt{\frac{p}{n}} +A\Bigl(\frac{{\bf r}(\Sigma)}{\sqrt{n}}\vee 1\Bigr)\frac{p}{n} 
+ A^2\Bigl(\frac{p}{n}\Bigr)^{2}\Bigr);
\end{align}
for $m=3,$
\begin{align}
\label{bd_m_3_A}
\nonumber
&
\Big\|\sup_{f\in {\mathcal F}}|R_{f\psi_A}(\Sigma, \hat \Sigma_n-\Sigma)-{\mathbb E}R_{f\psi_A}(\Sigma, \hat \Sigma_n-\Sigma)|\Bigr\|_{L_p}
\\
&
\lesssim 
\|\Sigma\|^2\Bigl(\sqrt{A}\frac{{\bf r}(\Sigma)}{\sqrt{n}}\sqrt{\frac{p}{n}} +A\Bigl(\frac{{\bf r}(\Sigma)}{\sqrt{n}}\vee 1\Bigr)\frac{p}{n}
+ A^2 \log n\Bigl(\frac{p}{n}\Bigr)^{2}\Bigr);
\end{align}
and, for $m=2,$
\begin{align}
\label{bd_m_2_A}
\nonumber
&
\Big\|\sup_{f\in {\mathcal F}}|R_{f\psi_A}(\Sigma, \hat \Sigma_n-\Sigma)-{\mathbb E}R_{f\psi_A}(\Sigma, \hat \Sigma_n-\Sigma)|\Bigr\|_{L_p}
\\
&
\lesssim 
\|\Sigma\|^2\Bigl(\sqrt{A}\frac{{\bf r}(\Sigma)}{\sqrt{n}}\sqrt{\frac{p}{n}} +A\log n \Bigl(\frac{{\bf r}(\Sigma)}{\sqrt{n}}\vee 1\Bigr)\frac{p}{n}
+ A^2\frac{p^2}{n}\Bigr).
\end{align}

On the other hand, note that $\sigma(\Sigma)\subset (0,A/2)$ and, on the event $E:=\{\|\hat \Sigma_n\|<A/2\},$  we also have $\sigma(\hat \Sigma_n)\subset (0,A/2).$   
Since $1-\psi_A(x)=0$ for all $x<1/2,$ we can conclude that $\tau_{f(1-\psi_A)}(\Sigma)=0$ and, on the event $E,$ $\tau_{f(1-\psi_A)}(\hat \Sigma_n)=0$
and $(f(1-\psi_A))'(\Sigma)=0.$ Therefore, $R_{f(1-\psi_A)}(\Sigma, \hat \Sigma_n-\Sigma)=0$ on the event $E,$ which implies that 
\begin{align*} 
&
\Bigl\|\sup_{f\in {\mathcal F}}|R_{f(1-\psi_A)}(\Sigma, \hat \Sigma_n-\Sigma)|\Bigr\|_{L_p}= \Bigl\|\sup_{f\in {\mathcal F}}|R_{f(1-\psi_A)}(\Sigma, \hat \Sigma_n-\Sigma)| I_{E^c}\Bigr\|_{L_p}
\\
&
\leq \Bigl\|\sup_{f\in {\mathcal F}}|R_{f(1-\psi_A)}(\Sigma, \hat \Sigma_n-\Sigma)|\Bigr\|_{L_{2p}} {\mathbb P}^{1/2p}(E^c).
\end{align*}
Using bound \eqref{small_ball_R_f}, we get 
\begin{align*}
\Bigl\|\sup_{f\in {\mathcal F}}|R_{f(1-\psi_A)}(\Sigma, \hat \Sigma_n-\Sigma)|\Bigr\|_{L_{2p}} 
\lesssim \sup_{f\in {\mathcal F}}\|(f(1-\psi_A))''\|_{L_{\infty}} \|\Sigma\|^2 \Bigl(\frac{{\bf r}(\Sigma)^2}{n} \vee \frac{p}{n}\vee \Bigl(\frac{p}{n}\Bigr)^2\Bigr).
\end{align*} 
Since $\sup_{f\in {\mathcal F}}\|(f(1-\psi_A))''\|_{L_{\infty}}\lesssim 1,$ we can conclude that 
\begin{align*}
\Bigl\|\sup_{f\in {\mathcal F}}|R_{f(1-\psi_A)}(\Sigma, \hat \Sigma_n-\Sigma)|\Bigr\|_{L_{p}} 
\lesssim  \|\Sigma\|^2 \Bigl(\frac{{\bf r}(\Sigma)^2}{n} \vee \frac{p}{n}\vee \Bigl(\frac{p}{n}\Bigr)^2\Bigr) {\mathbb P}^{1/2p}(E^c).
\end{align*} 
Using the above bound for $p=1$ also yields 
\begin{align*}
\sup_{f\in {\mathcal F}}\Bigl|{\mathbb E}R_{f(1-\psi_A)}(\Sigma, \hat \Sigma_n-\Sigma)\Bigr| \leq \Bigl\|\sup_{f\in {\mathcal F}}|R_{f(1-\psi_A)}(\Sigma, \hat \Sigma_n-\Sigma)|\Bigr\|_{L_1}
\lesssim 
\|\Sigma\|^2 \frac{{\bf r}(\Sigma)^2}{n} {\mathbb P}^{1/2}(E^c).
\end{align*}
Therefore, 
\begin{align*}
\Bigl\|\sup_{f\in {\mathcal F}}|R_{f(1-\psi_A)}(\Sigma, \hat \Sigma_n-\Sigma)- {\mathbb E}R_{f(1-\psi_A)}(\Sigma, \hat \Sigma_n-\Sigma)|\Bigr\|_{L_{p}} 
\lesssim  \|\Sigma\|^2 \Bigl(\frac{{\bf r}(\Sigma)^2}{n} \vee \frac{p}{n}\vee \Bigl(\frac{p}{n}\Bigr)^2\Bigr) {\mathbb P}^{1/2p}(E^c).
\end{align*} 

Assuming that $A\geq 2c\|\Sigma\|+2$ with a sufficiently large constant $c$ and using bounds \eqref{KL_1} and \eqref{KL_2}, it is easy to check that, under the assumption ${\bf r}(\Sigma)\lesssim n,$ 
for some constant $c'>0$ and for $t=n,$
\begin{align*}
{\mathbb P}(E^c)&={\mathbb P}\{\|\hat \Sigma_n\|\geq A/2\}\leq  {\mathbb P}\{\|\hat \Sigma_n-\Sigma\|\geq (c-1)\|\Sigma\|\}
\\
&
\leq 
{\mathbb P}\Bigl\{|\|\hat \Sigma_n-\Sigma\|-{\mathbb E}\|\hat \Sigma_n-\Sigma\||\geq c'\|\Sigma\|\Bigl(\sqrt{\frac{t}{n}}\vee \frac{t}{n}\Bigr)\Bigr\}
\leq e^{-t}=e^{-n}.
\end{align*}
Therefore, under the assumption that ${\bf r}(\Sigma)\lesssim n,$
\begin{align*}
\Bigl\|\sup_{f\in {\mathcal F}}|R_{f(1-\psi_A)}(\Sigma, \hat \Sigma_n-\Sigma)- {\mathbb E}R_{f(1-\psi_A)}(\Sigma, \hat \Sigma_n-\Sigma)|\Bigr\|_{L_{p}} 
&\lesssim  \|\Sigma\|^2 \Bigl(\frac{{\bf r}(\Sigma)^2}{n} \vee \frac{p}{n}\vee \Bigl(\frac{p}{n}\Bigr)^2\Bigr) e^{-n/2p}
\\
&
\lesssim 
\|\Sigma\|^2 \frac{{\bf r}(\Sigma)^2}{n} e^{-n/2p}+ \|\Sigma\|^2 \Bigl(\frac{p}{n}\vee \Bigl(\frac{p}{n}\Bigr)^2\Bigr). 
\end{align*} 
Combining the last inequality with bounds \eqref{bd_m>3_A}, \eqref{bd_m_3_A} and \eqref{bd_m_2_A}, using the fact that $e^{-n/2p}\lesssim (\frac{p}{n})^2\wedge 1$ and taking $A= 2c\|\Sigma\|+2, $ 
it is easy to complete the proof.
\qed
\end{proof}

The proofs of propositions \ref{entr_bd_lin_part} and \ref{conc_F_smooth_lin} are very similar to the proofs of propositions \ref{entr_bd} and \ref{conc_F_smooth} provided above.
Concerning the proofs of propositions \ref{entr_Wasser} and \ref{conc_F_smooth_Wasser}, it is enough to construct on the same probability space versions $\bar G_n(f), f\in {\mathcal F}$
and $\bar G_{\Sigma}(f), f\in {\mathcal F}$ of stochastic processes $G_n(f), f\in {\mathcal F}$ and $G_{\Sigma}(f), f\in {\mathcal F}$ such that the $L_p$-norms of $\sup_{f\in {\mathcal F}}|\bar G_n(f)-\bar G_{\Sigma}(f)|$
could be upper bounded by the expressions in the right hand sides of inequalities of propositions \ref{entr_Wasser} and \ref{conc_F_smooth_Wasser}.
For this, we use the construction of Proposition \ref{norm_approx_lin} and the proof of the upper bounds are again quite similar to the proofs provided above (with some simplifications).

From some of the results stated in this section, it is easy to deduce the following simple proposition (that itself implies Proposition \ref{emp_pr_first} of Section \ref{Main_results}).
Recall that $\tilde G_n(f)= \sqrt{n/2}\int_{{\mathbb R}_+}f d(\mu_{\hat \Sigma_n}-\mu_{\Sigma}), f\in {\mathcal F}_1.$

\begin{proposition}
\label{emp_pr_first_detail}
Suppose that ${\bf r}(\Sigma)\lesssim n.$ 
Then, for all $p\geq 1,$ 
\begin{align*}
&
\Big\|\|\hat \mu_{\hat \Sigma_n} - \mu_{\Sigma}\|_{{\mathcal F}_1}\Bigr\|_{L_p}
\lesssim 
(\|\Sigma\|+1)^{1/2}\|\Sigma\|_2 \sqrt{\frac{p}{n}}  + (\|\Sigma\|+1)\|\Sigma\| \log n\frac{p}{n}
+\|\Sigma\|^2 \Bigl(\frac{{\bf r}(\Sigma)^2}{n} \vee 
\frac{p}{n} \vee \Bigl(\frac{p}{n}\Bigr)^2\Bigr)
\end{align*}
and 
\begin{align*}
&
\nonumber
{\mathcal W}_{{\mathcal F}_1, p} (\tilde G_n, G_{\Sigma})
\lesssim
(\|\Sigma\|+1)^{1/2}\|\Sigma\|_2 \sqrt{\frac{p}{n}}  + (\|\Sigma\|+1)\|\Sigma\| \log n \frac{p}{\sqrt{n}}
+\|\Sigma\|^2 \Bigl(\frac{{\bf r}(\Sigma)^2}{\sqrt{n}} \vee 
\frac{p}{\sqrt{n}} \vee \frac{p^2}{n^{3/2}}\Bigr).
\end{align*}
\end{proposition}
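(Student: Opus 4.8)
The plan is to reduce everything to the first order Taylor expansion of the linear spectral statistic and then invoke the uniform bounds of Section~\ref{sup_bounds}. For $f\in{\mathcal F}_1$ write
\begin{align*}
\tau_f(\hat\Sigma_n)-\tau_f(\Sigma)=\langle f'(\Sigma),\hat\Sigma_n-\Sigma\rangle+R_f(\Sigma,\hat\Sigma_n-\Sigma),
\end{align*}
so that $\tilde G_n(f)=G_n(f)+\sqrt{n/2}\,R_f(\Sigma,\hat\Sigma_n-\Sigma)$ and, pointwise,
\begin{align*}
\|\hat\mu_{\hat\Sigma_n}-\mu_\Sigma\|_{{\mathcal F}_1}\le\sup_{f\in{\mathcal F}_1}|\langle f'(\Sigma),\hat\Sigma_n-\Sigma\rangle|+\sup_{f\in{\mathcal F}_1}|R_f(\Sigma,\hat\Sigma_n-\Sigma)|.
\end{align*}
Since ${\mathcal F}_1$ is precisely the class appearing in Proposition~\ref{conc_F_smooth_lin} with $m=1$, that proposition bounds the $L_p$-norm of the first supremum by $\lesssim(\|\Sigma\|+1)^{1/2}\|\Sigma\|_2\sqrt{p/n}+(\|\Sigma\|+1)\|\Sigma\|\log n\,(p/n)$, which accounts for the first two terms of the asserted inequality.

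It then remains to control the remainder supremum, and here I would avoid any chaining: for $f\in{\mathcal F}_1$ one has $\|f'\|_{\rm Lip}=\|f''\|_{L_\infty}\le1$, so by the a.s. bound \eqref{bd_on_R_f_gen} (which follows from Proposition~\ref{rem_first_order} together with the approximation \eqref{rem_N_inf}),
\begin{align*}
\sup_{f\in{\mathcal F}_1}|R_f(\Sigma,\hat\Sigma_n-\Sigma)|\le\tfrac12\|\hat\Sigma_n-\Sigma\|_2^2\quad\text{a.s.}
\end{align*}
Hence $\big\|\sup_{f\in{\mathcal F}_1}|R_f(\Sigma,\hat\Sigma_n-\Sigma)|\big\|_{L_p}\le\tfrac12\big\|\|\hat\Sigma_n-\Sigma\|_2\big\|_{L_{2p}}^2$, and combining Proposition~\ref{HS_error} with Proposition~\ref{conc_Schatten_hat} (for $r=2$) gives $\big\|\|\hat\Sigma_n-\Sigma\|_2\big\|_{L_{2p}}\lesssim\|\Sigma\|\big(\tfrac{{\bf r}(\Sigma)}{\sqrt n}\vee\sqrt{p/n}\vee\tfrac pn\big)$; squaring yields $\lesssim\|\Sigma\|^2\big(\tfrac{{\bf r}(\Sigma)^2}{n}\vee\tfrac pn\vee(p/n)^2\big)$, the last term of the bound. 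Adding the two estimates proves the first inequality.

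For the Wasserstein bound I would work on the probability space carrying the sample $X_1,\dots,X_n$, on which $\tilde G_n$, $G_n$ and $R_f(\Sigma,\hat\Sigma_n-\Sigma)$ are all defined, the last being a measurable function of the sample. Using the coupling from the proof of Proposition~\ref{norm_approx_lin} — Rio's theorem applied, in the eigenbasis of $\Sigma$, to the independent coordinates $\frac1{\sqrt{2n}}\sum_i(g_{i,k}^2-1)$, with independent auxiliary randomness for each $k$ — together with the chaining of Proposition~\ref{conc_F_smooth_Wasser}, one constructs on (an enlargement of) this space a process $\bar G_\Sigma\overset{f.d.d.}{=}G_\Sigma$ with $\big\|\sup_{f\in{\mathcal F}_1}|G_n(f)-\bar G_\Sigma(f)|\big\|_{L_p}\lesssim(\|\Sigma\|+1)^{1/2}\|\Sigma\|_2\sqrt{p/n}+(\|\Sigma\|+1)\|\Sigma\|\log n\,(p/\sqrt n)$ (the $m=1$ case of Proposition~\ref{conc_F_smooth_Wasser}). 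Then
\begin{align*}
\sup_{f\in{\mathcal F}_1}|\tilde G_n(f)-\bar G_\Sigma(f)|\le\sup_{f\in{\mathcal F}_1}|G_n(f)-\bar G_\Sigma(f)|+\sqrt{n/2}\,\sup_{f\in{\mathcal F}_1}|R_f(\Sigma,\hat\Sigma_n-\Sigma)|,
\end{align*}
and taking $L_p$-norms, the second term contributes $\sqrt{n/2}$ times the remainder estimate above, i.e. $\lesssim\|\Sigma\|^2\big(\tfrac{{\bf r}(\Sigma)^2}{\sqrt n}\vee\tfrac p{\sqrt n}\vee\tfrac{p^2}{n^{3/2}}\big)$, the extra term in the second inequality. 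Since $\tilde G_n$ here is the genuine process and $\bar G_\Sigma\overset{f.d.d.}{=}G_\Sigma$, this bounds ${\mathcal W}_{{\mathcal F}_1,p}(\tilde G_n,G_\Sigma)$.

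Overall this is an assembly of facts already established in Sections~\ref{Conc} and~\ref{sup_bounds}, so I do not expect a genuine obstacle; the only delicate point is ensuring, in the Wasserstein part, that the Gaussian coupling and the sample-measurable remainder are carried by a single probability space, which is handled by performing Rio's coupling with additional independent randomness while leaving the original sample — hence $G_n$ and $R_f$ — unchanged.
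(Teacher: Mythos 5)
Your proposal is correct and follows essentially the same route as the paper: the decomposition into the linear term plus the remainder, the a.s.\ bound $\sup_{f\in{\mathcal F}_1}|R_f(\Sigma,\hat\Sigma_n-\Sigma)|\le\frac12\|\hat\Sigma_n-\Sigma\|_2^2$ controlled via the Hilbert--Schmidt moment bounds, and the $m=1$ cases of Propositions~\ref{conc_F_smooth_lin} and~\ref{conc_F_smooth_Wasser} for the uniform linear and Gaussian-coupling parts. Your extra care in keeping the original sample fixed and adding only auxiliary randomness for Rio's coupling (so that the coupled Gaussian process and the sample-measurable remainder live on one space) is exactly the point the paper handles implicitly, and your citation of Propositions~\ref{HS_error} and~\ref{conc_Schatten_hat} for the remainder moments is the correct reference.
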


\begin{proof}
Note that 
\begin{align*}
\int_{{\mathbb R}_+} fd(\mu_{\hat \Sigma_n}-\mu_{\Sigma}) = \tau_f(\hat \Sigma_n) -\tau_f(\Sigma) = 
\langle f'(\Sigma), \hat \Sigma_n-\Sigma\rangle + R_f(\Sigma, \hat \Sigma_n-\Sigma).
\end{align*}
It follows from bound \eqref{bd_on_R_f_gen} that 
\begin{align*}
\sup_{f\in {\mathcal F}_1}|R_f(\Sigma, \hat \Sigma_n-\Sigma)| \leq \frac{1}{2} \|\hat \Sigma_n-\Sigma\|_2^2.
\end{align*}
Using the bound 
\begin{align*}
&
 \Bigl\|\|\hat \Sigma_n-\Sigma\|_2^2\Bigr\|_{L_p} \lesssim \|\Sigma\|^2 \Bigl(\frac{{\bf r}(\Sigma)^2}{n} \vee 
\frac{p}{n} \vee \Bigl(\frac{p}{n}\Bigr)^2\Bigr)
\end{align*}
that easily follows from propositions \ref{prop_lin} and \ref{norm_approx_lin}, we get that 
\begin{align*}
\Bigl\|\sup_{f\in {\mathcal F}_1}|R_f(\Sigma, \hat \Sigma_n-\Sigma)| \Bigr\|_{L_p} \lesssim \|\Sigma\|^2 \Bigl(\frac{{\bf r}(\Sigma)^2}{n} \vee 
\frac{p}{n} \vee \Bigl(\frac{p}{n}\Bigr)^2\Bigr).
\end{align*}
It remains to combine the last bound with the bounds of propositions \ref{conc_F_smooth_lin} and \ref{conc_F_smooth_Wasser} (both for $m=1$)
to complete the proof.

\qed
\end{proof}

\section{Bounding the bias of estimator $\hat T_f(X_1,\dots, X_n)$}
\label{bias_bounds}

Our goal in this section is to prove the following result concerning the bias of estimator $\hat T_f(X_1,\dots, X_n)$ of $\tau_f(\Sigma).$

\begin{theorem}
\label{bias_main}
Suppose $f\in C^{m+1}({\mathbb R}_+)$ for some $m\geq 2,$ $f(0)=0$ and $\|f^{(m+1)}\|_{L_{\infty}}<\infty.$ Then 
\begin{align*}
|{\mathbb E}\hat T_{f,m}(X_1,\dots,X_n)- \tau_f(\Sigma)|\lesssim_m
\|f^{(m+1)}\|_{L_{\infty}} \|\Sigma\|^{m+1} {\bf r}(\Sigma)\Bigl(\sqrt{\frac{{\bf r}(\Sigma)}{n}}\vee \sqrt{\frac{\log n}{n}}\Bigr)^{m+1}.
\end{align*}
\end{theorem}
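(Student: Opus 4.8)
The plan is to combine the $m$-th order Taylor expansion of the plug-in estimator around $\Sigma$ with the sharp Schatten-norm bound for the Taylor remainder of trace functionals (from \cite{P_S_S}), an elementary computation of the $1/n$-expansion of the bias of each plug-in estimator, and the algebraic cancellation built into the coefficients $C_j$.

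First I would fix $m$ and, for each sample size $n_j$, write the $m$-th order Taylor expansion
\begin{align*}
\tau_f(\hat \Sigma_{n_j}) = \sum_{k=0}^m \frac1{k!}\,(D^k\tau_f)(\Sigma)\bigl[(\hat \Sigma_{n_j}-\Sigma)^{\otimes k}\bigr] + R_{m,f}\bigl(\Sigma,\hat \Sigma_{n_j}-\Sigma\bigr),
\end{align*}
where $(D^k\tau_f)(\Sigma)$ is the $k$-th Fr\'echet derivative of the trace functional (a symmetric $k$-linear form on self-adjoint operators, expressible through the $k$-th divided differences of $f$ on $\sigma(\Sigma)$) and $R_{m,f}$ is the remainder. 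The key input is the sharp bound on the remainder of higher order Taylor expansions of traces of operator functions (\cite{P_S_S}), giving, for self-adjoint Hilbert--Schmidt $H$,
\begin{align*}
\bigl|R_{m,f}(\Sigma,H)\bigr| \lesssim_m \|f^{(m+1)}\|_{L_\infty}\,\|H\|_{m+1}^{m+1};
\end{align*}
in the infinite-dimensional case this is reduced to finite rank by the same approximation used in the proof of Theorem \ref{prop_rem}. Since $\|\Sigma\|<\infty$ and $X$ is Gaussian, all Schatten-norm moments of $\hat \Sigma_{n_j}-\Sigma = n_j^{-1}\sum_{i\le n_j}(X_i\otimes X_i-\Sigma)$ are finite, and the $k$-linear forms $(D^k\tau_f)(\Sigma)$ are bounded on the relevant operators (as $f^{(k)}$ is continuous on $[0,\|\Sigma\|]$ for $k\le m$), so one may take expectations termwise.

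Second, I would show that for each fixed $k$ with $2\le k\le m$ the expectation ${\mathbb E}\,(D^k\tau_f)(\Sigma)[(\hat \Sigma_n-\Sigma)^{\otimes k}]$ is a finite linear combination of the powers $n^{-l}$, $1\le l\le m-1$, with coefficients not depending on $n$. Writing $Y_i:=X_i\otimes X_i-\Sigma$ (i.i.d., centered, with all moments finite) and expanding $(\hat \Sigma_n-\Sigma)^{\otimes k}=n^{-k}\sum_{i_1,\dots,i_k}Y_{i_1}\otimes\cdots\otimes Y_{i_k}$, multilinearity together with independence and ${\mathbb E}Y_i=0$ forces only index tuples in which every distinct value appears at least twice to contribute; grouping these by their kernel set partition $\pi$ (with $b=|\pi|\le\lfloor k/2\rfloor$ blocks, each of size $\ge2$) and using that the number of tuples with a given kernel is the falling factorial $(n)_b$ — a polynomial in $n$ of degree $b$ with $n$-free coefficients, vanishing to order $1$ at $0$ — yields the claimed expansion, with surviving powers $l$ ranging over $\lceil k/2\rceil\le l\le k-1\subseteq\{1,\dots,m-1\}$. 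The term $k=1$ contributes $0$ because $\hat \Sigma_n$ is unbiased, and $k=0$ contributes $\tau_f(\Sigma)$. Summing over $k$,
\begin{align*}
{\mathbb E}\tau_f(\hat \Sigma_n)-\tau_f(\Sigma) = \sum_{l=1}^{m-1}\frac{B_l(\Sigma,f)}{n^l} + {\mathbb E}R_{m,f}(\Sigma,\hat \Sigma_n-\Sigma),
\end{align*}
with $B_l$ independent of $n$ (finite, though no quantitative bound on $B_l$ is needed).

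Third, I would substitute this into $\hat T_{f,m}=\sum_{j=1}^m C_j\tau_f(\hat \Sigma_{n_j})$ and invoke \eqref{cond_C_j_1}--\eqref{cond_C_j_2}: since $\sum_jC_j=1$ and $\sum_jC_jn_j^{-l}=0$ for $l=1,\dots,m-1$, every term $B_l/n_j^l$ cancels and $\tau_f(\Sigma)$ is reproduced, so
\begin{align*}
{\mathbb E}\hat T_{f,m}(X_1,\dots,X_n)-\tau_f(\Sigma) = \sum_{j=1}^m C_j\,{\mathbb E}R_{m,f}(\Sigma,\hat \Sigma_{n_j}-\Sigma).
\end{align*}
Then, using \eqref{assume_C_j}, the remainder bound above and Proposition \ref{prop_schat} with $p=m+1$,
\begin{align*}
\bigl|{\mathbb E}R_{m,f}(\Sigma,\hat \Sigma_{n_j}-\Sigma)\bigr| \le {\mathbb E}\bigl|R_{m,f}(\Sigma,\hat \Sigma_{n_j}-\Sigma)\bigr| \lesssim_m \|f^{(m+1)}\|_{L_\infty}\,{\mathbb E}\|\hat \Sigma_{n_j}-\Sigma\|_{m+1}^{m+1} \lesssim_m \|f^{(m+1)}\|_{L_\infty}\|\Sigma\|^{m+1}{\bf r}(\Sigma)\Bigl(\sqrt{\tfrac{{\bf r}(\Sigma)}{n_j}}\vee\sqrt{\tfrac{\log n_j}{n_j}}\Bigr)^{m+1},
\end{align*}
and since $n/c\le n_j\le n$ the right-hand side is comparable, up to an $m$- and $c$-dependent constant, to the same expression with $n_j$ replaced by $n$; summing the $m$ terms finishes the proof. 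The main obstacle is the first step — invoking the operator-perturbation machinery of \cite{P_S_S} in exactly the needed form, i.e.\ the sharp $\|f^{(m+1)}\|_{L_\infty}\|H\|_{m+1}^{m+1}$ control of the $m$-th Taylor remainder of a trace functional for merely $C^{m+1}$ test functions, together with its extension to the trace-class/infinite-dimensional setting by finite-rank approximation. The combinatorial $1/n$-expansion is elementary but must be organized so that it is transparent that the surviving powers lie precisely in $\{1,\dots,m-1\}$, which is exactly what conditions \eqref{cond_C_j_2} are designed to annihilate.
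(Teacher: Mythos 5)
Your outline follows the paper's route in its core steps: the $m$-th order Taylor expansion of $\tau_f(\hat\Sigma_{n_j})$, the $\|f^{(m+1)}\|_{L_\infty}\|H\|_{m+1}^{m+1}$ control of the remainder from \cite{P_S_S}, the combinatorial $1/n$-expansion of the intermediate terms (your kernel-partition count is exactly the content of the Lemma~3.1 argument from \cite{Koltchinskii_2022} that the paper invokes), the cancellation via \eqref{cond_C_j_1}--\eqref{cond_C_j_2}, and Proposition~\ref{prop_schat} with $p=m+1$. That part is correct and would be accepted as is.

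The genuine gap is the regularity of $f$ under which the \cite{P_S_S} machinery applies. The higher-order spectral shift results there (and the representation of $D^k\tau_f$ as a bounded multilinear form on ${\mathcal S}_k$ via multiple operator integrals) are proved for $f\in\bigcap_{k=0}^{m+1}{\mathcal W}_k$, i.e.\ for functions whose derivatives up to order $m+1$ have integrable Fourier transforms --- not for arbitrary $f\in C^{m+1}$ with $\|f^{(m+1)}\|_{L_\infty}<\infty$. You name this as ``the main obstacle'' but then treat the needed form of the bound as something to be invoked, when in fact it has to be manufactured: the paper first proves the theorem for $f\in\bigcap_{k=0}^{m+1}{\mathcal W}_k$ (Proposition~\ref{bias_decomp}), then extends to $f\in C^{m+1}\cap L_1({\mathbb R})$ by mollifying with an $(m+2)$-fold convolution of uniform densities and subtracting a multiple of the kernel so that the approximants $f_k$ still vanish at $0$ and satisfy $\|f_k^{(m+1)}\|_{L_\infty}\lesssim \|f^{(m+1)}\|_{L_\infty}+o(1)$, and finally removes the $L_1$ hypothesis by a smooth cutoff and monotone convergence, justifying the limit of both $\tau_{f_k}(\Sigma)$ and ${\mathbb E}\tau_{f_k}(\hat\Sigma_n)$ by dominated convergence against the moment measure $\sum_{j\le m+1}x^j$. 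Without some such approximation step (or an independent proof of the trace-remainder bound for merely $C^{m+1}$ functions), your argument only establishes the theorem on the Wiener class, which is strictly smaller than the class in the statement. A secondary, related point: the boundedness of the $k$-linear forms $(D^k\tau_f)(\Sigma)$ on the relevant Schatten classes, which you need in order to take expectations termwise, also comes from the multiple-operator-integral representation under the same Wiener-class hypotheses, not merely from continuity of $f^{(k)}$ on $[0,\|\Sigma\|]$.
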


For the proof, we need several deep results on higher-order Taylor expansions for functional $\tau_f,$ obtained in \cite{P_S_S}. 
Assume that function $f\in C^{m+1}({\mathbb R}_+)$ is extended to a function $f\in C^{m+1}({\mathbb R})$
(such an extension always exists).
For self-adjoint operators $A,H,$ denote by 
\begin{align*}
S_f^{(m)}(A,H) := f(A+H)-f(A)-\sum_{k=1}^{m}  \frac{1}{k!}\frac{d^k}{dt^k}f(A+tH)\vert_{t=0}
\end{align*}
the remainder of the $m$-th order Taylor expansion of operator function $f(A).$
Let ${\mathcal W}_k$ denote the class of functions $f\in C^{k}({\mathbb R})$  
such that the Fourier transform $\widetilde {f^{(k)}}$ of the $k$-th derivative of $f$
is integrable:
\begin{align*}
\|f\|_{{\mathcal W}_k} := \int_{{\mathbb R}} |\widetilde {f^{(k)}}(t)|dt<\infty.
\end{align*}
It was proved in \cite{P_S_S} (see Theorems 1.1, 2.1) that, for all $f\in \bigcap_{k=0}^{m+1}{\mathcal W}_k$ and all perturbations $H\in {\mathcal S}_{m+1},$ 
there exists 
a function $\eta_{m+1}=\eta_{m+1,f,A,H}\in L_1({\mathbb R})$ such that $S_f^{(m)}(A,H)\in {\mathcal S}_1$ and the following higher order Lifshits-Krein spectral 
shift formula holds 
\begin{align*}
{\rm tr}(S_f^{(m)}(A,H))= \int_{{\mathbb R}} f^{(m+1)}(\lambda)\eta_{m+1}(\lambda)d\lambda
\end{align*}
with
\begin{align*}
\|\eta_{m+1}\|_{L_1} \lesssim_m \|H\|_{m+1}^{m+1}.
\end{align*}
This result immediately provides a bound on the remainder of Taylor expansion 
of order $m$ for trace functional $\tau_f.$ Namely, defining 
\begin{align*}
R_f^{(m)}(A,H) := \tau_f(A+H)-\tau_f(A)-\sum_{k=1}^{m}  \frac{1}{k!}\frac{d^k}{dt^k}\tau_f(A+tH)\vert_{t=0},
\end{align*}
we get 
\begin{align}
\label{bd_on_rem}
|R_f^{(m)}(A,H)| \lesssim_m  \|f^{(m+1)}\|_{L_{\infty}} \|H\|_{m+1}^{m+1}.
\end{align}
It was also shown in \cite{P_S_S} that, for all $t\in [0,1],$ $\frac{d^{m+1}}{dt^{m+1}}f(A+tH)\in {\mathcal S}_1$ and 
\begin{align*}
\sup_{t\in [0,1]}\Bigl|{\rm tr}\Bigl(\frac{d^{m+1}}{dt^{m+1}}f(A+tH)\Bigr)\Bigr| \lesssim_m \|f^{(m+1)}\|_{L_{\infty}} \|H\|_{m+1}^{m+1}.
\end{align*}
The proof of these results relied on representations of higher order derivatives of $f(A)$ as multiple 
operator integrals (see \cite{Peller-06, Peller-16})
\begin{align*}
\frac{d^k}{dt^k}f(A+tH)\vert_{t=0}= (D^k f)(A)[H,\dots, H],
\end{align*}
where 
\begin{align}
\label{mult_int}
(D^k f)(A)[H_1,\dots, H_k]=\sum_{\pi \in S_k}\int_{\mathbb R} \dots \int_{\mathbb R} f^{[k]}(s_1,\dots, s_{k+1}) E_A(ds_1) H_{\pi(1)}dE_A(ds_2)H_{\pi(2)}\dots H_{\pi(k)} dE_A(ds_{k+1}),
\end{align}
$S_k$ being the set of all permutations of $1,\dots, k.$
Here $f^{[k]}$ is the $k$-th order divided difference of function $f,$ defined for $k=0$ as $f^{[0]}=f$ and for $k\geq 1$ as 
\begin{align*}
f^{[k]}(s_1,\dots, s_{k+1}):=
\begin{cases}
\frac{f^{[k-1]}(s_1,\dots, s_{k-1}, s_k)-f^{[k-1]}(s_1,\dots, s_{k-1}, s_{k+1})}{s_k-s_{k+1}} & s_k\neq s_{k+1}\\
\frac{\partial}{\partial t}f^{[k-1]}(s_1,\dots, s_{k-1}, t)\vert_{t=s_k} & s_k=s_{k+1}
\end{cases}
\end{align*}
and $E_A$ a projection valued measure called the resolution of identity of self-adjoint operator $A.$ 
Such integrals reduce to countable sums in the case when $A=\Sigma$ is a covariance operator (or another operator with discrete spectrum).
Using the technique of multiple operator integrals, it was possible to prove that 
\begin{align*}
\sup_{t\in [0,1]}\Bigl|{\rm tr}\Bigl(\frac{d^k}{dt^k}f(A+tH)\Bigr)\Bigr| \lesssim \|f^{(k)}\|_{L_{\infty}}\|H\|_k^{k}
\end{align*}
(under the assumption that $f\in \bigcap_{j=0}^k {\mathcal W}_j$).
Since $(D^k f)(A)[H_1,\dots, H_k]$ is a $k$-linear form and 
\begin{align*}
{\rm tr}\Bigl(\frac{d^k}{dt^k}f(A+tH)\vert_{t=0}\Bigr)={\rm tr}((D^k f)(A)[H,\dots, H]),
\end{align*}
it easily follows that ${\rm tr}((D^k f)(A)[H_1,\dots, H_k])$ is a bounded $k$-linear form on space ${\mathcal S}_k$ (and, for that matter, on all the spaces ${\mathcal S}_p$ for $p\leq k$).

These results allow us to obtain representation formulas for the bias of estimator $\tau_f(\hat \Sigma_n)$ of $\tau_f (\Sigma)$
slightly modifying (and specializing) Lemma 3.1 in \cite{Koltchinskii_2022}.

\begin{proposition}
\label{bias_decomp}
Let $f\in \bigcap_{k=0}^{m+1}{\mathcal W}_k$ for some $m\geq 2$ with $f(0)=0$ and $\|f^{(m+1)}\|_{L_{\infty}}<\infty.$ Suppose that ${\bf r}(\Sigma)\lesssim n.$ Then, the following representation holds 
\begin{align*}
{\mathbb E}\tau_f(\hat \Sigma_n) - \tau_f(\Sigma) = \sum_{l=1}^{m-1} \frac{\beta_{l,m}(\Sigma, f)}{n^l} +R_n,
\end{align*}
where coefficients $\beta_{l,m}(\Sigma, f)$ do not depend on $n$ and
\begin{align}
\label{remain} 
|R_n| \lesssim_m \|f^{(m+1)}\|_{L_{\infty}} 
\|\Sigma\|^{m+1} {\bf r}(\Sigma)\Bigl(\sqrt{\frac{{\bf r}(\Sigma)}{n}}\vee \sqrt{\frac{\log n}{n}}\Bigr)^{m+1}.
\end{align}
\end{proposition}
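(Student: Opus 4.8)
The plan is to combine the higher-order Taylor expansion of the trace functional $\tau_f$ around $\Sigma$ with the moment structure of $\hat\Sigma_n-\Sigma$. First I would write
\begin{align*}
\tau_f(\hat\Sigma_n)-\tau_f(\Sigma)
= \sum_{k=1}^{m} \frac{1}{k!}\,{\rm tr}\bigl((D^k\tau_f)(\Sigma)[\hat\Sigma_n-\Sigma,\dots,\hat\Sigma_n-\Sigma]\bigr) + R_f^{(m)}(\Sigma,\hat\Sigma_n-\Sigma),
\end{align*}
where the multilinear forms $(D^k\tau_f)(\Sigma)[\cdot,\dots,\cdot]$ are bounded on ${\mathcal S}_k$ (and hence on ${\mathcal S}_2$, since $k\ge 2$ and ${\mathcal S}_2\hookrightarrow{\mathcal S}_k$) by the results of \cite{P_S_S} recalled above, and by \eqref{bd_on_rem} the remainder satisfies $|R_f^{(m)}(\Sigma,H)|\lesssim_m\|f^{(m+1)}\|_{L_\infty}\|H\|_{m+1}^{m+1}\le\|f^{(m+1)}\|_{L_\infty}\|H\|_2^{m+1}$. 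Taking expectations, the $k$-th term becomes $\frac{1}{k!}{\mathbb E}\,{\rm tr}\bigl((D^k\tau_f)(\Sigma)[\hat\Sigma_n-\Sigma]^{\otimes k}\bigr)$, a multilinear functional of the tensor moments of $\hat\Sigma_n-\Sigma$; since $\hat\Sigma_n-\Sigma=\frac1n\sum_{j=1}^n(X_j\otimes X_j-\Sigma)$ is an average of i.i.d.\ centered terms, expanding the $k$-fold product and using independence shows that ${\mathbb E}\,{\rm tr}\bigl((D^k\tau_f)(\Sigma)[\hat\Sigma_n-\Sigma]^{\otimes k}\bigr)$ is a polynomial in $1/n$ whose lowest-order term is of order $n^{-\lceil k/2\rceil}$ (the odd-order central moments contributing nothing and pairings being forced). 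Collecting, for $k=1$ the expectation vanishes, and for $2\le k\le m$ each term contributes powers $n^{-1},\dots,n^{-\lfloor m/2\rfloor}$; regrouping by powers of $1/n$ gives coefficients $\beta_{l,m}(\Sigma,f)$, $l=1,\dots,m-1$, independent of $n$, together with the higher-order tails of these polynomial expansions which I absorb into $R_n$.

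Next I would bound $R_n$. It has two kinds of contributions: (i) ${\mathbb E}R_f^{(m)}(\Sigma,\hat\Sigma_n-\Sigma)$, and (ii) the ``tail'' terms of order $n^{-l}$ with $l\ge m$ coming from the polynomial expansions of the $k$-th Taylor terms for $k\le m$. For (i), I would use $|R_f^{(m)}(\Sigma,H)|\lesssim_m\|f^{(m+1)}\|_{L_\infty}\|H\|_{m+1}^{m+1}$; then, since the rank of $\hat\Sigma_n$ is at most $n$, $\|\hat\Sigma_n-\Sigma\|_{m+1}\lesssim\|\hat\Sigma_n-\Sigma\|_2$ is not quite sharp enough, so instead I would apply Proposition \ref{prop_schat} directly with $p=m+1\ge 2$, which under ${\bf r}(\Sigma)\lesssim n$ yields
\begin{align*}
{\mathbb E}^{1/(m+1)}\|\hat\Sigma_n-\Sigma\|_{m+1}^{m+1}\lesssim_m\|\Sigma\|\,{\bf r}(\Sigma)^{1/(m+1)}\Bigl(\sqrt{\tfrac{{\bf r}(\Sigma)}{n}}\vee\sqrt{\tfrac{\log n}{n}}\Bigr),
\end{align*}
hence ${\mathbb E}\|\hat\Sigma_n-\Sigma\|_{m+1}^{m+1}\lesssim_m\|\Sigma\|^{m+1}{\bf r}(\Sigma)\bigl(\sqrt{{\bf r}(\Sigma)/n}\vee\sqrt{\log n/n}\bigr)^{m+1}$, which is exactly the bound \eqref{remain}. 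For (ii), each tail term of order $n^{-l}$, $l\ge m$, carries a coefficient that is a bounded multilinear form in ${\mathcal S}_2$ evaluated on tensor moments of $X\otimes X-\Sigma$; crude bounds ${\mathbb E}\|X\otimes X-\Sigma\|_2^{2j}\lesssim_j\|\Sigma\|^{2j}{\bf r}(\Sigma)^{2j}$ (Proposition \ref{HS_error} and its higher-moment analogue, or a direct Gaussian computation) show these are $O_m\bigl(\|\Sigma\|^{2m}{\bf r}(\Sigma)^{m}n^{-m}\bigr)$ or smaller, which is dominated by \eqref{remain} since $\bigl(\sqrt{{\bf r}(\Sigma)/n}\bigr)^{m+1}{\bf r}(\Sigma)=({\bf r}(\Sigma))^{(m+3)/2}n^{-(m+1)/2}$ is at least of the same order when ${\bf r}(\Sigma)\lesssim n$ (checking the exponent inequality $(m+3)/2-(m+1)/2\cdot\alpha \ge m+1-m\alpha$ fails the naive way, so here one must be a little careful and instead note that each such term already contains a factor $n^{-l}$ with $l\ge m\ge (m+1)/2$ and the ${\bf r}(\Sigma)$ power is at most $2l$, so with ${\bf r}(\Sigma)\le n$ it is $\le {\bf r}(\Sigma)^{2l-(m+1)/2}n^{-(m+1)/2}\cdot n^{-(l-(m+1)/2)}\le$ the target bound).

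The structural bookkeeping in step one — organizing the multi-index expansion of ${\mathbb E}\,{\rm tr}\bigl((D^k\tau_f)(\Sigma)[\hat\Sigma_n-\Sigma]^{\otimes k}\bigr)$ into a clean polynomial in $1/n$ with $n$-free coefficients — is the main obstacle, and this is precisely where the reference to ``slightly modifying (and specializing) Lemma 3.1 in \cite{Koltchinskii_2022}'' does the work: one invokes the combinatorial identity there, with the covariance structure of $X\otimes X$ substituted in, rather than re-deriving it. The analytic inputs (boundedness of the multilinear forms $(D^k\tau_f)(\Sigma)$ on ${\mathcal S}_k$ and the sharp remainder bound \eqref{bd_on_rem}) are quoted from \cite{P_S_S}, and the only genuinely new estimate needed is the Schatten-$(m+1)$-norm moment bound of Proposition \ref{prop_schat}, which converts $\|f^{(m+1)}\|_{L_\infty}{\mathbb E}\|\hat\Sigma_n-\Sigma\|_{m+1}^{m+1}$ into the stated right-hand side. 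Finally, Theorem \ref{bias_main} follows from Proposition \ref{bias_decomp} by observing that $|\beta_{l,m}(\Sigma,f)|\lesssim_m\|f^{(m+1)}\|_{L_\infty}\|\Sigma\|^{2l}{\bf r}(\Sigma)^{l}\cdot(\text{bounded factors})$ — more simply, that $\sum_{l=1}^{m-1}n^{-l}|\beta_{l,m}|$ is absorbed once one notes each $\beta_{l,m}$ arises as a sum of bounded ${\mathcal S}_2$-multilinear forms on $O(l)$-fold moments of $X\otimes X-\Sigma$, so $|\beta_{l,m}|\lesssim_m\|\Sigma\|^{2l}{\bf r}(\Sigma)^{l}$ and $n^{-l}|\beta_{l,m}|\lesssim_m\bigl(\|\Sigma\|^2{\bf r}(\Sigma)/n\bigr)^{l}$, which for ${\bf r}(\Sigma)\lesssim n$ and $\|\Sigma\|\lesssim 1$ is at worst of the order of the $l=1$ term $\|\Sigma\|^2{\bf r}(\Sigma)/n$, and one checks this too is $\lesssim_m$ the right-hand side of Theorem \ref{bias_main} whenever ${\bf r}(\Sigma)\bigl(\sqrt{{\bf r}(\Sigma)/n}\bigr)^{m+1}\gtrsim {\bf r}(\Sigma)/n$, i.e.\ always under ${\bf r}(\Sigma)\lesssim n$ — wait, that comparison needs ${\bf r}(\Sigma)^{(m+1)/2}\gtrsim n^{(m-1)/2}$, so in fact the $l\le m-1$ terms are \emph{not} automatically dominated and must be retained explicitly, which is exactly why the proposition states the decomposition with the $\beta_{l,m}/n^l$ terms separated out and Theorem \ref{bias_main} is a \emph{weaker} statement obtained only after the aggregation step of Section \ref{proof_of_main} cancels them; so here I would simply prove Proposition \ref{bias_decomp} as stated and defer the cancellation of the $\beta_{l,m}$ to the linear-aggregation argument.
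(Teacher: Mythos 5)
Your proposal is correct and follows essentially the same route as the paper: $m$-th order Taylor expansion of $\tau_f$, the polynomial-in-$1/n$ structure of the expected Taylor terms via Lemma 3.1 of \cite{Koltchinskii_2022}, and the bound $|R_n|\leq {\mathbb E}|R_f^{(m)}(\Sigma,\hat\Sigma_n-\Sigma)|\lesssim_m \|f^{(m+1)}\|_{L_\infty}{\mathbb E}\|\hat\Sigma_n-\Sigma\|_{m+1}^{m+1}$ combined with Proposition \ref{prop_schat} for $p=m+1$, which is exactly the paper's argument. The only slip is combinatorial and harmless: since each expected $j$-linear term ($2\le j\le m$) contributes powers $n^{-\lceil j/2\rceil},\dots,n^{-(j-1)}$ only (singleton blocks vanish and a partition into $b$ blocks yields $n^{-j}\cdot n(n-1)\cdots(n-b+1)$), the identity is exact with $1\le l\le m-1$, so the ``tail terms of order $n^{-l}$, $l\ge m$'' that you bound in part (ii) are identically zero and $R_n$ consists solely of ${\mathbb E}R_f^{(m)}(\Sigma,\hat\Sigma_n-\Sigma)$.
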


\begin{proof}
We use the $m$-th order Taylor expansion for the trace functional $\tau_f(\hat \Sigma_n)$ around $\Sigma:$
\begin{align*}
\tau_f (\hat \Sigma_n) = \tau_f(\Sigma) + \sum_{j=1}^{m}\frac{{\rm tr}((D^j f)(\Sigma)[\hat \Sigma_n-\Sigma,\dots, \hat \Sigma_n-\Sigma])}{j!} + R_f^{(m)}(\Sigma, \hat \Sigma_n-\Sigma),
\end{align*}
which implies that
\begin{align*}
{\mathbb E}\tau_f (\hat \Sigma_n) -\tau_f(\Sigma) =\sum_{j=1}^{m}\frac{{\mathbb E}{\rm tr}((D^j f)(\Sigma)[\hat \Sigma_n-\Sigma,\dots, \hat \Sigma_n-\Sigma])}{j!} + R_n,
\end{align*}
where $R_n={\mathbb E}R_f^{(m)}(\Sigma, \hat \Sigma_n-\Sigma).$ Since ${\rm tr}((D^j f)(\Sigma)[\hat \Sigma_n-\Sigma,\dots, \hat \Sigma_n-\Sigma])$ are bounded $j$-linear forms 
on the space ${\mathcal S}_1$ of nuclear operators and $\hat \Sigma_n-\Sigma\in {\mathcal S}_1,$ one can repeat the proof of Lemma 3.1 in \cite{Koltchinskii_2022}
to show that 
\begin{align*}
\sum_{j=1}^{m}\frac{{\mathbb E}{\rm tr}((D^j f)(\Sigma)[\hat \Sigma_n-\Sigma,\dots, \hat \Sigma_n-\Sigma])}{j!} = \sum_{l=1}^{m-1} \frac{\beta_{l,m}(\Sigma, f)}{n^l}
\end{align*}
with coefficients $\beta_{l,m}(\Sigma, f)$ not depending on $n.$ We can now use bound \eqref{bd_on_rem} for $R_f^{(m)}(\Sigma, \hat \Sigma_n-\Sigma)$
and the bound of Proposition \ref{prop_schat} for $p=m+1$ to show that, under the assumption ${\bf r}(\Sigma)\lesssim n,$ 
\begin{align*}
|R_n| &\leq {\mathbb E}|R_f^{(m)}(\Sigma, \hat \Sigma_n-\Sigma)|\lesssim \|f^{(m+1)}\|_{L_{\infty}} {\mathbb E}\|\hat \Sigma_n-\Sigma\|_{m+1}^{m+1}
\\
&
\lesssim_m \|f^{(m+1)}\|_{L_{\infty}} 
\|\Sigma\|^{m+1} {\bf r}(\Sigma)\Bigl(\sqrt{\frac{{\bf r}(\Sigma)}{n}}\vee \sqrt{\frac{\log n}{n}}\Bigr)^{m+1}.
\end{align*}

\qed
\end{proof}

We are ready to provide the proof of Theorem \ref{bias_main}.

\begin{proof}
We will first assume that $f\in \bigcap_{k=0}^{m+1}{\mathcal W}_k.$ 
We have 
\begin{align*}
{\mathbb E}\hat T_f(X_1,\dots,X_n)- \tau_f(\Sigma) = \sum_{j=1}^m C_j ({\mathbb E}\tau_f(\hat \Sigma_{n_j})- \tau_f(\Sigma)).
\end{align*}
Using Proposition \ref{bias_decomp}, we get 
\begin{align*}
{\mathbb E}\hat T_f(X_1,\dots,X_n)- \tau_f(\Sigma) &= \sum_{j=1}^m C_j \sum_{l=1}^{m-1} \frac{\beta_{l,m}(\Sigma, f)}{n_j^l} +\sum_{j=1}^m C_j R_{n_j}
\\
&
=
\sum_{l=1}^{m-1}\sum_{j=1}^m \frac{C_j}{n_j^l}\beta_{l,m}(\Sigma, f)+\sum_{j=1}^m C_j R_{n_j}
\\
&
=\sum_{j=1}^m C_j R_{n_j}
\end{align*}
since 
$
\sum_{j=1}^m \frac{C_j}{n_j^l}=0, l=1,\dots, m-1.
$
Hence 
\begin{align*}
|{\mathbb E}\hat T_f(X_1,\dots,X_n)- \tau_f(\Sigma)| \leq \sum_{j=1}^m |C_j||R_{n_j}| \lesssim_m \max_{1\leq j\leq m}|R_{n_j}|.  
\end{align*}
By bound \eqref{remain},
\begin{align*}
\max_{1\leq j\leq m}|R_{n_j}|\lesssim_m \|f^{(m+1)}\|_{L_{\infty}} 
\|\Sigma\|^{m+1} {\bf r}(\Sigma)\max_{1\leq j\leq m}\Bigl(\sqrt{\frac{{\bf r}(\Sigma)}{n_j}}\vee \sqrt{\frac{\log n_j}{n_j}}\Bigr)^{m+1}.
\end{align*}
Under the condition that $n_j\asymp_m n$ for all $j=1,\dots, m,$ this yields the bound 
\begin{align}
\label{bias_W}
|{\mathbb E}\hat T_f(X_1,\dots,X_n)- \tau_f(\Sigma)|\lesssim_m
\|f^{(m+1)}\|_{L_{\infty}} \|\Sigma\|^{m+1} {\bf r}(\Sigma)\Bigl(\sqrt{\frac{{\bf r}(\Sigma)}{n}}\vee \sqrt{\frac{\log n}{n}}\Bigr)^{m+1}.
\end{align}

Next, we extend the bound to all functions $f\in C^{m+1}({\mathbb R})\cap L_1({\mathbb R})$ with $f(0)=0$ and $\|f^{(m+1)}\|_{L_{\infty}}<\infty.$ 
To this end, we will construct a sequence $\{f_k\}$ of functions in ${\mathbb R}$
such that $f_k\in \bigcap_{j=0}^{m+1}{\mathcal W}_j$ and $f_k$ converges to $f$ pointwise as $k\to \infty.$
Let $p$ be the $(m+2)$-fold convolution of uniform densities in $[-1,1].$ 
It is supported in the interval $[-m-2, m+2],$ $p(0)>0$  
and its characteristic function is 
\begin{align*}
\widetilde{p}(t)=\Bigl(\frac{\sin (t)}{t}\Bigr)^{m+3}, t\in {\mathbb R}.
\end{align*}
It is easy to see that $p$ is $m+1$ times continuously differentiable and the functions
\begin{align*}
\widetilde{p^{(j)}}(t)= (-it)^j \widetilde{p}(t), t\in {\mathbb R}
\end{align*}
are integrable for all $j=0,\dots, m+1.$ Thus, $p\in \bigcap_{j=0}^{m+1}{\mathcal W}_j.$
Let 
\begin{align*}
p_k (x):= \frac{1}{\sigma_k}p\Bigl(\frac{x}{\sigma_k}\Bigr), x\in {\mathbb R},
\end{align*}
where $\sigma_k\in (0,1), \sigma_k\to 0$ as $k\to \infty.$ 
Clearly, $p_k \in \bigcap_{j=0}^{m+1}{\mathcal W}_j$ for all $k\geq 1.$

Define 
\begin{align*}
f_k(x):= (f\ast p_k)(x)-\frac{(f\ast p_k)(0)}{p(0)}p(x), x\in {\mathbb R}.
\end{align*}
Clearly, $f_k(0)=0.$ Also, by continuity of $f,$ we easily get that 
$(f\ast p_k)(x)\to f(x)$ as $k\to \infty$ for all $x\in {\mathbb R}.$
In particular, $(f\ast p_k)(0)\to f(0)=0$ as $k\to \infty.$ Therefore, 
$f_k(x)\to f(x)$ as $k\to \infty.$

By simple properties of convolution, we have 
\begin{align*}
f_k^{(j)}(x)= (f\ast p_k^{(j)})(x)- \frac{(f\ast p_k)(0)}{p(0)}p^{(j)}(x), j=0,\dots, m+1,
\end{align*}
which easily implies that 
\begin{align*}
\widetilde{f_k^{(j)}}(t) =\widetilde{f}(t) \widetilde{p_k^{(j)}}(t) - \frac{(f\ast p_k)(0)}{p(0)}\widetilde{p^{(j)}}(t), j=0,\dots, m+1.
\end{align*}
Since, $p, p_k \in\bigcap_{j=0}^{m+1}{\mathcal W}_j$ and $f\in L_1({\mathbb R})$ (implying that $\tilde f$ is uniformly bounded),
it follows that $f_k\in \bigcap_{j=0}^{m+1}{\mathcal W}_j$ for all $k\geq 1.$
Thus, by \eqref{bias_W}, 
\begin{align}
\label{bias_W_k}
|{\mathbb E}\hat T_{f_k}(X_1,\dots,X_n)- \tau_{f_k}(\Sigma)|\lesssim_m
\|f_k^{(m+1)}\|_{L_{\infty}} \|\Sigma\|^{m+1} {\bf r}(\Sigma)\Bigl(\sqrt{\frac{{\bf r}(\Sigma)}{n}}\vee \sqrt{\frac{\log n}{n}}\Bigr)^{m+1}.
\end{align}

Note also that 
\begin{align*}
f_k^{(j)}(x):= (f^{(j)}\ast p_k)(x)-\frac{(f\ast p_k)(0)}{p(0)}p^{(j)}(x), x\in {\mathbb R}, j=0,\dots, m+1.
\end{align*}
Using the Taylor expansion of order $m$ of function $f$ and the fact that $f(0)=0,$ it is easy to check that 
\begin{align*}
|f(-y)| \leq \frac{|f'(0)|}{1!}|y| + \dots + \frac{|f^{(m)}(0)|}{m!}|y|^m + \frac{\|f^{(m+1)}\|_{L_{\infty}}}{(m+1)!}|y|^{m+1}.
\end{align*}
Since 
\begin{align*}
(f\ast p_k)(0)= \int_{{\mathbb R}}f(-y) p_k(y)dy,
\end{align*}
it is straightforward to see that 
\begin{align}
\label{f_k_at_0}
|(f\ast p_k)(0)| \lesssim \sum_{l=1}^{m}  \frac{|f^{(l)}(0)|}{l!} \sigma_k^l + \frac{\|f^{(m+1)}\|_{L_{\infty}}}{(m+1)!}\sigma_k^{m+1}.
\end{align}
This yields the bound 
\begin{align}
\label{f_k''_inf}
\nonumber
\|f_k^{(m+1)}\|_{L_{\infty}}&\leq \|f^{(m+1)}\ast p_k\|_{L_{\infty}}+\frac{|(f\ast p_k)(0)|}{p(0)}\|p^{(m+1)}\|_{L_{\infty}}
\\
&
\lesssim_m \|f^{(m+1)}\|_{L_{\infty}}+ (|f'(0)|\vee \dots \vee |f^{(m)}(0)|)\sigma_k
\end{align}
and we can rewrite \eqref{bias_W_k} as follows:
\begin{align}
\label{bias_W_k_F}
&
\nonumber
|{\mathbb E}\hat T_{f_k}(X_1,\dots,X_n)- \tau_{f_k}(\Sigma)|
\\
&
\lesssim_m
(\|f^{(m+1)}\|_{L_{\infty}}+ (|f'(0)|\vee \dots \vee |f^{(m)}(0)|)\sigma_k)\|\Sigma\|^{m+1} {\bf r}(\Sigma)\Bigl(\sqrt{\frac{{\bf r}(\Sigma)}{n}}\vee \sqrt{\frac{\log n}{n}}\Bigr)^{m+1}.
\end{align}
It remains to justify passing to the limit as $k\to\infty$ in the last inequality. 
Note that, similarly to \eqref{f_k_at_0} and \eqref{f_k''_inf}, we can show that
\begin{align*}
&
|f_k^{(j)}(0)|\lesssim |f^{(j)}(0)|+ (|f^{(j+1)}(0)|\vee \dots \vee |f^{(m)}(0)|\vee \|f^{(m+1)}\|_{L_{\infty}})\sigma_k
\\
&
\lesssim |f^{(j)}(0)|\vee |f^{(j+1)}(0)|\vee \dots \vee |f^{(m)}(0)|\vee \|f^{(m+1)}\|_{L_{\infty}}.
\end{align*}
Therefore, by the $m$-th order Taylor expansion, for all $x\geq 0,$
\begin{align*}
|f_k(x)|\lesssim (|f'(0)|\vee \dots \vee |f^{(m)}(0)|)\vee \|f^{(m+1)}\|_{L_{\infty}})(x+x^2+\dots +x^{m+1}).
\end{align*}
Since $f_k$ converges to $f$ pointwise as $k\to\infty,$ 
\begin{align*}
\int_{{\mathbb R}_+}(x+x^2+\dots + x^{m+1})\mu_{\Sigma}(dx)= {\rm tr}(\Sigma)+ \|\Sigma\|_2^2 +\dots + \|\Sigma\|_{m+1}^{m+1}<\infty
\end{align*}
and 
\begin{align*}
{\mathbb E}\int_{{\mathbb R}_+}(x+x^2+\dots+ x^{m+1})\mu_{\hat \Sigma_n}(dx)= {\mathbb E}({\rm tr}(\hat \Sigma_n)+ \|\hat \Sigma_n\|_2^2+\dots +\|\hat \Sigma_n\|_{m+1}^{m+1})<\infty,
\end{align*}
we can use the dominated convergence theorem to show that 
\begin{align*}
\tau_{f_k}(\Sigma)=\int_{{\mathbb R}_+}f_k d\mu_{\Sigma}\to \int_{{\mathbb R}_+}f d\mu_{\Sigma}=\tau_f(\Sigma)
\end{align*}
and 
\begin{align*}
{\mathbb E}\tau_{f_k}(\hat \Sigma_n)={\mathbb E}\int_{{\mathbb R}_+}f_k d\mu_{\hat \Sigma_n}\to {\mathbb E}\int_{{\mathbb R}_+}f d\mu_{\hat \Sigma_n}={\mathbb E}\tau_f(\hat \Sigma_n)
\end{align*}
as $k\to \infty.$ The same convergence properties hold for $\tau_{f_k}(\hat \Sigma_{n_j}), j=1,\dots, m,$ which implies that 
\begin{align*}
{\mathbb E}\hat T_{f_k}(X_1,\dots,X_n)- \tau_{f_k}(\Sigma)\to {\mathbb E}\hat T_{f}(X_1,\dots,X_n)- \tau_{f}(\Sigma)\ {\rm as}\ k\to\infty.
\end{align*}
We can now pass to the limit in bound \eqref{bias_W_k_F} as $k\to\infty$ to complete the proof under the assumption that $f\in C^{m+1}({\mathbb R})\cap L_1({\mathbb R}).$

Finally, we need to get rid of the assumption that $f\in L_1({\mathbb R}).$ Let $\psi$ be a $C^{\infty}$ function in ${\mathbb R}$ with values in $[0,1],$ with ${\rm supp}(\psi)\subset [-1,1]$ and $\psi(x)=1, x\in [-1/2/1/2].$ Denote $\psi_k(x):= \psi(2^{-k}x), x\in {\mathbb R}.$ It is easy to see that $\{\psi_k\}$ is a nondecreasing sequence of functions 
and $\psi_k(x) \to 1$ as $k\to\infty$ for all $x\in {\mathbb R},$ which implies that $f\psi_k\to f$ pointwise. Note also that, for $f\in C^{m+1},$ we have $f \psi_k \in C^{m+1}$
and $\|(f\psi_k)^{(m+1)}\|_{L_{\infty}}\leq C\|f^{(m+1)}\|_{L_{\infty}}$ with a constant $C>0$ that depends only on $\psi$ and on $m.$ Since $f\psi_k$ are continuous functions with bounded 
supports, we have $f\psi_k\in L_1({\mathbb R}), k\geq 1.$ Thus, the bound of Theorem \ref{bias_main} holds for functions $f\psi_k$ (and $\|(f\psi_k)^{(m+1)}\|_{L_{\infty}}$ 
could be replaced by $\|f^{(m+1)}\|_{L_{\infty}}$ up to a constant that depends only on $m$).  To finish the proof, it is enough to show that 
\begin{align}
\label{conver_tau}
\tau_{f\psi_k}(\Sigma)\to \tau_f(\Sigma)\ {\rm and}\  {\mathbb E}\tau_{f\psi_k}(\hat \Sigma_n)\to {\mathbb E}\tau_f(\hat \Sigma_n)
\end{align}
as $k\to \infty$ for all $n\geq 1.$ This would imply that 
\begin{align*}
{\mathbb E}\hat T_{f\psi_k}(X_1,\dots,X_n)- \tau_{f\psi_k}(\Sigma)\to {\mathbb E}\hat T_{f}(X_1,\dots,X_n)- \tau_{f}(\Sigma)\ {\rm as}\ k\to\infty,
\end{align*}
and the bound of Theorem \ref{bias_main} would follow. 
It is easy to check that, for a nonnegative function $f,$ the convergence \eqref{conver_tau} follows by the monotone convergence theorem. 
In the general case, it is enough use the representation $f=f_{+}-f_{-},$ where $f_{+}:= f\vee0, f_{-}=-(f\wedge 0).$

\qed
\end{proof}

\section{Proofs of the main results}
\label{proof_of_main}


The proof of Theorem \ref{Th_M_YYY} easily follows from the following more detailed result.

\begin{theorem}
\label{Th_M}
Suppose $f\in C^{m+1}({\mathbb R}_{+})$ for some $m\geq 2,$ $f(0)=0,$ $\|f'\|_{L_{\infty}}<\infty,$ $\|f'\|_{{\rm Lip}}<\infty$ 
and $\|f^{(m+1)}\|_{L_{\infty}}<\infty.$
Suppose also 
that ${\bf r}(\Sigma)\lesssim n.$ Then, for all $p\geq 1,$
\begin{align*}
&
\|\hat T_{f,m}(X_1,\dots, X_n)-\tau_f(\Sigma)\|_{L_p} 
\\
&
\lesssim_m 
\Bigl(\|\Sigma f'(\Sigma)\|_2\sqrt{\frac{p}{n}}\vee \|\Sigma f'(\Sigma)\|\frac{p}{n}\Bigr)
+\|f'\|_{\rm Lip} \|\Sigma\|^2
\Bigl(\frac{{\bf r}(\Sigma)}{\sqrt{n}}\sqrt{\frac{p}{n}}\vee \Bigl(\frac{{\bf r}(\Sigma)}{\sqrt{n}}\vee 1\Bigr)\frac{p}{n} \vee\Bigl(\frac{p}{n}\Bigr)^{2}\Bigr)
\\
&
\ \ \ \ +\|f^{(m+1)}\|_{L_{\infty}} 
\|\Sigma\|^{m+1} {\bf r}(\Sigma)\Bigl(\sqrt{\frac{{\bf r}(\Sigma)}{n}}\vee \sqrt{\frac{\log n}{n}}\Bigr)^{m+1}
\\
&\lesssim_m 
\|f'\|_{L_{\infty}}\|\Sigma\|
\Bigl(\sqrt{{\bf r}(\Sigma^2)}\sqrt{\frac{p}{n}}\vee \frac{p}{n}\Bigr)
+\|f'\|_{\rm Lip} \|\Sigma\|^2
\Bigl(\frac{{\bf r}(\Sigma)}{\sqrt{n}}\sqrt{\frac{p}{n}}\vee \Bigl(\frac{{\bf r}(\Sigma)}{\sqrt{n}}\vee 1\Bigr)\frac{p}{n} \vee\Bigl(\frac{p}{n}\Bigr)^{2}\Bigr)
\\
&
\ \ \ \ +\|f^{(m+1)}\|_{L_{\infty}} 
\|\Sigma\|^{m+1} {\bf r}(\Sigma)\Bigl(\sqrt{\frac{{\bf r}(\Sigma)}{n}}\vee \sqrt{\frac{\log n}{n}}\Bigr)^{m+1}.
\end{align*}
\end{theorem}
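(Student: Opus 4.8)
The plan is to split the error of the aggregated estimator into a centered stochastic part and a deterministic bias part,
\begin{align*}
\hat T_{f,m}(X_1,\dots,X_n)-\tau_f(\Sigma)
&=\bigl(\hat T_{f,m}(X_1,\dots,X_n)-{\mathbb E}\hat T_{f,m}(X_1,\dots,X_n)\bigr)\\
&\quad+\bigl({\mathbb E}\hat T_{f,m}(X_1,\dots,X_n)-\tau_f(\Sigma)\bigr),
\end{align*}
and to control the two summands separately in the $L_p$-norm. The second summand is precisely the bias of $\hat T_{f,m}$; under the hypotheses $f\in C^{m+1}({\mathbb R}_+)$, $f(0)=0$, $\|f^{(m+1)}\|_{L_\infty}<\infty$ and ${\bf r}(\Sigma)\lesssim n$, Theorem \ref{bias_main} bounds it, deterministically and with no dependence on $p$, by $\lesssim_m\|f^{(m+1)}\|_{L_\infty}\|\Sigma\|^{m+1}{\bf r}(\Sigma)\bigl(\sqrt{{\bf r}(\Sigma)/n}\vee\sqrt{\log n/n}\bigr)^{m+1}$, which is exactly the last group of terms in the asserted bound.

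For the centered part I would expand $\hat T_{f,m}-{\mathbb E}\hat T_{f,m}=\sum_{j=1}^m C_j\bigl(\tau_f(\hat\Sigma_{n_j})-{\mathbb E}\tau_f(\hat\Sigma_{n_j})\bigr)$, apply the triangle inequality, and invoke $\sum_{j=1}^m|C_j|\lesssim_m1$ (assumption \eqref{assume_C_j}) to reduce matters to bounding $\max_{1\le j\le m}\bigl\|\tau_f(\hat\Sigma_{n_j})-{\mathbb E}\tau_f(\hat\Sigma_{n_j})\bigr\|_{L_p}$. Each term here is estimated by Corollary \ref{conc_tau_f} with sample size $n_j$ in place of $n$; since $n/c\le n_j\le n$ for all $j$, one has $n_j\asymp n$ and $\log n_j\asymp\log n$ (with constants depending on $c$), so every occurrence of $n_j$, ${\bf r}(\Sigma)/n_j$, $p/n_j$ and $\log n_j/n_j$ in that bound may be replaced by the corresponding quantity with $n$ at the price of an $m$-dependent factor. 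This yields
\begin{align*}
\Bigl\|\hat T_{f,m}(X_1,\dots,X_n)-{\mathbb E}\hat T_{f,m}(X_1,\dots,X_n)\Bigr\|_{L_p}
&\lesssim_m
\Bigl(\|\Sigma f'(\Sigma)\|_2\sqrt{\frac pn}\vee\|\Sigma f'(\Sigma)\|\frac pn\Bigr)\\
&\quad+\|f'\|_{\rm Lip}\|\Sigma\|^2
\Bigl(\frac{{\bf r}(\Sigma)}{\sqrt n}\sqrt{\frac pn}\vee\Bigl(\frac{{\bf r}(\Sigma)}{\sqrt n}\vee1\Bigr)\frac pn\vee\Bigl(\frac pn\Bigr)^2\Bigr),
\end{align*}
which reproduces the first two groups of terms. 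Adding the bias bound gives the first displayed inequality of the theorem, and the second follows from the elementary estimates $\|\Sigma f'(\Sigma)\|_2\le\|f'\|_{L_\infty}\|\Sigma\|_2=\|f'\|_{L_\infty}\|\Sigma\|\sqrt{{\bf r}(\Sigma^2)}$ and $\|\Sigma f'(\Sigma)\|\le\|f'\|_{L_\infty}\|\Sigma\|$, already used in the proof of Corollary \ref{conc_tau_f}.

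There is no genuinely difficult step in this assembly: the substance is entirely in the results being invoked — the bias decomposition of Proposition \ref{bias_decomp} and Theorem \ref{bias_main}, which rests on the higher-order Taylor expansions for trace functionals of \cite{P_S_S} together with the Schatten-norm moment bound of Proposition \ref{prop_schat}, and the concentration of the first-order Taylor remainder in Theorem \ref{prop_rem} (whose proof, via Gaussian concentration and the Lipschitz property of operator functions, is the most technical ingredient). The only routine points that require attention are the reduction $n_j\asymp n$, the use of $\sum_j|C_j|\lesssim_m1$, and bookkeeping to check that no term is dropped when the several $\vee$-type expressions coming from Corollary \ref{conc_tau_f} and Theorem \ref{bias_main} are combined into the single sum displayed above; the passage from the extension of $f$ to all of ${\mathbb R}$ back to a function merely in $C^{m+1}({\mathbb R}_+)$ is handled exactly as in Remark \ref{rem_on_eff_rank_04}, since only the restrictions of $f'$ and $f^{(m+1)}$ to $[0,2\|\Sigma\|]$ actually enter the estimates.
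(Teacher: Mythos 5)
Your proposal is correct and follows essentially the same route as the paper's own proof: decompose the error into the centered part and the bias, bound the centered part by expanding $\hat T_{f,m}-{\mathbb E}\hat T_{f,m}=\sum_j C_j(\tau_f(\hat\Sigma_{n_j})-{\mathbb E}\tau_f(\hat\Sigma_{n_j}))$, using $\sum_j|C_j|\lesssim_m 1$, $n_j\asymp_m n$ and Corollary \ref{conc_tau_f}, and then invoke Theorem \ref{bias_main} for the bias, with the second displayed inequality following from $\|\Sigma f'(\Sigma)\|_2\leq \|f'\|_{L_\infty}\|\Sigma\|\sqrt{{\bf r}(\Sigma^2)}$ and $\|\Sigma f'(\Sigma)\|\leq\|f'\|_{L_\infty}\|\Sigma\|$. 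No gaps; this matches the paper's argument step for step.
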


\begin{proof}
Since 
\begin{align*}
\hat T_f(X_1,\dots, X_n)-{\mathbb E}\hat T_f(X_1,\dots, X_n)= \sum_{j=1}^m C_j (\tau_f(\hat \Sigma_{n_j}) - {\mathbb E}\tau_f(\hat \Sigma_{n_j})),
\end{align*}
we have 
\begin{align*}
\Bigl\|\hat T_f(X_1,\dots, X_n)-{\mathbb E}\hat T_f(X_1,\dots, X_n)\Bigr\|_{L_p}
&\lesssim \sum_{j=1}^m |C_j | \Bigl\|\tau_f(\hat \Sigma_{n_j}) - {\mathbb E}\tau_f(\hat \Sigma_{n_j})\Bigr\|_{L_p}
\\
&
\lesssim_m \max_{1\leq j\leq m}\Bigl\|\tau_f(\hat \Sigma_{n_j}) - {\mathbb E}\tau_f(\hat \Sigma_{n_j})\Bigr\|_{L_p}.
\end{align*}
Using the bound of Corollary \ref{conc_tau_f}, 
we get 
\begin{align*}
&
\max_{1\leq j\leq m}\Bigl\|\tau_f(\hat \Sigma_{n_j}) - {\mathbb E}\tau_f(\hat \Sigma_{n_j})\Bigr\|_{L_p}
\\
&
\lesssim_m 
\max_{1\leq j\leq m}\Bigl(\|\Sigma f'(\Sigma)\|_2 \sqrt{\frac{p}{n_j}}\vee \|\Sigma f'(\Sigma)\|\frac{p}{n_j}\Bigr)
+\|f'\|_{\rm Lip} \|\Sigma\|^2
\max_{1\leq j\leq m}\Bigl(\frac{{\bf r}(\Sigma)}{\sqrt{n_j}}\sqrt{\frac{p}{n_j}}\vee \Bigl(\frac{{\bf r}(\Sigma)}{\sqrt{n_j}}\vee 1\Bigr)\frac{p}{n_j} \vee\Bigl(\frac{p}{n_j}\Bigr)^{2}\Bigr).
\end{align*}
Under the condition that $n_j\asymp_m n$ for all $j=1,\dots, m,$ we conclude that 
\begin{align*}
&
\Bigl\|\hat T_f(X_1,\dots, X_n)-{\mathbb E}\hat T_f(X_1,\dots, X_n)\Bigr\|_{L_p}
\\
&
\lesssim_m
\Bigl(\|\Sigma f'(\Sigma)\|_2 \sqrt{\frac{p}{n}}\vee \|\Sigma f'(\Sigma)\|\frac{p}{n}\Bigr)
+\|f'\|_{\rm Lip} \|\Sigma\|^2
\Bigl(\frac{{\bf r}(\Sigma)}{\sqrt{n}}\sqrt{\frac{p}{n}}\vee \Bigl(\frac{{\bf r}(\Sigma)}{\sqrt{n}}\vee 1\Bigr)\frac{p}{n} \vee\Bigl(\frac{p}{n}\Bigr)^{2}\Bigr).
\end{align*}
Using bounds on the bias of Theorem \ref{bias_main}, it is easy to complete the proof of Theorem \ref{Th_M}.

\qed
\end{proof}

The proof of Theorem \ref{Th_M1_YYY} immediately follows from the next result.

\begin{theorem}
\label{Th_M1}
Suppose that ${\bf r}(\Sigma)\lesssim n.$ Let $p\geq 1.$
Then, for all $m\geq 4,$
\begin{align}
\label{bd_m>3_B}
&
\nonumber
\Big\|\|\hat \mu_{n,m} - \mu_{\Sigma}\|_{{\mathcal F}_m}\Bigr\|_{L_p}
\nonumber
\lesssim_m (\|\Sigma\|+1)^{1/2}\|\Sigma\|\sqrt{{\bf r}(\Sigma^2)} \sqrt{\frac{p}{n}}  + (\|\Sigma\|+1)\|\Sigma\| \frac{p}{n}
\\
&
\nonumber
+
\|\Sigma\|^2\Bigl((\|\Sigma\|+1)^{1/2}\frac{{\bf r}(\Sigma)}{\sqrt{n}}\sqrt{\frac{p}{n}} +(\|\Sigma\|+1)\Bigl(\frac{{\bf r}(\Sigma)}{\sqrt{n}}\vee 1\Bigr)\frac{p}{n} 
+ (\|\Sigma\|+1)^2\Bigl(\frac{{\bf r}(\Sigma)^2}{n}\vee 1\Bigr)\Bigl(\frac{p}{n}\Bigr)^{2}\Bigr)
\\
&
+ \|\Sigma\|^{m+1} {\bf r}(\Sigma)\Bigl(\sqrt{\frac{{\bf r}(\Sigma)}{n}}\vee \sqrt{\frac{\log n}{n}}\Bigr)^{m+1};
\end{align}
for $m=3,$
\begin{align}
\label{bd_m_3_B}
&
\nonumber
\Big\|\|\hat \mu_{n,m} - \mu_{\Sigma}\|_{{\mathcal F}_m}\Bigr\|_{L_p}
\lesssim 
(\|\Sigma\|+1)^{1/2}\|\Sigma\|\sqrt{{\bf r}(\Sigma^2)} \sqrt{\frac{p}{n}}  + (\|\Sigma\|+1)\|\Sigma\| \frac{p}{n}
\\
&
\nonumber
+\|\Sigma\|^2\Bigl((\|\Sigma\|+1)^{1/2}\frac{{\bf r}(\Sigma)}{\sqrt{n}}\sqrt{\frac{p}{n}} +(\|\Sigma\|+1)\Bigl(\frac{{\bf r}(\Sigma)}{\sqrt{n}}\vee 1\Bigr)\frac{p}{n} 
+ (\|\Sigma\|+1)^2 \Bigl(\frac{{\bf r}(\Sigma)^2}{n}\vee \log n\Bigr)\Bigl(\frac{p}{n}\Bigr)^{2}\Bigr)
\\
&
+ \|\Sigma\|^{m+1} {\bf r}(\Sigma)\Bigl(\sqrt{\frac{{\bf r}(\Sigma)}{n}}\vee \sqrt{\frac{\log n}{n}}\Bigr)^{m+1};
\end{align}
and, for $m=2,$
\begin{align}
\label{bd_m_2_B}
&
\nonumber
\Big\|\|\hat \mu_{n,m} - \mu_{\Sigma}\|_{{\mathcal F}_m}\Bigr\|_{L_p}
\lesssim 
(\|\Sigma\|+1)^{1/2}\|\Sigma\|\sqrt{{\bf r}(\Sigma^2)} \sqrt{\frac{p}{n}}  + (\|\Sigma\|+1)\|\Sigma\| \frac{p}{n}
\\
&
\nonumber
+
\|\Sigma\|^2\Bigl((\|\Sigma\|+1)^{1/2}\frac{{\bf r}(\Sigma)}{\sqrt{n}}\sqrt{\frac{p}{n}} +(\|\Sigma\|+1)\log n \Bigl(\frac{{\bf r}(\Sigma)}{\sqrt{n}}\vee 1\Bigr)\frac{p}{n}
+ (\|\Sigma\|+1)^2\frac{p^2}{n}\Bigr)
\\
&
+ \|\Sigma\|^{m+1} {\bf r}(\Sigma)\Bigl(\sqrt{\frac{{\bf r}(\Sigma)}{n}}\vee \sqrt{\frac{\log n}{n}}\Bigr)^{m+1}.
\end{align}
\end{theorem}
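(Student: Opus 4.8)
The plan is to reduce Theorem \ref{Th_M1} entirely to the uniform fluctuation bounds of Section \ref{sup_bounds} and the bias bound of Section \ref{bias_bounds}. First, recall that for any $f\in C^{m+1}(\mathbb{R}_+)$ with $f(0)=0$ one has $\int_{\mathbb{R}_+}f\,d\hat\mu_{n,m}=\sum_{j=1}^m C_j\tau_f(\hat\Sigma_{n_j})=\hat T_{f,m}(X_1,\dots,X_n)$ and $\int_{\mathbb{R}_+}f\,d\mu_\Sigma=\tau_f(\Sigma)$, so that
\begin{align*}
\|\hat\mu_{n,m}-\mu_\Sigma\|_{\mathcal{F}_m}=\sup_{f\in\mathcal{F}_m}\bigl|\hat T_{f,m}(X_1,\dots,X_n)-\tau_f(\Sigma)\bigr|,
\end{align*}
the right-hand side being well defined since every $f\in\mathcal{F}_m$ satisfies $|f(x)|\lesssim x$ near $0$ and $\mu_\Sigma$, $\mu_{\hat\Sigma_{n_j}}$ integrate such functions. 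I would then split, inside the supremum,
\begin{align*}
\sup_{f\in\mathcal{F}_m}\bigl|\hat T_{f,m}-\tau_f(\Sigma)\bigr|\leq\sup_{f\in\mathcal{F}_m}\bigl|\hat T_{f,m}-\mathbb{E}\hat T_{f,m}\bigr|+\sup_{f\in\mathcal{F}_m}\bigl|\mathbb{E}\hat T_{f,m}-\tau_f(\Sigma)\bigr|,
\end{align*}
and estimate the $L_p$-norm of each summand separately.

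The bias term is the easy one: every $f\in\mathcal{F}_m$ lies in $C^{m+1}(\mathbb{R}_+)$, has $f(0)=0$ and $\|f^{(m+1)}\|_{L_{\infty}}\leq1$, so Theorem \ref{bias_main} gives $|\mathbb{E}\hat T_{f,m}-\tau_f(\Sigma)|\lesssim_m\|\Sigma\|^{m+1}{\bf r}(\Sigma)\bigl(\sqrt{{\bf r}(\Sigma)/n}\vee\sqrt{\log n/n}\bigr)^{m+1}$ with a bound that is deterministic and independent of $f$; it therefore survives both the supremum over $\mathcal{F}_m$ and the $L_p$-norm, contributing exactly the last term of \eqref{bd_m>3_B}, \eqref{bd_m_3_B} and \eqref{bd_m_2_B}. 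For the fluctuation term I would use the linear-aggregation identity $\hat T_{f,m}-\mathbb{E}\hat T_{f,m}=\sum_{j=1}^m C_j\bigl(\tau_f(\hat\Sigma_{n_j})-\mathbb{E}\tau_f(\hat\Sigma_{n_j})\bigr)$, so that, by $\sum_{j=1}^m|C_j|\lesssim_m1$ from \eqref{assume_C_j},
\begin{align*}
\Bigl\|\sup_{f\in\mathcal{F}_m}\bigl|\hat T_{f,m}-\mathbb{E}\hat T_{f,m}\bigr|\Bigr\|_{L_p}\leq\sum_{j=1}^m|C_j|\,\Bigl\|\sup_{f\in\mathcal{F}_m}\bigl|\tau_f(\hat\Sigma_{n_j})-\mathbb{E}\tau_f(\hat\Sigma_{n_j})\bigr|\Bigr\|_{L_p}\lesssim_m\max_{1\leq j\leq m}\Bigl\|\sup_{f\in\mathcal{F}_m}\bigl|\tau_f(\hat\Sigma_{n_j})-\mathbb{E}\tau_f(\hat\Sigma_{n_j})\bigr|\Bigr\|_{L_p}.
\end{align*}
Since $\mathcal{F}_m$ is precisely the class $\{f\in C^{m+1}(\mathbb{R}_+):f(0)=0,\ \max_{1\leq j\leq m+1}\|f^{(j)}\|_{L_{\infty}}\leq1\}$ to which the proposition behind \eqref{bd_m>3_A'_B}, \eqref{bd_m_3_A'_B}, \eqref{bd_m_2_A'_B} applies, I would invoke that proposition with $\hat\Sigma_{n_j}$ in place of $\hat\Sigma_n$ (the data $X_1,\dots,X_{n_j}$ are i.i.d.\ $N(0,\Sigma)$, and ${\bf r}(\Sigma)\lesssim n\asymp_m n_j$), and use $n_j\asymp_m n$ to replace $n_j$ by $n$ in each resulting estimate. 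Adding the bias contribution and distinguishing the cases $m\geq4$, $m=3$, $m=2$ according to which of \eqref{bd_m>3_A'_B}, \eqref{bd_m_3_A'_B}, \eqref{bd_m_2_A'_B} is used produces exactly \eqref{bd_m>3_B}, \eqref{bd_m_3_B}, \eqref{bd_m_2_B}; Theorem \ref{Th_M1_YYY} then follows by specializing to $\|\Sigma\|\lesssim1$ and passing to $\psi_{1/2}$-norms via \eqref{Orlicz_L_p}.

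As this outline makes clear, the proof of Theorem \ref{Th_M1} itself is essentially bookkeeping: the substantive work has already been carried out, namely the chaining arguments over $\mathcal{F}_m$ in the pseudometrics $\tau$ and $d$ underlying Propositions \ref{entr_bd_lin_part}--\ref{conc_F_smooth} --- this is where the metric entropy of H\"older balls and the resulting three-way split according to the value of $m$ enter --- together with the higher-order Lifshits--Krein trace formula from \cite{P_S_S} behind Theorem \ref{bias_main}. The only point requiring genuine (if minor) care inside the present proof is to verify that $\mathcal{F}_m$ is simultaneously admissible for the fluctuation proposition, which controls $f$ through $\|f'\|_{L_{\infty}}$ and $\|f''\|_{L_{\infty}}$, and for Theorem \ref{bias_main}, which controls $f$ through $f(0)=0$ and $\|f^{(m+1)}\|_{L_{\infty}}$; both memberships are immediate from the definition of $\mathcal{F}_m$, so no additional hypotheses on $f$ are needed and the constants depend only on $m$ (and on $\|\Sigma\|$ as displayed).
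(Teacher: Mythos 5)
Your proposal is correct and follows essentially the same route as the paper: decompose $\hat T_{f,m}-\tau_f(\Sigma)$ into the centered fluctuation $\sum_j C_j(\tau_f(\hat\Sigma_{n_j})-{\mathbb E}\tau_f(\hat\Sigma_{n_j}))$ plus the bias ${\mathbb E}\hat T_{f,m}-\tau_f(\Sigma)$, control the supremum of the first part over ${\mathcal F}_m$ via the uniform bounds \eqref{bd_m>3_A'_B}--\eqref{bd_m_2_A'_B} (equivalently Propositions \ref{conc_F_smooth_lin} and \ref{conc_F_smooth}) applied to each $\hat\Sigma_{n_j}$ with $n_j\asymp_m n$, and use Theorem \ref{bias_main} together with \eqref{assume_C_j} for the second. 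This is exactly the paper's argument, so no further comment is needed.
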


\begin{proof}
As in the proof of Theorem \ref{Th_M},
\begin{align*}
\int_{{\mathbb R}_+} f d(\hat \mu_n-\mu_{\Sigma}) &= \sum_{j=1}^m C_j (\tau_f(\hat \Sigma_{n_j})-\tau_f(\Sigma))
\\
&
= \sum_{j=1}^m C_j (\tau_f(\hat \Sigma_{n_j})-{\mathbb E}\tau_f(\hat \Sigma_{n_j})) + {\mathbb E}\sum_{j=1}^m C_j \tau_f(\hat \Sigma_{n_j})-\tau_f(\Sigma)
\\
&
=\sum_{j=1}^m C_j (\tau_f(\hat \Sigma_{n_j})-{\mathbb E}\tau_f(\hat \Sigma_{n_j})) + {\mathbb E}\hat T_f(X_1,\dots, X_n)-\tau_f(\Sigma).
\end{align*}
Hence,
\begin{align*}
&
\Bigl\|\sup_{f\in {\mathcal F}_m}\Bigl|\int_{{\mathbb R}_+} f d(\hat \mu_n-\mu_{\Sigma})\Bigr|\Bigr\|_{L_p} 
\\
&
\leq \sum_{j=1}^m |C_j| \Bigl\|\sup_{f\in {\mathcal F}_m}\Bigl|\tau_f(\hat \Sigma_{n_j})-{\mathbb E}\tau_f(\hat \Sigma_{n_j})\Bigr|\Bigr\|_{L_p} +
\sup_{f\in {\mathcal F}_m} |{\mathbb E}\hat T_f(X_1,\dots, X_n)-\tau_f(\Sigma)|
\\
&
\lesssim 
\max_{1\leq j\leq m}\Bigl\|\sup_{f\in {\mathcal F}_m}\Bigl|\tau_f(\hat \Sigma_{n_j})-{\mathbb E}\tau_f(\hat \Sigma_{n_j})\Bigr|\Bigr\|_{L_p} +
\sup_{f\in {\mathcal F}_m} |{\mathbb E}\hat T_f(X_1,\dots, X_n)-\tau_f(\Sigma)|,
\end{align*}
and it is enough to use the bounds of Proposition \ref{conc_F_smooth} and Theorem \ref{bias_main} to complete the proof of Theorem \ref{Th_M1}.

\qed
\end{proof}

Next we provide the proof of  Theorem \ref{Th_M_A}.

\begin{proof}
The claims of Theorem \ref{Th_M_A} are obvious in view of theorems \ref{Th_M}, \ref{Th_M1} and the following simple inequalities:
\begin{align*}
&
{\mathbb E}\Bigl|\check T_{f,m}(X_1,\dots, X_n)- \tau_f(\Sigma)\Bigr|^p = {\mathbb E}\Bigl|{\mathbb E}(\hat T_{f,m}(X_1,\dots, X_n)|{\mathcal A}_{\rm sym})- \tau_f(\Sigma)\Bigr|^p
\\
&
={\mathbb E}\Bigl|{\mathbb E}(\hat T_{f,m}(X_1,\dots, X_n)- \tau_f(\Sigma))|{\mathcal A}_{\rm sym})\Bigr|^p
\leq {\mathbb E} {\mathbb E}(|\hat T_{f,m}(X_1,\dots, X_n)- \tau_f(\Sigma)|^p|{\mathcal A}_{\rm sym})
\\
&
=
{\mathbb E} |\hat T_{f,m}(X_1,\dots, X_n)- \tau_f(\Sigma)|^p
\end{align*}
and 
\begin{align*}
&
{\mathbb E} \sup_{f\in {\mathcal F}_m} \Bigl| \int_{\mathbb R_{+}} fd\check \mu_{n,m}- \int_{{\mathbb R}_+}fd\mu_{\Sigma}\Bigr|^p
= 
{\mathbb E} \sup_{f\in {\mathcal F}_m} \Bigl| {\mathbb E}\Bigl(\int_{\mathbb R_{+}} fd\hat \mu_{n,m}- \int_{{\mathbb R}_+}fd\mu_{\Sigma}|{\mathcal A}_{\rm sym}\Bigr)\Bigr|^p
\\
&
\leq 
{\mathbb E} {\mathbb E}\Bigl(\sup_{f\in {\mathcal F}_m} \Bigl|\int_{\mathbb R_{+}} fd\hat \mu_{n,m}- \int_{{\mathbb R}_+}fd\mu_{\Sigma}\Bigr|^p| {\mathcal A}_{\rm sym}\Bigr)=
{\mathbb E} \sup_{f\in {\mathcal F}_m} \Bigl|\int_{\mathbb R_{+}} fd\hat \mu_{n,m}- \int_{{\mathbb R}_+}fd\mu_{\Sigma}\Bigr|^p.
\end{align*}

\qed
\end{proof}

The proof of Theorem \ref{Th_M_B_YYY} is an immediate corollary of the following result.

\begin{theorem}
\label{Th_M_B}
Suppose $f\in C^{m+1}({\mathbb R}_{+})$ for some $m\geq 2,$ $f(0)=0,$ $\|f'\|_{\rm Lip}<\infty$ and $\|f^{(m+1)}\|_{L_{\infty}}<\infty.$
Suppose also 
that ${\bf r}(\Sigma)\lesssim n.$ Then, for all $p\geq 1,$
\begin{align*}
&
\|\check T_{f,m}(X_1,\dots, X_n)-\tau_f(\Sigma)- \langle f'(\Sigma), \hat \Sigma_n-\Sigma\rangle\|_{L_p} 
\\
&\lesssim_m 
\|f'\|_{\rm Lip} \|\Sigma\|^2\Bigl(\frac{{\bf r}(\Sigma)}{\sqrt{n}}\sqrt{\frac{p}{n}} \vee \Bigl(\frac{{\bf r}(\Sigma)}{\sqrt{n}}\vee 1\Bigr)\frac{p}{n}\vee \Bigl(\frac{p}{n}\Bigr)^{2}\Bigr)
+ 
\|f^{(m+1)}\|_{L_{\infty}} 
\|\Sigma\|^{m+1} {\bf r}(\Sigma)\Bigl(\sqrt{\frac{{\bf r}(\Sigma)}{n}}\vee \sqrt{\frac{\log n}{n}}\Bigr)^{m+1}.
\end{align*}
As a consequence, for all $p\geq 1,$
\begin{align*}
&
W_p\Bigl(\sqrt{\frac{n}{2}}\Bigl(\check T_{f,m}(X_1,\dots, X_n)-\tau_f(\Sigma)\Bigr), G_{\Sigma}(f)\Bigr) 
\\
&
\lesssim_m 
 \frac{\|\Sigma f'(\Sigma)\|_2\sqrt{p}\vee \|\Sigma f'(\Sigma)\| p}{\sqrt{n}} 
+\|f'\|_{\rm Lip} \|\Sigma\|^2{\bf r}(\Sigma)\Bigl(\sqrt{\frac{p}{n}}\vee \frac{p}{n}\Bigr) + \|f'\|_{\rm Lip} \|\Sigma\|^2\sqrt{n}\Bigl(\frac{p}{n}\vee \Bigl(\frac{p}{n}\Bigr)^{2}\Bigr)
\\
&
\ \ \ \ \ \ + 
\|f^{(m+1)}\|_{L_{\infty}} 
\|\Sigma\|^{m+1} {\bf r}(\Sigma)\sqrt{n}\Bigl(\sqrt{\frac{{\bf r}(\Sigma)}{n}}\vee \sqrt{\frac{\log n}{n}}\Bigr)^{m+1}.
\end{align*}
\end{theorem}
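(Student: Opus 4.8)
The plan is to exploit the tower property of conditional expectation together with the observation that the full-sample linear term $\langle f'(\Sigma),\hat\Sigma_n-\Sigma\rangle$ is already ${\mathcal A}_{\rm sym}$-measurable, which reduces the claim to the concentration bound for the remainder of the first order Taylor expansion (Theorem \ref{prop_rem}) and to the bias bound for $\hat T_{f,m}$ (Theorem \ref{bias_main}).

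First I would record the averaging identity: conditionally on ${\mathcal A}_{\rm sym}$, by exchangeability each $X_i\otimes X_i$ has conditional expectation equal to the unordered average $\hat\Sigma_n$, so that ${\mathbb E}(\hat\Sigma_{n_j}\mid{\mathcal A}_{\rm sym})=\hat\Sigma_n$ for every $j=1,\dots,m$, and hence ${\mathbb E}(\langle f'(\Sigma),\hat\Sigma_{n_j}-\Sigma\rangle\mid{\mathcal A}_{\rm sym})=\langle f'(\Sigma),\hat\Sigma_n-\Sigma\rangle$. Next, plugging the first order Taylor expansion $\tau_f(\hat\Sigma_{n_j})=\tau_f(\Sigma)+\langle f'(\Sigma),\hat\Sigma_{n_j}-\Sigma\rangle+R_f(\Sigma,\hat\Sigma_{n_j}-\Sigma)$ into $\check T_{f,m}={\mathbb E}(\sum_j C_j\tau_f(\hat\Sigma_{n_j})\mid{\mathcal A}_{\rm sym})$, taking conditional expectations term by term and using $\sum_j C_j=1$, one obtains the exact identity
\[
\check T_{f,m}-\tau_f(\Sigma)-\langle f'(\Sigma),\hat\Sigma_n-\Sigma\rangle=\sum_{j=1}^m C_j\,{\mathbb E}\bigl(R_f(\Sigma,\hat\Sigma_{n_j}-\Sigma)\mid{\mathcal A}_{\rm sym}\bigr).
\]

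Then I would split each summand into a centered remainder and its mean, ${\mathbb E}(R_f(\Sigma,\hat\Sigma_{n_j}-\Sigma)\mid{\mathcal A}_{\rm sym})={\mathbb E}(R_f(\Sigma,\hat\Sigma_{n_j}-\Sigma)-{\mathbb E}R_f(\Sigma,\hat\Sigma_{n_j}-\Sigma)\mid{\mathcal A}_{\rm sym})+{\mathbb E}R_f(\Sigma,\hat\Sigma_{n_j}-\Sigma)$. By conditional Jensen the $L_p$-norm of the first piece is at most $\|R_f(\Sigma,\hat\Sigma_{n_j}-\Sigma)-{\mathbb E}R_f(\Sigma,\hat\Sigma_{n_j}-\Sigma)\|_{L_p}$, which Theorem \ref{prop_rem} bounds by $\|f'\|_{\rm Lip}\|\Sigma\|^2$ times the stated mixture of powers of $p/n_j$ and ${\bf r}(\Sigma)/\sqrt{n_j}$; since $n_j\asymp_m n$ and ${\bf r}(\Sigma)\lesssim n$ (hence ${\bf r}(\Sigma)\lesssim n_j$) these are all $\asymp_m$ their $n$-versions. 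For the deterministic pieces, ${\mathbb E}\hat\Sigma_{n_j}=\Sigma$ gives ${\mathbb E}R_f(\Sigma,\hat\Sigma_{n_j}-\Sigma)={\mathbb E}\tau_f(\hat\Sigma_{n_j})-\tau_f(\Sigma)$, so that $\sum_j C_j{\mathbb E}R_f(\Sigma,\hat\Sigma_{n_j}-\Sigma)={\mathbb E}\hat T_{f,m}-\tau_f(\Sigma)$, which Theorem \ref{bias_main} bounds by $\|f^{(m+1)}\|_{L_\infty}\|\Sigma\|^{m+1}{\bf r}(\Sigma)(\sqrt{{\bf r}(\Sigma)/n}\vee\sqrt{\log n/n})^{m+1}$. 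Combining the two contributions with $\sum_j|C_j|\lesssim_m1$ yields the first displayed bound of the theorem.

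For the Wasserstein consequence I would write $\sqrt{n/2}(\check T_{f,m}-\tau_f(\Sigma))=G_n(f)+\sqrt{n/2}\,\Delta_n$, where $G_n(f)=\sqrt{n/2}\langle f'(\Sigma),\hat\Sigma_n-\Sigma\rangle$ and $\Delta_n$ is the quantity just estimated, apply the triangle inequality for $W_p$, use $W_p(\sqrt{n/2}(\check T_{f,m}-\tau_f(\Sigma)),G_n(f))\le\sqrt{n/2}\,\|\Delta_n\|_{L_p}$, and add the normal approximation $W_p(G_n(f),G_\Sigma(f))\lesssim(\|\Sigma f'(\Sigma)\|_2\sqrt p\vee\|\Sigma f'(\Sigma)\|p)/\sqrt n$ from Proposition \ref{norm_approx_lin}; multiplying the $L_p$-bound by $\sqrt n$ reproduces the ${\bf r}(\Sigma)(\sqrt{p/n}\vee p/n)$, $\sqrt n(p/n\vee(p/n)^2)$ and bias terms exactly as stated. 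The only nonroutine point is the averaging identity ${\mathbb E}(\hat\Sigma_{n_j}\mid{\mathcal A}_{\rm sym})=\hat\Sigma_n$ — equivalently, that the symmetrization of the plug-in estimator built from $n_j$ observations has the same ${\mathcal A}_{\rm sym}$-conditional linear part as the full-sample linear form, together with the measurability of that linear form with respect to ${\mathcal A}_{\rm sym}$; everything else is matching the $n_j\asymp n$ versions of the earlier estimates to their $n$-versions, with the constants $\lesssim_m$ absorbing the dependence on $c$.
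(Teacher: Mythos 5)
Your proposal is correct and follows essentially the same route as the paper's proof: the identity $\check T_{f,m}-\tau_f(\Sigma)-\langle f'(\Sigma),\hat \Sigma_n-\Sigma\rangle=\sum_j C_j\,{\mathbb E}(R_f(\Sigma,\hat \Sigma_{n_j}-\Sigma)\mid {\mathcal A}_{\rm sym})$ obtained from ${\mathbb E}(\hat \Sigma_{n_j}\mid{\mathcal A}_{\rm sym})=\hat \Sigma_n$ and $\sum_j C_j=1$, conditional Jensen plus Theorem \ref{prop_rem} for the centered remainders, Theorem \ref{bias_main} for the bias part (your observation that $\sum_j C_j{\mathbb E}R_f(\Sigma,\hat\Sigma_{n_j}-\Sigma)={\mathbb E}\hat T_{f,m}-\tau_f(\Sigma)$ is exactly how the paper's term ${\mathbb E}\check T_{f,m}-\tau_f(\Sigma)$ is handled, since ${\mathbb E}\check T_{f,m}={\mathbb E}\hat T_{f,m}$), and Proposition \ref{norm_approx_lin} with the triangle inequality for the Wasserstein consequence. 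No gaps.
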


\begin{proof}
Note that 
\begin{align*}
&
\check T_{f,m}(X_1,\dots, X_n)-{\mathbb E}\check T_{f,m}(X_1,\dots, X_n)= \sum_{j=1}^m C_j {\mathbb E}(\tau_f(\hat \Sigma_{n_j})|{\mathcal A}_{\rm sym})-
\sum_{j=1}^m C_j {\mathbb E}\tau_f(\hat \Sigma_{n_j})
\\
&
= \sum_{j=1}^m C_j {\mathbb E}(\tau_f(\hat \Sigma_{n_j})-{\mathbb E} \tau_f(\hat \Sigma_{n_j})|{\mathcal A}_{\rm sym}).
\end{align*} 
Since 
\begin{align*}
\tau_f(\hat \Sigma_{n_j})-{\mathbb E} \tau_f(\hat \Sigma_{n_j})= \langle f'(\Sigma), \hat \Sigma_{n_j}-\Sigma\rangle + R_f(\Sigma, \hat \Sigma_{n_j}-\Sigma)-
{\mathbb E}R_f(\Sigma, \hat \Sigma_{n_j}-\Sigma),
\end{align*}
\begin{align*}
{\mathbb E}\Bigl(\langle f'(\Sigma), \hat \Sigma_{n_j}-\Sigma\rangle|{\mathcal A}_{\rm sym}\Bigr) = 
\langle f'(\Sigma), {\mathbb E}(\hat \Sigma_{n_j}|{\mathcal A}_{\rm sym})-\Sigma\rangle = \langle f'(\Sigma), \hat \Sigma_n-\Sigma\rangle
\end{align*}
and $\sum_{j=1}^m C_j=1,$ we easily get that 
\begin{align*}
&
\check T_{f,m}(X_1,\dots, X_n)-{\mathbb E}\check T_{f,m}(X_1,\dots, X_n)- \langle f'(\Sigma), \hat \Sigma_n-\Sigma\rangle
\\
&
= \sum_{j=1}^m C_j {\mathbb E}(R_f(\Sigma, \hat \Sigma_{n_j}-\Sigma)-
{\mathbb E}R_f(\Sigma, \hat \Sigma_{n_j}-\Sigma)|{\mathcal A}_{\rm sym}),
\end{align*}
which implies 
\begin{align}
\label{represent_lin_T_f,m}
&
\nonumber
\check T_{f,m}(X_1,\dots, X_n)-\tau_f(\Sigma)- \langle f'(\Sigma), \hat \Sigma_n-\Sigma\rangle
\\
&
= {\mathbb E} \check T_{f,m}(X_1,\dots, X_n)-\tau_f(\Sigma)+ 
\sum_{j=1}^m C_j {\mathbb E}(R_f(\Sigma, \hat \Sigma_{n_j}-\Sigma)-{\mathbb E}R_f(\Sigma, \hat \Sigma_{n_j}-\Sigma)|{\mathcal A}_{\rm sym}).
\end{align}
Note that ${\mathbb E}\check T_{f,m}(X_1,\dots, X_n)={\mathbb E}\hat T_{f,m}(X_1,\dots, X_n),$ so, the bias of estimator 
$\check T_{f,m}(X_1,\dots, X_n)$ can be controlled by the bound of Theorem \ref{bias_main}.
It remains to observe that 
\begin{align*}
&
\Bigl\|\sum_{j=1}^m C_j {\mathbb E}(R_f(\Sigma, \hat \Sigma_{n_j}-\Sigma)-{\mathbb E}R_f(\Sigma, \hat \Sigma_{n_j}-\Sigma)|{\mathcal A}_{\rm sym})\|_{L_p}
\\
&
\leq \sum_{j=1}^m |C_j| \max_{1\leq j\leq m} 
\Bigl\|{\mathbb E}(R_f(\Sigma, \hat \Sigma_{n_j}-\Sigma)-{\mathbb E}R_f(\Sigma, \hat \Sigma_{n_j}-\Sigma)|{\mathcal A}_{\rm sym})\Bigr\|_{L_p}
\\
&
\lesssim_m \max_{1\leq j\leq m} \Bigl\|{\mathbb E}(R_f(\Sigma, \hat \Sigma_{n_j}-\Sigma)-{\mathbb E}R_f(\Sigma, \hat \Sigma_{n_j}-\Sigma)|{\mathcal A}_{\rm sym})\Bigr\|_{L_p}
\\
&
\lesssim_m \max_{1\leq j\leq m} \Bigl\|R_f(\Sigma, \hat \Sigma_{n_j}-\Sigma)-{\mathbb E}R_f(\Sigma, \hat \Sigma_{n_j}-\Sigma))\Bigr\|_{L_p},
\end{align*}
to use the bound of Theorem \ref{prop_rem} and to recall that $n_j\asymp_m n, j=1,\dots, m,$ to complete the proof of the 
first claim of Theorem \ref{Th_M_B}.

The second claim easily follows from the first claim and Proposition \ref{norm_approx_lin}.

\qed
\end{proof}

Finally, Theorem \ref{Th_M_C_YYY} is an immediate consequence of the following result.

\begin{theorem}
\label{Th_M_C}
Suppose that ${\bf r}(\Sigma)\lesssim n.$
Then,
for $m \geq 4,$  
\begin{align}
\label{bd_m>3_A'_X}
&
\nonumber
{\mathcal W}_{{\mathcal F}_m, p} (\check G_n, G_{\Sigma})
\lesssim_m (\|\Sigma\|+1)^{1/2}\|\Sigma\| \sqrt{{\bf r}(\Sigma^2)}\sqrt{\frac{p}{n}}  + (\|\Sigma\|+1)\|\Sigma\| \frac{p}{\sqrt{n}}
\\
&
\nonumber
+
\|\Sigma\|^2\Bigl((\|\Sigma\|+1)^{1/2}{\bf r}(\Sigma)\sqrt{\frac{p}{n}} +(\|\Sigma\|+1)\Bigl({\bf r}(\Sigma)\frac{p}{n}\vee \frac{p}{\sqrt{n}}\Bigr)
+ (\|\Sigma\|+1)^2\Bigl(\frac{{\bf r}(\Sigma)^2}{n}\vee 1\Bigr)\frac{p^2}{n^{3/2}}\Bigr)
\\
&
+ 
\|\Sigma\|^{m+1} {\bf r}(\Sigma)\sqrt{n}\Bigl(\sqrt{\frac{{\bf r}(\Sigma)}{n}}\vee \sqrt{\frac{\log n}{n}}\Bigr)^{m+1};
\end{align}
for $m=3,$
\begin{align}
\label{bd_m_3_A'_X}
&
\nonumber
{\mathcal W}_{{\mathcal F}_m, p} (\check G_n, G_{\Sigma})
\lesssim (\|\Sigma\|+1)^{1/2}\|\Sigma\| \sqrt{{\bf r}(\Sigma^2)}\sqrt{\frac{p}{n}}  + (\|\Sigma\|+1)\|\Sigma\| \frac{p}{\sqrt{n}}
\\
&
\nonumber
+\|\Sigma\|^2\Bigl((\|\Sigma\|+1)^{1/2}{\bf r}(\Sigma)\sqrt{\frac{p}{n}} +(\|\Sigma\|+1)\Bigl({\bf r}(\Sigma)\frac{p}{n} \vee \frac{p}{\sqrt{n}}\Bigr)
+ (\|\Sigma\|+1)^2 \Bigl(\frac{{\bf r}(\Sigma)^2}{n}\vee \log n\Bigr)\frac{p^2}{n^{3/2}}\Bigr)
\\
&
+
\|\Sigma\|^{m+1} {\bf r}(\Sigma)\sqrt{n}\Bigl(\sqrt{\frac{{\bf r}(\Sigma)}{n}}\vee \sqrt{\frac{\log n}{n}}\Bigr)^{m+1};
\end{align}
and, for $m=2,$
\begin{align}
\label{bd_m_2_A'_X}
&
\nonumber
{\mathcal W}_{{\mathcal F}_m, p} (\check G_n, G_{\Sigma})
\lesssim (\|\Sigma\|+1)^{1/2}\|\Sigma\| \sqrt{{\bf r}(\Sigma^2)} \sqrt{\frac{p}{n}}  + (\|\Sigma\|+1)\|\Sigma\| \frac{p}{\sqrt{n}}
\\
&
\nonumber
+
\|\Sigma\|^2\Bigl((\|\Sigma\|+1)^{1/2}{\bf r}(\Sigma)\sqrt{\frac{p}{n}} +(\|\Sigma\|+1)\log n \Bigl({\bf r}(\Sigma)\frac{p}{n}\vee \frac{p}{\sqrt{n}}\Bigr)
+ (\|\Sigma\|+1)^2\frac{p^2}{\sqrt{n}}\Bigr)
\\
&
+
\|\Sigma\|^{m+1} {\bf r}(\Sigma)\sqrt{n}\Bigl(\sqrt{\frac{{\bf r}(\Sigma)}{n}}\vee \sqrt{\frac{\log n}{n}}\Bigr)^{m+1}.
\end{align}
\end{theorem}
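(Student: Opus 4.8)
The plan is to follow the proof of Theorem \ref{Th_M_B}, but to track all bounds uniformly over the class ${\mathcal F}_m$ and to transfer the resulting uniform bounds into the Wasserstein distance ${\mathcal W}_{{\mathcal F}_m,p}$ via a triangle‑inequality argument. First I would record the decomposition obtained in the proof of Theorem \ref{Th_M_B}: for every $f\in {\mathcal F}_m$,
\begin{align*}
\check T_{f,m}(X_1,\dots,X_n)-\tau_f(\Sigma)-\langle f'(\Sigma),\hat\Sigma_n-\Sigma\rangle
={\mathbb E}\check T_{f,m}(X_1,\dots,X_n)-\tau_f(\Sigma)+\sum_{j=1}^m C_j\,{\mathbb E}(Y_{n_j}(f)|{\mathcal A}_{\rm sym}),
\end{align*}
where $Y_{n_j}(f):=R_f(\Sigma,\hat\Sigma_{n_j}-\Sigma)-{\mathbb E}R_f(\Sigma,\hat\Sigma_{n_j}-\Sigma)$ is the centered remainder process. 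Multiplying by $\sqrt{n/2}$ and writing $G_n(f):=\sqrt{n/2}\langle f'(\Sigma),\hat\Sigma_n-\Sigma\rangle$ — the linear process of propositions \ref{norm_approx_lin} and \ref{conc_F_smooth_Wasser} — this becomes $\check G_n(f)=G_n(f)+D_n(f)$, where $D_n(f):=\check G_n(f)-G_n(f)$ consists of the (rescaled) bias of $\check T_{f,m}$ together with the symmetrized remainder terms. Since ${\mathcal W}_{{\mathcal F}_m,p}$ satisfies the triangle inequality (by the standard gluing construction for stochastic processes with prescribed finite‑dimensional distributions) and since $\check G_n$ and $G_n$ live on the same probability space, I would get
\begin{align*}
{\mathcal W}_{{\mathcal F}_m,p}(\check G_n,G_{\Sigma})
\le{\mathcal W}_{{\mathcal F}_m,p}(\check G_n,G_n)+{\mathcal W}_{{\mathcal F}_m,p}(G_n,G_{\Sigma})
\le\Bigl\|\sup_{f\in {\mathcal F}_m}|D_n(f)|\Bigr\|_{L_p}+{\mathcal W}_{{\mathcal F}_m,p}(G_n,G_{\Sigma}).
\end{align*}

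Next I would estimate the two terms separately. The second term is handled by Proposition \ref{conc_F_smooth_Wasser} (applied with $m\ge 2$, whose hypotheses ${\mathcal F}_m$ satisfies), which produces the first two lines of each of the three displayed bounds of the theorem. For the first term I would split $D_n(f)$ into its bias part and its symmetrized remainder part. The bias part, uniformly over ${\mathcal F}_m$, is controlled by Theorem \ref{bias_main} (using ${\mathbb E}\check T_{f,m}={\mathbb E}\hat T_{f,m}$), so that $\sqrt{n/2}\,\sup_{f\in{\mathcal F}_m}|{\mathbb E}\check T_{f,m}-\tau_f(\Sigma)|\lesssim_m\|\Sigma\|^{m+1}{\bf r}(\Sigma)\sqrt{n}\bigl(\sqrt{{\bf r}(\Sigma)/n}\vee\sqrt{\log n/n}\bigr)^{m+1}$, which is the last line of each of the three displayed bounds. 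For the symmetrized remainder part, the key inequality is
\begin{align*}
\Bigl\|\sup_{f\in {\mathcal F}_m}\Bigl|\sum_{j=1}^m C_j\,{\mathbb E}(Y_{n_j}(f)|{\mathcal A}_{\rm sym})\Bigr|\Bigr\|_{L_p}
\lesssim_m\max_{1\le j\le m}\Bigl\|\sup_{f\in {\mathcal F}_m}|Y_{n_j}(f)|\Bigr\|_{L_p},
\end{align*}
which I would derive by pulling the supremum inside the conditional expectation, $\sup_{f}|{\mathbb E}(Y_{n_j}(f)|{\mathcal A}_{\rm sym})|\le{\mathbb E}(\sup_{f}|Y_{n_j}(f)|\,|\,{\mathcal A}_{\rm sym})$, then using $\|{\mathbb E}(\cdot|{\mathcal A}_{\rm sym})\|_{L_p}\le\|\cdot\|_{L_p}$, together with $\sum_{j}|C_j|\lesssim_m1$ and $n_j\asymp_m n$, exactly as in the proof of Theorem \ref{Th_M_A}. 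The right‑hand side is then bounded, in the three cases $m\ge4$, $m=3$, $m=2$, by the corresponding bounds of Proposition \ref{conc_F_smooth} (again ${\mathcal F}_m$ meets its hypotheses), and multiplying by $\sqrt{n/2}$ yields the remaining (middle) lines of the three displayed bounds.

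Collecting the Wasserstein bound for the linear part, the bias bound, and the symmetrized remainder bound, and absorbing numerical constants, completes the proof; the three‑way case split and the $(\|\Sigma\|+1)$‑factors are inherited verbatim from propositions \ref{conc_F_smooth} and \ref{conc_F_smooth_Wasser}. The hard part will be the measurability bookkeeping in the uniform symmetrization step: $\sup_{f\in{\mathcal F}_m}|Y_{n_j}(f)|$ need not be a bona fide random variable, so the inequality $\sup_{f}|{\mathbb E}(Y_{n_j}(f)|{\mathcal A}_{\rm sym})|\le{\mathbb E}(\sup_{f}|Y_{n_j}(f)|\,|\,{\mathcal A}_{\rm sym})$ must be read in terms of outer expectations, with the conditional Jensen inequality applied in its outer form, consistently with the empirical‑process conventions fixed in Section \ref{Intro}; once this is granted, the only remaining care is the uniform tracking of the $\sqrt{n}$ factor when converting the $L_p$‑remainder bounds of Theorem \ref{bias_main} and Proposition \ref{conc_F_smooth} into bounds on $\check G_n$.
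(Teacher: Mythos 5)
Your proposal is correct and follows essentially the same route as the paper: the same decomposition of $\check G_n(f)-G_n(f)$ into the rescaled bias plus the symmetrized remainder process, the same triangle inequality for ${\mathcal W}_{{\mathcal F}_m,p}$, and the same three ingredients (Proposition \ref{conc_F_smooth_Wasser} for the linear part, Theorem \ref{bias_main} for the bias, and conditional Jensen plus Proposition \ref{conc_F_smooth} for the symmetrized remainder). The measurability caveat you raise is handled in the paper by the outer-expectation convention of Section \ref{Intro}, exactly as you suggest.
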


\begin{proof}
Note that, by representation \eqref{represent_lin_T_f,m},
\begin{align*}
&
\nonumber
\check G_n(f)= G_n(f)
+
\sqrt{\frac{n}{2}}({\mathbb E} \check T_{f,m}(X_1,\dots, X_n)-\tau_f(\Sigma))
\\
&
+ 
\sqrt{\frac{n}{2}}\sum_{j=1}^m C_j {\mathbb E}(R_f(\Sigma, \hat \Sigma_{n_j}-\Sigma)-{\mathbb E}R_f(\Sigma, \hat \Sigma_{n_j}-\Sigma)|{\mathcal A}_{\rm sym}).
\end{align*}
Therefore, 
\begin{align*}
{\mathcal W}_{{\mathcal F}_m, p}(\check G_n, G_{\Sigma}) &\leq {\mathcal W}_{{\mathcal F}_m, p}(G_n, G_{\Sigma}) + \Bigl\|\sup_{f\in {\mathcal F}_m}|\check G_n(f)-G_n(f)|\Bigr\|_{L_p}
\\
&
\leq {\mathcal W}_{{\mathcal F}_m, p}(G_n, G_{\Sigma})+ \sqrt{\frac{n}{2}}\sup_{f\in {\mathcal F}_m}|{\mathbb E} \check T_{f,m}(X_1,\dots, X_n)-\tau_f(\Sigma)|
\\
&
+
\Bigl\|\sup_{f\in {\mathcal F}_m}\Bigl|\sum_{j=1}^m C_j 
{\mathbb E}(R_f(\Sigma, \hat \Sigma_{n_j}-\Sigma)-{\mathbb E}R_f(\Sigma, \hat \Sigma_{n_j}-\Sigma)|{\mathcal A}_{\rm sym})\Bigr|\Bigr\|_{L_p}.
\end{align*}
Note also that
\begin{align*}
&
\Bigl\|\sup_{f\in {\mathcal F}_m}\Bigl|\sum_{j=1}^m C_j 
{\mathbb E}(R_f(\Sigma, \hat \Sigma_{n_j}-\Sigma)-{\mathbb E}R_f(\Sigma, \hat \Sigma_{n_j}-\Sigma)|{\mathcal A}_{\rm sym})\Bigr|\Bigr\|_{L_p}
\\
&
\leq \sum_{j=1}^m |C_j| \max_{1\leq j\leq m} \Bigl\|\sup_{f\in {\mathcal F}_m}\Bigl|{\mathbb E}(R_f(\Sigma, \hat \Sigma_{n_j}-\Sigma)-{\mathbb E}R_f(\Sigma, \hat \Sigma_{n_j}-\Sigma)|{\mathcal A}_{\rm sym})\Bigr|\Bigr\|_{L_p}
\\
&
\lesssim_m \max_{1\leq j\leq m} \Bigl\|{\mathbb E}\Bigl(\sup_{f\in {\mathcal F}_m}\Bigl|R_f(\Sigma, \hat \Sigma_{n_j}-\Sigma)-{\mathbb E}R_f(\Sigma, \hat \Sigma_{n_j}-\Sigma))\Bigr|\Bigl|{\mathcal A}_{\rm sym}\Bigr)\Bigr\|_{L_p}
\\
&
\lesssim_m \max_{1\leq j\leq m} \Bigl\|\sup_{f\in {\mathcal F}_m}\Bigl|R_f(\Sigma, \hat \Sigma_{n_j}-\Sigma)-{\mathbb E}R_f(\Sigma, \hat \Sigma_{n_j}-\Sigma))\Bigr|\Bigr\|_{L_p}
\end{align*}
and, under the assumption that $n_j\asymp n, j=1,\dots, m,$ the right hand side of the last inequality could be upper bounded using Proposition \ref{conc_F_smooth}.
It is enough to use this proposition along with Theorem \ref{bias_main} and Proposition \ref{conc_F_smooth_Wasser} to complete the proof.

\qed
\end{proof}

\section{Proofs of lower bounds and related results}
\label{proof_lower}

We start with the proof of Proposition \ref{lower_first_term}.

\begin{proof}
In what follows, we will deal only with random variables $X\sim N(0,\Sigma)$ taking values in a fixed subspace $L\subset {\mathbb H}$ with 
${\rm dim}(L)=d=[r].$ In this case, ${\rm Im}(\Sigma)\subset L$ and, assuming that $\|\Sigma\|\leq a,$ we have $\Sigma\in {\mathcal S}(a,r).$ 
Moreover, we can view $\Sigma$ as a covariance operator in space $L$ and, choosing an orthonormal basis $e_1,\dots, e_d$ of $L,$ we can 
identify $L$ with ${\mathbb R}^d$ and view $\Sigma$ as a $d\times d$ covariance matrix. 

We will use a version of the two hypotheses method (see \cite{Tsybakov}, Section 2.3).
Consider two covariance operators in $L:$
$\Sigma_0 := \gamma_1 a I_d$ and $\Sigma_1:= (1+ \frac{c_1}{\sqrt{dn}})\gamma_1 a I_d,$
where $c_1= \frac{\gamma_2-\gamma_1}{\gamma_1}.$
It is immediate that $\Sigma_0, \Sigma_1\in {\mathcal S}(a,r).$

By a simple computation, the following bound 
on the Kullback-Leibler distance between the distributions of i.i.d. observations 
$X_1,\dots, X_n$ sampled from $N(0,\Sigma_0)$ and from $N(0, \Sigma_1)$ holds:
\begin{align*}
K(N(0,\Sigma_0)^{\otimes n}\| N(0, \Sigma_1)^{\otimes n}) &=nK(N(0,\Sigma_0)\| N(0, \Sigma_1)) = 
n\Bigl({\rm tr}(\Sigma_1^{-1}\Sigma_0)-d +\log \frac{{\rm det}(\Sigma_1)}{{\rm det}(\Sigma_0)}\Bigr) 
\\
&
= n d \Bigl(\frac{1}{1+ c_1/\sqrt{nd}} -1 +\log(1+c_1/\sqrt{nd})\Bigr)
\lesssim c_1^2.
\end{align*}
This bound easily implies that minimax error of testing the hypotheses $H_0: \Sigma=\Sigma_0$ against 
the alternative $H_1: \Sigma=\Sigma_1$ is bounded away from zero by a constant depending only on $c_1$
(see \cite{Tsybakov}, Theorem 2.2). 
Note also that $\tau_f(\Sigma_0)= f(\gamma_1 a) d$ and $\tau_f(\Sigma_1)= f(\gamma_1 a (1+c_1/\sqrt{nd})) d.$
Without loss of generality, assume that $f'(x)\geq \lambda, x\in [\gamma_1 a,\gamma_2 a].$
Then,
\begin{align*}
\tau_f(\Sigma_1)-\tau_f(\Sigma_0) = d \int_{\gamma_1 a}^{\gamma_1 a (1+c_1/\sqrt{nd})} f'(x)dx \geq d \lambda \gamma_1 a c_1/\sqrt{nd}
= \lambda \gamma_1 c_1 a \sqrt{\frac{d}{n}}.
\end{align*}
By a standard argument used in the two hypotheses method, this easily implies
\begin{align*}
&
\inf_{T_{n,f}}\sup_{\Sigma\in {\mathcal S}(a,r)}{\mathbb E}_{\Sigma}^{1/2}(T_{n,f}(X_1,\dots, X_n)-\tau_f(\Sigma))^2
\\
&
\geq 
\inf_{T_{n,f}}\max_{\Sigma\in \{\Sigma_0, \Sigma_1\}}{\mathbb E}_{\Sigma}^{1/2}(T_{n,f}(X_1,\dots, X_n)-\tau_f(\Sigma))^2
\gtrsim \lambda \gamma_1 c_1 a \sqrt{\frac{d}{n}},
\end{align*}
completing the proof.

\qed
\end{proof}

Next, we provide the proof of Proposition \ref{upper_piecewise}.

\begin{proof}
First note that
\begin{align} 
\label{start_x}
&
\nonumber
{\mathbb E}_{\Sigma}^{1/2}(\tilde T_g(X_1,\dots, X_n)-g(\Sigma))^2 
\\
&
\nonumber
\leq 
{\mathbb E}_{\Sigma}^{1/2}(\tilde T_g(X_1,\dots, X_n)-g(\Sigma))^2 I(\|\hat \Sigma_n-\Sigma\|< \delta)
+ 
{\mathbb E}_{\Sigma}^{1/2}(\tilde T_g(X_1,\dots, X_n)-g(\Sigma))^2 I(\|\hat \Sigma_n-\Sigma\|\geq \delta)
\\
&
\nonumber
\leq 
{\mathbb E}_{\Sigma}^{1/2}(\tilde T_g(X_1,\dots, X_n)-g(\Sigma))^2 I(\|\hat \Sigma_n-\Sigma\|< \delta)
+ 
{\mathbb E}_{\Sigma}^{1/2}\tilde T_g(X_1,\dots, X_n)^2 I(\|\hat \Sigma_n-\Sigma\|\geq \delta) 
\\
&
\ \ \ \ \ \ \ + |g(\Sigma)| {\mathbb P}_{\Sigma}^{1/2}\{\|\hat \Sigma_n-\Sigma\|< \delta\}.
\end{align} 
Next, observe that, for $\Sigma \in D_g,$ $\|\Sigma-\Sigma_j\|<\delta$ for some (in fact, unique) value of $j.$
If $\|\hat \Sigma_n-\Sigma\|<\delta,$ we have $\|\hat \Sigma_n-\Sigma_j\|<2\delta,$ 
which also holds for the unique 
value of $j.$ Thus, we have that $g(\Sigma)=\tau_{f_j}(\Sigma)$ and $\tilde T_{g}(X_1,\dots, X_n)= \hat T_{f_j}(X_1,\dots, X_n),$ 
implying that, for $\Sigma$ satisfying $\|\Sigma-\Sigma_j\|<\delta,$
\begin{align*}
{\mathbb E}_{\Sigma}^{1/2}(\tilde T_g(X_1,\dots, X_n)-g(\Sigma))^2 I(\|\hat \Sigma_n-\Sigma\|< \delta) 
&
={\mathbb E}_{\Sigma}^{1/2}(\hat T_f(X_1,\dots, X_n)-\tau_{f_j}(\Sigma))^2 I(\|\hat \Sigma_n-\Sigma\|< \delta) 
\\
&
\leq {\mathbb E}_{\Sigma}^{1/2}(\hat T_f(X_1,\dots, X_n)-\tau_{f_j}(\Sigma))^2.
\end{align*}
Therefore, for all $\Sigma\in {\mathcal S}(a,r)\cap D_g,$
\begin{align}
\label{<delta_bd}
{\mathbb E}_{\Sigma}^{1/2}(\tilde T_g(X_1,\dots, X_n)-g(\Sigma))^2 I(\|\hat \Sigma_n-\Sigma\|< \delta) 
&
\nonumber
\leq \max_{1\leq j\leq N}{\mathbb E}_{\Sigma}^{1/2}(\hat T_f(X_1,\dots, X_n)-\tau_{f_j}(\Sigma))^2
\\
&
\lesssim_{m,a} 
\sqrt{\frac{r}{n}} + r \Bigl(\sqrt{\frac{r}{n}}\Bigr)^{m+1},
\end{align}
where we used the bound of Theorem \ref{Th_M_YYY}.

On the other hand note that, for all $\Sigma\in {\mathcal S}(a,r),$ 
\begin{align}
\label{bd_on_g}
|g(\Sigma)| \leq \max_{1\leq j\leq N}|\tau_{f_j}(\Sigma)|\leq \max_{1\leq j\leq N}\|f_j'\|_{L_{\infty}}{\rm tr}(\Sigma) \leq 
 \max_{1\leq j\leq N}\|f_j'\|_{L_{\infty}} a r \lesssim_a r
\end{align}
and 
\begin{align*}
&
|\tilde T_g(X_1,\dots, X_n)|\leq \max_{1\leq j\leq N} |\hat T_{f_j}(X_1,\dots, X_n)|= \max_{1\leq j\leq N} \Bigl|\sum_{i=1}^m C_i\tau_{f_j}(\hat \Sigma_{n_i})\Bigr|
\\
&
\leq \sum_{i=1}^m |C_i| \max_{1\leq j\leq N} \max_{1\leq i\leq m}|\tau_{f_j}(\hat \Sigma_{n_i})| \lesssim_m 
\max_{1\leq j\leq N} \|f_j'\|_{L_{\infty}} \max_{1\leq i\leq m}{\rm tr}(\hat \Sigma_{n_i})
\\
&
\lesssim_m  \max_{1\leq i\leq m}{\rm tr}(\hat \Sigma_{n_i}).
\end{align*}
Therefore,
\begin{align}
\label{bd_on_tilde_T}
&
\nonumber
{\mathbb E}_{\Sigma}^{1/2}\tilde T_g(X_1,\dots, X_n)^2 I(\|\hat \Sigma_n-\Sigma\|\geq \delta) \leq 
{\mathbb E}_{\Sigma}^{1/4}\tilde T_g(X_1,\dots, X_n)^4 {\mathbb P}_{\Sigma}^{1/2}\{\|\hat \Sigma_n-\Sigma\|\geq \delta\}
\\
&
\nonumber
\lesssim_m \|\max_{1\leq i\leq m}{\rm tr}(\hat \Sigma_{n_i})\|_{L_4} {\mathbb P}_{\Sigma}^{1/2}\{\|\hat \Sigma_n-\Sigma\|\geq \delta\}
\lesssim_m \|\max_{1\leq i\leq m}{\rm tr}(\hat \Sigma_{n_i})\|_{L_4} {\mathbb P}_{\Sigma}^{1/2}\{\|\hat \Sigma_n-\Sigma\|\geq \delta\}
\\
&
\nonumber
\lesssim_m \Bigl({\rm tr}(\Sigma)+  \max_{1\leq i\leq m}\|{\rm tr}(\hat \Sigma_{n_i})-{\rm tr}(\Sigma)\|_{L_4}\Bigr) {\mathbb P}_{\Sigma}^{1/2}\{\|\hat \Sigma_n-\Sigma\|\geq \delta\} 
\\
&
\lesssim_m \|\Sigma\|{\bf r}(\Sigma){\mathbb P}_{\Sigma}^{1/2}\{\|\hat \Sigma_n-\Sigma\|\geq \delta\},
\end{align}
where we used the bound of Proposition \ref{trace_conc} and the conditions that  $n_i \asymp n, i=1,\dots, m$ and ${\bf r}(\Sigma)\lesssim n.$

It follows from bounds \eqref{start_x}, \eqref{<delta_bd}, \eqref{bd_on_g} and \eqref{bd_on_tilde_T} that 
\begin{align*} 
&
\nonumber
\sup_{g\in {\mathcal G}_{\delta}}\sup_{\Sigma\in {\mathcal S}(a,r)\cap D_g}{\mathbb E}_{\Sigma}^{1/2}(\tilde T_g(X_1,\dots, X_n)-g(\Sigma))^2 
\\
&
\lesssim_{m,a} 
\sqrt{\frac{r}{n}} + r \Bigl(\sqrt{\frac{r}{n}}\Bigr)^{m+1} + r \sup_{\Sigma\in {\mathcal S}(a,r)}{\mathbb P}_{\Sigma}^{1/2}\{\|\hat \Sigma_n-\Sigma\|\geq \delta\}.
\end{align*}
It remains to use bounds \eqref{KL_1} and \eqref{KL_2} to get, under the conditions $\delta \geq C\sqrt{\frac{r}{n}}$ 
for a sufficiently large numerical constant $C>0,$ $a\lesssim 1$ and $r\leq n,$ that 
\begin{align*}
\sup_{\Sigma\in {\mathcal S}(a,r)}{\mathbb P}_{\Sigma}^{1/2}\{\|\hat \Sigma_n-\Sigma\|\geq \delta\}\leq \exp\{-n(\delta\wedge \delta^2)/2\},
\end{align*}
which implies the claim.

\qed 
\end{proof} 

We now turn to the proof of Proposition \ref{min_max_lower_eff_rank}.

\begin{proof}
As in the proof of Proposition \ref{lower_first_term}, we assume that random variable $X\sim N(0,\Sigma)$ takes values in a fixed subspace 
$L\subset {\mathbb H}$ with ${\rm dim}(L)=d=[r]$ and use a representation of $\Sigma$ as a $d\times d$ covariance matrix. 


To prove the first bound of Proposition \ref{min_max_lower_eff_rank}, one has to construct functionals from the class ${\mathcal G}_{\delta}$
for which a minimax lower bound of order $\sqrt{\frac{d}{n}}$ holds. 
This is done for the functional $g(\Sigma):= {\rm tr} (\Sigma) I_{B(\Sigma_0,\delta)}(\Sigma)$ 
for any $\delta\geq \frac{2c_1}{\sqrt{nd}}$ and the proof of the minimax lower bound is based 
on the two hypotheses method (exactly as in the proof of the lower bound of Proposition \ref{lower_first_term}).


A more difficult argument is needed to construct functionals $g\in {\mathcal G}_{\delta}$ for which a minimax lower bound of the order $d \Bigl(\sqrt{\frac{d}{n}}\Bigr)^{m+1}$ holds. Note that it is enough to prove this bound only for a sufficiently large $d,$ say, $d\geq d_0$ (otherwise, the 
first term $\sqrt{\frac{d}{n}}$ will be dominant up to a constant that depends only on $d_0$). 

First, we construct a sufficiently 
large set $\Sigma_{\omega}, \omega\in B\subset \{-1,1\}^{d(d+1)/2}$ of ``well separated" $d\times d$ covariance matrices. Namely, 
let $A_{\omega}:=(\omega_{ij})_{i,j=1}^d$ be a symmetric matrix with entries $\omega_{ij}=\pm 1,$ and define 
\begin{align*} 
\Sigma_{\omega}:= 
I_d + \eps \Bigl(2I_d+ \frac{2A_{\omega}}{\sqrt{d}}\Bigr), 
\omega=(\omega_{ij}: i\leq j) \in\{-1,1\}^{d(d+1)/2},
\end{align*}   
with $\eps\in (0,1)$ to be chosen later. Assuming that $\omega_{ij}, i\leq j$ are independent Rademacher random variables, $A_{\omega}$
is a symmetric $d\times d$ Bernoulli random matrix. It is well known that, with a high probability, $\|A_{\omega}\|\leq c' \sqrt{d}$ for some 
constant $c'\geq 2.$ Moreover, by Wigner theorem,
\begin{align*}
\frac{1}{d} {\rm tr}\Bigl(\Bigl(2I_d+ \frac{2A_{\omega}}{\sqrt{d}}\Bigr)^{m+1}\Bigr) \overset{{\mathbb P}}{\longrightarrow} 
\int_{-2}^2 (2+x)^{m+1}\mu_{\rm sc}(dx)=:c_m>0\ {\rm as}\ d\to\infty,
\end{align*}
where $\mu_{\rm sc}(dx)=\frac{1}{2\pi}\sqrt{4-x^2}dx$ is Wigner's semicircle law in $[-2,2].$ It follows that there exists a sequence $\delta_d\to 0$ as $d\to\infty$ and a number $\bar d\geq 1$ such that
with probability at least $1/2$ for all $d\geq \bar d$  
\begin{align}
\label{cond_A_1}
\|A_{\omega}\|\leq c'\sqrt{d}
\end{align}
and
\begin{align}
\label{cond_A_2}
\Bigl|\frac{1}{d} {\rm tr}\Bigl(\Bigl(2I_d+ \frac{2A_{\omega}}{\sqrt{d}}\Bigr)^{m+1}\Bigr)-c_m\Bigr|\leq \delta_d.
\end{align}
In other words, for all $d\geq \bar d,$ there exists a subset $B_d'\subset \{-1,1\}^{d(d+1)/2}$ such that ${\rm card}(B_d')\geq 2^{d(d+1)/2-1}$ and, for all $\omega\in B_d',$
bounds \eqref{cond_A_1} and \eqref{cond_A_2} hold. In what follows, we assume that $(c'-1)\eps <1/4.$ This implies that, for all $\omega\in B_d',$ $\Sigma_{\omega}$ is a 
covariance operator with $\|\Sigma_{\omega}\|\lesssim 1$ and $\|\Sigma_{\omega}^{-1}\|\lesssim 1.$   

We can now use standard arguments from the proof of Varshamov-Gilbert lemma to show that there exists $B\subset B_d'$
such that ${\rm card}(B)\geq 2^{d^2/8}$ and 
\begin{align*}
h(\omega,\omega') := \sum_{i\leq j}I(\omega_{ij}\neq \omega_{ij}') \geq d^2/8, \omega,\omega'\in B, \omega\neq \omega'
\end{align*}
(see the proof of Proposition 5.1 in \cite{Koltchinskii_2022} for a similar argument with more details).

Note that, for all $\omega, \omega'\in B, \omega\neq \omega',$
\begin{align*}
\|\Sigma_{\omega}-\Sigma_{\omega'}\|_2^2 &=\Bigl(\frac{2\eps}{\sqrt{d}}\Bigr)^2 \|A_{\omega}-A_{\omega'}\|_2^2= 
\frac{4\eps^2}{d} \sum_{i,j}(\omega_{ij}-\omega_{ij}')^2 \geq \frac{4\eps^2}{d} \sum_{i\leq j}(\omega_{ij}-\omega_{ij}')^2 
\\
&
= \frac{16\eps^2}{d} h(\omega, \omega')
\geq \frac{16\eps^2}{d} \frac{d^2}{8} = 2\eps^2 d
\end{align*}
and 
\begin{align*}
\|\Sigma_{\omega}-\Sigma_{\omega'}\|\geq \frac{1}{\sqrt{d}}\|\Sigma_{\omega}-\Sigma_{\omega'}\|_2 \geq \sqrt{2}\eps.
\end{align*}

We will need the following lemma. 

\begin{lemma}
\label{min_max_omega}
Let $X_1,\dots, X_n$ be i.i.d. random variables sampled from $N(0,\Sigma_{\omega}), \omega\in B.$
Suppose that $\eps \leq c_1\sqrt{\frac{d}{n}}$ for a small enough constant $c_1>0.$
Then 
\begin{align*}
\inf_{\hat \omega}\max_{\omega\in B}{\mathbb E}_{\Sigma_{\omega}}\|\Sigma_{\hat \omega}- \Sigma_{\omega}\|_2^2 \gtrsim \eps^2 d,
\end{align*}
where the infimum is taken over all the estimators $\hat \omega(X_1,\dots, X_n)$ of parameter $\omega$ based on $X_1,\dots, X_n.$
\end{lemma}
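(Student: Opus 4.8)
The plan is to apply the many--hypotheses (Fano) method to the finite family $\{\Sigma_\omega:\omega\in B\}$ constructed above, in the spirit of the proof of Proposition 5.1 in \cite{Koltchinskii_2022}, combined with the usual reduction from estimation of $\omega$ to its identification. First I would record the reduction: given any estimator $\hat\omega=\hat\omega(X_1,\dots,X_n)$, let $\psi\in B$ be a point with $\Sigma_\psi$ closest to $\Sigma_{\hat\omega}$ in the Hilbert--Schmidt metric, so that $\|\Sigma_\psi-\Sigma_\omega\|_2\le 2\|\Sigma_{\hat\omega}-\Sigma_\omega\|_2$ by the triangle inequality. Since it was already shown that $\|\Sigma_\omega-\Sigma_{\omega'}\|_2^2\ge 2\eps^2 d$ for $\omega\neq\omega'$ in $B$, the event $\{\psi\neq\omega\}$ forces $\|\Sigma_{\hat\omega}-\Sigma_\omega\|_2^2\ge\frac14\|\Sigma_\psi-\Sigma_\omega\|_2^2\ge\frac12\eps^2 d$, whence
\begin{align*}
\inf_{\hat\omega}\max_{\omega\in B}\E_{\Sigma_\omega}\|\Sigma_{\hat\omega}-\Sigma_\omega\|_2^2\ \ge\ \frac{\eps^2 d}{2}\,\inf_{\psi}\max_{\omega\in B}\P_{\Sigma_\omega}(\psi\neq\omega),
\end{align*}
the infimum on the right being over all $B$--valued statistics.

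Next I would bound the Kullback--Leibler divergence. Because of the standing assumption $(c'-1)\eps<1/4$ and the bound $\|A_\omega\|\le c'\sqrt d$ valid for $\omega\in B$, all the operators satisfy $\tfrac12 I_d\preceq\Sigma_\omega\preceq C I_d$ for a numerical constant $C$, so in particular $\|\Sigma_{\omega'}^{-1}\|\le 2$. Applying a second--order expansion of $x\mapsto x-1-\log x$ near $x=1$ to the eigenvalues of $\Sigma_{\omega'}^{-1/2}\Sigma_\omega\Sigma_{\omega'}^{-1/2}$ (which all lie in a fixed compact subinterval of $(0,\infty)$) gives the standard estimate
\begin{align*}
K\bigl(N(0,\Sigma_\omega)\,\|\,N(0,\Sigma_{\omega'})\bigr)\ \lesssim\ \|\Sigma_{\omega'}^{-1}\|^2\,\|\Sigma_\omega-\Sigma_{\omega'}\|_2^2\ \lesssim\ \|\Sigma_\omega-\Sigma_{\omega'}\|_2^2=\frac{4\eps^2}{d}\|A_\omega-A_{\omega'}\|_2^2\ \lesssim\ \eps^2 d,
\end{align*}
since $A_\omega-A_{\omega'}$ has at most $d^2$ entries, each of absolute value at most $2$. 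Tensorizing over the $n$ i.i.d.\ observations and using $\eps\le c_1\sqrt{d/n}$ yields $K\bigl(\P_{\Sigma_\omega}^{\otimes n}\,\|\,\P_{\Sigma_{\omega'}}^{\otimes n}\bigr)=n\,K\bigl(N(0,\Sigma_\omega)\,\|\,N(0,\Sigma_{\omega'})\bigr)\lesssim n\eps^2 d\lesssim c_1^2 d^2$.

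Finally, I would invoke Fano's inequality (see \cite{Tsybakov}, Section 2.2): since ${\rm card}(B)\ge 2^{d^2/8}$, i.e.\ $\log{\rm card}(B)\gtrsim d^2$,
\begin{align*}
\inf_{\psi}\max_{\omega\in B}\P_{\Sigma_\omega}(\psi\neq\omega)\ \ge\ 1-\frac{\max_{\omega,\omega'}K\bigl(\P_{\Sigma_\omega}^{\otimes n}\,\|\,\P_{\Sigma_{\omega'}}^{\otimes n}\bigr)+\log 2}{\log{\rm card}(B)}\ \ge\ 1-C'c_1^2-o(1)\quad\text{as }d\to\infty.
\end{align*}
Choosing the numerical constant $c_1$ small enough (and $d$ large, which is already assumed in the ambient proof), the right--hand side is at least $1/4$; combined with the reduction above this gives $\inf_{\hat\omega}\max_{\omega\in B}\E_{\Sigma_\omega}\|\Sigma_{\hat\omega}-\Sigma_\omega\|_2^2\gtrsim\eps^2 d$. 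The only genuinely technical point is the Gaussian KL estimate with the right dependence on $\|\Sigma_\omega-\Sigma_{\omega'}\|_2^2$ and on $\|\Sigma_{\omega'}^{-1}\|$ (where the uniform bounds $\|\Sigma_\omega\|\lesssim 1$, $\|\Sigma_\omega^{-1}\|\lesssim 1$ enter); everything else is the textbook Fano argument, with the smallness of $c_1$ calibrated precisely so that the bound $\lesssim c_1^2 d^2$ on the KL divergence is a small fraction of $\log{\rm card}(B)\asymp d^2$.
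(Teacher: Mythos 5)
Your proof is correct and is precisely the many-hypotheses (Fano) argument that the paper itself invokes for this lemma by citing Section 2.6 of Tsybakov; the reduction via the closest point of $B$, the Gaussian KL bound $K(N(0,\Sigma_\omega)\|N(0,\Sigma_{\omega'}))\lesssim \|\Sigma_{\omega'}^{-1}\|^2\|\Sigma_\omega-\Sigma_{\omega'}\|_2^2\lesssim \eps^2 d$ (valid since all $\Sigma_\omega$ have spectrum in a fixed compact subinterval of $(0,\infty)$ under $(c'-1)\eps<1/4$), and the comparison of $n\eps^2 d\lesssim c_1^2 d^2$ with $\log{\rm card}(B)\gtrsim d^2$ are all sound. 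No gaps.
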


The proof relies on a well known approach to minimax lower bounds based on many hypotheses
(see \cite{Tsybakov}, Section 2.6). 

In what follows, $\eps := c_1\sqrt{\frac{d}{n}}$ for a small enough constant $c_1>0$
(as in Lemma \ref{min_max_omega}) and we also assume that $(c'-1)\eps <1/4.$ 

\begin{lemma}
\label{tr_f_repr}
Let $f\in C^{m+2}({\mathbb R})$ be a function such that $f(0)=0,$ $f(1)=\dots=f^{(m)}(1)=0$ and $f^{(m+1)}(1)\neq 0.$
Suppose also that $\|f'\|_{L_{\infty}}\leq 1,$ $\|f''\|_{L_{\infty}}\leq 1$ and $\|f^{(m+2)}\|_{L_{\infty}}\leq 1.$ Then, for all $d\geq \bar d$
and all $\omega\in B,$ 
\begin{align*}
|\tau_f(\Sigma_{\omega}) - c_m(f)\eps^{m+1} d| \lesssim_m \eps^{m+1}d (\delta_d+ \eps),
\end{align*}
where 
\begin{align*}
c_m(f):=  c_m \frac{f^{(m+1)}(1)}{(m+1)!}.
\end{align*}
\end{lemma}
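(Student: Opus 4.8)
\textbf{Proof plan for Lemma \ref{tr_f_repr}.}

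The plan is to write $\tau_f(\Sigma_\omega)=\operatorname{tr}(f(\Sigma_\omega))$ with $\Sigma_\omega = I_d + \eps B_\omega$, where $B_\omega := 2I_d + \frac{2A_\omega}{\sqrt d}$ is a self-adjoint operator satisfying $\|B_\omega\|\le 2(c'+1)$ (by \eqref{cond_A_1}), and then Taylor-expand $f$ around the point $1$ at the level of the operator function. Since $f(1)=f'(1)=\dots=f^{(m)}(1)=0$, the Taylor expansion of the scalar function $f$ at $1$ reads $f(1+t) = \frac{f^{(m+1)}(1)}{(m+1)!}t^{m+1} + \frac{1}{(m+1)!}\int_1^{1+t}(1+t-s)^{m+1}f^{(m+2)}(s)\,ds$, so that $f(1+t) = \frac{f^{(m+1)}(1)}{(m+1)!}t^{m+1} + \rho(t)$ with $|\rho(t)| \le \frac{\|f^{(m+2)}\|_{L_\infty}}{(m+2)!}|t|^{m+2} \le \frac{|t|^{m+2}}{(m+2)!}$. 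Applying this identity to the eigenvalues $1+\eps\lambda_k(B_\omega)$ of $\Sigma_\omega$ and summing over $k$ gives
\begin{align*}
\tau_f(\Sigma_\omega) = \frac{f^{(m+1)}(1)}{(m+1)!}\,\eps^{m+1}\operatorname{tr}(B_\omega^{m+1}) + \sum_{k=1}^d \rho(\eps\lambda_k(B_\omega)),
\end{align*}
where the remainder sum is bounded in absolute value by $\sum_k \frac{\eps^{m+2}|\lambda_k(B_\omega)|^{m+2}}{(m+2)!} \le \frac{\eps^{m+2}}{(m+2)!}\,d\,\|B_\omega\|^{m+2} \lesssim_m \eps^{m+2} d$.

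Next I would handle the main term. By \eqref{cond_A_2}, $\bigl|\frac1d\operatorname{tr}(B_\omega^{m+1}) - c_m\bigr| \le \delta_d$, i.e. $\operatorname{tr}(B_\omega^{m+1}) = c_m d + \theta_d d$ with $|\theta_d|\le \delta_d$. Hence
\begin{align*}
\frac{f^{(m+1)}(1)}{(m+1)!}\,\eps^{m+1}\operatorname{tr}(B_\omega^{m+1}) = c_m(f)\,\eps^{m+1} d + \frac{f^{(m+1)}(1)}{(m+1)!}\,\eps^{m+1} d\,\theta_d,
\end{align*}
where $c_m(f) = c_m\frac{f^{(m+1)}(1)}{(m+1)!}$ as in the statement, and $\bigl|\frac{f^{(m+1)}(1)}{(m+1)!}\eps^{m+1} d\,\theta_d\bigr| \lesssim_m \eps^{m+1} d\,\delta_d$, using that $|f^{(m+1)}(1)|$ is a constant depending only on $f$ (and, under the normalization built into the statement, can be absorbed into the $\lesssim_m$ constant, or one simply tracks the dependence on $f$). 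Combining this with the remainder estimate from the previous paragraph yields
\begin{align*}
\bigl|\tau_f(\Sigma_\omega) - c_m(f)\,\eps^{m+1} d\bigr| \lesssim_m \eps^{m+1} d\,\delta_d + \eps^{m+2} d = \eps^{m+1} d(\delta_d + \eps),
\end{align*}
which is the claimed bound.

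The argument is essentially a bookkeeping exercise once the two facts \eqref{cond_A_1} and \eqref{cond_A_2} are in hand, so there is no serious obstacle; the only point requiring mild care is making sure the operator-level Taylor expansion is legitimate, but since $\Sigma_\omega$ is a finite-rank (indeed $d\times d$) positive-definite matrix with spectrum contained in a bounded interval around $1$, this is just the scalar Taylor formula applied eigenvalue by eigenvalue, with no functional-analytic subtlety. One should also note that $\|f'\|_{L_\infty}\le 1$ and $\|f''\|_{L_\infty}\le 1$ are not actually needed for this lemma (they are there because $f$ is meant to lie in $\mathcal H_m$ for the subsequent construction); only $f(0)=0$, the vanishing of the derivatives at $1$, and $\|f^{(m+2)}\|_{L_\infty}\le 1$ enter. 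If one wants the constant in $\lesssim_m$ to be genuinely independent of $f$, one can additionally normalize so that $|f^{(m+1)}(1)|\le 1$ (which is consistent with $\|f''\|_{L_\infty}\le 1,\|f^{(m+2)}\|_{L_\infty}\le1$ only for small enough $\eps$-independent reasons); otherwise the dependence on $f$ enters solely through the bounded quantity $|f^{(m+1)}(1)|$.
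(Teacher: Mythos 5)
Your proposal is correct and follows essentially the same route as the paper: Taylor-expand the scalar $f$ around $1$ eigenvalue by eigenvalue using the vanishing derivatives $f(1)=\dots=f^{(m)}(1)=0$, bound the remainder via $\|f^{(m+2)}\|_{L_\infty}$ and the spectral bound \eqref{cond_A_1}, and convert the leading term via \eqref{cond_A_2}. Your side remarks (that $\|f'\|_{L_\infty},\|f''\|_{L_\infty}\le 1$ are not used here, and that the implicit constant carries a factor $|f^{(m+1)}(1)|$, which in the subsequent application is taken $\asymp 1$) match how the paper handles these points.
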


\begin{proof}
Clearly, 
\begin{align*}
\tau_f (\Sigma_{\omega})= {\rm tr}(f(\Sigma_{\omega})) = {\rm tr}\Bigl(f\Bigl(I_d + \eps\Bigl(2I_d+ \frac{2A_{\omega}}{\sqrt{d}}\Bigr)\Bigr)\Bigr).
\end{align*}
Note that 
\begin{align*}
f(1+ \eps(2+x))= \sum_{j=0}^{m+1} \frac{f^{(j)}(1)}{j!}  \eps^j (2+x)^j + R(x)=  \frac{f^{(m+1)}(1)}{(m+1)!}  \eps^{m+1} (2+x)^{m+1}+ R(x),
\end{align*}
where the following bound holds for the remainder $R(x):$
\begin{align*}
|R(x)|\leq \frac{\|f^{(m+2)}\|_{L_{\infty}}}{(m+2)!} \eps^{m+2} |2+x|^{m+2}.
\end{align*}
Therefore,
\begin{align*}
\tau_f (\Sigma_{\omega})&= \sum_{k=1}^d f\Bigl(1+ \eps\Bigl(2+ \lambda_k\Bigl(\frac{2A_{\omega}}{\sqrt{d}}\Bigr)\Bigr)\Bigr)
= \frac{f^{(m+1)}(1)}{(m+1)!} \eps^{m+1} d \frac{1}{d}{\rm tr}\Bigl(2I_d+ \frac{2A_{\omega}}{\sqrt{d}}\Bigr)^{m+1}
\\
&
= c_m \frac{f^{(m+1)}(1)}{(m+1)!} \eps^{m+1} d  + \rho = c_m(f) \eps^{m+1} d  + \rho
\end{align*}
where 
\begin{align*}
|\rho| &\leq \frac{|f^{(m+1)}(1)|}{(m+1)!} \eps^{m+1} d \delta_d + \frac{\|f^{(m+2)}\|_{L_{\infty}}}{(m+2)!} \eps^{m+2} d \max_{1\leq k\leq d}
\Bigl|2+\lambda_k\Bigl(\frac{2A_{\omega}}{\sqrt{d}}\Bigr)\Bigr|^{m+2} 
\\
&
\leq \frac{|f^{(m+1)}(1)|}{(m+1)!} \eps^{m+1} d \delta_d + 2^{m+2}(1+c')^{m+2}\frac{\|f^{(m+2)}\|_{L_{\infty}}}{(m+2)!} \eps^{m+2} d 
\\
&
\lesssim_m \eps^{m+1} d (\delta_d+\eps).
\end{align*}

\qed
\end{proof}

Assume now that $f\in C^{m+2}({\mathbb R})$ satisfies the conditions of Lemma \ref{tr_f_repr} and $f^{(m+1)}(1)=b>0,$ $b\asymp 1.$
Let $\delta:=\eps/4$ and define 
\begin{align*}
g_{ij}(\Sigma):= \sum_{\omega\in B} I_{B(\Sigma_{\omega},\delta)}(\Sigma) \tau_{\omega_{ij}f}(\Sigma), \Sigma\in D_{g_{ij}}, 1\leq i\leq j\leq d.
\end{align*}
Clearly, $g_{ij}\in {\mathcal G_{\delta}}, i\leq j$ and 
\begin{align}
\label{g_ij_rep}
g_{ij}(\Sigma_{\omega}) = \tau_{\omega_{ij}f}(\Sigma_{\omega})= \omega_{ij}\bar c_m \eps^{m+1} d + \rho_{ij}, i\leq j,
\end{align}
where $\bar c_m =\frac{c_m b}{(m+1)!}$ and 
\begin{align}
\label{up_g_ij}
\max_{i\leq j} 
|\rho_{ij}| 
\lesssim_m \eps^{m+1} d (\delta_d+\eps).
\end{align}

Suppose now that 
\begin{align*}
\sup_{g\in {\mathcal G}_{\delta}}\inf_{T_{n,g}}\sup_{\Sigma\in {\mathcal S}(a,r)\cap D_g}
{\mathbb E}_{\Sigma}^{1/2}(T_{n,g}(X_1,\dots, X_n)-g(\Sigma))^2 < \Delta.
\end{align*}
Then, for all $i\leq j,$ there exist estimators $\tilde T_{ij}(X_1,\dots, X_n)$ of functionals $g_{ij}(\Sigma)$ such that 
\begin{align*}
\max_{i\leq j}\sup_{\Sigma\in {\mathcal S}(a,r)\cap D_{g_{ij}}}
{\mathbb E}_{\Sigma}^{1/2}(\tilde T_{ij}(X_1,\dots, X_n)-g_{ij}(\Sigma))^2 < \Delta,
\end{align*}
which implies that 
\begin{align}
\label{main_tilde_ij}
\max_{i\leq j}\max_{\omega\in B}
{\mathbb E}_{\Sigma_{\omega}}^{1/2}(\tilde T_{ij}(X_1,\dots, X_n)-g_{ij}(\Sigma_{\omega}))^2 < \Delta.
\end{align}
Denote $\tilde \omega_{ij}:= {\rm sign}(\tilde T_{ij}(X_1,\dots, X_n))$
and let
\begin{align*}
\hat \omega \in {\rm Argmin}_{\omega \in B} \sum_{i,j}(\tilde \omega_{ij}-\omega_{ij})^2.
\end{align*}
Then, for all $\omega\in B,$ 
\begin{align*}
\Bigl(\sum_{i,j}(\hat \omega_{ij}-\omega_{ij})^2\Bigr)^{1/2} \leq \Bigl(\sum_{i,j}(\hat \omega_{ij}-\tilde \omega_{ij})^2\Bigr)^{1/2}
+\Bigl(\sum_{i,j}(\tilde \omega_{ij}-\omega_{ij})^2\Bigr)^{1/2} \leq 2 \Bigl(\sum_{i,j}(\tilde \omega_{ij}-\omega_{ij})^2\Bigr)^{1/2}.
\end{align*}
Therefore,
\begin{align}
\label{HS_hat_om}
&
\nonumber
\|\Sigma_{\hat \omega}-\Sigma_{\omega}\|_2^2 = 
\frac{4\eps^2}{d} \sum_{i,j}(\hat \omega_{ij}-\omega_{ij})^2
\leq \frac{16\eps^2}{d} \sum_{i,j}(\tilde \omega_{ij}-\omega_{ij})^2
\\
&
= \frac{16\eps^2}{d}\frac{1}{\bar c_m^2 \eps^{2(m+1)}} \frac{1}{d^2} 
\sum_{i,j}(\bar c_m \eps^{m+1} d\ \tilde \omega_{ij}-\bar c_m \eps^{m+1} d\ \omega_{ij})^2
\end{align}
Note that 
\begin{align}
\label{tildeT_omega}
|\bar c_m \eps^{m+1} d\ \tilde \omega_{ij}-\bar c_m \eps^{m+1} d\ \omega_{ij}| \leq 2|\tilde T_{ij}(X_1,\dots, X_n)-\bar c_m \eps^{m+1} d\ \omega_{ij}|.
\end{align}
Indeed, suppose $\tilde \omega_{ij} =+1$ and $\omega_{ij}=-1.$ Then, 
\begin{align*}
|\bar c_m \eps^{m+1} d\ \tilde \omega_{ij}-\bar c_m \eps^{m+1} d\ \omega_{ij}| = 2\bar c_m \eps^{m+1} d.
\end{align*}
On the other hand, $\tilde \omega_{ij}=+1$ implies that $\tilde T_{ij}(X_1,\dots, X_n)\geq 0$ and 
\begin{align*}
|\tilde T_{ij}(X_1,\dots, X_n)-\bar c_m \eps^{m+1} d\ \omega_{ij}| = \tilde T_{ij}(X_1,\dots, X_n)+ \bar c_m \eps^{m+1} d 
\geq \bar c_m \eps^{m+1} d = \frac{1}{2}|\bar c_m \eps^{m+1} d\ \tilde \omega_{ij}-\bar c_m \eps^{m+1} d\ \omega_{ij}|,
\end{align*}
which implies \eqref{tildeT_omega}. Similar argument holds when $\tilde \omega_{ij}=-1, \omega_{ij}=+1.$

Note also that, by \eqref{g_ij_rep} and \eqref{up_g_ij}, 
\begin{align*}
&
|\tilde T_{ij}(X_1,\dots, X_n)-\bar c_m \eps^{m+1} d\ \omega_{ij}| \leq 
|\tilde T_{ij}(X_1,\dots, X_n)-g_{ij}(\Sigma_{\omega})| + |g_{ij}(\Sigma_{\omega})-\bar c_m \eps^{m+1} d\ \omega_{ij}|
\\
&
\leq |\tilde T_{ij}(X_1,\dots, X_n)-g_{ij}(\Sigma_{\omega})|  + c_m' \eps^{m+1}d (\delta_d+\eps).
\end{align*}
Using the last bound along with \eqref{HS_hat_om} and \eqref{tildeT_omega}, we get 
\begin{align*}
\|\Sigma_{\hat \omega}-\Sigma_{\omega}\|_2^2 &\leq 
\frac{16\eps^2}{d}\frac{1}{\bar c_m^2 \eps^{2(m+1)}} \frac{1}{d^2} 
\sum_{i,j}(\bar c_m \eps^{m+1} d\ \tilde \omega_{ij}-\bar c_m \eps^{m+1} d\ \omega_{ij})^2
\\
&
\leq 
\frac{2^6\eps^2}{d}\frac{1}{\bar c_m^2 \eps^{2(m+1)}} \frac{1}{d^2} 
\sum_{i,j} (\tilde T_{ij}(X_1,\dots, X_n)-\bar c_m \eps^{m+1} d\ \omega_{ij})^2
\\
&
\leq 
\frac{2^7\eps^2}{d}\frac{1}{\bar c_m^2 \eps^{2(m+1)}} \frac{1}{d^2} 
\sum_{i,j} (\tilde T_{ij}(X_1,\dots, X_n)-g_{ij}(\Sigma_{\omega}))^2
\\
&
+\frac{2^7\eps^2}{d}\frac{1}{\bar c_m^2 \eps^{2(m+1)}}(c_m')^2 \eps^{2(m+1)}d^2 (\delta_d+\eps)^2
\\
&
\leq 
\frac{2^7\eps^2}{d}\frac{1}{\bar c_m^2 \eps^{2(m+1)}} \frac{1}{d^2} 
\sum_{i,j} (\tilde T_{ij}(X_1,\dots, X_n)-g_{ij}(\Sigma_{\omega}))^2 + c_m'' \eps^2 d (\delta_d+\eps)^2,
\end{align*}
where $c_m'':= \frac{2^7 (c_m')^2}{\bar c_m^2}.$
Therefore,
\begin{align*}
&
{\mathbb E}_{\Sigma_{\omega}}\|\Sigma_{\hat \omega}-\Sigma_{\omega}\|_2^2 
\\
&
\leq 
\frac{2^7\eps^2}{d}\frac{1}{\bar c_m^2 \eps^{2(m+1)}} \frac{1}{d^2} 
\sum_{i,j} {\mathbb E}_{\Sigma_{\omega}}(\tilde T_{ij}(X_1,\dots, X_n)-g_{ij}(\Sigma_{\omega}))^2 + c_m'' \eps^2 d (\delta_d+\eps)^2,
\end{align*}
and, by bound \eqref{main_tilde_ij},
\begin{align*}
\max_{\omega\in B}{\mathbb E}_{\Sigma_{\omega}}\|\Sigma_{\hat \omega}-\Sigma_{\omega}\|_2^2 
\leq \frac{2^7\eps^2}{d}\frac{1}{\bar c_m^2 \eps^{2(m+1)}}\Delta^2 +c_m'' \eps^2 d (\delta_d+\eps)^2.
\end{align*}
By Lemma \ref{min_max_omega}, we get that 
\begin{align*}
\eps^2 d  \leq C'\Bigl(\frac{\eps^2}{d}\frac{1}{\bar c_m^2 \eps^{2(m+1)}}\Delta^2 +c_m'' \eps^2 d (\delta_d+\eps)^2\Bigr)
\end{align*}
with a sufficiently large constant $C'>0.$ Since also $\delta_d\to 0$ as $d\to \infty,$ we can choose $d_0\geq \bar d$ so that, for all 
$d\geq d_0,$ $\delta_d^2\leq \frac{1}{8C'c_{m}''}.$ If, in addition,  $\eps^2= c_1^2\frac{d}{n} \leq \frac{1}{8C'c_{m}''}$ (which holds 
if constant $c_1$ is small enough), we can conclude 
that $\Delta \gtrsim \eps^{m+1} d$ for all $d\geq d_0,$
which implies that 
\begin{align*}
\sup_{g\in {\mathcal G}_{\delta}}\inf_{T_{n,g}}\sup_{\Sigma\in {\mathcal S}(a,r)\cap D_g}
{\mathbb E}_{\Sigma}^{1/2}(T_{n,g}(X_1,\dots, X_n)-g(\Sigma))^2 \gtrsim d\Bigl(\sqrt{\frac{d}{n}}\Bigr)^{m+1},
\end{align*}
completing the proof.

\qed
\end{proof}

\end{document}